\documentclass{article}

\usepackage[margin=1in]{geometry}
\usepackage[T1]{fontenc}
\usepackage[english]{babel}
\usepackage[numbers,sort]{natbib} 
\usepackage[dvipsnames]{xcolor}
\usepackage{amsmath, amssymb, amsthm}
\usepackage{bbm}
\usepackage{blkarray}
\usepackage{cancel}
\usepackage{enumitem}
\usepackage{float}
\usepackage{bm}
\usepackage{graphicx}
\usepackage{mathrsfs}
\usepackage{microtype}
\usepackage{ragged2e}
\usepackage{thmtools}
\usepackage{tikz}
\usepackage[Symbolsmallscale]{upgreek}
\usepackage{caption}
\usepackage{subcaption}
\usepackage[pagebackref]{hyperref}
\hypersetup{citecolor=purple, colorlinks=true, linkcolor=purple}
\usepackage{fancyhdr}
\usepackage{algorithm}
\usepackage{algpseudocode}
\usepackage{comment}
\usepackage{subfiles}
\usepackage{mathtools} 
\mathtoolsset{showonlyrefs}

\DeclareUnicodeCharacter{2161}{II} 

\newtheorem{theorem}{Theorem}
\newtheorem{proposition}[theorem]{Proposition}

\newtheorem{lemma}[theorem]{Lemma}
\newtheorem{corollary}[theorem]{Corollary}

\newtheorem{assumption}[theorem]{Assumption}
\newtheorem{remark}[theorem]{Remark}

\newtheoremstyle{recall}
{3pt}{3pt}{}{}{\bfseries}{.}{.5em}
{\thmname{#1}\thmnote{ #3}}

\theoremstyle{recall}
\newtheorem*{assumptionrecall}{Assumption}

\title{ \Large \textbf{Efficient Online Learning in Interacting Particle Systems}}
\author{
Louis Sharrock\thanks{Department of Statistical Science, University College London. \texttt{l.sharrock@ucl.ac.uk}} \and 
Nikolas Kantas\thanks{Department of Mathematics, Imperial College London. \texttt{n.kantas@imperial.ac.uk}} \and 
Grigorios A. Pavliotis\thanks{Department of Mathematics, Imperial College London. \texttt{g.pavliotis@imperial.ac.uk}}
}

\date{}

\begin{document}
\maketitle

\begin{abstract}
We introduce a new method for online parameter estimation in stochastic interacting particle systems, based on continuous observation of a small number of particles from the system. Our method recursively updates the model parameters using a stochastic approximation of the gradient of the asymptotic log-likelihood, which is computed using the continuous stream of observations. Under suitable assumptions, we rigorously establish convergence of our method to the stationary points of the asymptotic log-likelihood of the interacting particle system. We consider asymptotics both in the limit as the time horizon $t\rightarrow\infty$, for a fixed and finite number of particles, and in the joint limit as the number of particles $N\rightarrow\infty$ and the time horizon $t\rightarrow\infty$. Under additional assumptions on the asymptotic log-likelihood, we also establish an $\mathrm{L}^2$ convergence rate and a central limit theorem. Finally, we present several numerical examples of practical interest, including a model for systemic risk, a model of interacting FitzHugh--Nagumo neurons, and a Cucker--Smale flocking model. Our numerical results corroborate our theoretical results, and also suggest that our estimator is effective even in cases where the assumptions required for our theoretical analysis do not hold.
\end{abstract}

\section{Introduction}

The study of interacting particle systems (IPSs) and their mean-field limits has a rich history, dating back to the seminal work of \citet{mckean1966class}; see \cite{vlasov1968vibrational,oelschlager1984martingale,meleard1996asymptotic,sznitman1991topics} for some other early references. In the last two decades, the probabilistic properties of such systems have been the subject of sustained interest, with many new results on well-posedness  \citep[e.g.,][]{chaudruderaynal2020strong,huang2019distribution}, existence and uniqueness \citep[e.g.,][]{jourdain2008nonlinear,bauer2018strong,mishura2020existence}, ergodicity \citep[e.g.,][]{bashiri2020longtime,bolley2013uniform,carrillo2006contractions,cattiaux2008probabilistic,eberle2019quantitative}, and the propagation of chaos \citep[e.g.,][]{durmus2020elementary,malrieu2001logarithmic,malrieu2003convergence,lacker2023sharp}. 

In parallel, significant attention has also been dedicated to the various applications of such models. These include, amongst others, statistical physics \citep{benedetto1997kinetic}, multi-agent systems \citep{benachour1998nonlinear}, mean-field games \citep{carmona2018probabilistic,cardaliaguet2018mean,cardaliaguet2019master}, stochastic control \citep{buckdahn2017meanfield}, filtering \citep{crisan2010approximate}, mathematical biology including neuroscience \citep{baladron2012meanfield} and structured models of population dynamics \citep{burger2007aggregation}, the social sciences including opinion dynamics \citep{chazelle2017wellposedness,goddard2022noisy} and cooperative behaviours \citep{canuto2012eulerian}, financial mathematics \citep{giesecke2020inference}, Bayesian inference \citep{liu2016stein}, and the analysis of mean-field neural networks \citep{hu2020meanfield,mei2018mean,rotskoff2022trainability,sirignano2020mean}.

More recently, there has been growing interest in the study of statistical inference for this class of processes \citep[e.g.,][]{kasonga1990maximum,bishwal2011estimation,giesecke2020inference,chen2021maximum,sharrock2022parameterestimation,sharrock2023online,dellamaestra2023lan,amorino2023parameter}, in both frequentist \citep[e.g.,][]{sharrock2022parameterestimation,amorino2023parameter} and Bayesian settings \citep[e.g.,][]{jasra2025bayesian,nickl2025bayesian}. One limitation of existing approaches is that, with certain notable exceptions \citep{pavliotis2022eigenfunction,genoncatalot2022inference,genoncatalot2023parametric}, it is generally assumed that it is possible to observe the entire IPS, or else multiple i.i.d. trajectories of the limiting McKean-Vlasov SDE (MVSDE). In cases where the number of particles is very large, however, this assumption may be unrealistic, or else associated with a prohibitive computational cost. 

In addition, most existing approaches are `offline' or `batch' methods, which can be impractical for large datasets where observations occur over a long time period. In particular, existing methods typically rely on optimisation of a function (e.g., the log-likelihood) of the entire observed data path, which can be impractically slow for long time periods, or for models which are costly to evaluate. One exception to this is \cite{sharrock2023online}, which introduced an efficient `online' or `recursive' estimator, and analysed its asymptotic properties. Unfortunately, the estimator in \cite{sharrock2023online} still relies on observation of multiple i.i.d. paths of the MVSDE, or multiple trajectories of the IPS. 

In this context, a natural question is whether online parameter estimation in IPSs remains possible when only a single particle, or a small number of particles, can be observed. In this paper, we answer this question in the affirmative.

\subsection{Contributions}
Our main contributions are summarised below.
 
\paragraph{Methodology.} We derive a new online estimator for statistical inference in ergodic IPSs (and the associated MVSDEs). Our estimator is based on minimising the asymptotic (both in time, and in the number of particles) negative log-likelihood of the IPS. It requires observation of the trajectories of just three particles, which suffices to form an asymptotically unbiased estimate of the mean-field interaction terms appearing in the gradient of the asymptotic log-likelihood. In comparison to existing online estimators, which assume it is possible to observe the trajectory of every particle from the IPS, our estimator offers significant computational advantages in the typical case where $N\gg 1$.

\paragraph{Theory.} We prove, under suitable assumptions, that the proposed estimator converges to the stationary points of the asymptotic log-likelihood of the IPS. Under additional assumptions on the asymptotic negative log-likelihood (e.g., strong convexity), we show that our estimator is consistent, in the sense that it converges in $\mathrm{L}^2$ to the true parameter $\theta_0$, and obtain a central limit theorem. We also present corresponding guarantees for a comparable estimator which requires observation of all particles from the IPS, extending existing theoretical results. This enables a detailed comparison of the theoretical properties of both estimators. In all cases, we consider asymptotics in the case where the number of particles is fixed and finite, and only the time horizon $t\to\infty$, and also in the case where both the number of particles $N\to\infty$ and the time horizon $t\to\infty$.

\paragraph{Applications.} We present extensive numerical results to demonstrate the performance of our estimator. Our numerical results corroborate our theoretical findings, and also suggest that our estimator is effective in cases where our assumptions demonstrably do not hold, e.g., in situations where there exist multiple invariant measures, or the diffusion coefficient in the particle dynamics is degenerate. We consider several models of practical interest, including a model for systemic risk, a model of interacting FitzHugh--Nagumo neurons, a Cucker--Smale flocking model, and a mean-field $\frac{3}{2}$ stochastic volatility model.

\subsection{Related Work}
\label{sec:related-work}
\paragraph{Parameter Estimation in IPSs and MVSDEs.} 
Until recent years, surprisingly little work had been devoted to the study of statistical inference for IPSs and MVSDEs. This is in stark contrast to the wealth of literature on parameter estimation in linear SDEs, i.e., diffusion processes whose coefficients do not depend on the law of the process \citep[e.g.,][]{borkar1982parameter,liptser2001statistics,kessler2012statistical,kutoyants2004statistical}. A notable exception is the pioneering work of \cite{kasonga1990maximum}, who established asymptotic properties (consistency, asymptotic normality) of the maximum likelihood estimator (MLE) for a system of weakly interacting particles in the limit as $N\to\infty$, based on continuous observation of all $N$ particles over a fixed time interval $[0,T]$. 

More recently, there has been a surge of interest in this topic. In particular, several authors have extended the results in \cite{kasonga1990maximum} in various directions \citep[e.g.,][]{bishwal2011estimation,giesecke2020inference,chen2021maximum,sharrock2022parameterestimation,sharrock2023online,dellamaestra2023lan,amorino2023parameter}. In particular, \citet{bishwal2011estimation} studied the case where only discrete observations of the system are available, and the parameter to be estimated is a function of time, while \citet{giesecke2020inference} established asymptotic properties (consistency, asymptotic normality, asymptotic efficiency) of an approximate MLE for a much broader class of interacting stochastic systems, widely applicable in financial mathematics, which additionally allow for discontinuous (i.e., jump) dynamics. \citet{chen2021maximum} established the optimal convergence rate for the MLE in an interacting particle system with linear interaction in the limit as both $N\to\infty$ and $T\to\infty$. \citet{sharrock2022parameterestimation} established the asymptotic properties of the MLE in the limit as $N\to\infty$ for a more general family of IPSs. Subsequently, \citet{sharrock2023online} introduced online (or recursive) MLEs for the parameters of an IPS or the associated MVSDE, and analysed their asymptotic properties in the limit as $T\to\infty$, and the joint limit as $T\to\infty$ and $N\to\infty$. \cite{dellamaestra2023lan} established the local asymptotic normality of the MLE, and obtained simple and explicit criteria for identifiability and non-degeneracy of the Fisher information matrix. Meanwhile, \citet{amorino2023parameter} studied joint parameter estimation for both the drift and diffusion coefficients, based on discrete observations of the IPS over a fixed time interval $[0,T]$. 

In a rather different direction, \citet{dellamaestra2022nonparametric} have considered non-parametric estimation of the drift term in a MVSDE, based on continuous observation of the associated IPS over a fixed time horizon. More specifically, the authors obtained adaptive estimators based on the solution map of the Fokker--Planck equation, and proved their optimality in a minimax sense. We refer to \cite{lu2019nonparametric,lang2021identifiability,lu2021learning,comte2022nonparametric,yao2022meanfield,amorino2025polynomial,belomestny2024nonparametric,comte2024nonparametric,nickl2025bayesian,pavliotis2025fourierbased} for other recent contributions on non-parametric (and semi-parametric) inference for IPSs. 

In most of the aforementioned works, statistical inference is based on direct observation of all $N$ particles in the IPS, or observation of $N$ i.i.d. trajectories of the limiting MVSDE. In cases where the number of particles is very large, however, this may be unrealistic, or entail a prohibitive computational cost. In this context, several authors have also studied estimation based on observation of a single particle from the IPS \citep{pavliotis2022eigenfunction,pavliotis2024method,pavliotis2025fourierbased}, or else a single trajectory of the limiting MVSDE \citep{genoncatalot2022inference,genoncatalot2023parametric,comte2024nonparametric}.  In particular, \citet{genoncatalot2022inference} studied parametric inference for a specific class of one-dimensional MVSDEs with no potential term and a polynomial interaction term, based on continuous observation of a single sample path on the time interval $[0,2T]$ in the stationary regime. \citet{genoncatalot2023parametric} considered a more general family of MVSDEs, and proposed an alternative pseudo-likelihood approach based on a kernel estimator of the invariant density. Meanwhile, \citet{pavliotis2022eigenfunction} established the asymptotic properties (asymptotic unbiasedness, asymptotic normality) of the eigenfunction martingale estimator as $N\to\infty$, based on discrete observations of a single trajectory of the IPS on a fixed time interval $[0,T]$, while \citet{pavliotis2024method} established consistency of the method of moments estimator as $N\to\infty$ and $T\to\infty$. Finally, \citet{pavliotis2025fourierbased} considered semi-parametric estimation of the interaction kernel based on observation of a single particle using a generalised Fourier expansion.

\paragraph{Online Parameter Estimation in Continuous-Time Processes.}
Even for linear SDEs, the literature on `online' or `recursive' parameter estimation is somewhat sparse, with some notable recent exceptions \citep{sirignano2017stochastic,surace2019online,sirignano2020stochastic,bhudisaksang2021online,sharrock2022twotimescale,sharrock2023twotimescale,sharrock2022theory}. The problem of recursive estimation in continuous time stochastic processes was first analysed by \citet{gerencser1984continuoustime}; see also  \citet{levanony1994recursive,gerencser2009recursive} for some other early references. More recently, this problem was revisited by \citet{sirignano2017stochastic,sirignano2020stochastic}, who proposed an online method---`stochastic gradient descent in continuous time'---for statistical inference in fully observed diffusion processes, and analysed its asymptotic properties (e.g., almost sure convergence, asymptotic normality). Their method can be seen as a form of continuous-time stochastic gradient descent with respect to the asymptotic (or average) log-likelihood of the diffusion process. This approach has since been extended to partially observed diffusion processes \citep{surace2019online,sharrock2023twotimescale,sharrock2022joint}, jump diffusion processes \citep{bhudisaksang2021online}, nonlinear diffusion processes \citep{sharrock2023online},  and stochastic processes driven by coloured noise \citep{pavliotis2025filtered}.

More recently, a related problem has also been studied by \citet{wang2024continuous,wang2022forward}. In particular, \citet{wang2024continuous,wang2022forward} considered the task of minimising a function of the stationary distribution of a parameterised (linear, in the sense of McKean) diffusion process. They introduced an efficient continuous-time stochastic gradient descent algorithm for this task, which continuously updates the parameters of the model using an estimate for the gradient of the stationary distribution, which is simultaneously updated using forward propagation of the derivatives of the diffusion process. They establish the convergence of their `online forward propagation algorithm' to the stationary points of the objective function for a broad class of diffusion processes, and demonstrate its efficacy in a number of applications.

\subsection{Paper Organisation}
The remainder of this paper is organised as follows. In Section \ref{sec:prelims}, we introduce the problem of interest. In Section \ref{sec:methodology}, we present our main methodological contributions. In Section \ref{sec:theory}, we state our assumptions and our main theoretical results. In Section \ref{sec:numerics}, we present several numerical examples illustrating our proposed methodology. Finally, in Section \ref{sec:conclusion}, we provide some concluding remarks. Additional material, including proofs of our main results, is provided in the Appendices. 

\section{Preliminaries}
\label{sec:prelims} 

\subsection{Notation}
We use $\langle\cdot,\cdot\rangle$ and $\|\cdot\|$ to denote, respectively, the Euclidean inner product and the Euclidean norm on $\mathbb{R}^d$. For matrices and higher order tensors, we use $\|\cdot\|$ to denote the Frobenius norm. Finally, we write $\|\cdot\|_p$ to denote the $\ell^p$ norm. We write $\mathcal{P}(\mathbb{R}^d)$ for the collection of all probability measures on $\mathbb{R}^d$ and $\smash{\mathcal{P}_p(\mathbb{R}^d) = \{\mu\in\mathcal{P}(\mathbb{R}^d): \int_{\mathbb{R}^d} \|x\|^p \mu(\mathrm{d}x)<\infty\}}$ for the collection of all probability measures on $\mathbb{R}^d$ with finite $p^{\text{th}}$ moment. In a slight abuse of notation, we will occasionally write $\mu(\|\cdot\|^p)=\int_{\mathbb{R}^d} \|x\|^p\mu(\mathrm{d}x)$ for the $p^{\text{th}}$ moment of $\mu$.  For $p\geq 1$, and $\mu,\nu\in\mathcal{P}_p(\mathbb{R}^d)$, we will write $\mathsf{W}_{p}(\mu,\nu)$ to denote the Wasserstein distance between $\mu$ and $\nu$, viz.
\begin{equation}
\mathsf{W}_{p}(\mu,\nu) = \inf_{\pi\in\Pi(\mu,\nu)}\left[\int_{\mathbb{R}^d\times\mathbb{R}^d}\|x-y\|^p\pi(\mathrm{d}x,\mathrm{d}y)\right]^{\frac{1}{p}},
\end{equation}
where $\Pi(\mu,\nu)$ denotes the set of all couplings of $\mu$ and $\nu$. That is, the set of all probability measures on $\mathbb{R}^d\times\mathbb{R}^d$ with marginals $\mu$ and $\nu$.

\subsection{The Model}
We consider a weakly interacting particle system (IPS) on $\mathbb{R}^d$, parameterised by $\theta\in\Theta$, of the form
\begin{align}
\mathrm{d}x_t^{\theta,i,N}&= \bigg[-\nabla V(\theta,x_t^{\theta,i,N}) - \frac{1}{N}\sum_{j=1}^N \nabla W(\theta,x_t^{\theta,i,N}-x_t^{\theta,j,N})\bigg]\mathrm{d}t + \sigma\mathrm{d}w_t^{i,N}, \quad t\geq 0, \label{eq:IPS} 
\end{align}
where $V(\theta,\cdot):\mathbb{R}^d\rightarrow\mathbb{R}$, $W(\theta,\cdot):\mathbb{R}^d\rightarrow\mathbb{R}$ are continuously differentiable functions, $\sigma\in\mathbb{R}^{d\times d}$ is a constant, invertible matrix, $w^{i,N}:=(w_t^{i,N})_{t\geq0}$ are a set of independent $\mathbb{R}^d$-valued standard Brownian motions, and $\Theta\subseteq\mathbb{R}^p$ is an open set. We assume that $\smash{(x_0^{i,N})_{i=1}^N}$ are a set of i.i.d. $\mathbb{R}^d$-valued random variables, with common law $\mu_0$, independent of $(w_t^{i,N})_{t\geq 0}$. We will commonly refer to $V$ as the \emph{confinement potential}, and to $W$ as the \emph{interaction potential}. 

For notational convenience, it will also be useful for us to introduce the \emph{drift function} $b:\mathbb{R}^p\times\mathbb{R}^d\times\mathbb{R}^d\rightarrow\mathbb{R}^d$, defined according to $b(\theta,x,y):= -\nabla V(\theta,x) - \nabla W(\theta,x-y)$. Using this notation, the IPS can be written as 
\noeqref{eq:ips-b-drift}
\begin{align}
    \mathrm{d}x_t^{\theta,i,N} &= 
    \Big[\frac{1}{N}\sum_{j=1}^N b(\theta,x_t^{\theta,i,N},x_t^{\theta,j,N})\Big]\mathrm{d}t+ \sigma\mathrm{d}w_t^{i,N}. \label{eq:ips-b-drift}
\end{align}
We will assume, throughout this paper, that there exists a true, static parameter $\theta_0\in\Theta$ which generates observations $(x_t^{i,N})_{t\geq 0}:= (x_t^{\theta_0,i,N})_{t\geq 0}$ of the IPS \eqref{eq:IPS}. Thus, we operate under the exact modelling regime, and in our notation will suppress the dependence of the observed path on the true parameter $\theta_0$. 

\begin{remark}
It will sometimes be useful to view the IPS in \eqref{eq:IPS} as an SDE on $(\mathbb{R}^d)^N$. In particular, suppose we write $\boldsymbol{x}_t^{\theta,N} = (x_t^{\theta,1,N},\dots,{x}_t^{\theta,N,N})^{\top}\in(\mathbb{R}^d)^N$. Then this process is the solution of
\begin{equation}
    \label{eq:vector-sde}
    \mathrm{d}\boldsymbol{x}_t^{\theta,N} = {B}^N(\theta,\boldsymbol{x}_t^{\theta,N})\mathrm{d}t + \Sigma_N \mathrm{d}\boldsymbol{w}_t^N,
\end{equation}
where $\Sigma_N = \boldsymbol{I}_N\otimes \sigma$, $\boldsymbol{w}^N = (w^{1,N},\dots,w^{N,N})^{\top}$ is a $(\mathbb{R}^d)^N$-valued standard Brownian motion, and the function $B^N(\theta,\cdot):(\mathbb{R}^d)^N\rightarrow(\mathbb{R}^d)^N$ is defined according to the form $ B^N(\theta,\boldsymbol{x}^{N}) = (B^{i,N}(\theta,\boldsymbol{x}^{N}),\dots,B^{N,N}(\theta,\boldsymbol{x}^{N}))^{\top}$, where, for each $i\in[N]:=\{1,\dots,N\}$, the function 
 $B^{i,N}(\theta,\cdot):(\mathbb{R}^d)^N\rightarrow\mathbb{R}^d$ is defined according to $B^{i,N}(\theta,\boldsymbol{x}^{N}) = \frac{1}{N}\sum_{j=1}^N b(\theta,x^{i,N},x^{j,N})$.
 \end{remark}

\subsection{The Mean-Field Model}
We are interested in the regime where the number of particles $N\gg 1$ so that, under appropriate conditions which we will later impose \citep[e.g.,][]{malrieu2001logarithmic,cattiaux2008probabilistic}, any single particle in the IPS can be well approximated by the solution of the limiting MVSDE, viz.
\begin{align}
\mathrm{d}x_t^{\theta}&= \bigg[
-\nabla V(\theta,x_t^{\theta}) - \int_{\mathbb{R}^d}\nabla W(\theta,x_t^{\theta}-y)\mu_t^{\theta}(\mathrm{d}y)
\bigg]\mathrm{d}t + \sigma\mathrm{d}w_t,\quad t\geq 0, \label{eq:MVSDE} 
\end{align}
where $w=(w_t)_{t\geq 0}$ is a standard $\mathbb{R}^d$-valued Brownian motion, and $\mu_t^{\theta} =\mathcal{L}(x_t^{\theta})$ denotes the law of $x_t^{\theta}$. This phenomenon is known as the \emph{propagation of chaos} \citep{sznitman1991topics,chaintron2022propagation,chaintron2022propagationa}.

\subsection{Model Assumptions}
\label{sec:model-assumptions}
We are now ready to state some initial assumptions on the data generating process. We begin with the following integrability assumption on the initial condition.

\begin{assumption}
\label{assumption:moments}
    The initial law satisfies $\mu_0\in\mathcal{P}_k(\mathbb{R}^d)$ for all $k\in\mathbb{N}$.
\end{assumption}

\noindent Meanwhile, regarding the true drift function, i.e., the drift function evaluated at the true parameter, we impose one of the following two sets of assumptions. 

\begin{assumption}
\label{assumption:drift}
    The functions $x\mapsto V(\theta_0,x)$ and $x\mapsto W(\theta_0,x)$ are twice continuously differentiable. In addition, they satisfy one of the following two conditions:
    \begin{enumerate}
        \item[(a)(i)] $V(\theta_0,\cdot)$ satisfies the $C(A,\alpha)$ condition. That is, there exist $A>0$, $\alpha\geq 0$ such that, for all $0<\varepsilon<1$, and for all $x,y\in\mathbb{R}^d$, 
        \begin{equation}
            (x - y)\cdot (\nabla V(\theta_0,x) - \nabla V(\theta_0,y) )\geq A\varepsilon^{\alpha}\left(\|x-y\|^2 - \varepsilon^2\right).
        \end{equation}
        In addition, $\nabla V(\theta_0,\cdot)$ is locally Lipschitz with polynomial growth, and $\nabla^2 V(\theta_0,\cdot)$ has polynomial growth. That is, there exist $C,m>0$ such that, for all $x,y\in\mathbb{R}^d$,  
        \begin{align}
            \|\nabla V(\theta_0,x) - \nabla V(\theta_0,y)\| &\leq C\|x-y\|(1+ \|x\|^m + \|y\|^m), \\
            \|\nabla^2 V(\theta_0,x)\| &\leq C(1 + \|x\|^m).
        \end{align}
        \item[(a)(ii)] $W(\theta_0,\cdot)$ is symmetric  and convex, $\nabla W(\theta_0,\cdot)$ is locally Lipschitz with polynomial growth, and $\nabla^2 W(\theta_0,\cdot)$ has polynomial growth. That is, there exist $C,m>0$ such that, for all $x,y\in\mathbb{R}^d$, 
        \begin{align}
            \|\nabla W(\theta_0,x) - \nabla W(\theta_0,y)\| &\leq C\|x-y\|(1+ \|x\|^m + \|y\|^m), \\
            \|\nabla^2 W(\theta_0,x)\| &\leq C(1 + \|x\|^m).
        \end{align}
    or
        \item[(b)(i)] $V(\theta_0,\cdot)=0$.
        \item[(b)(ii)] $W(\theta_0,\cdot)$ is symmetric,  $\nabla W(\theta_0,\cdot)$ is locally Lipschitz with polynomial growth, and $\nabla^2 W(\theta_0,\cdot)$ has polynomial growth. That is, there exist $C,m>0$ such that, for all $x,y\in\mathbb{R}^d$, 
        \begin{align}
            \|\nabla W(\theta_0,x) - \nabla W(\theta_0,y)\| &\leq C\|x-y\|(1+ \|x\|^m + \|y\|^m), \\
            \|\nabla^2 W(\theta_0,x)\| &\leq C(1 + \|x\|^m).
        \end{align}
        In addition, $W(\theta_0,\cdot)$ satisfies the $C(A,\alpha)$ condition. That is, there exist $A>0$, $\alpha\geq 0$,  such that, for all $0<\varepsilon<1$, and for all $x,y\in\mathbb{R}^d$, 
        \begin{equation}
            (x - y)\cdot (\nabla W(\theta_0,x) - \nabla W(\theta_0,y) )\geq A\varepsilon^{\alpha}\left(\|x-y\|^2 - \varepsilon^2\right).
        \end{equation}
    \end{enumerate}
\end{assumption}

These conditions ensure the existence and uniqueness of a strong solution to the (observed) IPS and the associated MVSDE \citep[][Theorem 2.6]{cattiaux2008probabilistic}, uniform-in-time moment bounds \citep[][Proposition 2.7]{cattiaux2008probabilistic}, uniform-in-time propagation of chaos \citep[][Theorem 3.1]{cattiaux2008probabilistic}, and the existence of, and convergence to, a unique invariant measure \citep[][Theorem 4.1]{cattiaux2008probabilistic}.\footnote{
\label{remark:centering}Under Assumption \ref{assumption:drift}(b), the required results in fact hold for a projected or centered version of the observed IPS, defined by $\smash{y_t^{i,N} = x_t^{i,N} - \frac{1}{N}\sum_{j=1}^N x_t^{j,N}}$ \citep[][Section 2]{cattiaux2008probabilistic}. This still defines a diffusion process, but now on the hyperplane $\mathcal{M}_N = \{\boldsymbol{x}^N\in(\mathbb{R}^d)^N: \sum_{i=1}^N x^{i,N} = 0\}$. For notational convenience, in the remainder we will state all results using the notation $\smash{(x_t^{i,N})_{t\geq 0}^{i\in[N]}}$, with this understood to mean the centered process $\smash{(y_t^{i,N})_{t\geq 0}^{i\in[N]}}$ when Assumption~\ref{assumption:drift}(b) holds.
}\textsuperscript{,}\footnote{
Under Assumption \ref{assumption:drift}(b), the MVSDE in fact admits a one-parameter family of invariant distributions, the parameter corresponding to the mean of the invariant distribution. Thus, the invariant distribution is unique once its expectation has been specified. Throughout this paper, we will assume that the mean is fixed, and without loss of generality set it to zero \cite[see, e.g.,][]{cattiaux2008probabilistic}.
} We provide a precise statement of these results in Appendix~\ref{app:existing-results}.

\begin{remark}
\label{rem:poc}
    More generally, our theoretical analysis remains valid under any conditions which guarantee uniform-in-time moment bounds, uniform-in-time propagation of chaos, and convergence (at a sufficiently fast rate) to an invariant measure. See, e.g., \cite{malrieu2001logarithmic,malrieu2003convergence} for some classical assumptions, and \cite{carrillo2020longtime,delgadino2023phase,lacker2018strong,lacker2023hierarchies,lacker2023sharp} for some more recent results. We choose to adopt the conditions introduced in \citet{cattiaux2008probabilistic} as they are at once sufficiently general to hold for many models of practical interest (see Section \ref{sec:numerics}), whilst not being so general as to demand a significant additional notational overhead. 
\end{remark}

Finally, we will impose the following regularity assumptions on the confinement potential and the interaction potential.

\begin{assumption}
\label{assumption:drift-grad}
    For all $\theta\in\Theta$, the functions $x\mapsto \nabla V(\theta,x)$ and $x\mapsto \nabla W(\theta,x)$ are continuously differentiable. For all $x\in\mathbb{R}^d$, the functions $\theta\mapsto \nabla V(\theta,x)$ and $\theta\mapsto \nabla W(\theta,x)$ are three times continuously differentiable.  In addition, for $k=0,1,2,3$:
    \begin{itemize}
        \item[(i)] There exist $C_k,m_k>0$ such that, for all $\theta\in\Theta$, and for all $x,y\in\mathbb{R}^d$, 
        \begin{align}
        \| \partial_{\theta}^k \nabla V(\theta,x)- \partial_{\theta}^k \nabla V (\theta,y)\|
        &\le C_k\|x-y\|\big(1+\|x\|^{m_k}+\|y\|^{m_k}\big) \\
        \| \partial_{\theta}^k \nabla V(\theta,0)\| &\leq C_k \label{eq:ass-MV-lip}
        \end{align}
        \item[(ii)] 
        There exist $C_k,m_k>0$ such that, for all $\theta\in\Theta$, and for all $x,y\in\mathbb{R}^d$,
        \begin{align}
        \| \partial_{\theta}^k\nabla W(\theta,x)- \partial_{\theta}^k\nabla W(\theta,y)\|
        &\le C_k\|x-y\|\big(1+\|x\|^{m_k}+\|y\|^{m_k}\big) \\
        \| \partial_{\theta}^k \nabla W(\theta,0)\| &\leq C_k
        \end{align}
    \end{itemize}
\end{assumption}

\begin{remark}
    In the case where $\Theta\subset\mathbb{R}^p$, these assumptions are very mild, and will hold for essentially all of the models that we encounter in practice. On the other hand, when $\Theta=\mathbb{R}^p$, they are somewhat restrictive. Given this, it is worth noting that it is possible to establish our results under weaker assumptions, which additionally allow for $x\mapsto \nabla V(\theta,x)$ and $x\mapsto \nabla W(\theta,x)$ to grow linearly in $\|\theta\|$ \citep[see, e.g.,][]{sirignano2020stochastic}. 
\end{remark}

\subsection{The Likelihood Function}
\label{sec:log-lik}
We are interested in online inference for the unknown parameter $\theta_0$. We will perform this task based on recursive maximisation of an appropriate likelihood function. 

\subsubsection{The Log-Likelihood of the Interacting Particle System}
\label{sec:log-lik-ips}
Let $\mathbb{P}_t^{\theta,N}$ denote the probability measure induced by the trajectories $\smash{(x_s^{\theta,i,N})_{s\in[0,t]}^{i\in[N]}}$ of the IPS. Then, using Girsanov's Theorem \citep[e.g.,][]{oksendal2003stochastic},
we have a log-likelihood function given (up to an additive constant) by \citep[e.g.,][]{kasonga1990maximum,dellamaestra2023lan}
\begin{align}
\label{eq:ips-log-likelihood}
\mathcal{L}_t^{N}(\theta)
&=\int_0^t  \langle B^N(\theta,\boldsymbol{x}_s^N), (\Sigma_N\Sigma_N^{\top})^{-1}\mathrm{d}\boldsymbol{x}_s^N\rangle - \frac{1}{2}\int_0^t \|B^N(\theta,\boldsymbol{x}_s^N)\|_{\Sigma_N\Sigma_N^{\top}}^2\mathrm{d}s \\
&=\sum_{i=1}^N\Big[ \int_0^t \big \langle  B^{i,N}(\theta,\boldsymbol{x}_s^N), (\sigma\sigma^{\top})^{-1}\mathrm{d}x_s^{i,N}\big\rangle - \frac{1}{2} \int_0^t \| B^{i,N}(\theta,\boldsymbol{x}_s^N)\|_{\sigma\sigma^{\top}}^2\mathrm{d}s \Big] \label{ips:log-likelihood-2} \\
&=\sum_{i=1}^N\Big[ \int_0^t \big \langle  B(\theta,x_s^{i,N},\mu_s^{N}), (\sigma\sigma^{\top})^{-1}\mathrm{d}x_s^{i,N}\big\rangle - \frac{1}{2} \int_0^t \| B(\theta,x_s^{i,N},\mu_s^{N})\|_{\sigma\sigma^{\top}}^2\mathrm{d}s \Big] \label{ips:log-likelihood}
\end{align}
 where $\smash{\mu_s^N=\frac{1}{N}\sum_{j=1}^N \delta_{x_s^{j,N}}}$ denotes the empirical law of the observed IPS, $\smash{B(\theta,x,\mu) = \int b(\theta,x,y)\mu(\mathrm{d}y)}$, and $\smash{\|z\|_{\sigma\sigma^{\top}}^2 := \langle z, z\rangle_{\sigma\sigma^{\top}}:=\langle z,(\sigma \sigma^{\top})^{-1} z \rangle}$.\footnote{Strictly speaking, to use Girsanov's theorem to define the log-likelihood in \eqref{eq:ips-log-likelihood} we must assume that, for all $\theta\in\Theta$, the likelihood ratio process
$\smash{Z_s^\theta:={\mathrm{d}\mathbb{P}_t^{\theta,N}}/{\mathrm{d}\mathbb{W}_t^N}\big|_{\mathcal{F}_s}}$,
$s\in[0,t]$ exists and is a martingale under $\mathbb{W}_t^N$, the unique law (on path space) of the driftless system $\mathrm{d}\boldsymbol{x}_s^N=\Sigma_N\,\mathrm{d}\boldsymbol{w}_s^N$, with the same initial condition as the original process. In the absence of this assumption, we can simply view the function in \eqref{eq:ips-log-likelihood} as a contrast function, and proceed in the same way.}\textsuperscript{,}\footnote{Under Assumption~\ref{assumption:drift}(b), we must consider the log-likelihood associated with the centered process $\smash{(y_s^{\theta,i,N})_{s\in[0,t]}^{i\in[N]}}$. This requires additional care, since the diffusion coefficient $\tilde\Sigma_N$ is now singular on $(\mathbb{R}^d)^N$. Nonetheless, with the addition of a small addendum to Assumption~\ref{assumption:drift}(b), one can show that the log-likelihood for the centered IPS takes precisely the same functional form as for the non-centered IPS. Thus, our subsequent methodological developments are applicable in this case. We discuss this further in Appendix~\ref{app:centered-ips}.}

 We are first interested in the asymptotic behaviour of this function as the time horizon $t\rightarrow\infty$, for a fixed and finite number of particles $N\in\mathbb{N}$. This is the subject of the following proposition.

\begin{proposition}
\label{prop:ips-likelihood-t-limit}
    Suppose that Assumption \ref{assumption:moments}, Assumption \ref{assumption:drift}, and Assumption~\ref{assumption:drift-grad} (with $k=0$) hold. Then, as $t\rightarrow\infty$, it holds that
    \noeqref{eq:IPS-log-likelihood-large-t-a}
    \begin{align}
    \label{eq:IPS-log-likelihood-large-t-a}
        \frac{1}{t}[\mathcal{L}_t^{N}(\theta) - \mathcal{L}_t^{N}(\theta_0)] &\xrightarrow[L^1]{\mathrm{a.s.}} -\frac{1}{2}\int_{(\mathbb{R}^d)^{N}} \Big[ \sum_{i=1}^N  \left|\left|B^{i,N}(\theta,\boldsymbol{x}^N) - B^{i,N}(\theta_0,\boldsymbol{x}^N)\right|\right|_{\sigma\sigma^{\top}}^2 \Big] \pi_{\theta_0}^N(\mathrm{d}\boldsymbol{x}^N).
    \end{align}
    where $\pi_{\theta_0}^N\in\mathcal{P}((\mathbb{R}^d)^N)$ denotes the unique invariant measure of the IPS evaluated at the true parameter $\theta_0$.
\end{proposition}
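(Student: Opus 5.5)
We begin by substituting the observed dynamics into the log-likelihood. Under $\theta_0$, for each $i\in[N]$, $\mathrm{d}x_s^{i,N} = B^{i,N}(\theta_0,\boldsymbol{x}_s^N)\mathrm{d}s + \sigma\mathrm{d}w_s^{i,N}$. Inserting this into \eqref{ips:log-likelihood-2} and completing the square gives the decomposition
\begin{align}
\mathcal{L}_t^{N}(\theta) - \mathcal{L}_t^{N}(\theta_0) &= -\frac{1}{2}\sum_{i=1}^N\int_0^t \big\|B^{i,N}(\theta,\boldsymbol{x}_s^N) - B^{i,N}(\theta_0,\boldsymbol{x}_s^N)\big\|_{\sigma\sigma^{\top}}^2\mathrm{d}s \\
&\quad + \sum_{i=1}^N \int_0^t \big\langle B^{i,N}(\theta,\boldsymbol{x}_s^N) - B^{i,N}(\theta_0,\boldsymbol{x}_s^N), (\sigma\sigma^{\top})^{-1}\sigma\,\mathrm{d}w_s^{i,N}\big\rangle =: -\tfrac{1}{2} I_t + M_t.
\end{align}
This is a purely algebraic identity. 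It then suffices to show that $\frac{1}{t}I_t$ converges a.s. and in $L^1$ to the stated integral against $\pi^N_{\theta_0}$, and that $\frac{1}{t}M_t\to 0$ a.s. and in $L^1$.

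For the drift term, set $f(\boldsymbol{x}^N) = \sum_i \|B^{i,N}(\theta,\boldsymbol{x}^N) - B^{i,N}(\theta_0,\boldsymbol{x}^N)\|^2_{\sigma\sigma^\top}$. Assumption~\ref{assumption:drift-grad} with $k=0$ gives $\|\nabla V(\theta,x)\| + \|\nabla W(\theta,x)\|\le C(1+\|x\|^{m_0+1})$ uniformly in $\theta$. Hence $f$ is continuous with polynomial growth. The uniform-in-time moment bounds and the existence of, and convergence to, a unique invariant measure $\pi^N_{\theta_0}$ for the IPS (the results of \citet{cattiaux2008probabilistic} recalled in Appendix~\ref{app:existing-results}, which hold under Assumptions~\ref{assumption:moments}--\ref{assumption:drift}) give ergodicity of the $(\mathbb{R}^d)^N$-valued diffusion \eqref{eq:vector-sde}. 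The nondegenerate noise gives uniqueness of $\pi^N_{\theta_0}$, and $f\in L^1(\pi^N_{\theta_0})$ because all moments of $\pi^N_{\theta_0}$ are finite. The ergodic theorem for Markov processes then yields $\frac{1}{t}\int_0^t f(\boldsymbol{x}_s^N)\mathrm{d}s\to\pi^N_{\theta_0}(f)$ a.s. for $\pi^N_{\theta_0}$-almost every start. To pass to an arbitrary initial law $\mu_0^{\otimes N}$, we use the convergence of the law of $\boldsymbol{x}_t^N$ to $\pi^N_{\theta_0}$, for instance via a synchronous coupling with a stationary copy that contracts in $\mathsf{W}_p$ (the $C(A,\alpha)$ condition), together with the polynomial-growth local Lipschitz bound on $f$.

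For $L^1$ convergence of the drift term, the uniform moment bounds give $\sup_s\mathbb{E}[f(\boldsymbol{x}_s^N)^2]<\infty$. Hence $\frac1t\int_0^t f(\boldsymbol{x}^N_s)\mathrm{d}s$ is bounded in $L^2$, so it is uniformly integrable, and a.s. convergence upgrades to $L^1$ convergence.

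For the martingale term, $M_t$ is a continuous local martingale with quadratic variation $\langle M\rangle_t = \int_0^t g(\boldsymbol{x}_s^N)\mathrm{d}s$, where $g$ again has polynomial growth. Since $\sup_s\mathbb{E}g(\boldsymbol{x}_s^N)<\infty$, $M$ is a true martingale and $\mathbb{E}[M_t^2]\le Ct$. This gives $\mathbb{E}|M_t/t|\le Ct^{-1/2}\to0$, which is the $L^1$ statement. For the a.s. statement, the ergodic argument above gives $\langle M\rangle_t/t\to\pi^N_{\theta_0}(g)<\infty$ a.s. On the event $\langle M\rangle_\infty<\infty$, $M_t$ converges and so $M_t/t\to0$. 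On the complementary event, the strong law of large numbers for martingales gives $M_t/\langle M\rangle_t\to0$, and combined with the linear growth of $\langle M\rangle_t$ this yields $M_t/t\to0$ a.s.

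The main obstacle is the a.s. ergodic theorem started from a non-stationary initial law, particularly under Assumption~\ref{assumption:drift}(b), where the argument must be carried out for the centered process on the hyperplane $\mathcal{M}_N$. I would handle this by coupling with a stationary copy and using exponential or polynomial $\mathsf{W}_2$ contraction, or alternatively via Harris recurrence, which follows from hypoellipticity and the Lyapunov function $\|\boldsymbol{x}\|^2$ provided by the moment bounds.
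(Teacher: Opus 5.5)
Your proposal is correct and follows essentially the same route as the paper. Both use the same algebraic decomposition into $-\tfrac12\sum_i\int_0^t\|\Delta B^{i,N}\|^2_{\sigma\sigma^{\top}}\,\mathrm{d}s$ plus a martingale, the ergodic theorem for the drift term, the strong law of large numbers for continuous local martingales (via convergence of $\langle M\rangle_t/t$) for the a.s.\ martingale limit, uniform integrability from uniform-in-time moment bounds to upgrade a.s.\ to $\mathrm{L}^1$, and a bound $\mathbb{E}|M_t/t|\le Kt^{-1/2}$ (BDG in the paper, It\^o isometry in yours).

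The only real difference is that you explicitly handle the non-stationary initial law $\mu_0^{\otimes N}$, via coupling or Harris recurrence. The paper simply invokes the ergodic theorem of Khasminskii (Theorem 4.2), which already covers arbitrary initial conditions. If you take the coupling route, note that it works for the a.s.\ statement because the synchronous-coupling contraction is pathwise (the additive noise cancels in the difference), not merely because of a $\mathsf{W}_2$ contraction.
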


\begin{proof}
    See Appendix \ref{app:additional-proofs-log-lik-ips}.
\end{proof}

Meanwhile, the asymptotic behaviour of these functions as the number of particles $N\rightarrow\infty$, for a fixed and finite time horizon $t\in\mathbb{R}_{+}$, is established in the following proposition. 

\begin{proposition}
\label{prop:ips-likelihood-n-limit}
    Suppose that Assumption \ref{assumption:moments}, Assumption \ref{assumption:drift}, and Assumption~\ref{assumption:drift-grad} (with $k=0$) hold. Then, as $N\rightarrow\infty$, it holds that
    \noeqref{eq:IPS-log-likelihood-large-N-a}
    \begin{align}
        \frac{1}{N}\left[\mathcal{L}_t^N(\theta) - \mathcal{L}_t^N(\theta_0)\right] 
        &\stackrel{L^1}{\longrightarrow}-\frac{1}{2} \int_0^t \left[\int_{\mathbb{R}^d} \| B(\theta,x,\mu_s^{\theta_0}) - B(\theta_0,x,\mu_s^{\theta_0})\|_{\sigma\sigma^{\top}}^2\mu_s^{\theta_0}(\mathrm{d}x)\right]\mathrm{d}s
        \label{eq:IPS-log-likelihood-large-N-a}
    \end{align}
    where $\mu_s^{\theta_0} = \mathrm{Law}(x_s^{\theta_0})\in\mathcal{P}(\mathbb{R}^d)$ denotes the law of the MVSDE evaluated at the true parameter $\theta_0$.
\end{proposition}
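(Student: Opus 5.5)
Substituting the observed dynamics $\mathrm{d}x_s^{i,N} = B(\theta_0,x_s^{i,N},\mu_s^N)\,\mathrm{d}s + \sigma\,\mathrm{d}w_s^{i,N}$ into \eqref{ips:log-likelihood} and completing the square gives, for each $t$,
\begin{align}
\frac{1}{N}\left[\mathcal{L}_t^N(\theta) - \mathcal{L}_t^N(\theta_0)\right]
&= -\frac{1}{2N}\sum_{i=1}^N \int_0^t \|\Delta B(\theta,x_s^{i,N},\mu_s^N)\|_{\sigma\sigma^{\top}}^2\,\mathrm{d}s + \frac{1}{N}\sum_{i=1}^N \int_0^t \big\langle \Delta B(\theta,x_s^{i,N},\mu_s^N), (\sigma\sigma^{\top})^{-1}\sigma\,\mathrm{d}w_s^{i,N}\big\rangle,
\end{align}
where $\Delta B(\theta,x,\mu) := B(\theta,x,\mu)-B(\theta_0,x,\mu)$. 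I will show that the martingale term tends to zero in $L^1$ and that the Lebesgue integral term tends in $L^1$ to the stated limit.

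\textbf{Martingale term.} Since the $w^{i,N}$ are independent, the Itô isometry gives
\[
\mathbb{E}[M_t^2] = \frac{1}{N^2}\sum_{i=1}^N\int_0^t \mathbb{E}\,\|\sigma^{\top}(\sigma\sigma^{\top})^{-1}\Delta B(\theta,x_s^{i,N},\mu_s^N)\|^2\,\mathrm{d}s .
\]
By Assumption~\ref{assumption:drift-grad} ($k=0$), $\|b(\theta,x,y)\|\le C(1+\|x\|^{m+1}+\|y\|^{m+1})$, so $\|\Delta B\|^2$ is bounded by a polynomial in $\|x_s^{i,N}\|$ and in empirical moments $\frac1N\sum_j\|x_s^{j,N}\|^q$. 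The uniform-in-time moment bounds (Assumption~\ref{assumption:moments}, \ref{assumption:drift}, and the results in Appendix~\ref{app:existing-results}), uniform also in $N$, then give $\mathbb{E}[M_t^2]\le Ct/N\to0$.

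\textbf{Drift term.} I introduce i.i.d. copies $\bar x^{i}$ of the MVSDE at $\theta_0$, driven by the same $w^{i,N}$ and started at the same $x_0^{i,N}$. This is the standard synchronous coupling, under which propagation of chaos gives $\sup_{s\le t}\mathbb{E}\|x_s^{i,N}-\bar x_s^i\|^2\to0$; on the finite interval even the non-uniform version suffices. Writing $F(x,\mu)=\|\Delta B(\theta,x,\mu)\|_{\sigma\sigma^\top}^2$, I compare three quantities.
\begin{enumerate}
\item[(i)] $\frac1N\sum_i F(x_s^{i,N},\mu_s^N)$ versus $\frac1N\sum_i F(\bar x_s^i,\bar\mu_s^N)$, where $\bar\mu_s^N$ is the empirical law of the $\bar x^i$.
\item[(ii)] $\frac1N\sum_i F(\bar x_s^i,\bar\mu_s^N)$ versus $\frac1N\sum_i F(\bar x_s^i,\mu_s^{\theta_0})$.
\item[(iii)] $\frac1N\sum_i F(\bar x_s^i,\mu_s^{\theta_0})$ versus $\int F(x,\mu_s^{\theta_0})\,\mu_s^{\theta_0}(\mathrm{d}x)$.
\end{enumerate}

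For (i), I use $|F(x,\mu)-F(x',\mu')|\le |\Delta B-\Delta B'|\,(|\Delta B|+|\Delta B'|)$ together with the local Lipschitz property of $b$ with polynomial weights. Cauchy--Schwarz then bounds the error by $C\big(\mathbb{E}\|x_s^{i,N}-\bar x_s^i\|^2\big)^{1/2}$ times moment factors, and this tends to zero.

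For (ii), $\Delta B(\theta,\bar x^i,\bar\mu^N) - \Delta B(\theta,\bar x^i,\mu)$ is an average of the terms $\Delta b(\bar x^i,\bar x^j) - \int\Delta b(\bar x^i,y)\,\mu(\mathrm{d}y)$. These are conditionally centred given $\bar x^i$ for $j\neq i$, so the second moment of the difference is $O(1/N)$ by the usual variance computation; the $j=i$ term contributes $O(1/N)$ by the moment bounds.

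For (iii), the summands are i.i.d. with finite variance, so the $L^2$ law of large numbers gives an error of $O(N^{-1/2})$.

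All three bounds are uniform in $s\in[0,t]$, so integrating over $s$ and applying Fubini yields $L^1$ convergence of the drift term to the claimed limit.

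\textbf{Main obstacle.} I expect step (i) to be the hard part: controlling the non-globally-Lipschitz, polynomially growing integrand under the coupling. It requires a careful Hölder split, with high moments (available by Assumption~\ref{assumption:moments}) absorbing the polynomial weights, and only the second-moment coupling error going to zero. Under Assumption~\ref{assumption:drift}(b), the same argument must be run for the centered process, noting that $\Delta B$ depends only on differences $x-y$ in the interaction term.
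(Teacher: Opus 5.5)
Your proof is correct and follows the paper's skeleton. You use the same drift/martingale split and the same It\^o-isometry bound $Kt/N$ for the martingale term, via orthogonality across $i$. For the drift term you use the synchronous coupling with propagation of chaos, followed by an i.i.d.\ variance bound; your step (iii) is exactly the paper's second term.

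The genuine difference is how you replace the empirical measure of the i.i.d.\ copies by $\mu_s^{\theta_0}$.
\begin{itemize}
\item The paper merges your steps (i) and (ii) into one term and controls it through the $\mathsf{W}_2$-local-Lipschitz property of $L$ (Lemma~\ref{lem:Phi-L1-stability}). It splits $\mathsf{W}_2(\mu_s^N,\mu_s)\le \mathsf{W}_2(\mu_s^N,\mu_s^{[N]})+\mathsf{W}_2(\mu_s^{[N]},\mu_s)$ and bounds the last piece with the empirical-measure estimate of \citet{fournier2015rate}. This gives the dimension-dependent rate $\rho(N)$.
\item You instead use that $\mu\mapsto B(\theta,x,\mu)$ is linear. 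The discrepancy is then an average of terms that are conditionally centred given $\bar x^i$, and an elementary variance computation gives an $O(N^{-1/2})$ error independent of $d$, with no optimal-transport input.
\end{itemize}

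For this qualitative limit both routes suffice. The paper's route is more general: it only uses Wasserstein-Lipschitz dependence on the measure, so it would survive nonlinear measure dependence, and it reuses the machinery behind the $\rho(N)$ terms elsewhere. Yours is more elementary and sharper. The same centring argument also applies to the bilinear-in-measure quantities $G$ and $H$ used later, where it would give $N^{-1/2}$ in place of $\rho(N)$.
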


\begin{proof}
    See Appendix \ref{app:additional-proofs-log-lik-ips}.
\end{proof}

Finally, we can characterise the behaviour of the log-likelihood of the IPS in the joint limit as $N\rightarrow\infty$ and $t\rightarrow\infty$. 

\begin{corollary}
\label{cor:ips-likelihood-n-t-limit}
    Suppose that Assumption \ref{assumption:moments}, Assumption \ref{assumption:drift}, and Assumption~\ref{assumption:drift-grad} (with $k=0$) hold. Then, as $N\rightarrow\infty$ and then $t\rightarrow\infty$, it holds that
    \begin{align}
         \frac{1}{Nt} \left[\mathcal{L}_t^N(\theta) - \mathcal{L}_t^{N}(\theta_0)\right]
         &\stackrel{L^1}{\longrightarrow} -\frac{1}{2}\int_{\mathbb{R}^d} \|B(\theta,x,\pi_{\theta_0}) - B(\theta_0,x,\pi_{\theta_0})\|_{\sigma\sigma^{\top}}^2\pi_{\theta_0}(\mathrm{d}x) \label{eq:asymptotic-ll-IPS}
    \end{align}
    where $\pi_{\theta_0}\in\mathcal{P}(\mathbb{R}^d)$ denotes the unique invariant measure of the MVSDE evaluated at the true parameter $\theta_0$.
\end{corollary}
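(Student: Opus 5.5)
The plan is to combine Proposition~\ref{prop:ips-likelihood-n-limit} with the convergence of the MVSDE law to its invariant measure. The iterated limit (first $N\to\infty$ at fixed $t$, then $t\to\infty$) reduces the question to a deterministic Ces\`aro-type limit. Define
\begin{equation}
g_\theta(s) := \int_{\mathbb{R}^d} \| B(\theta,x,\mu_s^{\theta_0}) - B(\theta_0,x,\mu_s^{\theta_0})\|_{\sigma\sigma^{\top}}^2\mu_s^{\theta_0}(\mathrm{d}x),
\end{equation}
and define $g_\theta(\infty)$ by the same expression with $\mu_s^{\theta_0}$ replaced by $\pi_{\theta_0}$.

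\textbf{Step 1 (inner limit).} Proposition~\ref{prop:ips-likelihood-n-limit}, divided by $t$, gives for each fixed $t$
\begin{equation}
\frac{1}{Nt}\big[\mathcal{L}_t^N(\theta) - \mathcal{L}_t^N(\theta_0)\big] \stackrel{L^1}{\longrightarrow} -\frac{1}{2t}\int_0^t g_\theta(s)\,\mathrm{d}s \qquad \text{as } N\to\infty.
\end{equation}
It then remains to show the deterministic statement $\frac{1}{t}\int_0^t g_\theta(s)\,\mathrm{d}s \to g_\theta(\infty)$. Interpreting the iterated $L^1$ limit as convergence of the limiting (deterministic) objects, this statement suffices.

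\textbf{Step 2 (pointwise convergence $g_\theta(s)\to g_\theta(\infty)$).} Write $\Delta(x,y) = b(\theta,x,y)-b(\theta_0,x,y)$, so that $B(\theta,x,\mu)-B(\theta_0,x,\mu) = \int \Delta(x,y)\mu(\mathrm{d}y)$. By Assumption~\ref{assumption:drift-grad} with $k=0$, $\Delta$ is locally Lipschitz with polynomial growth in $(x,y)$. Hence $g_\theta(s)$ is an integral of a polynomially growing, locally Lipschitz function against $\mu_s^{\theta_0}\otimes\mu_s^{\theta_0}\otimes\mu_s^{\theta_0}$ (expand the square). Take a coupling $(X_s,Y)$ realising $\mathsf{W}_2(\mu_s^{\theta_0},\pi_{\theta_0})$, together with independent copies of it. The difference $|g_\theta(s)-g_\theta(\infty)|$ is then bounded by Cauchy--Schwarz by
\begin{equation}
C\,\mathsf{W}_2(\mu_s^{\theta_0},\pi_{\theta_0})\big(1+\sup_{s}\mu_s^{\theta_0}(\|\cdot\|^{q})+\pi_{\theta_0}(\|\cdot\|^q)\big)^{1/2}
\end{equation}
for some $q$. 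The uniform-in-time moment bounds and the convergence $\mathsf{W}_2(\mu_s^{\theta_0},\pi_{\theta_0})\to0$ from \citet{cattiaux2008probabilistic} (Appendix~\ref{app:existing-results}) then give $g_\theta(s)\to g_\theta(\infty)$. Under Assumption~\ref{assumption:drift}(b) the same argument applies to the centered process with zero-mean invariant measure.

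\textbf{Step 3 (Ces\`aro).} The function $g_\theta$ is bounded uniformly in $s$ by the moment bounds and converges as $s\to\infty$. The Ces\`aro average of a function with a limit converges to that limit, which yields \eqref{eq:asymptotic-ll-IPS}.

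The main obstacle is Step 2, namely controlling the cubic dependence on $\mu_s^{\theta_0}$ with polynomially growing integrands. The key idea is to take the coupling in $\mathsf{W}_2$, or in $\mathsf{W}_p$ for larger $p$ if needed, and pay for the polynomial weights with uniform higher moments, which Assumption~\ref{assumption:moments} guarantees for all orders. If the cited convergence is only in $\mathsf{W}_1$ or $\mathsf{W}_2$, interpolation with the uniform higher moments upgrades it to the required form.
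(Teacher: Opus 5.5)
Your proposal is correct and follows essentially the same route as the paper: Proposition~\ref{prop:ips-likelihood-n-limit} handles the inner $N\to\infty$ limit, and then $\mathsf{W}_2(\mu_s^{\theta_0},\pi_{\theta_0})\to 0$ (Theorem~\ref{thm:invariant-distribution-2}), uniform-in-time moment bounds, a coupling/Cauchy--Schwarz estimate and Ces\`aro's theorem finish the argument. The only difference is cosmetic: you expand the square and treat all three arguments at once with a product of optimal couplings, whereas the paper splits the error into the change of measure inside $B$ (its $I_t^{(1)}$) and the change of the outer integrating measure (its $I_t^{(2)}$), each bounded by $K\,\mathsf{W}_2(\mu_s^{\theta_0},\pi_{\theta_0})$.
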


\begin{proof}
    See Appendix \ref{app:additional-proofs-log-lik-ips}
\end{proof}

\subsubsection{The Log-Likelihood of the McKean--Vlasov SDE}
\label{sec:log-lik-mvsde}
Let $\mathbb{P}_t^{\theta}$ denote the probability measure induced by the solution $(x_s^{\theta})_{s\in[0,t]}$ of the MVSDE \eqref{eq:MVSDE}. Then, once more appealing to Girsanov's Theorem, we have a log-likelihood function given by \citep[e.g.,][Section 2.3]{dellamaestra2023lan}
\begin{equation}
    \mathcal{L}_t(\theta) = \int_0^t \big\langle B(\theta,x_s,\mu_s^{\theta}), (\sigma\sigma^{\top})^{-1}\mathrm{d}x_s\big\rangle - \frac{1}{2}\int_0^t \big\|B(\theta,x_s,\mu_s^{\theta})\big\|^2_{\sigma\sigma^{\top}}\mathrm{d}s \label{eq:MVSDE-log-likelihood}
\end{equation}
where $(x_s)_{s\geq 0}:=(x_s^{\theta_0})_{s\geq 0}$ denotes the path of the MVSDE at the true parameter $\theta_0$. In this case, we are just interested in the asymptotic behaviour of this function as the time horizon $t\rightarrow\infty$. In particular, we have the following result.

\begin{proposition}
\label{prop:mvsde-likelihood-t-limit}
    Suppose that Assumption \ref{assumption:moments}, Assumption \ref{assumption:drift}, and Assumption~\ref{assumption:drift-grad} (with $k=0$) hold.\footnote{For this result, we in fact require that Assumption~\ref{assumption:drift} holds for all $\theta\in\Theta$, and not just for $\theta=\theta_0$. This ensures the existence of the family of invariant measures $(\pi_{\theta})_{\theta\in\Theta}$.} Then, as $t\rightarrow\infty$, it holds that
    \begin{align}
        \frac{1}{t}\left[\mathcal{L}_t(\theta) - \mathcal{L}_t(\theta_0)\right]
        &\stackrel{L^1}{\longrightarrow} -\frac{1}{2}\int_{\mathbb{R}^d} \| B(\theta,x,\pi_{\theta}) - B(\theta_0,x,\pi_{\theta_0})\|_{\sigma\sigma^{\top}}^2\pi_{\theta_0}(\mathrm{d}x). \label{eq:asymptotic-ll-mvsde}
    \end{align}
    where $\pi_{\theta}, \pi_{\theta_0}\in\mathcal{P}(\mathbb{R}^d)$ denote the unique invariant measures of the MVSDE, evaluated at the parameter $\theta$ and the true parameter $\theta_0$, respectively.
\end{proposition}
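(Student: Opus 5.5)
Write $\mathrm{d}x_s = B(\theta_0,x_s,\mu_s^{\theta_0})\mathrm{d}s + \sigma\mathrm{d}w_s$. Substituting into \eqref{eq:MVSDE-log-likelihood} and completing the square, I will reduce $\mathcal{L}_t(\theta)-\mathcal{L}_t(\theta_0)$ to a deterministic-integrand time average plus a martingale:
\begin{equation}
\mathcal{L}_t(\theta)-\mathcal{L}_t(\theta_0) = -\frac{1}{2}\int_0^t \|\Delta_s(\theta)\|_{\sigma\sigma^{\top}}^2\mathrm{d}s + \int_0^t \langle \Delta_s(\theta), (\sigma\sigma^{\top})^{-1}\sigma\mathrm{d}w_s\rangle,
\end{equation}
where $\Delta_s(\theta) := B(\theta,x_s,\mu_s^{\theta}) - B(\theta_0,x_s,\mu_s^{\theta_0})$. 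The plan is then to show that $\frac1t$ times the martingale term vanishes in $L^1$, and that $\frac1t\int_0^t \|\Delta_s(\theta)\|^2\mathrm{d}s$ converges in $L^1$ to $\int \|B(\theta,x,\pi_\theta)-B(\theta_0,x,\pi_{\theta_0})\|^2_{\sigma\sigma^\top}\pi_{\theta_0}(\mathrm{d}x)$.

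For the martingale term, I will use the It\^o isometry: its second moment is $\mathbb{E}\int_0^t \|\sigma^{\top}(\sigma\sigma^{\top})^{-1}\Delta_s\|^2\mathrm{d}s$. By Assumption~\ref{assumption:drift-grad} with $k=0$, $\nabla V(\theta,\cdot)$ and $\nabla W(\theta,\cdot)$ have polynomial growth. So $\|\Delta_s\|\le C(1+\|x_s\|^{m+1}+\mu_s^{\theta}(\|\cdot\|^{m+1})+\mu_s^{\theta_0}(\|\cdot\|^{m+1}))$. The uniform-in-time moment bounds (Appendix~\ref{app:existing-results}, valid for $\mu^\theta$ as well by the footnote's assumption that Assumption~\ref{assumption:drift} holds for all $\theta$, and using Assumption~\ref{assumption:moments}) then give $\sup_s\mathbb{E}\|\Delta_s\|^2<\infty$. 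Hence the $L^2$ norm of the martingale is $O(\sqrt t)$, and dividing by $t$ it tends to zero in $L^2$, hence in $L^1$.

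For the drift term, I will introduce the stationary comparison integrand $f(x) := \|B(\theta,x,\pi_\theta)-B(\theta_0,x,\pi_{\theta_0})\|^2_{\sigma\sigma^\top}$ and split
\begin{equation}
\frac1t\int_0^t \|\Delta_s\|^2\mathrm{d}s - \pi_{\theta_0}(f) = \frac1t\int_0^t \big(\|\Delta_s\|^2 - f(x_s)\big)\mathrm{d}s + \Big(\frac1t\int_0^t f(x_s)\mathrm{d}s - \pi_{\theta_0}(f)\Big).
\end{equation}

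\textbf{First piece.} Since $b$ is locally Lipschitz in $y$ with polynomial growth, for any coupling one has
\begin{equation}
\|B(\theta,x,\mu)-B(\theta,x,\nu)\| \le C(1+\|x\|^{m})\big(1+\mu(\|\cdot\|^{2m})+\nu(\|\cdot\|^{2m})\big)^{1/2}\mathsf{W}_2(\mu,\nu),
\end{equation}
by Cauchy--Schwarz. Combined with $a^2-b^2=(a-b)(a+b)$, Cauchy--Schwarz in expectation and the moment bounds, this gives $\mathbb{E}\,\big|\|\Delta_s\|^2-f(x_s)\big| \le C(\mathsf{W}_2(\mu_s^\theta,\pi_\theta)+\mathsf{W}_2(\mu_s^{\theta_0},\pi_{\theta_0}))$. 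That bound tends to zero by the convergence to the invariant measure \citep[][Theorem 4.1]{cattiaux2008probabilistic}, and its Ces\`aro mean then also tends to zero.

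\textbf{Second piece.} This is an ergodic theorem for a single path of the (time-inhomogeneous) MVSDE with the polynomially growing test function $f$, and it is the main obstacle, since $x_s$ is not a Markov process with stationary transitions. My plan is to couple $x_s$ with the linear diffusion $\bar{x}_s$ driven by the same Brownian motion, with frozen drift $B(\theta_0,\cdot,\pi_{\theta_0})$ and started from $\pi_{\theta_0}$. This is a stationary ergodic time-homogeneous diffusion, so Birkhoff's theorem gives $\frac1t\int_0^t f(\bar x_s)\mathrm{d}s\to\pi_{\theta_0}(f)$ a.s. The convergence also holds in $L^1$ by uniform integrability from the moment bounds.

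The difference $\mathbb{E}\|x_s-\bar x_s\|^2$ I will control by It\^o's formula, using the $C(A,\alpha)$ (or convexity) condition of Assumption~\ref{assumption:drift} together with the $\mathsf{W}_2(\mu_s^{\theta_0},\pi_{\theta_0})\to0$ term, in the same way as the synchronous-coupling arguments in \citet{cattiaux2008probabilistic}. This should yield $\mathbb{E}\|x_s-\bar x_s\|^2\to0$. The polynomial-Lipschitz property of $f$ then transfers the Ces\`aro convergence to $x_s$, which completes the proof.
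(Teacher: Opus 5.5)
Your proposal is correct and follows the paper's proof essentially step for step. It uses the same drift/martingale split and the same It\^o-isometry bound for the martingale. For the drift it uses the same three-way comparison: replace $\mu_s^{\theta},\mu_s^{\theta_0}$ by $\pi_\theta,\pi_{\theta_0}$, then couple $x_s$ synchronously with the stationary frozen-drift diffusion $\bar x_s$, then apply the ergodic theorem to $\bar x_s$. The only difference is that you re-derive $\mathbb{E}\|x_s-\bar x_s\|^2\to0$ via It\^o's formula, whereas the paper simply cites the synchronous-coupling bound of Theorem~\ref{thm:invariant-distribution-2} with $(x_0,\bar x_0)$ optimally coupled.
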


\begin{proof}
See Appendix \ref{app:additional-proofs-log-lik-mvsde}.
\end{proof}

\begin{remark}
Curiously, the asymptotic log-likelihood of the IPS in the joint limit as $N\rightarrow\infty$ and $t\rightarrow\infty$, c.f. \eqref{eq:asymptotic-ll-IPS}, does not coincide with the asymptotic log-likelihood of the MVSDE as $t\rightarrow\infty$, c.f. \eqref{eq:asymptotic-ll-mvsde}. This being said, the two functions do coincide (and are both maximised) at the true parameter $\theta_0$. This disparity is perhaps a little surprising. Indeed, under the assumption of uniform-in-time propagation of chaos, we know that the dynamics of the IPS will converge to the dynamics of the McKean--Vlasov process as $N\rightarrow\infty$ and $t\rightarrow\infty$. The discrepancy arises because the log-likelihood of the IPS uses the empirical distribution of the observed system $\mu_t^N$, which converges to $\pi_{\theta_0}$, while the log-likelihood of the MVSDE uses the model distribution $\mu_t^{\theta}$, which converges to $\pi_{\theta}$. 
\end{remark}

\section{Methodology}
\label{sec:methodology}

Our goal is to recursively estimate the true parameter $\theta_{0}$ in real time, using the continuous stream of observations of a (subset of) the full collection of particles $\smash{(x_t^{i,N})_{t\geq 0}^{i\in[N]}}$ from the IPS.\footnote{In the case where Assumption~\ref{assumption:drift}(b) holds (i.e., the confinement potential is null), we must assume it is possible to observe (a subset of) the centered particles $\smash{(y_t^{i,N})_{t\geq 0}^{i\in[N]}}$. This ensures that the results required for our theoretical analysis (e.g., propagation-of-chaos, convergence to a unique invariant measure) continue to hold (see Remark~\ref{remark:centering}).} To achieve this task, we will seek to recursively minimise an appropriately chosen objective.

\subsection{The Objective Function}
 We are interested in the case where the number of particles $N\gg 1$, and thus any single particle in the IPS \eqref{eq:IPS} resembles a solution of the MVSDE \eqref{eq:MVSDE}. In this regime, there are two natural choices for the objective function.  

The first is the average \emph{negative} log-likelihood of the IPS, which under the conditions specified in Corollary~\ref{cor:ips-likelihood-n-t-limit} is given by:
\begin{align}
    \mathcal{L}(\theta) &:= \int_{\mathbb{R}^d} \frac{1}{2}\|B(\theta,x,\pi_{\theta_0})-B(\theta_0,x,\pi_{\theta_0})\|_{\sigma\sigma^{\top}}^2 \pi_{\theta_0}(\mathrm{d}x):=\int_{\mathbb{R}^d} L(\theta,x,\pi_{\theta_0})\pi_{\theta_0}(\mathrm{d}x). \label{eq:obj-func-1} \\ 
\intertext{\indent The second is the average negative log-likelihood of the limiting MVSDE, which under the conditions specified in Proposition~\ref{prop:mvsde-likelihood-t-limit} is given by:}
    \mathcal{J}(\theta)&:= \int_{\mathbb{R}^d} \frac{1}{2}\|B(\theta,x,\pi_{\theta})-B(\theta_0,x,\pi_{\theta_0})\|_{\sigma\sigma^{\top}}^2 \pi_{\theta_0}(\mathrm{d}x):=\int_{\mathbb{R}^d}J(\theta,x,\pi_{\theta},\pi_{\theta_0})\pi_{\theta_0}(\mathrm{d}x). \label{eq:obj-func-2}
\end{align}

\begin{remark}
These two functions are non-negative for all $\theta\in\Theta$ and, under standard identifiability assumptions \citep[e.g.,][Assumptions S2,S4]{genoncatalot2022inference}, uniquely minimised (and equal to zero) at the true parameter $\theta_0$.
\end{remark}

Interestingly, designing recursive maximum likelihood estimators with respect to the two objective functions in \eqref{eq:obj-func-1}, \eqref{eq:obj-func-2} will lead to rather different algorithms. In this paper, we will focus exclusively on algorithms designed with reference to the first objective function. The second will be the subject of a forthcoming paper \citep{sharrock2026recursive}.

\subsection{Gradient Descent in Continuous Time}
\label{sec:grad-log-lik}
In order to optimise the objective function in \eqref{eq:obj-func-1}, a natural approach is to simulate the corresponding \emph{gradient flow}, or \emph{curve of steepest descent}. Let $\smash{\theta_{\mathrm{init}}\in\Theta}$. Then, the gradient flow  $\smash{(\theta_t)_{t\geq 0}}$ of $\mathcal{L}$ is defined as the solution of 
\begin{align}
\label{eq:continuous-time-grad-descent-1}
\mathrm{d}\theta_t &= -\gamma_t \partial_{\theta}\mathcal{L}(\theta_t)\mathrm{d}t, 
\end{align}
where $\gamma_t:\mathbb{R}_{+}\rightarrow \mathbb{R}_{+}$ is a deterministic, positive, non-increasing function commonly referred to as the \emph{learning rate}. Thus, for all $t\geq 0$, $\smash{(\theta_t)_{t\geq 0}}$ follows the direction of steepest descent with respect to the asymptotic log-likelihood function of the IPS.

\begin{remark}
The definition of the gradient flow above differs slightly from the standard definition of a gradient flow \citep[e.g.,][]{santambrogio2017euclidean}, due to the additional inclusion of the learning rate $(\gamma_t)_{t\geq 0}$. Given this, we will sometimes instead refer to \eqref{eq:continuous-time-grad-descent-1} as ``gradient descent in continuous time'', in line with the taxonomy introduced in \cite{sirignano2017stochastic}. We can recover the standard definition of the gradient flow after a time reparameterisation. In particular, defining a new time coordinate as $\tau = \tau(t) = \int_0^{t}\gamma_{s}\mathrm{d}s$, we have $\mathrm{d}\theta_{\tau} =  -\partial_{\theta}\mathcal{L}({\theta}_{\tau})\mathrm{d}\tau$.
\end{remark}

\begin{remark}
\label{remark:constrained}
In order to account for the case where $\Theta\subsetneq\mathbb{R}^p$, we shall in fact use a modified version of this equation \cite[e.g.,][]{surace2019online}, setting 
\begin{align}
\label{eq:continuous-time-grad-descent-modified-1}
\mathrm{d}\theta_t &= \left\{\begin{array}{lll} -\gamma_t \partial_{\theta}\mathcal{L}(\theta_t)\mathrm{d}t & , & \theta_t\in\Theta, \\
0 & , & \theta_t\notin\Theta. \\
\end{array}
\right.
\end{align}
For notational convenience, in what follows we will always write update equations in the form \eqref{eq:continuous-time-grad-descent-1}. However, this should always be understood to mean \eqref{eq:continuous-time-grad-descent-modified-1}. 
\end{remark}

\subsubsection{The Gradient of the Asymptotic Log-Likelihood Functions}
In order to implement the gradient flow in \eqref{eq:continuous-time-grad-descent-1} we will first need to compute the gradient of the objective function. This is the subject of the following proposition.

\begin{proposition}
\label{prop:ips-likelihood-n-t-limit-grad}
    Suppose that Assumption \ref{assumption:moments}, Assumption \ref{assumption:drift}, and Assumption \ref{assumption:drift-grad} (with $k=0,1$) hold. Then the gradient of the negative asymptotic log-likelihood function $\mathcal{L}$ with respect to $\theta$ is given by\footnote{Here, and in the remainder, we use the convention that the gradient operator $\partial_{\theta}$ adds a contravariant dimension to the tensor field on which it acts. Thus, for example, $\partial_{\theta}\mathcal{L}(\theta)$ is a column vector, taking values in $\mathbb{R}^{p\times 1}$. Meanwhile, $G(\theta,x,\mu):=\partial_{\theta}B(\theta,x,\mu)$ is a matrix, taking values in $\mathbb{R}^{p\times d}$.}
    \begin{equation}
     \partial_{\theta}\mathcal{L}(\theta) = \int_{\mathbb{R}^d} H(\theta,x,\pi_{\theta_0}) \pi_{\theta_0}(\mathrm{d}x) 
        \label{eq:asymptotic-ll-IPS-grad}
    \end{equation}
    where $H:\mathbb{R}^p\times\mathbb{R}^d\times\mathcal{P}(\mathbb{R}^d)\rightarrow\mathbb{R}^{p\times 1}$ is given by
    \begin{align}
    \label{eq:h1}
    H(\theta,x,\mu) &:= G(\theta,x,\mu) (\sigma\sigma^{\top})^{-1}\left(B(\theta,x,\mu) - B(\theta_0,x,\mu)\right),
    \end{align}
    and $G:\mathbb{R}^p\times\mathbb{R}^d\times\mathcal{P}(\mathbb{R}^d)\rightarrow\mathbb{R}^{p\times d}$ is given by
    \begin{align}
        G(\theta,x,\mu) &:=\partial_{\theta}{B(\theta,x,\mu)} = \int_{\mathbb{R}^d} \partial_{\theta} b(\theta,x,y)\mu(\mathrm{d}y). \label{eq:g1}
    \end{align}
\end{proposition}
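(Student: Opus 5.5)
The plan is to differentiate under the integral sign in the definition \eqref{eq:obj-func-1}. The key structural point is that in $\mathcal{L}(\theta)=\int L(\theta,x,\pi_{\theta_0})\pi_{\theta_0}(\mathrm{d}x)$ the measure $\pi_{\theta_0}$ does not depend on $\theta$, both as integrating measure and as the argument of $B$. So only the explicit $\theta$-dependence through $B(\theta,x,\pi_{\theta_0})=\int b(\theta,x,y)\pi_{\theta_0}(\mathrm{d}y)$ has to be handled. No derivative of an invariant measure with respect to its parameter is needed, which is exactly what distinguishes $\mathcal{L}$ from $\mathcal{J}$.

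\textbf{Step 1 (inner derivative).} Fix $x$. I show that $\theta\mapsto B(\theta,x,\pi_{\theta_0})$ is $C^1$ with $\partial_\theta B = \int \partial_\theta b(\theta,x,y)\pi_{\theta_0}(\mathrm{d}y)=G(\theta,x,\pi_{\theta_0})$. By Assumption~\ref{assumption:drift-grad} with $k=1$, writing $\partial_\theta b(\theta,x,y)=-\partial_\theta\nabla V(\theta,x)-\partial_\theta\nabla W(\theta,x-y)$, the Lipschitz bound against the point $0$ together with the bound at $0$ gives
\begin{equation}
\sup_{\theta\in\Theta}\|\partial_\theta b(\theta,x,y)\|\le C(1+\|x\|^{m+1}+\|y\|^{m+1}).
\end{equation}
The invariant measure $\pi_{\theta_0}$ has finite moments of all orders; this is the uniform-in-time moment bound from \citet{cattiaux2008probabilistic}, recalled in Appendix~\ref{app:existing-results}, and it passes to the limit measure. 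The right-hand side is therefore $\pi_{\theta_0}(\mathrm{d}y)$-integrable, uniformly in $\theta$. The mean value theorem plus dominated convergence then justify exchanging $\partial_\theta$ and $\int\cdot\,\pi_{\theta_0}(\mathrm{d}y)$, and also give continuity of $G$ in $\theta$. The same argument with $k=0$ gives the bound $\|B(\theta,x,\pi_{\theta_0})\|\le C(1+\|x\|^{m+1})$, uniformly in $\theta$.

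\textbf{Step 2 (pointwise derivative of $L$).} Since $L(\theta,x,\mu)=\tfrac12\langle B(\theta,x,\mu)-B(\theta_0,x,\mu),(\sigma\sigma^\top)^{-1}(B(\theta,x,\mu)-B(\theta_0,x,\mu))\rangle$ and $(\sigma\sigma^\top)^{-1}$ is symmetric, the chain rule gives $\partial_\theta L(\theta,x,\pi_{\theta_0}) = G(\theta,x,\pi_{\theta_0})(\sigma\sigma^\top)^{-1}(B(\theta,x,\pi_{\theta_0})-B(\theta_0,x,\pi_{\theta_0})) = H(\theta,x,\pi_{\theta_0})$. This uses the convention that $\partial_\theta$ adds the leading $p$-dimension, so $G$ is $p\times d$.

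\textbf{Step 3 (outer exchange).} I differentiate $\int L(\theta,x,\pi_{\theta_0})\pi_{\theta_0}(\mathrm{d}x)$ under the integral. The bounds from Step 1 give
\begin{equation}
\sup_{\theta\in\Theta}\|H(\theta,x,\pi_{\theta_0})\|\le C(1+\|x\|^{2m+2}),
\end{equation}
which is $\pi_{\theta_0}$-integrable. Dominated convergence, applied to difference quotients via the mean value theorem (legitimate locally, since $\Theta$ is open), then yields \eqref{eq:asymptotic-ll-IPS-grad}.

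The main obstacle is Step 3's domination. It needs uniform-in-$\theta$ polynomial growth bounds on $B$ and $G$, and all polynomial moments of $\pi_{\theta_0}$. The growth bounds come from Assumption~\ref{assumption:drift-grad}. The moments require a short limiting argument: uniform-in-time moment bounds for the MVSDE together with $\mathsf{W}_2$ convergence $\mu_t^{\theta_0}\to\pi_{\theta_0}$, combined with Fatou's lemma, give $\pi_{\theta_0}(\|\cdot\|^k)<\infty$ for every $k$.
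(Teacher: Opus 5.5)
Your proposal is correct and follows essentially the same route as the paper. You apply the chain rule to $L$ pointwise, use dominated convergence (with uniform-in-$\theta$ polynomial growth of $\partial_\theta b$) to identify $\partial_\theta B(\theta,x,\pi_{\theta_0})$ with $G(\theta,x,\pi_{\theta_0})$, and then use dominated convergence again for the outer integral, relying on the finite moments of $\pi_{\theta_0}$. Your explicit growth bounds from Assumption~\ref{assumption:drift-grad}, and your weak-convergence/Fatou argument that $\pi_{\theta_0}$ has all moments, spell out details that the paper simply attributes to Lemma~\ref{lem:weighted-lip-g}, Lemma~\ref{lem:Phi-hH-stability}, and Theorem~\ref{thm:moment-bounds}.
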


\begin{proof}
    See Appendix~\ref{app:additional-proofs-grad-log-lik}.
\end{proof}

\subsection{Stochastic Gradient Descent in Continuous Time}
\label{sec:sgdct}
In practice, we cannot simulate the gradient flow in \eqref{eq:continuous-time-grad-descent-1} directly, even after a suitable time-discretisation. In particular, it is not possible to compute $\smash{\partial_{\theta}\mathcal{L}}$, since this gradient is given by an expectation with respect to the unknown invariant measure $\pi_{\theta_0}$. To proceed, we thus seek a stochastic estimate for $\smash{\partial_{\theta}\mathcal{L}}$. Ideally, we would like to be able to compute this estimate in an online fashion, based on the continuous stream of observations.

\subsubsection{A Stochastic Estimate of the Gradient of the Asymptotic Log-Likelihood}
Below, we provide a formal derivation of one such estimate; the use of this estimate will later be justified rigorously. We begin with the observation that, due to ergodicity and the convergence of $\mu_s^{\theta_0}\rightarrow\pi_{\theta_0}$ as $s\rightarrow\infty$, it holds that
\begin{align}
\partial_{\theta} \mathcal{L}(\theta)& \stackrel{L^1}{=}  
\displaystyle \lim_{t\rightarrow\infty}\frac{1}{t} \left[ \int_{0}^t G(\theta,x_s^{i},\mu_s^{\theta_0}) 
(\sigma\sigma^{\top})^{-1} \left(B(\theta,x_s^{i},\mu_s^{\theta_0}) - B(\theta_0,x_s^{i},\mu_s^{\theta_0})\right)\mathrm{d}s\right].
\label{IPS_estimator_4-a} 
\intertext{where $(x_s^{i})_{s\geq 0}$ is a solution of the MVSDE in \eqref{eq:MVSDE}, driven by the same Brownian motion and with the same initial conditions as the solution $(x_s^{i,N})_{s\geq 0}$ of the IPS in \eqref{eq:IPS}. Substituting $B(\theta_0,x_s^{i},\mu_s^{\theta_0})\mathrm{d}s =  \mathrm{d}x_s^{i} -\sigma\mathrm{d}w_s^{i}$ from \eqref{eq:MVSDE}, and noting that the additional martingale term converges to zero both a.s. and in $L^1$ under our conditions, it follows that}
\partial_{\theta}\mathcal{L}(\theta)&\stackrel{L^1}{=} \lim_{t\rightarrow\infty} \frac{1}{t}\left[ \int_0^t G(\theta,x_s^{i},\mu_s^{\theta_0}) (\sigma\sigma^{\top})^{-1} \big[B(\theta,x_s^{i},\mu_s^{\theta_0})\mathrm{d}s - \mathrm{d}x_s^{i}\big]  \right]
\intertext{Finally, due to uniform-in-time propagation of chaos, which guarantees that $\mu_s^{N}\rightarrow\mu_s^{\theta_0}$ and $x_s^{i,N}\rightarrow x_s^{i}$ in $\mathrm{L}^2$ (and hence $\mathrm{L}^1$) as $N\rightarrow\infty$, for all $s\geq 0$, we have} 
\partial_{\theta}\mathcal{L}(\theta)&\stackrel{L^1}{=} \lim_{t\rightarrow\infty} \lim_{N\rightarrow\infty} \frac{1}{t} \left[ \int_0^t G(\theta,x_s^{i,N},\mu_s^{N}) (\sigma\sigma^{\top})^{-1} \big[ B(\theta,x_s^{i,N},\mu_s^{N})\mathrm{d}s - \mathrm{d}x_s^{i,N}\big]  \right] \label{IPS_estimator_6-a}  
\end{align}
This expression suggests that, when the number of particles $N\gg 1$, a natural stochastic estimate for $\partial_{\theta}\mathcal{L}(\theta_t)\mathrm{d}t$ is given by
\begin{align}
\partial_{\theta}\mathcal{L}(\theta_t)\mathrm{d}t &\approx G(\theta_t,x_t^{i,N},\mu_t^{N})(\sigma\sigma^{\top})^{-1}\big[ B(\theta_t,x_t^{i,N},\mu_t^{N})\mathrm{d}t - \mathrm{d}x_t^{i,N}\big] \label{eq:stochastic_estimate-a}
\end{align}

\subsubsection{The Algorithm}
\label{sec:algorithm-1}
Substituting this expression into \eqref{eq:continuous-time-grad-descent-1},  we obtain our first algorithm for optimising the objective function $\mathcal{L}$. Let $\bar{\theta}_{\mathrm{init}}^{i,N}\in\Theta$. Then, for $t\geq 0$, update
\begin{align}
\mathrm{d}\bar{\theta}_t^{i,N} &= -\gamma_t G(\bar{\theta}_t^{i,N},x_t^{i,N},{\mu}_t^N) (\sigma\sigma^{\top})^{-1}\big[ B(\bar{\theta}_t^{i,N},x_t^{i,N},{\mu}_t^N)\mathrm{d}t - \mathrm{d}x_t^{i,N}\big], \label{eq:IPS_update1-a} 
\end{align}
It is instructive to rewrite the update equation in \eqref{eq:IPS_update1-a} in a different form, which emphasises the connection with the objective function $\mathcal{L}$. In particular, after substituting the particle dynamics from \eqref{eq:IPS}, and performing some simple algebraic manipulations, we can rewrite \eqref{eq:IPS_update1-a} as
\begin{align}
 \mathrm{d}\bar{\theta}_t^{i,N} 
&= -\gamma_t G(\bar{\theta}_t^{i,N},x_t^{i,N},\mu_t^N) (\sigma\sigma^{\top})^{-1} \big(\big[B(\bar{\theta}_t^{i,N},x_t^{i,N},\mu_t^N) - B(\theta_0,x_t^{i,N},\mu_t^N)\big]\mathrm{d}t - \sigma \mathrm{d}w_t^{i,N}\big) \\[1.5mm]
 &= \underbrace{-\gamma_tH(\bar{\theta}_t^{i,N},x_t^{i,N},\mu_t^N)\mathrm{d}t}_{\text{noisy descent term}} + \underbrace{\gamma_tG(\bar{\theta}_t^{i,N},x_t^{i,N},\mu_t^N) \sigma^{-\top} \mathrm{d}w_t^{i,N}}_{\text{noise term}}, \label{theta_ideal_2-a} \\
&= -\underbrace{\gamma_t\partial_{\theta}\mathcal{L}(\bar{\theta}_t^{i,N})\mathrm{d}t}_{\text{true descent term}} - \underbrace{\gamma_t (H(\bar{\theta}_t^{i,N},x_t^{i,N},\mu_t^N)-\partial_{\theta}\mathcal{L}(\bar{\theta}_t^{i,N}))\mathrm{d}t}_{\text{fluctuations term}} + \underbrace{\gamma_tG(\bar{\theta}_t^{i,N},x_t^{i,N},\mu_t^N)\sigma^{-\top} \mathrm{d}w_t^{i,N}}_{\text{noise term}}. \label{theta_ideal_2-a-i-v0}
\end{align}

\begin{remark}
The estimator $\smash{(\bar{\theta}_t^{i,N})_{t\geq 0}}$ defined in \eqref{eq:IPS_update1-a} coincides with one of the estimators proposed in \cite{sharrock2023online}. The obvious disadvantage of this estimator is that it requires observation of the full system $\smash{(x_t^{i,N})_{t\geq 0}^{i\in[N]}}$ of interacting particles, via the empirical law $\smash{\mu_t^N = \frac{1}{N}\sum_{j=1}^N \delta_{x_t^{j,N}}}$. In cases where the number of particles $N$ is very large, $\smash{(\bar{\theta}_t^{i,N})_{t\geq 0}}$ may therefore be prohibitively expensive to implement.
\end{remark}

\subsection{Stochastic Gradient Descent in Continuous Time: A New Approach}

We now seek an alternative estimator which does not require observation of the entire set of interacting particles.

\subsubsection{A New Expression for the Gradients of the Asymptotic Log-Likelihood}
In order to obtain such an estimator, we will first obtain an alternative form for the asymptotic log-likelihood and its gradients. We begin by expanding the integrand in our existing expressions for the asymptotic negative log-likelihood function in \eqref{eq:obj-func-1}, which yields
    \begin{align}
    \mathcal{L}(\theta)
    &=\int_{\mathbb{R}^d} \tfrac{1}{2}\|B(\theta,x,\pi_{\theta_0})-B(\theta_0,x,\pi_{\theta_0})\|_{\sigma\sigma^{\top}}^2 \pi_{\theta_0}(\mathrm{d}x) \\[-.5mm]
    &= \int_{(\mathbb{R}^d)^3}\tfrac{1}{2} \big\langle b(\theta,x,y) - B(\theta_0,x,\pi_{\theta_0}), b(\theta,x,z) - B(\theta_0,x,\pi_{\theta_0}) \big\rangle_{\sigma\sigma^{\top}} \pi_{\theta_0}^{\otimes 3}(\mathrm{d}x,\mathrm{d}y,\mathrm{d}z)  \\[-.5mm]
    &:= \int_{(\mathbb{R}^d)^3} \ell(\theta,x,y,z,\pi_{\theta_0}) \pi_{\theta_0}^{\otimes 3}(\mathrm{d}x,\mathrm{d}y,\mathrm{d}z). \label{eq:ell-def}
    \end{align}
Similarly, expanding the integrand in our existing expression for the gradient of the asymptotic negative log-likelihood function in \eqref{eq:asymptotic-ll-IPS-grad}, it is possible to show that
\begin{align}
\partial_{\theta} \mathcal{L}(\theta)
&= \int_{\mathbb{R}^d} \big[ G(\theta,x,\pi_{\theta_0}) \big] (\sigma\sigma^{\top})^{-1} \big[B(\theta,x,\pi_{\theta_0}) - B(\theta_0,x,\pi_{\theta_0})\big] \pi_{\theta_0}(\mathrm{d}x)  \\
&=\int_{(\mathbb{R}^d)^3} \big[g(\theta,x,y)\big]  (\sigma\sigma^{\top})^{-1} \big[b(\theta,x,z) - B(\theta_0,x,\pi_{\theta_0})\big] \pi_{\theta_0}^{\otimes 3}(\mathrm{d}x,\mathrm{d}y,\mathrm{d}z) \label{eq:l1-explicit-v0} \\
&:=\int_{(\mathbb{R}^d)^3} h(\theta,x,y,z,\pi_{\theta_0}) \pi_{\theta_0}^{\otimes 3}(\mathrm{d}x,\mathrm{d}y,\mathrm{d}z), \label{eq:l1-explicit}  
\end{align}
where, in the second line, we have defined $g(\theta,x,y) := \partial_{\theta}b(\theta,x,y)$. This alternative representation of the gradient of the asymptotic negative log-likelihood will provide the starting point for our new, more efficient stochastic estimate.

\subsubsection{A New Stochastic Estimate of the Gradient of the Asymptotic Log-Likelihood} 
Once again, we present a formal derivation, deferring a rigorous theoretical treatment to the sequel. Similar to before, due to ergodicity and the convergence of $\mu_s^{\theta_0}\rightarrow \pi_{\theta_0}$ as $s\rightarrow\infty$, we have that
\begin{align}
\partial_{\theta} \mathcal{L}(\theta)
&\stackrel{L^1}{=}\lim_{t\rightarrow\infty}\frac{1}{t}\Big[\int_0^t \Big[g(\theta,x_s^{i},x_s^{j})\Big](\sigma\sigma^{\top})^{-1} \Big[b(\theta,x_s^{i},x_s^{k}) - B(\theta_0,x_s^{i}, \mu_s^{\theta_0})\Big]\mathrm{d}s\Big], \label{IPS_estimator2_4-a}
\end{align}
where $(x_s^{i})_{s\geq 0}$, $(x_s^{j})_{s\geq 0}$, $(x_s^{k})_{s\geq 0}$ are three independent solutions of the MVSDE, driven by Brownian motions $(w_s^{i})_{s\geq 0}$, $(w_s^{j})_{s\geq 0}$, $(w_s^{k})_{s\geq 0}$. Substituting $B(\theta_0,x_s^{i},\mu_s^{\theta_0})\mathrm{d}s = \mathrm{d}x_s^{i} - \sigma\mathrm{d}w_s^{i}$, and using the fact that the additional martingale term converges to zero, we have that
\begin{align}
\partial_{\theta} \mathcal{L}(\theta)
&\stackrel{L^1}{=}\lim_{t\rightarrow\infty}\frac{1}{t}\Big[\int_0^t \big[g(\theta,x_s^i,x_s^j)\big](\sigma\sigma^{\top})^{-1}  \big[b(\theta,x_s^i,x_s^k)\mathrm{d}s - \mathrm{d}x_s^i \big] \Big]  \label{IPS_estimator2_5-a}  
\end{align}
Finally, under the assumption of uniform-in-time propagation of chaos, it follows from the previous display that
\begin{align}
\partial_{\theta} \mathcal{L}(\theta)
&\stackrel{L^1}{=}\lim_{t\rightarrow\infty}\lim_{N\rightarrow\infty}\frac{1}{t}\Big[\int_0^t \big[g(\theta,x_s^{i,N},x_s^{j,N})\big](\sigma\sigma^{\top})^{-1} \big[b(\theta,x_s^{i,N},x_s^{k,N})\mathrm{d}s -\mathrm{d}x_s^{i,N}\big] \Big]\label{IPS_estimator2_7-a} 
\end{align}
 where $(x_s^{i,N})_{s\geq 0}$, $(x_s^{j,N})_{s\geq 0}$, and $(x_s^{k,N})_{s\geq 0}$ are the trajectories of three distinct particles from the observed IPS. This expression suggests that, for $N\gg 1$, an alternative stochastic estimate for $\partial_{\theta}\mathcal{L}(\theta_t)\mathrm{d}t$ is given by
 \begin{align}
\partial_{\theta}{{\mathcal{L}}}(\theta_t)\mathrm{d}t&\approx\big[g(\theta_t,x_t^{i,N}, x_t^{j,N})\big]  (\sigma\sigma^{\top})^{-1} \big[b(\theta_t,x_t^{i,N}, x_t^{k,N})\mathrm{d}t-\mathrm{d}x_t^{i,N}\big].
\end{align}

\subsubsection{A New Algorithm}
\label{sec:algorithm-2}
Substituting this expression into \eqref{eq:continuous-time-grad-descent-1},  we obtain an alternative algorithm for optimising the objective function $\mathcal{L}$. Let $\smash{\theta_{\mathrm{init}}^{i,j,k,N}\in\Theta}$. Then, for $t\geq 0$, evolve
\begin{align}
\mathrm{d}\theta_t^{i,j,k,N} &=  -\gamma_t \big[g(\theta_t^{i,j,k,N},x_t^{i,N}, x_t^{j,N})\big]  (\sigma\sigma^{\top})^{-1} \big[b(\theta_t^{i,j,k,N},x_t^{i,N},x_t^{k,N})\mathrm{d}t -\mathrm{d}x_t^{i,N}\big]. \label{eq:IPS_update2-a-v0}
\end{align}
Once again, it is instructive to rewrite this algorithm in a slightly different form. In this case, following similar manipulations to before, we have that
\begin{align}
 \mathrm{d}\theta_t^{i,j,k,N} 
&= -\underbrace{\gamma_t\partial_{\theta}\mathcal{L}(\theta_t^{i,j,k,N})\mathrm{d}t}_{\text{true descent term}} - \underbrace{\gamma_t (h(\theta_t^{i,j,k,N},x_t^{i,N},x_t^{j,N},x_t^{k,N},\mu_t^N) - \partial_{\theta}\mathcal{L}(\theta_t^{i,j,k,N}))\mathrm{d}t}_{\text{fluctuations term}} \label{theta_ideal_2-a-i-ii} \\
&~~~~~+ \underbrace{\gamma_t g(\theta_t^{i,j,k,N},x_t^{i,N},x_t^{j,N})\sigma^{-\top} \mathrm{d}w_t^{i,N}}_{\text{noise term}}.  \nonumber
\end{align}

\begin{remark}
    In some sense, one can view the estimator $\smash{(\bar{\theta}_t^{i,N})_{t\geq 0}}$ from Section~\ref{sec:algorithm-1}, as defined in \eqref{eq:IPS_update1-a}, as an ``averaged'' version of our new estimator $\smash{(\theta_t^{i,j,k,N})_{t\geq 0}}$, as defined in \eqref{eq:IPS_update2-a-v0}. Indeed, comparing the two update equations, wherever a pair or a triplet of particles appear in \eqref{eq:IPS_update2-a-v0}, an average over all of the particles appears in \eqref{eq:IPS_update1-a}. 
\end{remark}

\begin{remark}
The estimator $(\theta_t^{i,j,k,N})_{t\geq 0}$, as defined by \eqref{eq:IPS_update2-a-v0}, only depends on observations of three distinct particles $\smash{(x_t^{i,N})_{t\geq 0}}$, $\smash{(x_t^{j,N})_{t\geq 0}}$, and $\smash{(x_t^{k,N})_{t\geq 0}}$, regardless of the total number of particles $N$ in the data-generating IPS. Thus, in the typical case where $N\gg1$, it is much less costly to implement than the estimator $\smash{(\bar{\theta}_t^{i,N})_{t\geq 0}}$ defined in \eqref{eq:IPS_update1-a}, which requires observation of all particles. Nonetheless, we will show that the two estimators share many of the same theoretical properties as $N\rightarrow\infty$.
\end{remark}

\subsection{Extensions}
\label{sec:extensions}
At the price of an increased computational cost, it is possible to define variants of both of our estimators which enjoy improved convergence guarantees. Let $M\in[N]$. Define $\Pi = \{i_1,\dots,i_M\}\subseteq[N]$ as an ordered subset of the particles, and $\mathcal{C}(\Pi)\subseteq[N]^{3}$ as the set of cyclic triplets in $\Pi$, so that $M=|\Pi| = |\mathcal{C}(\Pi)|$.\footnote{To be precise, $\mathcal C(\Pi) := \{(i_\ell,i_{\ell+1},i_{\ell+2}) : \ell=1,\dots,M\}$, where indices are taken cyclically, so that $i_{M+1}=i_1$ and $i_{M+2}=i_2$.}\textsuperscript{,}\footnote{In the cases where $M=1$ with $\Pi = \{i_{1}\}$, or $M=2$ with $\Pi=\{i_1,i_2\}$, it is clear that the set of cyclic triplets $\mathcal{C}(\Pi)$ is not well defined. In these cases, we first define an extended set of indices $\Pi^{*} = \{i_1,i_2,i_3\}$ by choosing the required number of fixed auxiliary indices from $[N]\cap \Pi^c$, with $|\Pi^{*}| = M_{*} = \max\{3,M\}$. This means, in particular, that the set of cyclic triplets $\mathcal{C}(\Pi^{*})$ \emph{is} well defined. We then \emph{redefine} $\mathcal{C}(\Pi)$ as the subset of cyclic triplets from $\mathcal{C}(\Pi^{*})$ whose first index lies in the original set $\Pi$. This ensures that $|\mathcal C(\Pi)|=M$ and that each $(i,j,k)\in\mathcal C(\Pi)$ consists of three distinct indices.} We can then define two new estimators according to
\begin{align}
\mathrm{d}\bar{\theta}^{N,M}_t &= -\gamma_t \frac{1}{M}\sum_{i\in\Pi} 
\left[\big[ G(\bar{\theta}^{N,M}_t,x_t^{i,N},\mu_t^N)\big] (\sigma\sigma^{\top})^{-1} 
\big[ B(\bar{\theta}^{N,M}_t,x_t^{i,N},\mu_t^N)\mathrm{d}t - \mathrm{d}x_t^{i,N}\big]\right], 
\label{eq:IPS_update1-a-explicit-with-M} \\
\mathrm{d}\theta^{N,M}_t &=  -\gamma_t \frac{1}{M}\sum_{(i,j,k)\in\mathcal{C}(\Pi)}
\left[ \big[g(\theta^{N,M}_t,x_t^{i,N}, x_t^{j,N})\big]  (\sigma\sigma^{\top})^{-1} 
\big[b(\theta^{N,M}_t,x_t^{i,N},x_t^{k,N})\mathrm{d}t -\mathrm{d}x_t^{i,N}\big]\right].
\label{eq:IPS_update2-a-with-M}
\end{align}
Thus, in particular, we can view $\smash{(\bar{\theta}^{N,M}_t)_{t\geq 0}}$ and $\smash{({\theta}^{N,M}_t)_{t\geq 0}}$ as the estimators whose drifts are obtained by averaging the drifts defining the estimators in Section~\ref{sec:algorithm-1} and Section~\ref{sec:algorithm-2} over multiple \emph{primary trajectories} $(x_t^{i,N})^{i\in\Pi}_{t\geq 0}$. Naturally, the first of these estimators still requires us to observe the trajectories of every particle from the IPS. Meanwhile, the second estimator now requires us to observe $M_{*} = \max\{3,M\}$ trajectories.

\begin{remark}
In the case where the number of primary trajectories is equal to the total number of particles (i.e., $M=N$), the estimator $\smash{(\bar{\theta}_t^{N,M})_{t\geq 0}}$ coincides with the second estimator proposed in \cite{sharrock2023online}. This, in some sense, is the estimator which uses the maximal possible amount of information available from observations of the IPS. In particular, in comparison to $\smash{(\bar{\theta}_t^{i,N}})_{t\geq 0}$, which only uses the information from particles other than the $i^{\mathrm{th}}$ particle via the empirical measure $(\mu_t^N)_{t\geq 0}$, $\smash{(\bar{\theta}_t^{N,N})_{t\geq 0}}$ explicitly uses the sample paths $(x_t^{i,N})_{t\geq 0}^{i\in[N]}$ of all of the particles. Naturally, this also means that it is the most computationally costly estimator to implement. 
\end{remark}

\noindent We will later show that the convergence rates for $\smash{(\bar{\theta}^{N,M}_t})_{t\geq 0}$ and $\smash{({\theta}^{N,M}_t})_{t\geq 0}$ improve on the convergence rates for $\smash{(\bar{\theta}^{i,N}_t})_{t\geq 0}$ and $\smash{({\theta}^{i,j,k,N}_t})_{t\geq 0}$ by a factor of $M$ in one of the constants (see Theorem~\ref{theorem:main-theorem-1-2} vs Corollary~\ref{corollary:main-theorem-1-2-finite-n-with-m}). In this sense, the averaging mechanism provides a rather quantifiable way in which to balance the computational cost and the finite-time accuracy of the online estimation procedure.

\section{Theoretical Results}
\label{sec:theory}

In this section, we present our main results regarding the convergence of the estimators introduced in Sections~\ref{sec:algorithm-1}, \ref{sec:algorithm-2}, and \ref{sec:extensions}.

\subsection{Preliminary Results}
\label{sec:theory-add-notation}

We will first require some additional notation, and some auxiliary results. We begin by defining two finite-particle approximations to our original objective function (see Section~\ref{sec:methodology}). In particular, we set
\noeqref{eq:partial-lik-2}
\begin{align}
    \mathcal{L}^{i,N}(\theta) &:=\int_{(\mathbb{R}^d)^N}L(\theta,x^{i,N},\mu^N)\pi_{\theta_0}^N(\mathrm{d}\boldsymbol{x}^N) := \int_{(\mathbb{R}^d)^N} L^{i,N}(\theta,\boldsymbol{x}^N)\pi_{\theta_0}^{N}(\mathrm{d}\boldsymbol{x}^N)\label{eq:partial-lik-1} \\
    {\mathcal{L}}^{i,j,k,N}(\theta) &:=\int_{(\mathbb{R}^d)^N}\ell(\theta,\boldsymbol{x}^{i,j,k,N},\mu^N)\pi_{\theta_0}^{N}(\mathrm{d}\boldsymbol{x}^{N}) := \int_{(\mathbb{R}^d)^N} \ell^{i,j,k,N}(\theta,\boldsymbol{x}^N)\pi_{\theta_0}^{N}(\mathrm{d}\boldsymbol{x}^N), \label{eq:partial-lik-2} 
\end{align}
where $\boldsymbol{x}^{i,j,k,N} = (x^{i,N},x^{j,N},x^{k,N})$. These functions correspond to the time-averages of two negative \emph{pseudo log-likelihood} or \emph{contrast} functions (see Proposition~\ref{prop:ips-likelihood-t-limit-recall}, Appendix~\ref{app:additional-results-prop-inf-n}). We can also characterise the gradients of these functions. 

\begin{proposition}
\label{prop:asymptotic-partial-log-lik-grad}
Suppose that Assumption \ref{assumption:moments}, Assumption \ref{assumption:drift}, and Assumption \ref{assumption:drift-grad} (with $k=0,1$) hold. Then the gradients of the negative asymptotic pseudo log-likelihood functions $\mathcal{L}^{i,N}$ and $\mathcal{L}^{i,j,k,N}$ with respect to $\theta$ are given by
\begin{align}
    \partial_{\theta}\mathcal{L}^{i,N}(\theta) &= \int_{(\mathbb{R}^d)^N}H(\theta,x^{i,N},\mu^N)\pi_{\theta_0}^N(\mathrm{d}\boldsymbol{x}^N) := \int_{(\mathbb{R}^d)^N} H^{i,N}(\theta,\boldsymbol{x}^N)\pi_{\theta_0}^{N}(\mathrm{d}\boldsymbol{x}^N) \label{eq:incomplete-log-lik-1-grad} \\[-1mm]
    \partial_{\theta}{\mathcal{L}}^{i,j,k,N}(\theta) &= \int_{(\mathbb{R}^d)^N}h(\theta,\boldsymbol{x}^{i,j,k,N},\mu^N)\pi_{\theta_0}^{N}(\mathrm{d}\boldsymbol{x}^N) := \int_{(\mathbb{R}^d)^N} h^{i,j,k,N}(\theta,\boldsymbol{x}^N)\pi_{\theta_0}^{N}(\mathrm{d}\boldsymbol{x}^N). \label{eq:incomplete-log-lik-2-grad}
\end{align}
\end{proposition}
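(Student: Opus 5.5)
The plan is to differentiate under the integral sign with respect to $\theta$ in the definitions \eqref{eq:partial-lik-1}--\eqref{eq:partial-lik-2}. The measure $\pi_{\theta_0}^N$ does not depend on $\theta$, so only the integrands vary. First I compute the pointwise gradients.

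For $L(\theta,x,\mu)=\frac12\|B(\theta,x,\mu)-B(\theta_0,x,\mu)\|^2_{\sigma\sigma^\top}$ with $\mu=\mu^N$ an empirical measure, $B(\theta,x,\mu^N)=\frac1N\sum_j b(\theta,x,x^{j,N})$ is a finite sum. Assumption~\ref{assumption:drift-grad} ($k=1$) gives $\theta\mapsto b(\theta,x,y)$ is $C^1$, and the chain rule yields $\partial_\theta L = G(\theta,x,\mu^N)(\sigma\sigma^\top)^{-1}(B(\theta,x,\mu^N)-B(\theta_0,x,\mu^N)) = H(\theta,x,\mu^N)$, using symmetry of $(\sigma\sigma^\top)^{-1}$.

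For $\ell(\theta,x,y,z,\mu)=\frac12\langle b(\theta,x,y)-B(\theta_0,x,\mu),\,b(\theta,x,z)-B(\theta_0,x,\mu)\rangle_{\sigma\sigma^\top}$, the product rule gives two terms:
\begin{equation}
g(\theta,x,y)(\sigma\sigma^\top)^{-1}\bigl(b(\theta,x,z)-B(\theta_0,x,\mu)\bigr)
\end{equation}
and the same expression with $y$ and $z$ interchanged. Each carries a factor $\tfrac12$. The resulting $y,z$-symmetrised expression is what $h$ must mean when $\ell$ is evaluated at $\boldsymbol x^{i,j,k,N}$. I will identify $h$ as this symmetrised form, or note that it agrees with \eqref{eq:l1-explicit} after integration, because $\pi^N_{\theta_0}$ is exchangeable, being the invariant law of a system symmetric under permutations of particles. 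Exchangeability lets me swap $j$ and $k$ under the integral, so both halves integrate to the same quantity. This matches the paper's convention that $h$ is the nonsymmetric form in \eqref{eq:l1-explicit-v0}.

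Second, I justify the interchange of $\partial_\theta$ and $\int\,\mathrm d\pi^N_{\theta_0}$ by dominated convergence via the mean value theorem. I need, locally uniformly in $\theta$, a $\pi_{\theta_0}^N$-integrable bound on $\|H^{i,N}(\theta,\boldsymbol x^N)\|$ and $\|h^{i,j,k,N}(\theta,\boldsymbol x^N)\|$. Assumption~\ref{assumption:drift-grad} with $k=0,1$ gives polynomial growth: $\|\partial^k_\theta\nabla V(\theta,x)\|\le C_k(1+\|x\|^{m_k+1})$, obtained by taking $y=0$ in the Lipschitz bound, and likewise for $W$. Hence $b$, $g$ and their products are bounded by $C(1+\sum_j\|x^{j,N}\|^{m})$, uniformly in $\theta\in\Theta$. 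The remaining ingredient is that $\pi^N_{\theta_0}$ has finite moments of all orders, which follows from the uniform-in-time moment bounds and convergence to the invariant measure recorded in Appendix~\ref{app:existing-results} (Cattiaux et al.), combined with Assumption~\ref{assumption:moments} and Fatou's lemma.

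The main obstacle is this moment bound on $\pi^N_{\theta_0}$ and the uniform domination. Under Assumption~\ref{assumption:drift}(b) there is a further subtlety: the statements must be read for the centred process on $\mathcal M_N$, where the invariant measure is supported. I expect this to be handled by the same moment estimates, since $\pi^N_{\theta_0}$ is the limit of laws with uniformly bounded moments. Continuity of the resulting gradients in $\theta$ follows from the same domination, which completes the argument.
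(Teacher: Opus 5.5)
Your proposal is correct and follows the paper's proof essentially step for step. You use the chain rule for $L^{i,N}$ to obtain $H^{i,N}$, and the product rule for $\ell^{i,j,k,N}$ to obtain $\tfrac12(h^{i,j,k,N}+h^{i,k,j,N})$. You then apply dominated convergence via the polynomial growth bounds and finite moments of $\pi^N_{\theta_0}$, and use exchangeability of $\pi^N_{\theta_0}$ to merge the two halves. Your Fatou-type transfer of the uniform-in-time moment bounds to $\pi^N_{\theta_0}$, and your appeal to uniqueness of the invariant measure for exchangeability, justify these steps slightly more carefully than the paper's direct citation of Theorem~\ref{thm:moment-bounds}.
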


\begin{proof}
    See Appendix \ref{app:additional-proofs-theory-add-notation}
\end{proof}

These functions can be viewed as finite-particle approximations to the gradient of our original objective function (see Proposition~\ref{prop:ips-likelihood-n-t-limit-grad}, Section~\ref{sec:grad-log-lik}).  This notion is made precise in the following result, which establishes that both finite-particle gradients converge (uniformly) to the mean-field gradient as $N\rightarrow\infty$, and characterises the rate at which this convergence takes place. 

\begin{proposition}
\label{prop:inf-n-convergence-1}
    Suppose that Assumption \ref{assumption:moments}, Assumption \ref{assumption:drift}, and Assumption \ref{assumption:drift-grad} (with $k=0,1$) hold. Then, for all $N\in\mathbb{N}$, for all distinct $i,j,k\in[N]$, 
    there exist finite constants $K_1,K_1^{\dagger},K_2,K_2^{\dagger}<\infty$ 
    such that
\begin{align}
    \sup_{\theta\in\Theta}\big\| \partial_{\theta}\mathcal{L}^{i,N}(\theta) - \partial_{\theta} \mathcal{L}(\theta)\big\| 
    &\leq K_1\rho(N) + \frac{K_2}{N^{\frac{1}{2(1+\alpha)}}}\label{eq:l-i-convergence-rate} \\
    \sup_{\theta\in\Theta}\big\| \partial_{\theta}\mathcal{L}^{i,j,k,N}(\theta) - \partial_{\theta} \mathcal{L}(\theta)\big\| 
    &\leq K_1^{\dagger}\rho(N) + \frac{K_2^{\dagger}}{N^{\frac{1}{2(1+\alpha)}}}, \label{eq:l-i-j-k-convergence-rate} 
\end{align}
where the function $\rho:\mathbb{N}\rightarrow\mathbb{R}_{+}$ is defined according to 
    \begin{equation}
    \rho(N)= \left\{
    \begin{array}{lll} 
    N^{-\frac{1}{4}} & \text{if} & d<4 \\
    N^{-\frac{1}{4}}[\log(1+N)]^{\frac{1}{2}} & \text{if} & d=4 \\
    N^{-\frac{1}{d}} & \text{if} & d > 4.
    \end{array}
    \right. \label{eq:rho}
    \end{equation}
\end{proposition}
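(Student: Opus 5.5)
Both bounds will come from coupling the stationary particle system with i.i.d.\ copies of the stationary MVSDE. Let $\boldsymbol{x}^N\sim\pi_{\theta_0}^N$, and let $(\bar x^{1},\dots,\bar x^{N})$ be i.i.d.\ with law $\pi_{\theta_0}$. These are obtained as the $t\to\infty$ limit of the synchronous coupling used in the uniform-in-time propagation of chaos result of \citet[Theorem 3.1]{cattiaux2008probabilistic}. By uniform-in-time propagation of chaos together with convergence to equilibrium (Appendix~\ref{app:existing-results}), this coupling satisfies $\mathbb{E}\|x^{i,N}-\bar x^{i}\|^2\le C N^{-\frac{1}{1+\alpha}}$, uniformly in $i$. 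We also have uniform moment bounds of all orders for both $\pi_{\theta_0}^N$ and $\pi_{\theta_0}$, by Assumption~\ref{assumption:moments} and \citet[Proposition 2.7]{cattiaux2008probabilistic}.

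We write
$$\partial_\theta\mathcal{L}^{i,N}(\theta)-\partial_\theta\mathcal{L}(\theta)=\mathbb{E}\big[H(\theta,x^{i,N},\mu^N)-H(\theta,\bar x^{i},\bar\mu^N)\big]+\mathbb{E}\big[H(\theta,\bar x^{i},\bar\mu^N)-H(\theta,\bar x^{i},\pi_{\theta_0})\big],$$
where $\bar\mu^N=\frac1N\sum_j\delta_{\bar x^{j}}$. Here we use Proposition~\ref{prop:asymptotic-partial-log-lik-grad} and Proposition~\ref{prop:ips-likelihood-n-t-limit-grad}, and the fact that $\bar x^i\sim\pi_{\theta_0}$.

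\emph{First term.} Since $H=G(\sigma\sigma^\top)^{-1}(B(\theta,\cdot)-B(\theta_0,\cdot))$ is a product of averages of $b$ and $\partial_\theta b$, Assumption~\ref{assumption:drift-grad} ($k=0,1$) gives local Lipschitz bounds with polynomial weights that are uniform in $\theta$. Typical terms are $\|b(\theta,x,y)-b(\theta,x',y')\|\le C(\|x-x'\|+\|y-y'\|)(1+\text{poly})$, together with linear-times-polynomial growth of $b$ and $g$. Expanding the products and applying Cauchy--Schwarz and the moment bounds, the first term is bounded by $C\,(\mathbb{E}\|x^{i,N}-\bar x^i\|^2+\frac1N\sum_j\mathbb{E}\|x^{j,N}-\bar x^j\|^2)^{1/2}\le K_2N^{-\frac{1}{2(1+\alpha)}}$.

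\emph{Second term.} This is an empirical-measure error. $H(\theta,x,\mu)$ is bilinear in $\mu$ through $G$ and $B$, so the error splits into terms of the form $\int f\,\mathrm{d}(\bar\mu^N-\pi_{\theta_0})$ multiplied by bounded-in-$L^2$ factors. Each such term is controlled by $C\,\mathbb{E}[\mathsf W_2(\bar\mu^N,\pi_{\theta_0})^2]^{1/2}$, up to polynomial weights handled by Hölder's inequality and the moment bounds. Applying the Fournier--Guillin rate for i.i.d.\ empirical measures with $\pi_{\theta_0}$ having all moments gives $\mathbb{E}\,\mathsf W_2^2\lesssim\rho(N)^2$, which yields the $K_1\rho(N)$ term. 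The self-interaction term $j=i$ in $\bar\mu^N$ contributes only $O(1/N)$ and is absorbed. Note that for the quadratic term in $\bar\mu^N$ one could alternatively compute the variance directly and get $N^{-1/2}$; the $\mathsf W_2$ route is cruder but matches the stated $\rho$.

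\emph{The $(i,j,k)$ estimator.} The same coupling applies to $h(\theta,x^{i,N},x^{j,N},x^{k,N},\mu^N)$, where $\mu^N$ enters only through $B(\theta_0,x^i,\mu^N)$. The first term is bounded exactly as above. For the second, the key point is that for distinct $i,j,k$ the variables $\bar x^i,\bar x^j,\bar x^k$ are independent, so $\mathbb{E}\,h(\theta,\bar x^i,\bar x^j,\bar x^k,\pi_{\theta_0})=\partial_\theta\mathcal{L}(\theta)$ by \eqref{eq:l1-explicit}. What remains is the error from replacing $\pi_{\theta_0}$ by $\bar\mu^N$ in $B(\theta_0,\cdot)$, which is again $O(\rho(N))$.

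Uniformity in $\theta$ follows because all constants come from Assumption~\ref{assumption:drift-grad}, whose constants are uniform over $\Theta$. The main obstacle is the polynomial (non-global) Lipschitz structure: converting the coupling and $\mathsf W_2$ bounds into expectation bounds needs careful Hölder splitting with high moments. A second concern is that the stationary coupling must be justified by passing $t\to\infty$ in the uniform-in-time bounds, using weak convergence and lower semicontinuity of the coupling cost.
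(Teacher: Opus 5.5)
Your proof is correct, but it takes a different route from the paper's. The paper never builds a coupling at stationarity. It works with the time-averaged pseudo-log-likelihood gradients $\tfrac1t\partial_\theta\mathcal{L}_t^{i,N}$, $\tfrac1t\partial_\theta\mathcal{L}_t^{[i,N]}$ and $\tfrac1t\partial_\theta\mathcal{L}_t^{i}$ (and their $(i,j,k)$ analogues), computed along trajectories started from $\mu_0$, and splits the error with a four-term triangle inequality:

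\begin{itemize}
\item The two outer terms vanish as $t\to\infty$ by ergodic theorems for the IPS and the MVSDE.
\item The two inner terms are bounded at each finite $t$. Synchronous propagation of chaos gives $N^{-\frac{1}{2(1+\alpha)}}$, and Fournier--Guillin applied along the trajectory gives $\rho(N)$.
\item One then takes $\limsup_{t\to\infty}$.
\end{itemize}

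Because these time-averaged objects contain stochastic integrals, the paper also needs a localisation argument and a $\mathsf W_2^4$ moment bound to control martingale differences, even though those differences are $O(t^{-1/2})$ and vanish in the limit anyway.

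You instead move the $t\to\infty$ limit into a single soft step. Tightness plus lower semicontinuity of the quadratic cost gives a coupling of $\pi_{\theta_0}^N$ and $\pi_{\theta_0}^{\otimes N}$ that inherits the uniform propagation-of-chaos bound. Everything after that is a two-term split handled by Cauchy--Schwarz and $\mathbb{E}\,\mathsf W_2^2$. You need no ergodic theorem and no martingale terms, and the uniformity of the constants in $N$ and $\theta$ is transparent. In exchange, the paper's route reuses the finite-time contrasts $\mathcal{L}_t^{\cdot}$ that it needs elsewhere, and never has to pass couplings to a limit.

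Your framing also shows that the second term is a difference of expectations of a functional that is bilinear in $\mu$ (for $H$) or linear in $\mu$ (for $h$). It can therefore be computed exactly rather than bounded through $\mathsf W_2$. Write
$\mathbb{E}[H(\theta,\bar x^i,\bar\mu^N)]=N^{-2}\sum_{j,k}\mathbb{E}[g(\theta,\bar x^i,\bar x^j)(\sigma\sigma^{\top})^{-1}(b(\theta,\bar x^i,\bar x^k)-b(\theta_0,\bar x^i,\bar x^k))]$.
\begin{itemize}
\item Every term with $i,j,k$ distinct equals $\partial_\theta\mathcal{L}(\theta)$.
\item Only $O(N)$ of the $N^2$ terms are not of this form, and each has expectation bounded uniformly in $\theta$ and $N$.
\item The same count works for $h$.
\end{itemize}
So this error is $O(1/N)$, which beats both $\rho(N)$ and your $N^{-1/2}$ variance remark. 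On your route the $\rho(N)$ contribution can thus be absorbed into $K_2N^{-\frac{1}{2(1+\alpha)}}$. For the same reason, your separate treatment of the $j=i$ atom is unnecessary.
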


\begin{proof}
    See Appendix \ref{app:additional-proofs-theory-add-notation}.
\end{proof}

 \begin{remark}
The two contributions to the rates in \eqref{eq:l-i-convergence-rate} - \eqref{eq:l-i-j-k-convergence-rate} have distinct origins. The term $\smash{\rho(N)}$ corresponds to the standard $\mathsf{W}_2$ empirical measure rate \citep[][Theorem 1]{fournier2015rate}, while the term $\smash{N^{-\frac{1}{2(1+\alpha)}}}$ is inherited from the propagation-of-chaos rate in \citep[][Theorem 3.1]{cattiaux2008probabilistic} (see also Theorem~\ref{thm:poc}, Appendix~\ref{app:existing-results}). Consequently, if one strengthens the assumptions in a way that improves either (i) the empirical-measure concentration rate or (ii) the propagation-of-chaos rate (see Remark~\ref{rem:poc}, Section~\ref{sec:model-assumptions}), then the bounds in \eqref{eq:l-i-convergence-rate} - \eqref{eq:l-i-j-k-convergence-rate} can be improved accordingly, yielding faster overall convergence rates for the finite-particle gradients. 
\end{remark}

\subsection{Main Results}
\label{sec:main-results-first-estimator}
We are now ready to state our main results. In all cases, we will require the following standard assumption on the learning rate. This is the continuous-time analogue of the standard step-size condition used in the convergence analysis of stochastic approximation algorithms in discrete time \citep[e.g.,][]{robbins1951stochastic,sirignano2017stochastic}.

\begin{assumption}
\label{assumption:learning-rate}
    The learning rate $\gamma_t:\mathbb{R}_{+}\rightarrow\mathbb{R}_{+}$ is a positive, non-increasing function which satisfies $\int_0^{\infty}\gamma_t\mathrm{d}t = \infty$, $\int_0^{\infty}\gamma_t^2\mathrm{d}t<\infty$, $\int_0^{\infty}|\dot{\gamma}_t|\mathrm{d}t<\infty$, and $\lim_{t\rightarrow\infty}\gamma_t t^{p}=0$ for some $p>0$.
\end{assumption}

\subsubsection{Convergence} 
\label{sec:main-results-convergence}
We begin by characterising the asymptotic behaviour of the estimators $\smash{(\bar{\theta}^{i,N}_t)_{t\geq 0}}$ and $\smash{(\theta^{i,j,k,N}_t)_{t\geq 0}}$ in the limit as the time horizon $t\rightarrow\infty$, given a fixed and finite number of particles. In particular, the following proposition establishes the convergence of the two estimators to the stationary points of the two pseudo negative log-likelihood functions $\mathcal{L}^{i,N}$ and $\mathcal{L}^{i,j,k,N}$ defined above as $t\rightarrow\infty$. 

\begin{proposition}
    \label{prop:inf-t-convergence-1}
    Suppose that Assumption \ref{assumption:moments}, Assumption~\ref{assumption:drift}, Assumption~\ref{assumption:drift-grad}, and Assumption~\ref{assumption:learning-rate} hold. Let $N\in\mathbb{N}$, and let $i,j,k\in[N]$ be distinct. Suppose that $\mathbb{P}(\bar{\theta}_t^{i,N}\in\Theta~\forall t\geq 0)=\mathbb{P}(\theta_t^{i,j,k,N}\in\Theta~\forall t\geq 0)=1$. Then it holds almost surely that
    \begin{align}
    \lim_{t\rightarrow\infty}\|\partial_{\theta}\mathcal{L}^{i,N}(\bar{\theta}_t^{i,N}) \| &= 0 \\ \lim_{t\rightarrow\infty}\|\partial_{\theta}\mathcal{L}^{i,j,k,N}(\theta_t^{i,j,k,N}) \| &= 0.
    \end{align}
\end{proposition}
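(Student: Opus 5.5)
The plan follows the Sirignano--Spiliopoulos framework for stochastic gradient descent in continuous time. Here the ``environment'' process is the full particle system $\boldsymbol{x}_t^N$, an ergodic diffusion on $(\mathbb{R}^d)^N$ with unique invariant measure $\pi_{\theta_0}^N$. I treat the two estimators in parallel: the first uses the integrand $H^{i,N}(\theta,\boldsymbol{x}^N)$ and noise coefficient $G(\theta,x^{i,N},\mu^N)\sigma^{-\top}$, the second uses $h^{i,j,k,N}(\theta,\boldsymbol{x}^N)$ and $g(\theta,x^{i,N},x^{j,N})\sigma^{-\top}$. As in \eqref{theta_ideal_2-a-i-ii}, each update splits into three terms:
\begin{itemize}
\item a true descent term $-\gamma_t\partial_\theta\mathcal{L}^{i,j,k,N}(\theta_t)\,\mathrm{d}t$, using Proposition~\ref{prop:asymptotic-partial-log-lik-grad};
\item a fluctuation term $-\gamma_t\big(h^{i,j,k,N}(\theta_t,\boldsymbol{x}_t^N)-\partial_\theta\mathcal{L}^{i,j,k,N}(\theta_t)\big)\mathrm{d}t$;
\item a martingale noise term.
\end{itemize}
The argument is then the standard cycle-of-stopping-times argument. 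Fix $\kappa>0$ and define alternating times $\sigma_k<\tau_k$: $\sigma_k$ is when $\|\partial_\theta\mathcal{L}^{i,j,k,N}(\theta_t)\|$ first exceeds $\kappa$, and $\tau_k$ is the end of a subsequent interval on which $\int\gamma_s\,\mathrm{d}s\approx\lambda$ with $\lambda$ small.

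The key analytic input is control of the fluctuation term. I will use the Poisson equation for the generator $\mathcal{A}^N$ of the IPS on $(\mathbb{R}^d)^N$. For each $\theta$, let $v(\theta,\boldsymbol{x}^N)$ solve $\mathcal{A}^N v = h^{i,j,k,N}-\partial_\theta\mathcal{L}^{i,j,k,N}$, with $\pi^N_{\theta_0}$-mean zero. I need polynomial growth bounds on $v$, $\partial_{\boldsymbol{x}}v$, $\partial_\theta v$, $\partial^2_\theta v$ and $\partial_\theta\partial_{\boldsymbol{x}}v$. These should come from the exponential (or sufficiently fast) convergence to $\pi_{\theta_0}^N$ given by Assumption~\ref{assumption:drift} via \cite{cattiaux2008probabilistic}, together with the polynomial growth of $\partial_\theta^k b$ from Assumption~\ref{assumption:drift-grad} (this is why $k\le 3$ is needed). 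Alternatively, I could represent $v=-\int_0^\infty P_t^N(\cdot)\,\mathrm{d}t$ and differentiate under the integral.

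Apply It\^o's formula to $\gamma_t v(\theta_t,\boldsymbol{x}_t^N)$. This rewrites $\int_{\sigma}^{\tau}\gamma_s(h-\partial\mathcal{L})\,\mathrm{d}s$ as a sum of pieces:
\begin{itemize}
\item boundary terms $\gamma_t v$;
\item a $\dot\gamma_s v$ integral, controlled by $\int|\dot\gamma|<\infty$;
\item terms with $\gamma_s^2$ factors, coming from $\mathrm{d}\theta$ and the cross-variation, controlled by $\int\gamma^2<\infty$;
\item martingales with $L^2$-bounded quadratic variation.
\end{itemize}
Combined with the uniform-in-time moment bounds $\sup_t\mathbb{E}\|\boldsymbol{x}_t^N\|^q<\infty$, and using $\gamma_t t^p\to0$ to absorb the polynomial-in-$\boldsymbol{x}$ growth in the boundary terms (via a Borel--Cantelli argument on $\sup_{s\le t}\|\boldsymbol{x}^N_s\|$ growing at most like $t^{\epsilon}$), this shows that the fluctuation and noise contributions over $[\sigma_k,\tau_k]$ tend to zero a.s. as $k\to\infty$.

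The descent argument comes next. Assumption~\ref{assumption:drift-grad} implies that $\partial_\theta\mathcal{L}^{i,j,k,N}$ is Lipschitz in $\theta$ and that $\mathcal{L}^{i,j,k,N}$ is bounded below; note $\mathcal{L}^{i,j,k,N}$ may be negative because of the $\ell$ cross terms, so I need the boundedness from the uniform moment bounds and the bounded derivatives. Hence, for $\lambda$ small, $\|\partial_\theta\mathcal{L}\|\ge\kappa/2$ on each $[\sigma_k,\tau_k]$ for large $k$. Then
\[
\mathcal{L}(\theta_{\tau_k})-\mathcal{L}(\theta_{\sigma_k})\le-c\lambda\kappa^2+o(1).
\]
On $[\tau_k,\sigma_{k+1}]$ the increase of $\mathcal{L}$ is $o(1)$ by the same noise estimates. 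If infinitely many cycles occurred, $\mathcal{L}(\theta_t)\to-\infty$, which is a contradiction. So $\|\partial_\theta\mathcal{L}(\theta_t)\|\le\kappa$ eventually, and letting $\kappa\downarrow0$ along a countable sequence gives the claim. The hypothesis $\mathbb{P}(\theta_t\in\Theta\ \forall t)=1$ ensures the unprojected dynamics \eqref{eq:IPS_update2-a-v0} is what actually runs.

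The main obstacle is the Poisson-equation regularity on $(\mathbb{R}^d)^N$, with polynomial growth bounds on $\theta$- and $\boldsymbol{x}$-derivatives. This is needed especially under Assumption~\ref{assumption:drift}(b), where one works on the hyperplane $\mathcal{M}_N$ and ellipticity holds only there. The dimension of the process is fixed ($N$ finite), so constants may depend on $N$. Growth bounds should therefore follow from the Pardoux--Veretennikov results once I verify a Lyapunov/contractivity condition from the $C(A,\alpha)$ property. If that verification is delicate, I would first try the semigroup representation with Wasserstein-coupling contraction from \cite{cattiaux2008probabilistic}, differentiating the coupled flows in $\boldsymbol{x}$.
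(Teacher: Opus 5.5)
Your proposal is correct and follows essentially the same route as the paper. Both use the Sirignano--Spiliopoulos cycle of stopping times, control the fluctuation term through a Poisson equation for the generator of the $N$-particle system and It\^o's formula applied to $\gamma_t v(\theta_t,\boldsymbol{x}_t^N)$, and reach a contradiction with the uniform bound on $\mathcal{L}^{i,j,k,N}$ (Lemma~\ref{lemma:main-theorem-lemma-2-a}); the Poisson regularity you flag is, in the paper, simply taken from (a variant of) Lemma~17 of \cite{sharrock2023online}. The one real difference is minor: you fix each descent interval by $\int\gamma_s\,\mathrm{d}s=\lambda$ and use the uniform bound and Lipschitz continuity of $\partial_\theta\mathcal{L}^{i,j,k,N}$ to keep the gradient above $\kappa/2$, whereas the paper builds a factor-$2$ band into $\sigma_r$ and then proves $\lambda/2\le\int_{\tau_r}^{\sigma_r}\gamma_s\,\mathrm{d}s\le\lambda$ in Lemma~\ref{lemma:main-theorem-lemma-2}.
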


\begin{proof}
    See Appendix \ref{app:additional-proofs-main-results-convergence}.
\end{proof}

\begin{remark}
    The assumption that the iterates remain in the admissible set for all times is automatically satisfied in the unconstrained case where $\Theta=\mathbb{R}^p$. On the other hand, when $\Theta\subset \mathbb{R}^p$, this assumption is not automatic. In this case, our conclusions still hold \emph{conditional} on $\smash{\{\bar{\theta}_t^{i,N}\in\Theta~\forall t\geq 0\}}$ and $\smash{\{{\theta}_t^{i,j,k,N}\in\Theta~\forall t\geq 0\}}$, respectively. Conversely, conditional on the events $\smash{\{\bar{\theta}_t^{i,N}\in\Theta~\forall t\geq 0\}}^{\mathsf{c}}$ and $\smash{\{{\theta}_t^{i,j,k,N}\in\Theta~\forall t\geq 0\}}^{\mathsf{c}}$, we have that $\lim_{t\rightarrow\infty}\bar{\theta}_t^{i,N}=\partial\Theta$ and $\lim_{t\to\infty}{\theta}_t^{i,j,k,N}=\partial\Theta$. This follows from the definition of our dynamics: once the iterates hit the boundary, they remain there for all times (see Remark~\ref{remark:constrained}).
\end{remark}

We next establish that the estimators $\smash{(\bar{\theta}^{i,N}_t)_{t\geq 0}}$ and $\smash{(\theta^{i,j,k,N}_t)_{t\geq 0}}$ both converge to the stationary points of the asymptotic negative log-likelihood function $\mathcal{L}$ as first the time-horizon $t\rightarrow\infty$ and then the number of particles $N\rightarrow\infty$.

\begin{theorem}
\label{theorem:main-result-1}
    Suppose that Assumption \ref{assumption:moments}, Assumption~\ref{assumption:drift}, Assumption~\ref{assumption:drift-grad}, and Assumption~\ref{assumption:learning-rate} hold. Let $N\in\mathbb{N}$ and let $i,j,k\in[N]$ be distinct. Suppose that $\mathbb{P}(\bar{\theta}_t^{i,N}\in\Theta~\forall t\geq 0)=\mathbb{P}(\theta_t^{i,j,k,N}\in\Theta~\forall t\geq 0)=1$ for all $N\in\mathbb{N}$. Then it holds almost surely that
    \begin{align}
    \label{eq:final-limit-1}
\lim_{N\rightarrow\infty}\limsup_{t\rightarrow\infty}\|\partial_{\theta}\mathcal{L}(\bar{\theta}_t^{i,N})\|&=0 \\ \lim_{N\rightarrow\infty}\limsup_{t\rightarrow\infty}\|\partial_{\theta}\mathcal{L}(\theta_t^{i,j,k,N})\|&=0. \label{eq:final-limit-2}
    \end{align}
\end{theorem}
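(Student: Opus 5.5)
The plan is to combine the two preceding results with a triangle inequality. Fix $N\in\mathbb{N}$ and distinct $i,j,k\in[N]$. For every $t\geq 0$,
\begin{align}
\|\partial_{\theta}\mathcal{L}(\theta_t^{i,j,k,N})\| &\leq \|\partial_{\theta}\mathcal{L}^{i,j,k,N}(\theta_t^{i,j,k,N})\| + \|\partial_{\theta}\mathcal{L}^{i,j,k,N}(\theta_t^{i,j,k,N}) - \partial_{\theta}\mathcal{L}(\theta_t^{i,j,k,N})\| \\
&\leq \|\partial_{\theta}\mathcal{L}^{i,j,k,N}(\theta_t^{i,j,k,N})\| + \sup_{\theta\in\Theta}\|\partial_{\theta}\mathcal{L}^{i,j,k,N}(\theta) - \partial_{\theta}\mathcal{L}(\theta)\|.
\end{align}
The second step uses the hypothesis that $\theta_t^{i,j,k,N}\in\Theta$ for all $t$ almost surely, so that the supremum over $\Theta$ dominates the pointwise discrepancy. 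The same decomposition applies to $\bar{\theta}_t^{i,N}$, with $\mathcal{L}^{i,N}$ in place of $\mathcal{L}^{i,j,k,N}$.

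Next take $\limsup_{t\to\infty}$. By Proposition~\ref{prop:inf-t-convergence-1}, whose hypotheses are exactly those assumed here, the first term tends to zero almost surely for each fixed $N$. By Proposition~\ref{prop:inf-n-convergence-1}, the second term is deterministic, independent of $t$, and bounded by $K_1^{\dagger}\rho(N)+K_2^{\dagger}N^{-1/(2(1+\alpha))}$. This gives, almost surely,
\[
\limsup_{t\to\infty}\|\partial_{\theta}\mathcal{L}(\theta_t^{i,j,k,N})\| \leq K_1^{\dagger}\rho(N) + \frac{K_2^{\dagger}}{N^{\frac{1}{2(1+\alpha)}}}.
\]
The analogous bound, with constants $K_1,K_2$, holds for $\bar{\theta}_t^{i,N}$.

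To pass to $N\to\infty$, I would take the null set to be the countable union over $N$ (and the relevant index triples) of the exceptional null sets from Proposition~\ref{prop:inf-t-convergence-1}. Off this union the displayed bound holds simultaneously for all $N$. Since $\rho(N)\to 0$ and $N^{-1/(2(1+\alpha))}\to 0$, letting $N\to\infty$ gives \eqref{eq:final-limit-1} and \eqref{eq:final-limit-2}. Only $\limsup_t$ is claimed for fixed $N$, so I do not need the iterates to converge in $t$.

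I expect the main obstacle to be checking that the constants in Proposition~\ref{prop:inf-n-convergence-1} are uniform in $N$ and in the choice of indices. Only then does the right-hand side genuinely vanish as $N\to\infty$. I would justify this from exchangeability of $\pi_{\theta_0}^N$, the uniform-in-time moment bounds, and the uniform-in-time propagation of chaos of \citet{cattiaux2008probabilistic}, which feed into that proof. A secondary point is that the indices $i,j,k$ change with $N$, but by exchangeability the laws involved do not depend on which distinct indices are chosen, so the bounds are unaffected.
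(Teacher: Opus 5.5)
Your proposal is correct and follows essentially the same route as the paper: the triangle inequality with the supremum over $\Theta$ (justified by the iterates staying in $\Theta$), Proposition~\ref{prop:inf-t-convergence-1} to make the first term vanish as $t\to\infty$ for each fixed $N$, and Proposition~\ref{prop:inf-n-convergence-1} to send the uniform discrepancy to zero as $N\to\infty$. Two points you raise are left implicit in the paper: taking the countable union of exceptional null sets over $N$, and requiring the constants in Proposition~\ref{prop:inf-n-convergence-1} to be uniform in $N$ and in the choice of indices.
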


\begin{proof}
    See Appendix \ref{app:additional-proofs-main-results-convergence}.
\end{proof}

\subsubsection{Convergence Rates}
\label{sec:main-results-convergence-rates}
Under some additional assumptions, we can also obtain an $L^2$ convergence rate. We will first require some additional conditions on the learning rate. These conditions, which resemble those introduced in \cite{sirignano2020stochastic}, ensure that fluctuation terms which appear in the ODE governing the $L^2$ distance to the optimiser vanish sufficiently quickly as $t\rightarrow\infty$. They are satisfied, for example, by the standard choice $\gamma_t = \gamma_0 (1+t)^{-\beta}$, given $\gamma_0>0$ and $\beta\in(1/2,1)$ 

\begin{assumption}
    \label{assumption:learning-rate-v2}
Let $\Phi_{s,t} = \exp(-2\zeta\int_{s}^{t} \gamma_u\mathrm{d}u)$, where $\zeta\in\{\eta,\eta_{i,N},\eta_{i,j,k,N}\}$ is equal to the strong convexity constant in Assumption \ref{assumption:convexity-finite-n} or Assumption~\ref{assumption:convexity-infinite-n}, depending on the result at hand. The learning rate $\gamma_t:\mathbb{R}_{+}\rightarrow\mathbb{R}_{+}$ satisfies
\begin{align}
    \int_{0}^{t}\gamma^2_s\Phi_{s,t}\mathrm{d}s = O(\gamma_t), \quad \int_{0}^{t}|\dot{\gamma}_s|\Phi_{s,t}\mathrm{d}s=O(\gamma_t),  \quad &\int_0^{t}\gamma_s\Phi_{s,t}\mathrm{d}s = O(1), \quad \Phi_{0,t} = O(\gamma_t).
\end{align}
as $t\rightarrow\infty$. In addition, writing $a_s:\mathbb{R}_{+}\rightarrow\mathbb{R}_{+}$ for the function which characterises the rate of convergence to the invariant distribution (see Theorem~\ref{thm:invariant-distribution-2}), the learning rate satisfies
\begin{alignat}{3}
    &\Phi_{0,t}^{\frac{1}{2}} = o(\gamma_t^{\frac{1}{2}}), \quad &&\int_0^t  \gamma_s \Phi_{s,t}^{\frac{1}{2}}  \mathrm{d}s = O(1), \quad &&\int_0^t  \gamma_s^2 \Phi_{s,t}^{\frac{1}{2}}  \mathrm{d}s = o(\gamma_t^{\frac{1}{2}}),  \\
    &\int_0^t \gamma_s^{\frac{5}{2}}\Phi_{s,t}\mathrm{d}s = o(\gamma_t), \quad &&\int_0^t \gamma_s\Phi_{s,t}^{\frac{1}{2}}a_s^{\frac{1}{2}}\mathrm{d}s = o(\gamma_t^{\frac{1}{2}}), \quad  &&\int_0^t  \Phi_{s,t}\,\gamma_s^2a_s^{\frac{1}{2}(1-\varepsilon)}  \mathrm{d}s = o(\gamma_t), ~\varepsilon\in(0,1)
\end{alignat}
\end{assumption}

In addition to this condition on the learning rate, we will now need to assume that the relevant objective function is strongly convex. We provide two alternative assumptions. The first, which relates to the finite-particle functions $\mathcal{L}^{i,N}$ and $\mathcal{L}^{i,j,k,N}$, is relevant to the case where the time horizon $t\rightarrow\infty$, with the number of particles $N$ assumed fixed and finite. The second, which relates to the limiting function $\mathcal{L}$, is relevant to the case where both the time horizon $t\rightarrow\infty$ and the number of particles $N\rightarrow\infty$.

\begin{assumption}
\label{assumption:convexity-finite-n}
    Let $N\in\mathbb{N}$, and let $i,j,k\in[N]$ be distinct. The functions $\mathcal{L}^{i,N}$ and $\mathcal{L}^{i,j,k,N}$ are strongly convex with constants $\eta_{i,N}>0$ and $\eta_{i,j,k,N}>0$. 
\end{assumption}

\begin{assumption}
\label{assumption:convexity-infinite-n}
    The function $\mathcal{L}$ is strongly convex with constant $\eta>0$.
\end{assumption}

Once again, we begin by providing a result which characterises the asymptotic convergence rate of the estimators $(\bar{\theta}_t^{i,N})_{t\geq 0}$ and $(\theta_t^{i,j,k,N})_{t\geq 0}$ in the limit as the time horizon $t\rightarrow\infty$, given a fixed and finite number of particles. In particular, the following theorem establishes an $L^2$ convergence rate for $(\bar{\theta}_t^{i,N})_{t\geq 0}$ and $(\theta_t^{i,j,k,N})_{t\geq 0}$, assuming strong convexity of the asymptotic (in time), finite-particle, incomplete-data negative log-likelihoods.

\begin{theorem}
\label{theorem:main-theorem-1-2-finite-n}
    Suppose that Assumption \ref{assumption:moments}, Assumption~\ref{assumption:drift}, Assumption~\ref{assumption:drift-grad}, Assumption~\ref{assumption:learning-rate}, Assumption~\ref{assumption:learning-rate-v2}, Assumption~\ref{assumption:convexity-finite-n} hold. Let $N\in\mathbb{N}$, and let $i,j,k\in[N]$ be distinct. Suppose that $\mathbb{P}(\bar{\theta}_t^{i,N}\in\Theta~\forall t\geq 0)=\mathbb{P}(\theta_t^{i,j,k,N}\in\Theta~\forall t\geq 0)=1$. Suppose also that $\sup_{t\geq 0}\mathbb{E}[\|\bar{\theta}_t^{i,N}\|^l]<\infty$ and $\sup_{t\geq 0}\mathbb{E}[\|{\theta}_t^{i,j,k,N}\|^l] <\infty$ for all $l\in\mathbb{N}$. Finally, suppose that $\Theta$ is convex. Then, for sufficiently large $t\in\mathbb{R}_{+}$, there exist positive constants $K_{1},K_2>0$, $K_{1}^{\dagger},K_2^{\dagger}>0$, such that 
    \begin{align}
        &\mathbb{E}\left[\|\bar{\theta}_t^{i,N} - \theta_0^{i,N}\|^2\right]  \leq (K_{1}+K_{2})\gamma_t  \label{eq:average-l2-rate-1} \\
        &\mathbb{E}\left[\|\theta_t^{i,j,k,N}-\theta_0^{i,j,k,N}\|^2\right] \leq (K_{1}^{\dagger}+K_{2}^{\dagger})\gamma_t  \label{eq:non-average-l2-rate-1}
    \end{align}
    where $\theta_0^{i,N}$ and $\theta_0^{i,j,k,N}$ denote the unique minimisers of $\mathcal{L}^{i,N}$ and $\mathcal{L}^{i,j,k,N}$, respectively. Moreover, writing $\theta_0$ for the true parameter, there exist constants $K_3^{\dagger},K_4^{\dagger}>0$ such that
    \begin{align}
        &\mathbb{E}\left[\|\bar{\theta}_t^{i,N} - \theta_0\|^2\right]  \leq (K_{1}+K_{2})\gamma_t \label{eq:average-l2-rate-2} \\
        &\mathbb{E}\left[\|\theta_t^{i,j,k,N}-\theta_0\|^2\right] \leq 2(K_{1}^{\dagger}+K_{2}^{\dagger})\gamma_t + 2 \frac{1}{(\eta_{i,j,k,N})^2} \left[K_3^{\dagger}\rho^2(N) + \frac{K_4^{\dagger}}{N^{\frac{1}{1+\alpha}}}\right]. \label{eq:non-average-l2-rate-2}
    \end{align}
\end{theorem}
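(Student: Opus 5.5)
The plan is to follow the Sirignano--Spiliopoulos $L^2$-rate argument, applied to each estimator with its finite-particle objective; the two bounds are structurally identical. I describe it for $\theta_t:=\theta_t^{i,j,k,N}$ with minimiser $\theta^\star:=\theta_0^{i,j,k,N}$. The estimator $\bar\theta_t^{i,N}$ is treated in the same way, with $H^{i,N}$ in place of $h^{i,j,k,N}$.

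\textbf{Step 1: decomposition.} Apply It\^o's formula to $Y_t=\|\theta_t-\theta^\star\|^2$, using the decomposition \eqref{theta_ideal_2-a-i-ii} with $\partial_\theta\mathcal{L}$ replaced by $\partial_\theta\mathcal{L}^{i,j,k,N}$ (Proposition~\ref{prop:asymptotic-partial-log-lik-grad}). This gives
\begin{equation}
\mathrm{d}Y_t = -2\gamma_t\langle \theta_t-\theta^\star,\partial_\theta\mathcal{L}^{i,j,k,N}(\theta_t)\rangle\mathrm{d}t -2\gamma_t\langle\theta_t-\theta^\star, h^{i,j,k,N}(\theta_t,\boldsymbol{x}_t^N)-\partial_\theta\mathcal{L}^{i,j,k,N}(\theta_t)\rangle\mathrm{d}t + \gamma_t^2\|g\sigma^{-\top}\|^2\mathrm{d}t + \mathrm{d}M_t .
\end{equation}
Since $\partial_\theta\mathcal{L}^{i,j,k,N}(\theta^\star)=0$, convexity of $\Theta$ and strong convexity (Assumption~\ref{assumption:convexity-finite-n}) bound the first term by $-2\eta_{i,j,k,N}\gamma_tY_t$. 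Taking expectations removes the martingale, and the comparison lemma yields
\begin{equation}
\mathbb{E}[Y_t]\le \Phi_{0,t}Y_0+\int_0^t\Phi_{s,t}\gamma_s^2C\,\mathrm{d}s+\Big|\mathbb{E}\int_0^t\Phi_{s,t}\gamma_s\langle\cdot,\cdot\rangle\mathrm{d}s\Big| .
\end{equation}
Uniform moment bounds on $\boldsymbol{x}_t^N$ and $\theta_t$, combined with the polynomial growth in Assumption~\ref{assumption:drift-grad}, make the constant $C$ finite. Assumption~\ref{assumption:learning-rate-v2} then bounds the first two terms by $K_1\gamma_t$.

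\textbf{Step 2: the fluctuation term.} This is the main obstacle. I would introduce the Poisson equation $\mathcal{A}_{\boldsymbol{x}}v(\theta,\boldsymbol{x})=h^{i,j,k,N}(\theta,\boldsymbol{x})-\partial_\theta\mathcal{L}^{i,j,k,N}(\theta)$ for the generator of the IPS \eqref{eq:vector-sde} at $\theta_0$. Its solution is $v=-\int_0^\infty(P_s^N h-\pi^N_{\theta_0}h)\mathrm{d}s$, and it has polynomial growth in $\boldsymbol{x}$ together with its $\theta$-derivatives, by the ergodicity rate $a_s$ (Theorem~\ref{thm:invariant-distribution-2}) and the assumed $\theta$-regularity. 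I would then apply It\^o to $\Phi_{s,t}\gamma_s\langle\theta_s-\theta^\star,v(\theta_s,\boldsymbol{x}_s^N)\rangle$. This turns the fluctuation integral into four kinds of terms: boundary terms, which are $O(\gamma_t)$; a term in $\dot\gamma_s$; terms of order $\gamma_s^2$ coming from $\mathrm{d}\theta_s$ and the cross-variation; and a martingale. Each is controlled by the integrability conditions on $\Phi$ in Assumption~\ref{assumption:learning-rate-v2}. Where only $\Phi^{1/2}$ bounds are available, I would use Cauchy--Schwarz to produce $\mathbb{E}[Y_s]^{1/2}$ factors. The nonstationary start, which means using $a_s$ rather than exact stationarity, enters through the $a_s^{1/2}$ and $a_s^{\frac12(1-\varepsilon)}$ conditions. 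The result is a bound of the form $K_2\gamma_t+o(\gamma_t^{1/2})\sup_{s\le t}\mathbb{E}[Y_s]^{1/2}$, and a bootstrap closes it to give \eqref{eq:non-average-l2-rate-1}. Running the same argument with $H^{i,N}$ gives \eqref{eq:average-l2-rate-1}.

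\textbf{Step 3: distance to $\theta_0$.} For \eqref{eq:non-average-l2-rate-2}, write $\|\theta_t-\theta_0\|^2\le2\|\theta_t-\theta^\star\|^2+2\|\theta^\star-\theta_0\|^2$. Because $\partial_\theta\mathcal{L}(\theta_0)=0$, strong convexity gives
\begin{equation}
\eta_{i,j,k,N}\|\theta^\star-\theta_0\|\le\|\partial_\theta\mathcal{L}^{i,j,k,N}(\theta_0)-\partial_\theta\mathcal{L}^{i,j,k,N}(\theta^\star)\|=\|\partial_\theta\mathcal{L}^{i,j,k,N}(\theta_0)-\partial_\theta\mathcal{L}(\theta_0)\|.
\end{equation}
Squaring and applying \eqref{eq:l-i-j-k-convergence-rate} from Proposition~\ref{prop:inf-n-convergence-1} gives the stated term with $K_3^\dagger\rho^2(N)+K_4^\dagger N^{-1/(1+\alpha)}$.

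For \eqref{eq:average-l2-rate-2}, I would show that $\theta_0^{i,N}=\theta_0$ exactly, which yields the bound with no bias term. Since $H(\theta_0,x,\mu)=0$ for every $\mu$, we get $\partial_\theta\mathcal{L}^{i,N}(\theta_0)=0$, and the minimiser is unique. By contrast, $h(\theta_0,\cdot)$ only has zero mean under the product measure $\pi_{\theta_0}^{\otimes 3}$, not under $\pi^N_{\theta_0}$, which is why the three-particle estimator carries the extra bias term.
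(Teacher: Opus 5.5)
Your Steps 1 and 3 follow the paper's argument. Step 1 is It\^o on $\|\theta_t-\theta^\star\|^2$ with the integrating factor $\Phi_{s,t}$ and strong convexity. Step 3 is the $(a+b)^2$ split, the strong-monotonicity bound $\eta_{i,j,k,N}\|\theta^\star-\theta_0\|\le\|\partial_\theta\mathcal{L}^{i,j,k,N}(\theta_0)-\partial_\theta\mathcal{L}(\theta_0)\|$, and Proposition~\ref{prop:inf-n-convergence-1}. Deriving $\theta_0^{i,N}=\theta_0$ from $H(\theta_0,\cdot,\cdot)\equiv 0$ is an equally valid substitute for the paper's observation that $\mathcal{L}^{i,N}\ge 0$ vanishes at $\theta_0$. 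Step 2 uses the same Poisson device as the paper: your vector solution $v$ and the paper's scalar solution of $\mathcal{A}_{\boldsymbol{x}^N}\tilde v=\langle\theta-\theta^\star,\partial_\theta\mathcal{L}^{i,j,k,N}-h^{i,j,k,N}\rangle$ are related by $\tilde v=-\langle\theta-\theta^\star,v\rangle$.

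The gap is in how you close Step 2. A bound $\mathbb{E}[Y_t]\le K_2\gamma_t+o(\gamma_t^{1/2})\sup_{s\le t}\mathbb{E}[Y_s]^{1/2}$ cannot be bootstrapped to $O(\gamma_t)$. The supremum over $[0,t]$ is at least $Y_0$, which is positive unless you start at $\theta^\star$, and at most a constant by the moment hypothesis, so it never improves. All you can conclude is $\mathbb{E}[Y_t]=o(\gamma_t^{1/2})$, which is strictly weaker than \eqref{eq:non-average-l2-rate-1} because $\gamma_t\to 0$. A working bootstrap would need a weighted quantity such as $\sup_{s\le t}\mathbb{E}[Y_s]/\gamma_s$ with a matching weighted error term, and your sketch does not supply one.

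None of that machinery is needed. Applying It\^o to $\gamma_s\Phi_{s,t}\langle\theta_s-\theta^\star,v(\theta_s,\boldsymbol{x}_s^N)\rangle$ produces the target fluctuation integral plus only the following:
\begin{itemize}
\item an upper boundary term with expectation $\gamma_t\cdot O(1)$;
\item a lower boundary term equal to $\Phi_{0,t}\cdot O(1)=O(\gamma_t)$;
\item integrals weighted by $|\dot\gamma_s|\Phi_{s,t}$;
\item integrals weighted by $\gamma_s^2\Phi_{s,t}$, coming from $\partial_s\Phi_{s,t}$, from $\mathrm{d}\theta_s$ in both factors, and from the cross-variations;
\item true martingales.
\end{itemize}
Each $O(1)$ factor is controlled by $\sup_t\mathbb{E}\|\theta_t\|^l<\infty$, Theorem~\ref{thm:moment-bounds}, and polynomial growth of $v$, its $\theta$-derivatives up to order two, and $\partial_\theta\partial_{\boldsymbol{x}}v$. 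The first-row conditions of Assumption~\ref{assumption:learning-rate-v2}, namely $\Phi_{0,t}=O(\gamma_t)$ and $\int_0^t(|\dot\gamma_s|+\gamma_s^2)\Phi_{s,t}\,\mathrm{d}s=O(\gamma_t)$, then give $O(\gamma_t)$ directly. This is exactly the paper's proof.

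The weight is $\Phi_{s,t}$ throughout, so there is no point where ``only $\Phi^{1/2}$ bounds are available''. The It\^o--Poisson identity also holds pathwise from any initial law, so nonstationarity needs no separate treatment through $a_s$. The $\Phi^{1/2}$ and $a_s$ conditions are there for the CLTs (Theorems~\ref{theorem:clt} and~\ref{theorem:clt-inf-n}). There one scales by $\gamma_t^{-1/2}$ and uses $\|\Phi^{*}_{s,t}\|\le K\Phi_{s,t}^{1/2}$, and in the second theorem one also couples to stationary McKean--Vlasov copies.

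Relatedly, the polynomial growth of $v$ should come from a Pardoux--Veretennikov-type result for the fixed-$N$ diffusion, not from $a_s$; the paper invokes Lemma~17 of \cite{sharrock2023online}. For $\alpha\ge 1$, $a_s^{1/2}$ decays only like $s^{-1/\alpha}$, which is not integrable. So $a_s$ alone does not make $\int_0^\infty(P^N_sh-\pi^N_{\theta_0}h)\,\mathrm{d}s$ converge.
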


\begin{proof}
    See Appendix \ref{app:additional-proofs-main-results-convergence-rates}.
\end{proof}  

\begin{remark}
The uniform bounded-moments assumption on $\smash{(\bar\theta_t^{i,N})_{t\ge0}}$ and $\smash{(\theta_t^{i,j,k,N})_{t\ge0}}$ is automatic whenever $\Theta$ is compact, since then $\smash{\sup_{t\ge0}\|\bar\theta_t^{i,N}\|}$ and $\smash{\sup_{t\ge0}\|\theta_t^{i,j,k,N}\|}$ are almost surely bounded. In the unconstrained case $\Theta=\mathbb{R}^p$, such bounds can be established under standard \emph{dissipativity} conditions, via a comparison theorem \citep[e.g.,][]{ikeda1977comparison}. See, e.g., \cite{sirignano2020stochastic,sharrock2023online} for some specific examples.
\end{remark}

\begin{remark}
    The bounds in \eqref{eq:average-l2-rate-1} - \eqref{eq:non-average-l2-rate-1} show that, for each fixed $N\in\mathbb{N}$, the estimators converge to the minimisers $\theta_0^{i,N}$ and $\theta_0^{i,j,k,N}$ of the finite particles objectives $\mathcal{L}^{i,N}$ and $\mathcal{L}^{i,j,k,N}$, respectively, at a rate determined by the learning rate $\gamma_t$. Meanwhile, the bounds in \eqref{eq:average-l2-rate-2} - \eqref{eq:non-average-l2-rate-2} show that the convergence of the two estimators w.r.t. the true parameter $\theta_0$ is quantitatively different. In particular, \eqref{eq:average-l2-rate-2} implies that, for each fixed $N\in\mathbb{N}$, in the limit as $t\rightarrow\infty$, the ``averaged'' estimator is asymptotically \emph{unbiased}, while the ``non-averaged'' (i.e., three particle) estimator is asymptotically \emph{biased}. This is a consequence of the fact that $\smash{\theta_0^{i,N}}$ always coincides with the true parameter $\theta_0$, and so consistency w.r.t. $\theta_0^{i,N}$ in \eqref{eq:average-l2-rate-1} immediately implies consistency w.r.t. $\theta_0$ in \eqref{eq:average-l2-rate-2}. On the other hand, $\theta_0^{i,j,k,N}$ is generally not equal to $\theta_0$, and thus \eqref{eq:non-average-l2-rate-2} contains an additional bias term in comparison to \eqref{eq:non-average-l2-rate-1}, which only vanishes as $N\rightarrow\infty$. In practice, this suggests the following trade-off. In cases where $N$ is small, the additional finite-particle bias term for the non-averaged estimator will be significant, and thus the averaged estimator is likely to be preferable. On the other hand, when $N$ is moderate to large, the additional bias term will be negligible, and thus the non-averaged estimator is likely to be preferable, given its substantially reduced computational cost and observational requirements. 
\end{remark}

\begin{corollary}
\label{corollary:main-theorem-1-2-finite-n-with-m}
    Suppose that Assumption \ref{assumption:moments}, Assumption~\ref{assumption:drift}, Assumption~\ref{assumption:drift-grad}, Assumption~\ref{assumption:learning-rate}, Assumption~\ref{assumption:learning-rate-v2}, and Assumption~\ref{assumption:convexity-finite-n} hold. Let $N\in\mathbb{N}$, $M\in[N]$, and let $i,j,k\in[N]$ be distinct. Suppose that $\mathbb{P}(\bar{\theta}_t^{N,M}\in\Theta~\forall t\geq 0)=\mathbb{P}(\theta_t^{N,M}\in\Theta~\forall t\geq 0)=1$. Suppose also that $\sup_{t\geq 0}\mathbb{E}[\|\bar{\theta}_t^{N,M}\|^l]<\infty$ and $\sup_{t\geq 0}\mathbb{E}[\|{\theta}_t^{N,M}\|^l] <\infty$ for all $l\in\mathbb{N}$. Finally, suppose that $\Theta$ is convex. Then, for sufficiently large $t\in\mathbb{R}_{+}$, there exists positive constants $K_{1},K_2>0$, $K_{1}^{\dagger},K_2^{\dagger}>0$ such that
    \noeqref{eq:average-l2-rate-1-with-m}
    \begin{align}
        &\mathbb{E}\left[\|\bar{\theta}_t^{N,M} - \theta_0^{i,N}\|^2\right]  \leq (K_{1}+\frac{K_{2}}{M})\gamma_t  \label{eq:average-l2-rate-1-with-m} \\
        &\mathbb{E}\left[\|\theta_t^{N,M}-\theta_0^{i,j,k,N}\|^2\right] \leq (K_{1}^{\dagger}+\frac{K_{2}^{\dagger}}{M})\gamma_t  \label{eq:non-average-l2-rate-1-with-m}
    \end{align}
    where $\theta_0^{i,N}$ and $\theta_0^{i,j,k,N}$ denote the unique minimisers of $\mathcal{L}^{i,N}$ and $\mathcal{L}^{i,j,k,N}$, respectively. Moreover, writing $\theta_0$ for the true parameter, there exist constants $K_3^{\dagger},K_4^{\dagger}>0$
    such that, 
    \begin{align}
        &\mathbb{E}\left[\|\bar{\theta}_t^{N,M} - \theta_0\|^2\right]  \leq (K_{1}+\frac{K_{2}}{M})\gamma_t \label{eq:average-l2-rate-2-with-m} \\
        &\mathbb{E}\left[\|\theta_t^{N,M}-\theta_0\|^2\right] \leq 2(K_{1}^{\dagger}+\frac{K_{2}^{\dagger}}{M})\gamma_t + 2 \frac{1}{(\eta_{i,j,k,N})^2} \left[K_3^{\dagger}\rho^2(N) + \frac{K_4^{\dagger}}{N^{\frac{1}{1+\alpha}}}\right]. \label{eq:non-average-l2-rate-2-with-m}
    \end{align}
\end{corollary}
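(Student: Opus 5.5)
The plan is to reuse the argument behind Theorem~\ref{theorem:main-theorem-1-2-finite-n} for the averaged dynamics, and to track exactly where the factor $1/M$ enters. Write $\Delta_t = \theta_t^{N,M}-\theta_0^{i,j,k,N}$ (respectively $\bar\Delta_t = \bar\theta_t^{N,M}-\theta_0^{i,N}$). By exchangeability of the particles under $\pi_{\theta_0}^N$, the functions $\mathcal{L}^{i',j',k',N}$ for $(i',j',k')\in\mathcal{C}(\Pi)$ all coincide with $\mathcal{L}^{i,j,k,N}$, and likewise $\mathcal{L}^{i',N}=\mathcal{L}^{i,N}$. So the averaged drift has the same mean-field descent term $-\gamma_t\partial_\theta\mathcal{L}^{i,j,k,N}(\theta_t^{N,M})$ as before. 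Applying It\^o's formula to $\|\Delta_t\|^2$ gives three groups of terms:
\begin{itemize}
\item a descent term, bounded by $-2\eta_{i,j,k,N}\gamma_t\|\Delta_t\|^2$ using Assumption~\ref{assumption:convexity-finite-n} and convexity of $\Theta$;
\item a fluctuation term $-2\gamma_t\langle \Delta_t, \frac{1}{M}\sum_{\mathcal{C}(\Pi)}(h^{i',j',k',N}-\partial_\theta\mathcal{L}^{i,j,k,N})\rangle\mathrm{d}t$;
\item a martingale term plus an It\^o correction $\gamma_t^2\,\mathrm{tr}\big(\frac{1}{M^2}\sum_{\mathcal{C}(\Pi)} g\,(\sigma\sigma^\top)^{-1}g^\top\big)\mathrm{d}t$.
\end{itemize}

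For the It\^o correction, independence of the Brownian motions $w^{i',N}$ across the distinct primary indices $i'\in\Pi$ kills the cross-variation terms. Only $M$ diagonal terms remain, each bounded in expectation by the uniform moment bounds on the particles and on $\theta_t^{N,M}$. This yields $\frac{C}{M}\gamma_t^2$, and after the variation-of-constants step with Assumption~\ref{assumption:learning-rate-v2} it contributes $\frac{K_2^\dagger}{M}\gamma_t$. In the finite-$N$ proof this is precisely the source of the constant $K_2^\dagger$, so the identification of $K_2$ and $K_2^\dagger$ with the martingale/quadratic-variation contribution is what I would check first.

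The fluctuation term is handled exactly as in Theorem~\ref{theorem:main-theorem-1-2-finite-n}. For each summand we solve the Poisson equation for the $(\mathbb{R}^d)^N$-valued generator of the IPS, with right-hand side $h^{i',j',k',N}(\theta,\cdot)-\partial_\theta\mathcal{L}^{i,j,k,N}(\theta)$. We then apply It\^o's formula to $\gamma_t\langle\Delta_t, v(\theta_t,\boldsymbol{x}_t^N)\rangle$, where $v$ is the average of the Poisson solutions. This converts the term into:
\begin{itemize}
\item boundary terms;
\item $\dot\gamma$-terms;
\item $\gamma^2$-terms;
\item a convergence-to-stationarity term, weighted by $a_s$.
\end{itemize}
All of these are $O(\gamma_t)$ or $o(\gamma_t)$ by Assumption~\ref{assumption:learning-rate-v2}, with constants uniform in $M$ because the average of the Poisson solutions obeys the same polynomial bounds. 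This gives $K_1^\dagger\gamma_t$. Gr\"onwall with $\Phi_{s,t}$ then gives \eqref{eq:non-average-l2-rate-1-with-m}, and the identical argument with $H$, $G$ gives \eqref{eq:average-l2-rate-1-with-m}.

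For the bounds relative to $\theta_0$, \eqref{eq:average-l2-rate-2-with-m} follows immediately because $\theta_0^{i,N}=\theta_0$: $\partial_\theta\mathcal{L}^{i,N}(\theta_0)=0$ since $H(\theta_0,\cdot)\equiv0$, and the minimiser is unique. For \eqref{eq:non-average-l2-rate-2-with-m}, we use $\|a+b\|^2\le2\|a\|^2+2\|b\|^2$ and bound $\|\theta_0^{i,j,k,N}-\theta_0\|$ via strong convexity:
\[
\eta_{i,j,k,N}\|\theta_0^{i,j,k,N}-\theta_0\|\le\|\partial_\theta\mathcal{L}^{i,j,k,N}(\theta_0)\|=\|\partial_\theta\mathcal{L}^{i,j,k,N}(\theta_0)-\partial_\theta\mathcal{L}(\theta_0)\|.
\]
Proposition~\ref{prop:inf-n-convergence-1} then bounds the right-hand side, and squaring gives the bias term. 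The main obstacle is the It\^o correction step: one must show that the cross terms vanish rather than merely being bounded. These cross terms arise because $g$ evaluated at different triplets shares particles. The vanishing holds because the noise in each summand is $\mathrm{d}w^{i',N}$ with $i'$ the first index only, and these first indices are distinct.
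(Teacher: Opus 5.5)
Your proposal is correct and has the same architecture as the paper's proof:
\begin{itemize}
\item It\^o's formula for $\|z_t\|^2$ combined with strong convexity;
\item the variation-of-constants weight $\Phi_{s,t}$;
\item a Poisson-equation treatment of the averaged fluctuation term, with constants uniform in $M$;
\item the strong-convexity bias bound for $\|\theta_0^{i,j,k,N}-\theta_0\|$ combined with Proposition~\ref{prop:inf-n-convergence-1}.
\end{itemize}
You also state explicitly a fact the paper uses only tacitly: by exchangeability of $\pi_{\theta_0}^N$, all $\mathcal{L}^{i',j',k',N}$ with $(i',j',k')\in\mathcal{C}(\Pi)$ coincide, and likewise all $\mathcal{L}^{i',N}$. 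This is what makes the averaged drift a noisy gradient of $\mathcal{L}^{i,j,k,N}$ (resp.\ $\mathcal{L}^{i,N}$) and makes $R^{N,M}$ centred.

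Where you genuinely differ is the one step that produces the factor $1/M$, and here your argument is the right one. The paper writes the noise as $\gamma_t\big(\tfrac1M\sum g^{i,j,N}\big)\sigma^{-\top}\mathrm{d}w_t^{i,N}$, with a single Brownian motion. It then bounds the It\^o correction via $\|\tfrac1M\sum_r A_r\|_F^2\le\tfrac1M\sum_r\|A_r\|_F^2$. That inequality only yields $\tfrac1M\sum_{r=1}^M\mathbb{E}\|A_r\|_F^2=O(1)$, so the paper's next bound $\tfrac{K}{M}\int_1^t\gamma_s^2\Phi_{s,t}\,\mathrm{d}s$ is off by a factor of $M$. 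Moreover, with a single driving Brownian motion no $1/M$ gain is possible at all.

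Your mechanism supplies the missing justification:
\begin{itemize}
\item each summand is driven by $\mathrm{d}w^{i',N}$, where $i'$ is its first index;
\item the first indices in $\mathcal{C}(\Pi)$ are distinct, including when $M\le 2$, by the footnoted construction;
\item hence the cross-variations vanish even though the integrands share particles, and the correction is $\tfrac{1}{M^2}\sum_{i'}\mathrm{tr}\big(g(\sigma\sigma^\top)^{-1}g^\top\big)$, of order $K/M$.
\end{itemize}
The same reasoning over the distinct $i'\in\Pi$ gives the $K_2/M$ term for $\bar\theta_t^{N,M}$.

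One minor point: the $a_s$-weighted ``convergence-to-stationarity'' term in your list does not arise in the fixed-$N$ argument. It\^o's formula is applied along the actual non-stationary path, and the Poisson equation alone handles the centring. Including it is harmless but unnecessary.
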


\begin{proof}
    See Appendix \ref{app:additional-proofs-main-results-convergence-rates}.
\end{proof}

We next characterise the asymptotic convergence rate of $(\bar{\theta}_t^{i,N})_{t\geq 0}$ and $(\theta_t^{i,j,k,N})_{t\geq 0}$ as both $t\rightarrow\infty$ and $N\rightarrow\infty$. This means, in particular, that we assume now convexity of the asymptotic (in time and in the number of particles) complete-data negative log-likelihood $\mathcal{L}$, rather than the asymptotic (in time, but not in the number of particles) pseudo negative log-likelihoods $\mathcal{L}^{i,N}$ or $\mathcal{L}^{i,j,k,N}$. 

\begin{theorem}
\label{theorem:main-theorem-1-2}
    Suppose that Assumption \ref{assumption:moments}, Assumption~\ref{assumption:drift}, Assumption~\ref{assumption:drift-grad}, Assumption~\ref{assumption:learning-rate}, Assumption~\ref{assumption:learning-rate-v2}, and Assumption~\ref{assumption:convexity-infinite-n} hold. Let $N\in\mathbb{N}$ and let $i,j,k\in[N]$ be distinct. Suppose that $\mathbb{P}(\bar{\theta}_t^{i,N}\in\Theta~\forall t\geq 0)=\mathbb{P}(\theta_t^{i,j,k,N}\in\Theta~\forall t\geq 0)=1$ for all $N\in\mathbb{N}$. Suppose also that $\sup_{t\geq 0}\mathbb{E}[\|\bar{\theta}_t^{i,N}\|^l]<\infty$ and $\sup_{t\geq 0}\mathbb{E}[\|{\theta}_t^{i,j,k,N}\|^l] <\infty$ for all $l\in\mathbb{N}$, and for all $N\in\mathbb{N}$. Finally, suppose that $\Theta$ is convex. Then, for sufficiently large $t\in\mathbb{R}_{+}$, there exists positive constants $K_{1},K_2,K_3,K_4>0$, $K_{1}^{\dagger},K_2^{\dagger},K_3^{\dagger},K_4^{\dagger}>0$ such that 
    \begin{align}
        &\mathbb{E}\left[\|\bar{\theta}_t^{i,N} - \theta_0\|^2\right]  \leq (K_{1}+K_{2})\gamma_t + K_3 \rho(N) + \frac{K_4}{N^{\frac{1}{2(1+\alpha)}}} \\
        &\mathbb{E}\left[\|\theta_t^{i,j,k,N}-\theta_0\|^2\right] \leq (K_{1}^{\dagger}+K_{2}^{\dagger})\gamma_t + K_3^{\dagger} \rho(N) + \frac{K_4^{\dagger}}{N^{\frac{1}{2(1+\alpha)}}}
    \end{align}
    Suppose, in addition, that $\sup_{\theta\in\Theta}\|\partial_{\theta}^2\mathcal{L}^{i,N}(\theta) - \partial_{\theta}^2\mathcal{L}(\theta)\|_{\mathrm{op}}\leq \delta_{i,N}$ and $\sup_{\theta\in\Theta}\|\partial_{\theta}^2\mathcal{L}^{i,j,k,N}(\theta) - \partial_{\theta}^2 \mathcal{L}(\theta)\|_{\mathrm{op}}\leq \delta_{i,j,k,N}$, where  $0<\delta_{i,N}<\eta$ and $0<\delta_{i,j,k,N}<\eta$ for sufficiently large $N$. Then 
    \begin{align}
        &\mathbb{E}\left[\|\bar{\theta}_t^{i,N} - \theta_0\|^2\right]  \leq (K_{1}+K_{2})\gamma_t \label{eq:average-l2-rate-2-inf-n} \\
        &\mathbb{E}\left[\|\theta_t^{i,j,k,N}-\theta_0\|^2\right] \leq 2(K_{1}^{\dagger}+K_{2}^{\dagger})\gamma_t + 2 \frac{1}{(\eta-\delta_{i,j,k,N})^2} \left[K_3^{\dagger}\rho^2(N) + \frac{K_4^{\dagger}}{N^{\frac{1}{1+\alpha}}}\right]. \label{eq:non-average-l2-rate-2-inf-n}
    \end{align}
\end{theorem}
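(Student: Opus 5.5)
Both estimators are handled by the same argument, so I describe it for $\theta_t:=\theta_t^{i,j,k,N}$, with drift $h^{i,j,k,N}$ and noise coefficient $g\,\sigma^{-\top}$; for $\bar\theta_t^{i,N}$, replace these by $H^{i,N}$ and $G\sigma^{-\top}$. The plan is to run the $L^2$ Lyapunov argument of \cite{sirignano2020stochastic} directly with respect to $\mathcal{L}$, which is the objective that is now strongly convex. The finite-$N$ mismatch between the drift $\partial_\theta\mathcal{L}^{i,j,k,N}$ and $\partial_\theta\mathcal{L}$ is then absorbed using Proposition~\ref{prop:inf-n-convergence-1}.

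\textbf{Main computation.} Set $e_t=\theta_t-\theta_0$, where $\theta_0$ is the minimiser of $\mathcal{L}$. Apply Itô's formula to $\|e_t\|^2$, using the decomposition \eqref{theta_ideal_2-a-i-ii} with $\partial_\theta\mathcal{L}$ split as $\partial_\theta\mathcal{L}^{i,j,k,N}+(\partial_\theta\mathcal{L}-\partial_\theta\mathcal{L}^{i,j,k,N})$. This produces four contributions:
\begin{itemize}
\item[(a)] a dissipative term $-2\gamma_t\langle e_t,\partial_\theta\mathcal{L}(\theta_t)\rangle\le -2\eta\gamma_t\|e_t\|^2$, by Assumption~\ref{assumption:convexity-infinite-n};
\item[(b)] a bias term $2\gamma_t\langle e_t,\partial_\theta\mathcal{L}(\theta_t)-\partial_\theta\mathcal{L}^{i,j,k,N}(\theta_t)\rangle$;
\item[(c)] a fluctuation term $-2\gamma_t\langle e_t,h^{i,j,k,N}(\theta_t,\boldsymbol{x}_t^N)-\partial_\theta\mathcal{L}^{i,j,k,N}(\theta_t)\rangle$;
\item[(d)] a martingale term, together with the Itô correction $\gamma_t^2\|g\sigma^{-\top}\|^2$.
\end{itemize}
Take expectations and use variation of constants with $\Phi_{s,t}$ at $\zeta=\eta$. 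Then $\Phi_{0,t}=O(\gamma_t)$ handles the initial condition. The correction term is $O(\int_0^t\gamma_s^2\Phi_{s,t}\,ds)=O(\gamma_t)$, using the uniform moment bounds on the particles and on $\theta_t$. For the bias term I would bound it crudely: by Proposition~\ref{prop:inf-n-convergence-1} and the boundedness of $\sup_t\mathbb{E}\|e_t\|$, it is at most $C\gamma_t\,[K_1^\dagger\rho(N)+K_2^\dagger N^{-1/(2(1+\alpha))}]$. Integrating against $\Phi_{s,t}$ and using $\int_0^t\gamma_s\Phi_{s,t}ds=O(1)$ gives the $K_3^\dagger\rho(N)+K_4^\dagger N^{-\frac{1}{2(1+\alpha)}}$ terms of the first display. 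This is exactly why those terms appear linearly rather than squared.

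\textbf{Fluctuation term (main obstacle).} The hardest step is showing that the fluctuation term (c) contributes only $O(\gamma_t)$. I would first try a Poisson equation for the $N$-particle generator at $\theta_0$, with right-hand side $h^{i,j,k,N}(\theta,\cdot)-\partial_\theta\mathcal{L}^{i,j,k,N}(\theta)$, centred under $\pi^N_{\theta_0}$. I would rely on the bounds on its solution $v(\theta,\boldsymbol{x})$ and its $\theta$-derivatives that I expect are used in the proof of Proposition~\ref{prop:inf-t-convergence-1}; Assumption~\ref{assumption:drift-grad} with $k\le3$ supplies the needed smoothness. Applying Itô's formula to $\gamma_t\langle e_t,v(\theta_t,\boldsymbol{x}_t^N)\rangle$ rewrites (c) as a boundary term, a martingale, and terms weighted by $\dot\gamma_t$, $\gamma_t^2$ and cross-variations. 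Each is then controlled through the corresponding conditions in Assumption~\ref{assumption:learning-rate-v2}.

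The remaining conditions there, involving $a_s$ and $\Phi^{1/2}$, handle the non-stationarity of $\boldsymbol{x}_t^N$. For this I would use Theorem~\ref{thm:invariant-distribution-2} to replace expectations under $\mathrm{Law}(\boldsymbol{x}^N_t)$ by expectations under $\pi^N_{\theta_0}$, combined with Cauchy--Schwarz. The constants must be uniform in $N$. I expect this to hold because the Poisson bounds depend only on the uniform-in-time, uniform-in-$N$ moment estimates and on the ergodicity rate of \cite{cattiaux2008probabilistic}, although this is the point most needing care.

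\textbf{Refined bounds.} For the second pair of bounds, decompose $e_t=(\theta_t-\theta_0^{i,j,k,N})+(\theta_0^{i,j,k,N}-\theta_0)$ and use $\|a+b\|^2\le2\|a\|^2+2\|b\|^2$.
\begin{itemize}
\item \emph{Estimation term.} The Hessian condition gives $\partial^2_\theta\mathcal{L}^{i,j,k,N}\succeq(\eta-\delta_{i,j,k,N})I$. So $\mathcal{L}^{i,j,k,N}$ is strongly convex, and the argument of Theorem~\ref{theorem:main-theorem-1-2-finite-n} bounds $\mathbb{E}\|\theta_t-\theta_0^{i,j,k,N}\|^2$ by $(K_1^\dagger+K_2^\dagger)\gamma_t$.
\item \emph{Bias term.} Strong convexity gives
\[
(\eta-\delta_{i,j,k,N})\|\theta_0^{i,j,k,N}-\theta_0\|\le\|\partial_\theta\mathcal{L}^{i,j,k,N}(\theta_0)-\partial_\theta\mathcal{L}(\theta_0)\|,
\]
since $\partial_\theta\mathcal{L}^{i,j,k,N}(\theta_0^{i,j,k,N})=0=\partial_\theta\mathcal{L}(\theta_0)$. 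Squaring and applying Proposition~\ref{prop:inf-n-convergence-1} yields \eqref{eq:non-average-l2-rate-2-inf-n}.
\item \emph{Averaged estimator.} Here $\theta_0^{i,N}=\theta_0$, because $H^{i,N}(\theta_0,\cdot)\equiv0$. The bias term therefore vanishes, giving \eqref{eq:average-l2-rate-2-inf-n}.
\end{itemize}
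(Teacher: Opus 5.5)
Your proposal is correct and follows the paper's proof essentially step for step. The paper also runs an It\^o/variation-of-constants argument for $\|\theta_t-\theta_0\|^2$ under strong convexity of $\mathcal{L}$. It handles the finite-time fluctuation with the $N$-particle Poisson equation, and the finite-particle bias with Proposition~\ref{prop:inf-n-convergence-1} together with $\int\gamma_s\Phi_{s,t}\,\mathrm{d}s=O(1)$, which is why the $\rho(N)$ terms appear linearly. It then transfers strong convexity to $\mathcal{L}^{i,N}$ and $\mathcal{L}^{i,j,k,N}$ via the Hessian bound and invokes Theorem~\ref{theorem:main-theorem-1-2-finite-n}.

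One simplification: your step using Theorem~\ref{thm:invariant-distribution-2} and the $a_s$/$\Phi^{1/2}$ learning-rate conditions to handle non-stationarity is unnecessary here. The Poisson identity $\mathcal{A}_{\boldsymbol{x}^N}v=S$ holds pointwise, so after It\^o's formula only the uniform moment bounds are needed; those extra conditions are used only in the central limit theorems.
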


\begin{proof}
    See Appendix \ref{app:additional-proofs-main-results-convergence-rates}.
\end{proof}

\begin{remark}
The additional conditions required for the second set of statements in Theorem~\ref{theorem:main-theorem-1-2} are precisely the conditions required to transfer strong convexity of the limiting objective $\mathcal L$ to strong convexity of the finite-particle objectives. This allows us to recover the same $L^2$ rates as in the fixed-$N$ setting (see Theorem~\ref{theorem:main-theorem-1-2-finite-n}), up to replacing the strong convexity constant by $\eta-\delta_{i,N}$ or $\eta - \delta_{i,j,k,N}$. These additional conditions are very mild. In particular, under our existing assumptions, one can obtain (as in Proposition~\ref{prop:inf-n-convergence-1}) bounds of the form
\begin{alignat}{2}
    \sup_{\theta\in\Theta}\big\| \partial_{\theta}^2\mathcal{L}^{i,N}(\theta) - \partial_{\theta}^2 \mathcal{L}(\theta)\big\|_{\mathrm{op}} 
    &\leq \delta_{i,N}, \qquad &&\delta_{i,N} =\tilde K_1\rho(N) + \frac{\tilde K_2}{N^{\frac{1}{2(1+\alpha)}}}\label{eq:l-i-convergence-rate-hess} \\
    \sup_{\theta\in\Theta}\big\| \partial_{\theta}^2\mathcal{L}^{i,j,k,N}(\theta) - \partial_{\theta}^2 \mathcal{L}(\theta)\big\|_{\mathrm{op}} 
    &\leq \delta_{i,j,k,N},\qquad &&\delta_{i,j,k,N}=\tilde K_1^{\dagger}\rho(N) + \frac{\tilde K_2^{\dagger}}{N^{\frac{1}{2(1+\alpha)}}}. \label{eq:l-i-j-k-convergence-rate-hess} 
\end{alignat}
Thus, in particular, $\delta_{i,N}\to0$ and $\delta_{i,j,k,N}\to0$ as $N\to\infty$, so the conditions hold for all sufficiently large $N$.
\end{remark}

\subsubsection{Central Limit Theorem}
\label{sec:main-results-clt}

Finally, we obtain a central limit theorem. Once again, we begin in the case where the number of particles $N\in\mathbb{N}$ is fixed and finite, and we just consider asymptotics as $t\rightarrow\infty$. Similar to above, for this result, we will assume strong convexity of the finite-particle asymptotic incomplete-data negative log-likelihood, $\mathcal{L}^{i,N}$ or $\mathcal{L}^{i,j,k,N}$. In this case, we have the following result. 

\begin{theorem}
\label{theorem:clt}
    Suppose that Assumption \ref{assumption:moments}, Assumption~\ref{assumption:drift}, Assumption~\ref{assumption:drift-grad}, Assumption~\ref{assumption:learning-rate}, Assumption~\ref{assumption:learning-rate-v2}, and Assumption~\ref{assumption:convexity-finite-n} hold. Let $N\in\mathbb{N}$, and let $i,j,k\in[N]$ be distinct. Suppose that $\mathbb{P}(\bar{\theta}_t^{i,N}\in\Theta~\forall t\geq 0)=\mathbb{P}(\theta_t^{i,j,k,N}\in\Theta~\forall t\geq 0)=1$. Suppose, in addition, that $\smash{\sup_{t\geq 0}\mathbb{E}[\|\bar{\theta}_t^{i,N}\|^l]<\infty}$ and $\smash{\sup_{t\geq 0}\mathbb{E}[\|{\theta}_t^{i,j,k,N}\|^l] <\infty}$ for all $l\in\mathbb{N}$. Finally, suppose that $\Theta$ is convex. Then it holds that
    \begin{align}
        & \gamma_{t}^{-\frac{1}{2}}\big(\bar{\theta}^{i,N}_t - \theta_0^{i,N}\big) \stackrel{\mathrm{d}}{\longrightarrow} \mathcal{N}(0,\bar{\Sigma}^{i,N}) \label{eq:clt-average-1} \\
        & \gamma_{t}^{-\frac{1}{2}}\big(\theta^{i,j,k,N}_t - \theta_0^{i,j,k,N}\big) \stackrel{\mathrm{d}}{\longrightarrow} \mathcal{N}(0,\bar{\Sigma}^{i,j,k,N}) \label{eq:clt-non-average-1}
    \end{align}
    where $\theta_0^{i,N}$ and $\theta_0^{i,j,k,N}$ denote the unique minimisers of $\mathcal{L}^{i,N}$ and $\mathcal{L}^{i,j,k,N}$, respectively. 
    The limiting covariance matrices $\bar{\Sigma}^{i,N}$ and $\bar{\Sigma}^{i,j,k,N}$ are given by
    \begin{align}
        \bar{\Sigma}^{i,N} &= \lim_{t\rightarrow\infty} \left[{\gamma_t}^{-1}\int_0^t \gamma_s^2 \Phi_{s,t}^{*,i,N} \bar{\Gamma}^{i,N}(\theta_0^{i,N})  \Phi_{s,t}^{*,i,N,\top} \mathrm{d}s\right] \\
        \bar{\Sigma}^{i,j,k,N} &= \lim_{t\rightarrow\infty} \left[{\gamma_t}^{-1}\int_0^t \gamma_s^2 \Phi_{s,t}^{*,i,j,k,N} \bar{\Gamma}^{i,j,k,N}(\theta_0^{i,j,k,N})  \Phi_{s,t}^{*,i,j,k,N,\top} \mathrm{d}s\right]
    \end{align}
    where $\Phi^{*,i,N}_{s,t}\in\mathbb{R}^{p\times p}$ and $\Phi^{*,i,j,k,N}_{s,t}\in\mathbb{R}^{p\times p}$ are given by $\Phi^{*,i,N}_{s,t}= \exp[-\nabla^2\mathcal{L}^{i,N}(\theta_0^{i,N})\int_{s}^{t}\gamma_u\mathrm{d}u]$ and $\Phi^{*,i,j,k,N}_{s,t} = \exp[-\nabla^2\mathcal{L}^{i,j,k,N}(\theta_0^{i,j,k,N})\int_{s}^{t}\gamma_u\mathrm{d}u]$, and where $\bar{\Gamma}^{i,N}:\mathbb{R}^p\rightarrow\mathbb{R}^{p\times p}$ and $\bar{\Gamma}^{i,j,k,N}:\mathbb{R}^p\rightarrow\mathbb{R}^{p\times p}$ are given by
\begin{align}
    \bar{\Gamma}^{i,N}(\theta) &= \int_{(\mathbb{R}^d)^N} \Gamma^{i,N}(\theta,\boldsymbol{x}^N)\pi_{\theta_0}^N(\mathrm{d}\boldsymbol{x}^N) \\
     \bar{\Gamma}^{i,j,k,N}(\theta) &= \int_{(\mathbb{R}^d)^N} \Gamma^{i,j,k,N}(\theta,\boldsymbol{x}^N)\pi_{\theta_0}^N(\mathrm{d}\boldsymbol{x}^N) 
\end{align}
 with $\Gamma^{i,N}: \mathbb{R}^p\times(\mathbb{R}^d)^N\rightarrow\mathbb{R}^{p\times p}$ and $\Gamma^{i,j,k,N}: \mathbb{R}^p\times(\mathbb{R}^d)^N\rightarrow\mathbb{R}^{p\times p}$ given by
 \begin{align}
    \Gamma^{i,N}(\theta,\boldsymbol{x}^N)&= \left(G^{i,N}(\theta,{\boldsymbol{x}}^N)(\sigma\sigma^{\top})^{-1}E_i^{\top}  - \partial_{\boldsymbol{x}^N}v^{i,N}(\theta,{\boldsymbol{x}}^N) \right) \big(I_N\otimes (\sigma\sigma^{\top})\big) \\[-1mm]
    &~~~~\big(G^{i,N}(\theta,{\boldsymbol{x}}^N)(\sigma\sigma^{\top})^{-1}E_i^{\top}  - \partial_{\boldsymbol{x}^N}v^{i,N}(\theta,{\boldsymbol{x}}^N) \big)^{\top} \\[2mm]
    \Gamma^{i,j,k,N}(\theta,\boldsymbol{x}^N)&:=\left(g^{i,j,N}(\theta,{\boldsymbol{x}}^N)(\sigma\sigma^{\top})^{-1}E_i^{\top}  - \partial_{\boldsymbol{x}^N}v^{i,j,k,N}(\theta,{\boldsymbol{x}}^N) \right) \big(I_N\otimes (\sigma\sigma^{\top})\big) \\[-1mm]
    &~~~~\big(g^{i,j,N}(\theta,{\boldsymbol{x}}^N)(\sigma\sigma^{\top})^{-1}E_i^{\top}  - \partial_{\boldsymbol{x}^N}v^{i,j,k,N}(\theta,{\boldsymbol{x}}^N) \big)^{\top}. \notag
 \end{align}
where we use the shorthand $\smash{G^{i,N}(\theta,\boldsymbol{x}^N):=G(\theta,x^{i,N},\mu^N)}$ and $\smash{g^{i,j,N}(\theta,\boldsymbol{x}^N):=g(\theta,x^{i,N},x^{j,N})}$; $E_i\in\mathbb{R}^{dN\times d}$ is the matrix which selects the $i^{\mathrm{th}}$ component of a vector $\boldsymbol{x}^N = (x^{1,N},\dots,x^{N,N})^{\top}$, and $v^{i,N}(\theta,\boldsymbol{x}^N)$ and $v^{i,j,k,N}(\theta,\boldsymbol{x}^N)$ denote the solutions of the Poisson equations
\begin{align}
    \mathcal{A}_{\boldsymbol{x}^N} v^{i,N}(\theta,\boldsymbol{x}^N)  &= \partial_{\theta}\mathcal{L}^{i,N}(\theta) - H^{i,N}(\theta,\boldsymbol{x}^N), \quad \int_{(\mathbb{R}^d)^N} v^{i,N}(\theta,\boldsymbol{x}^N) \pi_{\theta_0}^{N}(\mathrm{d}\boldsymbol{x}^N)=0 \\
    \mathcal{A}_{\boldsymbol{x}^N} v^{i,j,k,N}(\theta,\boldsymbol{x}^N)  &=  \partial_{\theta}\mathcal{L}^{i,j,k,N}(\theta) - h^{i,j,k,N}(\theta,\boldsymbol{x}^N), \quad \int_{(\mathbb{R}^d)^N} v^{i,j,k,N}(\theta,\boldsymbol{x}^N) \pi_{\theta_0}^{N}(\mathrm{d}\boldsymbol{x}^N)=0.
\end{align}
\end{theorem}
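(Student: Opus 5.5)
We follow the classical route for CLTs of continuous-time stochastic gradient descent (as in \cite{sirignano2020stochastic}), treating both estimators in parallel. We present the argument for $\theta_t:=\theta_t^{i,j,k,N}$; the averaged estimator $\bar\theta_t^{i,N}$ is identical after replacing $(h^{i,j,k,N},g^{i,j,N},v^{i,j,k,N})$ by $(H^{i,N},G^{i,N},v^{i,N})$. Write $\theta^*:=\theta_0^{i,j,k,N}$ and $\Delta_t:=\theta_t-\theta^*$. The starting point is the decomposition \eqref{theta_ideal_2-a-i-ii}, with $\partial_\theta\mathcal{L}$ replaced by $\partial_\theta\mathcal{L}^{i,j,k,N}$.

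\textbf{Step 1 (Poisson equation for the fluctuation term).} The fluctuation term is $\gamma_t(h^{i,j,k,N}(\theta_t,\boldsymbol{x}_t^N)-\partial_\theta\mathcal{L}^{i,j,k,N}(\theta_t))\mathrm{d}t$. We remove it by applying It\^o's formula to $\gamma_t v^{i,j,k,N}(\theta_t,\boldsymbol{x}_t^N)$, where $v^{i,j,k,N}$ solves the Poisson equation in the statement. Existence, uniqueness, and polynomial-growth bounds on $v$, $\partial_\theta v$, $\partial_\theta^2 v$, $\partial_{\boldsymbol{x}}v$ and $\partial_\theta\partial_{\boldsymbol{x}}v$ follow from the exponential convergence of the $N$-particle system to $\pi^N_{\theta_0}$ (Theorem~\ref{thm:invariant-distribution-2}) and Assumption~\ref{assumption:drift-grad} up to $k=3$, exactly as in the proof of Proposition~\ref{prop:inf-t-convergence-1}.

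This rewrites the fluctuation integral as a sum of four pieces:
\begin{itemize}[label={}]
\item (a) boundary terms $\gamma_t v_t-\gamma_0 v_0$;
\item (b) a $\dot\gamma$ term;
\item (c) terms of order $\gamma_s^2$, from the $\theta$-derivatives of $v$ and the cross-variation;
\item (d) a martingale $-\int\gamma_s\partial_{\boldsymbol{x}^N}v\,\Sigma_N\mathrm{d}\boldsymbol{w}_s^N$.
\end{itemize}
Combining (d) with the original noise term $\gamma_s g^{i,j,N}\sigma^{-\top}\mathrm{d}w^{i,N}_s=\gamma_s g^{i,j,N}(\sigma\sigma^\top)^{-1}E_i^\top\Sigma_N\mathrm{d}\boldsymbol{w}^N_s$ gives a single martingale $\int\gamma_s\Xi_s\,\Sigma_N\mathrm{d}\boldsymbol{w}_s^N$. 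Its quadratic-variation density is precisely $\Gamma^{i,j,k,N}(\theta_s,\boldsymbol{x}^N_s)$.

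\textbf{Step 2 (linearisation).} Taylor expand $\partial_\theta\mathcal{L}^{i,j,k,N}(\theta_t)=\nabla^2\mathcal{L}^{i,j,k,N}(\theta^*)\Delta_t+R_t$, with $\|R_t\|\le C\|\Delta_t\|^2$; this uses convexity of $\Theta$ and bounded third derivatives. By variation of constants with $\Phi^*_{s,t}$,
\begin{equation}
\Delta_t=\Phi^*_{0,t}\Delta_0-\int_0^t\gamma_s\Phi^*_{s,t}R_s\mathrm{d}s+\int_0^t\Phi^*_{s,t}\,\mathrm{d}(\text{Step 1 remainders})_s+\int_0^t\gamma_s\Phi^*_{s,t}\Xi_s\Sigma_N\mathrm{d}\boldsymbol{w}_s^N.
\end{equation}
We multiply by $\gamma_t^{-1/2}$ and bound each non-martingale term in $L^1$:
\begin{itemize}[label={}]
\item $\Phi^*_{0,t}$ is handled by $\Phi_{0,t}^{1/2}=o(\gamma_t^{1/2})$;
\item the $R$ term uses $\mathbb{E}\|\Delta_s\|^2\le C\gamma_s$ from Theorem~\ref{theorem:main-theorem-1-2-finite-n}, together with $\int\gamma_s^2\Phi_{s,t}^{1/2}\mathrm{d}s=o(\gamma_t^{1/2})$;
\item the boundary, $\dot\gamma$ and $\gamma^2$ terms use the remaining integrals in Assumption~\ref{assumption:learning-rate-v2} and the uniform moment bounds on $\theta_t$ and $\boldsymbol{x}_t^N$.
\end{itemize}
The term $\Phi^*_{s,t}\mathrm{d}(\gamma_sv_s)$ must first be integrated by parts, producing $\gamma_s^2\nabla^2\mathcal{L}\,\Phi^*_{s,t}v_s$, which is controlled in the same way.

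\textbf{Step 3 (martingale CLT) — the main obstacle.} It remains to show that $M_t:=\gamma_t^{-1/2}\int_0^t\gamma_s\Phi^*_{s,t}\Xi_s\Sigma_N\mathrm{d}\boldsymbol{w}^N_s\to\mathcal{N}(0,\bar\Sigma^{i,j,k,N})$ in distribution. We argue via convergence of the (deterministic-time-indexed) quadratic variation, using a Gaussian-martingale argument such as the characteristic-function method or Dambis--Dubins--Schwarz; since the integrand depends on $t$, one proceeds through the characteristic function conditional on the filtration. The difficulty is to show that
\begin{equation}
\gamma_t^{-1}\int_0^t\gamma_s^2\Phi^*_{s,t}\big[\Gamma(\theta_s,\boldsymbol{x}_s^N)-\bar\Gamma(\theta^*)\big]\Phi^{*\top}_{s,t}\mathrm{d}s\to0 \quad\text{in } L^1.
\end{equation}
We split this into two parts.
\begin{itemize}[label={}]
\item $\Gamma(\theta_s,\cdot)-\Gamma(\theta^*,\cdot)$ is bounded via local Lipschitz continuity in $\theta$ and $\mathbb{E}\|\Delta_s\|\le C\gamma_s^{1/2}$, giving $o(1)$ by $\int\gamma_s^{5/2}\Phi_{s,t}=o(\gamma_t)$.
\item $\Gamma(\theta^*,\boldsymbol{x}_s^N)-\bar\Gamma(\theta^*)$ is handled by a second Poisson equation (for each entry of $\Gamma$), or directly by the mixing rate $a_s$. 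This gives $o(1)$ through $\int\Phi_{s,t}\gamma_s^2a_s^{(1-\varepsilon)/2}=o(\gamma_t)$ and the $\dot\gamma$ conditions.
\end{itemize}
Finally, the existence of the limit defining $\bar\Sigma^{i,j,k,N}$ follows by solving the Lyapunov-type ODE satisfied by $\gamma_t^{-1}\int_0^t\gamma_s^2\Phi^*\bar\Gamma\Phi^{*\top}\mathrm{d}s$, using positive definiteness of $\nabla^2\mathcal{L}^{i,j,k,N}(\theta^*)$ (Assumption~\ref{assumption:convexity-finite-n}). Slutsky's lemma then yields \eqref{eq:clt-non-average-1}, and \eqref{eq:clt-average-1} follows in the same way.
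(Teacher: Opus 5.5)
Your proposal is correct and follows essentially the same route as the paper's proof. The shared steps are: a second-order Taylor linearisation about $\theta_0^{i,j,k,N}$ with variation of constants through $\Phi^{*,i,j,k,N}_{s,t}$; a Poisson equation that converts the fluctuation term into boundary, $\dot\gamma$, $O(\gamma_s^2)$ and martingale pieces; the $\mathrm{L}^2$ rate of Theorem~\ref{theorem:main-theorem-1-2-finite-n} to control the Taylor remainder; and a martingale CLT driven by convergence of the quadratic variation. That convergence is split into a $\theta$-perturbation part, handled via $\int\gamma_s^{5/2}\Phi_{s,t}\,\mathrm{d}s=o(\gamma_t)$, and an ergodic part, handled by an entrywise Poisson equation. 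You split through $\Gamma^{i,j,k,N}(\theta_0^{i,j,k,N},\boldsymbol{x}_s^N)$ rather than through $\bar\Gamma^{i,j,k,N}(\theta_s)$, which is immaterial.

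Two minor caveats. First, the condition $\int\Phi_{s,t}\gamma_s^2a_s^{(1-\varepsilon)/2}\,\mathrm{d}s=o(\gamma_t)$ only controls the initial transient of the law of $\boldsymbol{x}_s^N$, not the fluctuations of the weighted time average itself, so the ``or directly by the mixing rate'' alternative cannot replace the second Poisson equation on its own. Second, your Lyapunov-ODE argument for the existence of $\bar\Sigma^{i,j,k,N}$ also needs something like convergence of $\dot\gamma_t/\gamma_t^2$. This holds for $\gamma_t=\gamma_0(1+t)^{-\beta}$ but is not implied by Assumption~\ref{assumption:learning-rate-v2}; the paper likewise takes the existence of this limit for granted.
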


\begin{proof}
See Appendix \ref{app:additional-proofs-main-results-clt}.
\end{proof}

Finally, we consider the situation where also the number of particles $N\rightarrow\infty$. In this setting, the natural assumption is once more that the asymptotic (both in time and in particles) complete-data negative log-likelihood is strongly convex. 

\begin{theorem}
\label{theorem:clt-inf-n}
    Suppose that Assumption \ref{assumption:moments}, Assumption~\ref{assumption:drift}, Assumption~\ref{assumption:drift-grad}, Assumption~\ref{assumption:learning-rate}, Assumption~\ref{assumption:learning-rate-v2}, and Assumption~\ref{assumption:convexity-infinite-n} hold. Let $N\in\mathbb{N}$, and let $i,j,k\in[N]$ be distinct. Suppose that $\mathbb{P}(\bar{\theta}_t^{i,N}\in\Theta~\forall t\geq 0)=\mathbb{P}(\theta_t^{i,j,k,N}\in\Theta~\forall t\geq 0)=1$. Suppose, in addition, that $\smash{\sup_{t\geq 0}\mathbb{E}[\|\bar{\theta}_t^{i,N}\|^l]<\infty}$ and $\smash{\sup_{t\geq 0}\mathbb{E}[\|{\theta}_t^{i,j,k,N}\|^l] <\infty}$ for all $l\in\mathbb{N}$. Suppose also that $\Theta$ is convex. Finally, suppose that $N=N(t)\rightarrow\infty$ as $t\rightarrow\infty$ at the rate $\rho(N) + {N^{-\frac{1}{2(1+\alpha)}}} = o(\gamma_t^{\frac{1}{2}})$, where $\rho:\mathbb{N}\rightarrow\mathbb{R}_{+}$ is the function defined in \eqref{eq:rho}. Then it holds that
    \begin{align}
        & \gamma_{t}^{-\frac{1}{2}}\big(\bar{\theta}_t^{i,N} - \theta_0\big) \stackrel{\mathrm{d}}{\longrightarrow} \mathcal{N}(0,\bar{\Sigma}^{i}) \label{eq:clt-average-1-inf-n} \\
        & \gamma_{t}^{-\frac{1}{2}}\big(\theta_t^{i,j,k,N} - \theta_0\big) \stackrel{\mathrm{d}}{\longrightarrow} \mathcal{N}(0,\bar{\Sigma}^{i,j,k}) \label{eq:clt-non-average-1-inf-n}
    \end{align}
    The limiting covariance matrices $\bar{\Sigma}^{i}$ and $\bar{\Sigma}^{i,j,k}$ are given by
    \begin{align}
        \bar{\Sigma}^{i} &= \lim_{t\rightarrow\infty} \left[{\gamma_t}^{-1}\int_0^t \gamma_s^2 \Phi_{s,t}^{*} \bar{\Gamma}^{i}(\theta_0)  \Phi_{s,t}^{*,\top} \mathrm{d}s\right] \\
        \bar{\Sigma}^{i,j,k} &= \lim_{t\rightarrow\infty} \left[{\gamma_t}^{-1}\int_0^t \gamma_s^2 \Phi^{*}_{s,t} \bar{\Gamma}^{i,j,k}(\theta_0)  \Phi_{s,t}^{*,\top} \mathrm{d}s\right]
    \end{align}
where $\Phi^{*}_{s,t}\in\mathbb{R}^{p\times p}$ is given by $\Phi^{*}_{s,t} = \exp[-\nabla^2\mathcal{L}(\theta_0)\int_{s}^{t}\gamma_u\mathrm{d}u]$, where $\bar{\Gamma}^{i}:\mathbb{R}^p\rightarrow\mathbb{R}^{p\times p}$ and $\bar{\Gamma}^{i,j,k}:\mathbb{R}^p\rightarrow\mathbb{R}^{p\times p}$ are given by
\begin{align}
    \bar{\Gamma}^{i}(\theta) &= \int_{\mathbb{R}^d} \Gamma^{i}(\theta,{x}^i)\pi_{\theta_0}(\mathrm{d}{x}^i) \\
    \bar{\Gamma}^{i,j,k}(\theta) &= \int_{(\mathbb{R}^d)^3} \Gamma^{i,j,k}(\theta,\boldsymbol{x}^{(i,j,k)})\pi_{\theta_0}^{\otimes 3}(\mathrm{d}\boldsymbol{x}^{(i,j,k)}) 
\end{align}
with $\Gamma^{i}:\mathbb{R}^p\times\mathbb{R}^d\rightarrow\mathbb{R}^{p\times p}$ and $\Gamma^{i,j,k}:\mathbb{R}^p\times(\mathbb{R}^d)^3\rightarrow\mathbb{R}^{p\times p}$ given by
\begin{align}
    \Gamma^{i}(\theta,x^{i})&=\left(G(\theta,x^{i},\pi_{\theta_0}) (\sigma\sigma^{\top})^{-1}  - \partial_{{x}^i}v^{i}(\theta,{x}^{i}) \right) (\sigma\sigma^{\top})  \\
    &~~~~\left(G(\theta,x^{i},\pi_{\theta_0}) (\sigma\sigma^{\top})^{-1}  - \partial_{{x}^i}v^{i}(\theta,{x}^{i}) \right)^{\top} \notag \\[2mm]
    \Gamma^{i,j,k}(\theta,\boldsymbol{x}^{(i,j,k)})&=\left(g(\theta,\boldsymbol{x}^{(i,j)}) (\sigma\sigma^{\top})^{-1} D_i^{\top} - \partial_{\boldsymbol{x}}v^{i,j,k}(\theta,\boldsymbol{x}^{(i,j,k)}) \right) \Big(I_3 \otimes (\sigma\sigma^{\top})\Big)  \\
    &~~~~\left(g(\theta,\boldsymbol{x}^{(i,j)})(\sigma\sigma^{\top})^{-1} D_i^{\top}  - \partial_{\boldsymbol{x}}v^{i,j,k}(\theta,\boldsymbol{x}^{(i,j,k)}) \right)^{\top}, \notag
\end{align}
where $D_i\in\mathbb{R}^{3d\times d}$ is the matrix which selects the $x_i^{\mathrm{th}}$ component of a vector $\boldsymbol{x}^{i,j,k} = (x^{i},{x}^j,{x}^k)^{\top}$, and where $v^{i}(\theta,{x}^i)$ and $v^{i,j,k}(\theta,\boldsymbol{x}^{(i,j,k)})$ denote the solutions of the Poisson equations
\begin{align}
    \mathcal{A}_{{x}^i} v^{i}(\theta,{x}^i)  &= \partial_{\theta}\mathcal{L}(\theta) - H(\theta,{x}^i,\pi_{\theta_0}), \quad \int_{\mathbb{R}^d} v^{i}(\theta,{x}^i) \pi_{\theta_0}(\mathrm{d}{x}^i)=0 \\
    \mathcal{A}_{\boldsymbol{x}^{(i,j,k)}} v^{i,j,k}(\theta,\boldsymbol{x}^{(i,j,k)})  &=  \partial_{\theta}\mathcal{L}(\theta) - h(\theta,\boldsymbol{x}^{(i,j,k)},\pi_{\theta_0}), \quad \int_{(\mathbb{R}^d)^3} v^{i,j,k}(\theta,\boldsymbol{x}^{(i,j,k)}) \pi_{\theta_0}^{\otimes 3}(\mathrm{d}\boldsymbol{x}^{(i,j,k)})=0.
\end{align}
\end{theorem}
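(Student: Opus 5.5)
We follow the standard Sirignano--Spiliopoulos scheme for CLTs of continuous-time SGD, run jointly in $t$ and $N=N(t)$. We give the argument for $(\theta_t^{i,j,k,N})_{t\geq0}$; the averaged estimator is the same with $h$, $g$ replaced by $H$, $G$.

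\emph{Step 1: centre the iterate.} Write $\Delta_t=\theta_t^{i,j,k,N}-\theta_0$ and use the decomposition \eqref{theta_ideal_2-a-i-ii}. The fluctuation term $h(\theta_t,\boldsymbol{x}_t^{N})-\partial_\theta\mathcal{L}(\theta_t)$ is split into two parts: the finite-particle bias $\partial_\theta\mathcal{L}^{i,j,k,N}(\theta_t)-\partial_\theta\mathcal{L}(\theta_t)$, and the centred part $h^{i,j,k,N}(\theta_t,\boldsymbol{x}_t^N)-\partial_\theta\mathcal{L}^{i,j,k,N}(\theta_t)$.

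\emph{Step 2: remove the centred fluctuations.} Apply It\^o's formula to $\gamma_t v^{i,j,k,N}(\theta_t,\boldsymbol{x}^N_t)$, where $v^{i,j,k,N}$ is the finite-$N$ Poisson solution from Theorem~\ref{theorem:clt}. This rewrites the centred fluctuation as:
\begin{itemize}
\item a stochastic integral $\int\gamma_s\partial_{\boldsymbol{x}}v^{i,j,k,N}\Sigma_N\,\mathrm{d}\boldsymbol{w}_s^N$, and
\item remainder terms of order $\gamma_t$, $\int|\dot\gamma_s|$ and $\int\gamma_s^2$.
\end{itemize}
The remainders are negligible at scale $\gamma_t^{1/2}$ by Assumption~\ref{assumption:learning-rate-v2}, using polynomial bounds on $v$ and its derivatives uniform in $N$ together with the uniform moment bounds. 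Combining this stochastic integral with the noise term $\gamma_s g\sigma^{-\top}\mathrm{d}w_s^{i,N}$ gives a single martingale
\[
M_t=\int_0^t\gamma_s\big(g^{i,j,N}(\sigma\sigma^\top)^{-1}E_i^\top-\partial_{\boldsymbol{x}}v^{i,j,k,N}\big)\Sigma_N\,\mathrm{d}\boldsymbol{w}^N_s .
\]

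\emph{Step 3: linearise and isolate the bias.} Taylor-expand $\partial_\theta\mathcal{L}(\theta_t)=\nabla^2\mathcal{L}(\theta_0)\Delta_t+O(\|\Delta_t\|^2)$ and use variation of constants with $\Phi^*_{s,t}$. This gives
\[
\gamma_t^{-1/2}\Delta_t=\gamma_t^{-1/2}\int_0^t\Phi^*_{s,t}\,\mathrm{d}M_s+R_t .
\]
The remainder $R_t$ collects three contributions.
\begin{itemize}
\item The initial condition, $\gamma_t^{-1/2}\Phi^*_{0,t}\Delta_0\to0$.
\item The quadratic Taylor term, controlled by $\gamma_t^{-1/2}\int\gamma_s\Phi_{s,t}\mathbb{E}\|\Delta_s\|^2\,\mathrm{d}s$. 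The $L^2$ rate of Theorem~\ref{theorem:main-theorem-1-2} gives $\mathbb{E}\|\Delta_s\|^2\lesssim\gamma_s+\epsilon_{N(s)}$ with $\epsilon_N=\rho(N)+N^{-1/(2(1+\alpha))}$. The $\gamma_s$ part is handled by Assumption~\ref{assumption:learning-rate-v2}. The $\epsilon_N$ part is handled by the coupling $\epsilon_{N(t)}=o(\gamma_t^{1/2})$, provided the relevant kernel integrals over $\epsilon_{N(s)}$ converge.
\item The bias term, bounded by $\gamma_t^{-1/2}\int_0^t\gamma_s\Phi_{s,t}\sup_\theta\|\partial_\theta\mathcal{L}^{i,j,k,N}-\partial_\theta\mathcal{L}\|\,\mathrm{d}s$. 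By Proposition~\ref{prop:inf-n-convergence-1} this is at most $\gamma_t^{-1/2}\int_0^t\gamma_s\Phi_{s,t}\epsilon_{N}\,\mathrm{d}s=o(1)$, using $\int\gamma_s\Phi_{s,t}=O(1)$ and the rate condition on $N(t)$.
\end{itemize}

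\emph{Step 4: martingale CLT.} Apply the martingale CLT to $\gamma_t^{-1/2}\int_0^t\Phi^*_{s,t}\,\mathrm{d}M_s$. Its quadratic variation is
\[
\gamma_t^{-1}\int_0^t\gamma_s^2\Phi^*_{s,t}\Gamma^{i,j,k,N}(\theta_s,\boldsymbol{x}_s^N)\Phi^{*\top}_{s,t}\,\mathrm{d}s .
\]
This must converge in probability to $\bar\Sigma^{i,j,k}$, which reduces to three replacements.
\begin{enumerate}
\item Replace $\theta_s$ by $\theta_0$, by Lipschitz continuity in $\theta$ and the $L^2$ rate.
\item Replace $\Gamma^{i,j,k,N}(\theta_0,\boldsymbol{x}_s^N)$ by its three-particle mean-field analogue $\Gamma^{i,j,k}(\theta_0,x_s^i,x_s^j,x_s^k)$. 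This uses uniform-in-time propagation of chaos, and the key input is that $\partial_{\boldsymbol{x}}v^{i,j,k,N}$ converges, in the components $i,j,k$, to $\partial_{\boldsymbol{x}}v^{i,j,k}$, while the remaining $N-3$ components contribute a vanishing total.
\item Replace the time average by the $\pi_{\theta_0}^{\otimes3}$-average, by ergodicity, again through a Poisson equation, with the mixing rate $a_s$ entering via Assumption~\ref{assumption:learning-rate-v2}.
\end{enumerate}
A Lindeberg (or Lyapunov) condition follows from fourth-moment bounds.

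\emph{Main obstacle.} The hard part is replacement 2: controlling the finite-$N$ Poisson solution $v^{i,j,k,N}$ on $(\mathbb{R}^d)^N$ uniformly in $N$ and identifying its limit. The plan is to use the representation $v=\int_0^\infty\mathbb{E}[h-\partial_\theta\mathcal{L}^{i,j,k,N}]\,\mathrm{d}s$ along the IPS semigroup. Uniform-in-$N$ exponential (or polynomial) ergodicity, as in the results of Appendix~\ref{app:existing-results}, combined with a synchronous-coupling derivative bound, should show that $\partial_{x^\ell}v^{i,j,k,N}=O(1/N)$ for $\ell\notin\{i,j,k\}$. It should also show that the derivatives in the $i,j,k$ components converge to those of the mean-field Poisson solution. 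Since $\sum_\ell\|\partial_{x^\ell}v\|^2=O(1/N)$ over these components, their contribution to the quadratic variation vanishes.
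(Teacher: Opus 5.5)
There is a genuine gap, and it sits exactly at what you call the main obstacle. It comes from your choice in Step 2 to remove the centred fluctuation using the finite-$N$ Poisson solution $v^{i,j,k,N}$ on $(\mathbb{R}^d)^N$ while $N=N(t)\to\infty$. That choice forces you to prove two things.

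First, you need polynomial bounds on $v^{i,j,k,N}$, $\partial_\theta v^{i,j,k,N}$ and $\partial_\theta\partial_{\boldsymbol{x}}v^{i,j,k,N}$ that are uniform in $N$, so that the Step 2 remainders are negligible.

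Second, and far more seriously, you need a derivative-level propagation of chaos for the Poisson solution. The $i,j,k$ blocks of $\partial_{\boldsymbol{x}}v^{i,j,k,N}$ must converge along the trajectory to $\partial_{\boldsymbol{x}}v^{i,j,k}$, and $\sum_{\ell\notin\{i,j,k\}}\|\partial_{x^\ell}v^{i,j,k,N}\|^2$ must tend to $0$. Without this you cannot identify the limit of the quadratic variation with $\bar{\Sigma}^{i,j,k}$, which is the content of the theorem.

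Your $O(1/N)$-per-block heuristic is plausible, but nothing in Appendix~\ref{app:existing-results} delivers it. Those results control trajectories in $L^2$ and laws in $\mathsf{W}_2$, not gradients of the $N$-particle semigroup. Nor does your proposed mechanism work under the stated hypotheses. For $\alpha>0$ in Assumption~\ref{assumption:drift}, the function in \eqref{eq:rate-function} satisfies $a_t(\delta)/\delta^2\to1$ as $\delta\to0$ for every fixed $t$. So synchronous coupling gives no decay of the derivative flow, and the semigroup representation of $v^{i,j,k,N}$ yields no time-integrable gradient estimate, let alone one uniform in $N$. For $\alpha=0$ your plan might be feasible, but it would be a substantial new result.

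The missing idea is to make the mean-field replacement \emph{before} introducing any Poisson equation, directly in $h$ and $g$, so that only zeroth-order errors appear. This is what the paper does.
\begin{itemize}
\item It couples the observed particles $x^{a,N}$, $a\in\{i,j,k\}$, synchronously (initial conditions drawn from the optimal coupling $\gamma_0^{*}$) to stationary solutions $\bar{x}^a$ of the MVSDE started from $\pi_{\theta_0}$. These are independent ergodic \emph{linear} diffusions with drift $B(\theta_0,\cdot,\pi_{\theta_0})$.
\item It replaces $\mu_s^N$ by $\pi_{\theta_0}$ inside $h$, and makes the same particle replacement inside the noise coefficient $g$.
\item By Theorem~\ref{thm:poc}, Theorem~1 of \cite{fournier2015rate} and Theorem~\ref{thm:invariant-distribution-2}, the error is $O(\rho(N)+N^{-\frac{1}{2(1+\alpha)}}+a_s^{1/2})$ in $L^1$ for $h$. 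The analogous bound holds in $L^2$ for $g$, up to an $\varepsilon$-loss.
\item These errors vanish at scale $\gamma_t^{1/2}$ because of the rate imposed on $N(t)$ and the conditions $\int_0^t\gamma_s\Phi_{s,t}^{1/2}a_s^{1/2}\,\mathrm{d}s=o(\gamma_t^{1/2})$ and $\int_0^t\Phi_{s,t}\gamma_s^2a_s^{\frac{1}{2}(1-\varepsilon)}\,\mathrm{d}s=o(\gamma_t)$ in Assumption~\ref{assumption:learning-rate-v2}, which exist for exactly this purpose.
\end{itemize}

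The only remaining Poisson equation lives on $(\mathbb{R}^d)^3$. Its generator is that of three independent copies of $\bar{x}$, and its right-hand side $\partial_\theta\mathcal{L}(\theta)-h(\theta,\cdot,\pi_{\theta_0})$ is centred under $\pi_{\theta_0}^{\otimes 3}$ by \eqref{eq:l1-explicit}. Its solution $v^{i,j,k}$ does not depend on $N$. The surviving martingale $\int\gamma_s\Phi^*_{s,t}\big(g(\sigma\sigma^\top)^{-1}D_i^\top-\partial_{\boldsymbol{x}}v^{i,j,k}\big)(I_3\otimes\sigma)\,\mathrm{d}w^{(i,j,k)}_s$ therefore has quadratic variation written directly in terms of $\Gamma^{i,j,k}$. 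Its convergence to $\bar{\Sigma}^{i,j,k}$ needs only an ergodic-average argument on $(\mathbb{R}^d)^3$, via a second Poisson equation for $\Gamma^{i,j,k}-\bar{\Gamma}^{i,j,k}$, plus the $L^2$ rate of Theorem~\ref{theorem:main-theorem-1-2}.

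If you replace your Step 2 and your replacement 2 with this coupling-first decomposition, you never need convergence of $v^{i,j,k,N}$. The rest of your outline (Steps 1 and 3 and the martingale-CLT skeleton) is fine and then goes through.
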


\begin{proof}
See Appendix \ref{app:additional-proofs-main-results-clt}.
\end{proof}

\section{Numerical Results}
\label{sec:numerics}
In this section, we present numerical experiments to illustrate the performance of the proposed estimators. We consider examples that satisfy the assumptions of the previous section, as well as examples that violate one or more of these assumptions (e.g., unique invariant measure, non-degenerate diffusion coefficient). In all cases, unless otherwise specified, we discretise the SDEs using a standard Euler-Maruyama scheme, with constant time-step $\Delta t = 0.1$.  We perform all experiments using a MacBook Pro 16'' (2021) laptop with Apple M1 Pro chip and 16GB of RAM.

\subsection{Quadratic Confinement, Quadratic Interaction}
We begin by considering a one-dimensional IPS with quadratic confinement potential and quadratic interaction potential, parametrised by $\theta=(\theta_1,\theta_2)^{\top}\in\mathbb{R}^2$, namely
\begin{equation}    
\label{eq:IPS_linear_model}
    \mathrm{d}x_t^{\theta,i,N} = \left[-\theta_1 x_t^{\theta,i,N} - \frac{\theta_2}{N} \sum_{j=1}^N \left(x_t^{\theta,i,N} - x_t^{\theta,j,N}\right)\right] \mathrm{d}t + \sigma\mathrm{d}w_t^{i,N},
\end{equation}
where $\sigma>0$ is a (known) diffusion coefficient, and $w^{i,N}=(w_t^{i,N})_{t\geq 0}$ are a set of independent standard Brownian motions.  In this model, we can interpret $\theta_1$ as a \emph{confinement parameter}, which determines the rate at which each particle is driven towards zero, and $\theta_2$ as an \emph{interaction parameter}, which determines the strength of interaction between the particles.  In this case, the online parameter update equations in \eqref{eq:IPS_update1-a} and \eqref{eq:IPS_update2-a-v0} take the form
\begin{align}
\mathrm{d}\begin{bmatrix} \bar{\theta}_{t,1}^{i,N} \\ \bar{\theta}_{t,2}^{i,N} \end{bmatrix} &= 
-\gamma_t 
\begin{bmatrix} -x_t^{i,N} \\[1mm] -(x_t^{i,N} - \bar{x}_t^{N}) \end{bmatrix}
(\sigma\sigma^{\top})^{-1}\Bigg[ 
\Bigg(-\bar{\theta}_{t,1}^{i,N} x_t^{i,N} - \bar{\theta}_{t,2}^{i,N}(x_t^{i,N} - \bar{x}_t^{N} )\Bigg)
\mathrm{d}t - \mathrm{d}x_t^{i,N}\Bigg] \label{eq:IPS_linear_update1} \\
\mathrm{d}\begin{bmatrix} \theta_{t,1}^{i,j,k,N} \\ \theta_{t,2}^{i,j,k,N} \end{bmatrix} &= 
-\gamma_t 
\begin{bmatrix} -x_t^{i,N} \\ -(x_t^{i,N} - x_t^{j,N}) \end{bmatrix}
(\sigma\sigma^{\top})^{-1}\Bigg[ 
\left(-\theta_{t,1}^{i,j,k,N} x_t^{i,N} - \theta_{t,2}^{i,j,k,N}(x_t^{i,N} - x_t^{k,N} )\right)
\mathrm{d}t - \mathrm{d}x_t^{i,N}\Bigg] \label{eq:IPS_linear_update2}
\end{align}
where $\smash{\bar{x}_t^N = \frac{1}{N}\sum_{j=1}^N x_t^{j,N}}$ denotes the empirical mean of the particles.  For our first experiment, we assume that the true parameters are given by $\smash{\theta_0 = (1.0, 0.2)^{\top}}$. Meanwhile, the initial parameter estimates are given by $\smash{\theta_{\mathrm{init},1}\sim \mathcal{U}[1.5,2.5]}$ and $\smash{\theta_{\mathrm{init},2}\sim\mathcal{U}[0.5,1.0]}$, respectively. We simulate trajectories from the IPS with $N=50$ particles and for $T=10000$ iterations, with initial condition $\smash{x_0^{i,N}\sim\mathcal{N}(0,1)}$. Finally, for both estimators, we use a constant parameter-wise learning rate $\gamma = (\gamma_1,\gamma_2) = (8\times 10^{-3}, 5\times 10^{-3})^{\top}$. 

\begin{figure}[b!]
  \centering
  \begin{subfigure}{0.325\linewidth}
    \includegraphics[width=\linewidth]{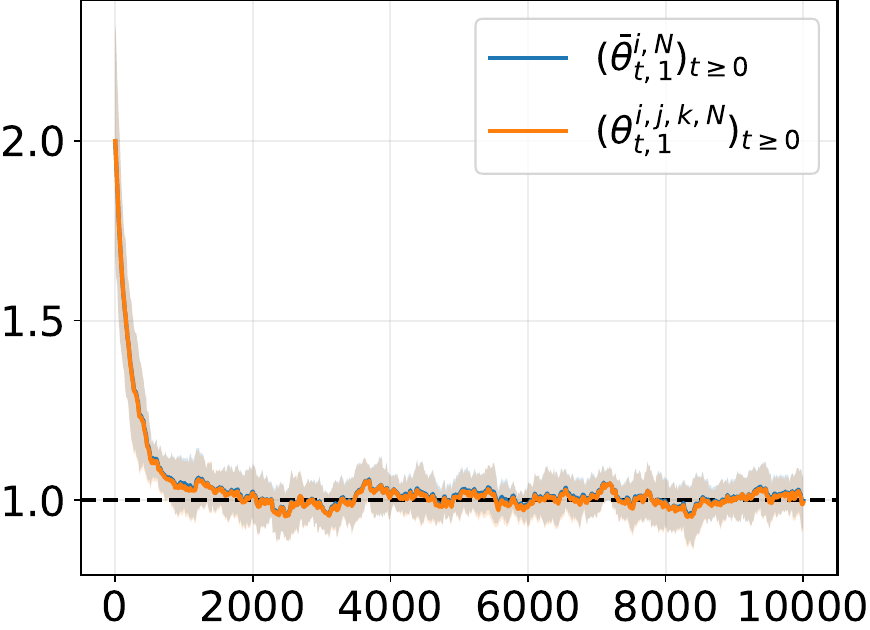}
    \caption{$\theta_1$.}
    \label{fig:1a}
  \end{subfigure}\hfill
  \begin{subfigure}{0.325\linewidth}
    \includegraphics[width=\linewidth]{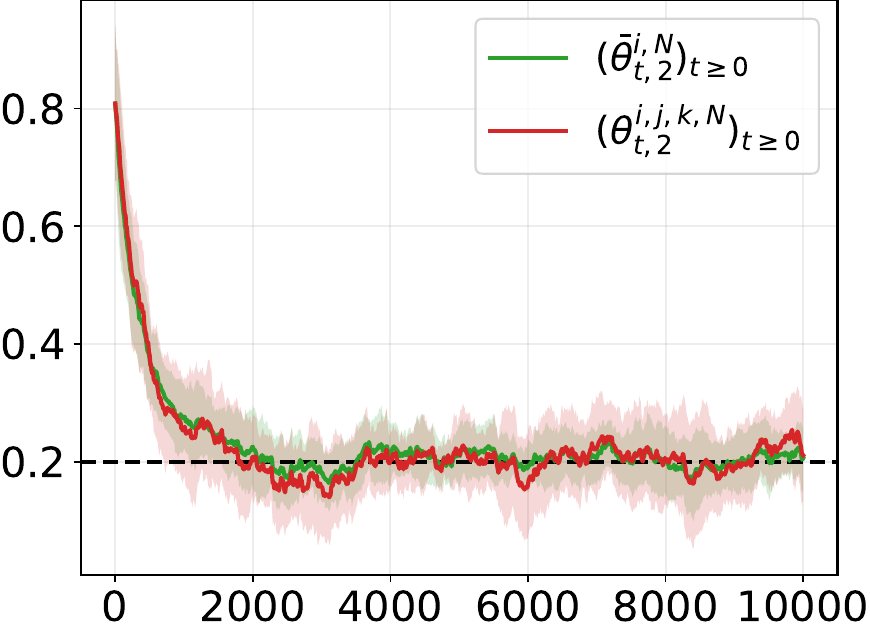}
    \caption{$\theta_2$.}
    \label{fig:1b}
  \end{subfigure}
  \begin{subfigure}{0.325\linewidth}
    \includegraphics[width=\linewidth]{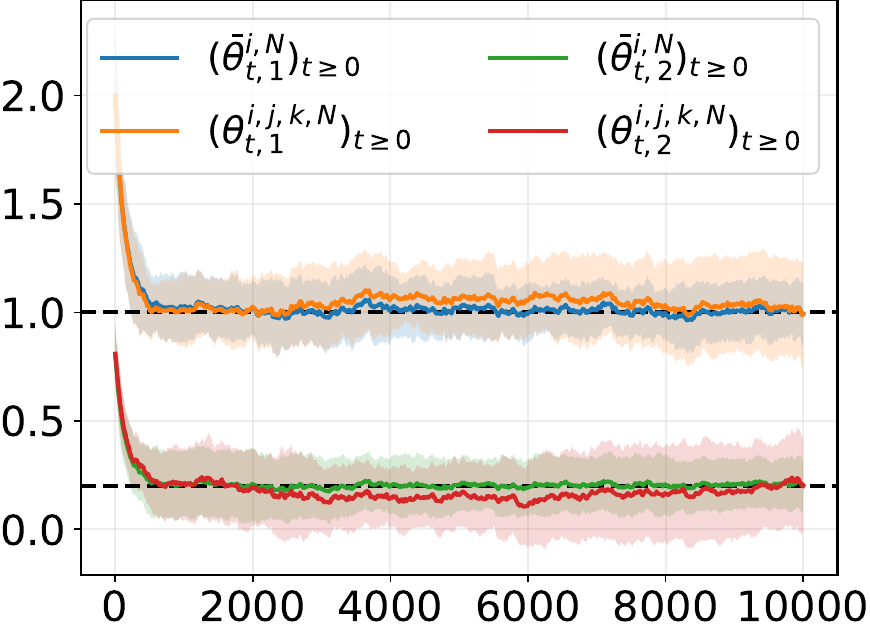}
    \caption{$(\theta_1,\theta_2)^{\top}$.}
    \label{fig:1c}
  \end{subfigure}
  \captionsetup{width=\textwidth}
  \caption{\textbf{Online parameter estimation for a model with quadratic confinement potential and quadratic interaction potential}. We plot the sequence of online parameter estimates $\smash{(\bar\theta_t^{i,N})_{t\geq 0}}$ and $\smash{(\theta_t^{i,j,k,N})_{t\geq 0}}$, as defined by the update equations in \eqref{eq:IPS_linear_update1} and \eqref{eq:IPS_linear_update2}. The true parameters are given by $\smash{\theta_0 = (1.0, 0.2)^{\top}}$. The initial parameter estimates are given by $\smash{\theta_{\mathrm{init},1}\sim \mathcal{U}[1.5,2.5]}$ and $\smash{\theta_{\mathrm{init},2}\sim\mathcal{U}[0.5,1.0]}$.}
  \label{fig:1}
\end{figure}

The performance of the two estimators is illustrated in Figure~\ref{fig:1}. In both the case where only one of the parameters is to be estimated (Fig.~\ref{fig:1a}, Fig.~\ref{fig:1b}), and the case where both of the parameters are to be jointly estimated (Fig.~\ref{fig:1c}), the sequence of online parameter estimates converges to the true parameter(s). Comparing the performance of the two estimators, we distinguish between several cases. In the case where only the confinement parameter is to be estimated (Fig.~\ref{fig:1a}), the evolution of both parameter estimates (blue, orange) is essentially identical. In the case where only the interaction parameter is to be estimated (Fig.~\ref{fig:1b}), the variance of the first estimator (green) is slightly smaller than the variance of the second estimator (red). Finally, when both parameters are to be estimated (Fig.~\ref{fig:1c}), the variance of the first estimator (blue, green) is reduced in comparison to the variance of the second estimator (orange, red) for both parameters. 

In Figure~\ref{fig:2}, we continue to investigate the performance of the two online parameter estimators, now as a function of the number of particles in the data-generating IPS. Our results indicate that the $\mathrm{L}^2$ error of the ``averaged'' estimator is essentially constant with respect to the number of particles, while the error of the ``non-averaged'' estimator decreases as the number of particles increases. This is entirely consistent with our theoretical results. In particular,  Theorem~\ref{theorem:main-theorem-1-2-finite-n} indicates that $\smash{\bar{\theta}_t^{i,N}\rightarrow\theta_0}$ as $\smash{t\rightarrow\infty}$, for any fixed and finite $N\in\mathbb{N}$. On the other hand, $\smash{\theta_t^{i,j,k,N}\rightarrow\theta_0}$ only in the joint limit as $t\rightarrow\infty$ and $N\rightarrow\infty$. In other words, the ``averaged'' estimator is consistent as $t\rightarrow\infty$, for any fixed $N\in\mathbb{N}$, while the non-averaged estimator is only consistent in the joint limit as both $t\rightarrow\infty$ and $N\rightarrow\infty$. 

\begin{figure}[t!]
  \centering
  \begin{subfigure}{0.45\linewidth}
    \includegraphics[width=\linewidth]{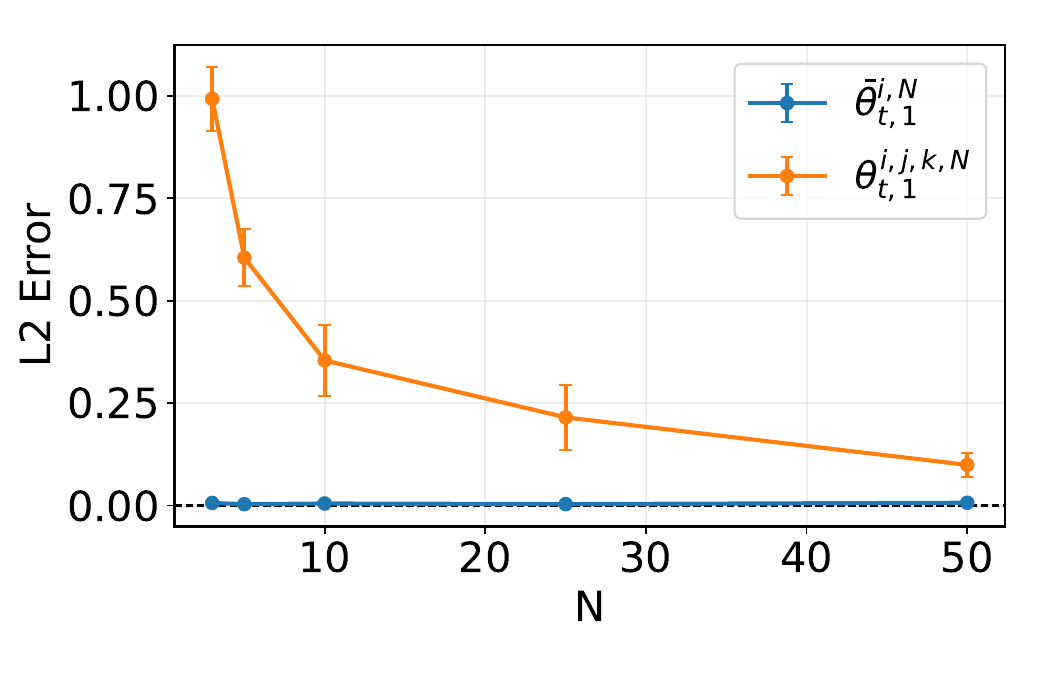}
    \caption{$\theta_1$.}
    \label{fig:2a}
  \end{subfigure}
  \begin{subfigure}{0.45\linewidth}
    \includegraphics[width=\linewidth]{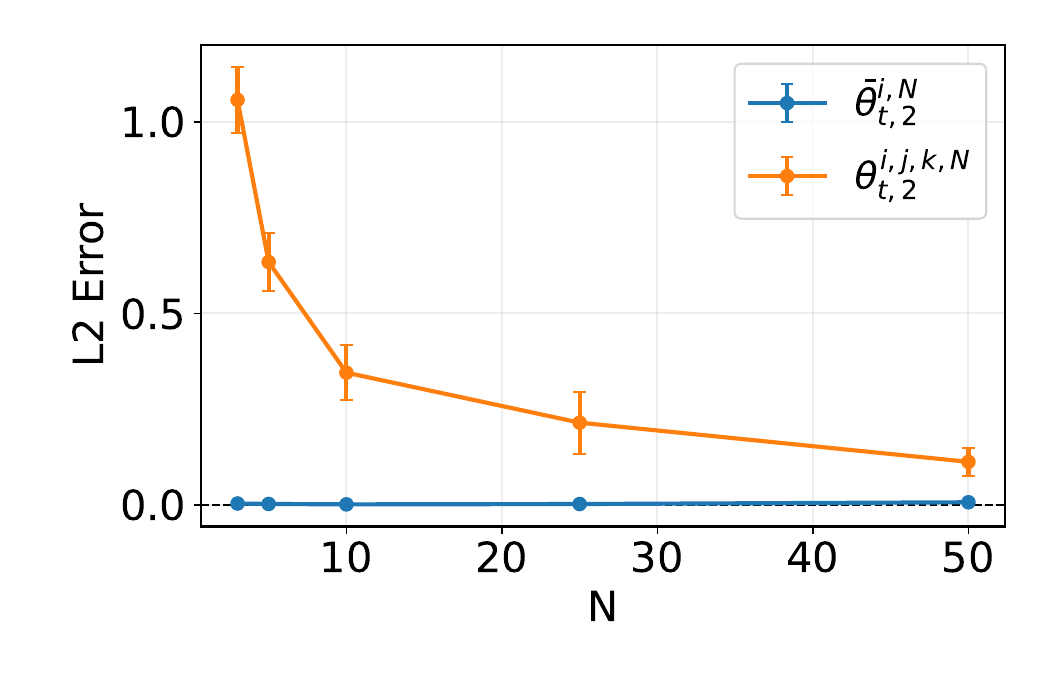}
    \caption{$\theta_2$.}
    \label{fig:2b}
  \end{subfigure}
  \captionsetup{width=\textwidth}
  \caption{\textbf{The $\mathrm{L}^2$ error of the averaged and the non-averaged estimators, for a model with quadratic confinement potential and quadratic interaction potential.} We plot the $\mathrm{L}^2$ error for both estimators after $T=50,000$ iterations, for $N\in\{3,5,10,25,50\}$ particles.}
  \label{fig:2}
\end{figure}

It is worth noting that, as with any gradient-based method, our estimators are somewhat sensitive to the choice of the learning rate. This sensitivity is particularly acute in the case where both parameters are estimated jointly, due to the non-identifiability of the parameter vector $\theta = (\theta_1,\theta_2)^{\top}$ in the mean-field limit as the number of particles $N\rightarrow\infty$ \citep[see, e.g.,][Section 5]{sharrock2023online}. While, in theory, both parameters are identifiable for any finite value of $N$, in reality, even for moderate values of $N$ (e.g., $N\sim 20$), the likelihood attains close to its maximal value for any $(\theta_1,\theta_2)$ satisfying $\theta_{1} + \theta_{2} = \theta_{0,1} + \theta_{0,2}$, where $\theta_0 = (\theta_{0,1},\theta_{0,2})^{\top}$ denotes the true parameter. This phenomenon is visualised in Figures~\ref{fig:3} - \ref{fig:4}, where we plot the time-averaged finite-particle (pseudo) likelihoods $\mathcal{L}^{i,N}$ and $\mathcal{L}^{i,j,k,N}$ of the IPS for $N\in\{3,10,20\}$. In practical terms, the result is that, for poorly chosen values of the learning rate, our estimators may converge quickly to a value $\theta_{*} = (\theta_{*,1},\theta_{*,2})^{\top}$ which satisfies $\theta_{*,1} + \theta_{*,2} = \theta_{0,1} + \theta_{0,2}$, but for which $\theta_{*,1}\neq  \theta_{0,1}$ and $\theta_{*,2}\neq \theta_{0,2}$ even approximately. Consequently, it may then take a \emph{very} long time to converge to the true parameter.

\begin{figure}[b!]
  \centering
  \begin{subfigure}{0.325\linewidth}
    \includegraphics[width=\linewidth]{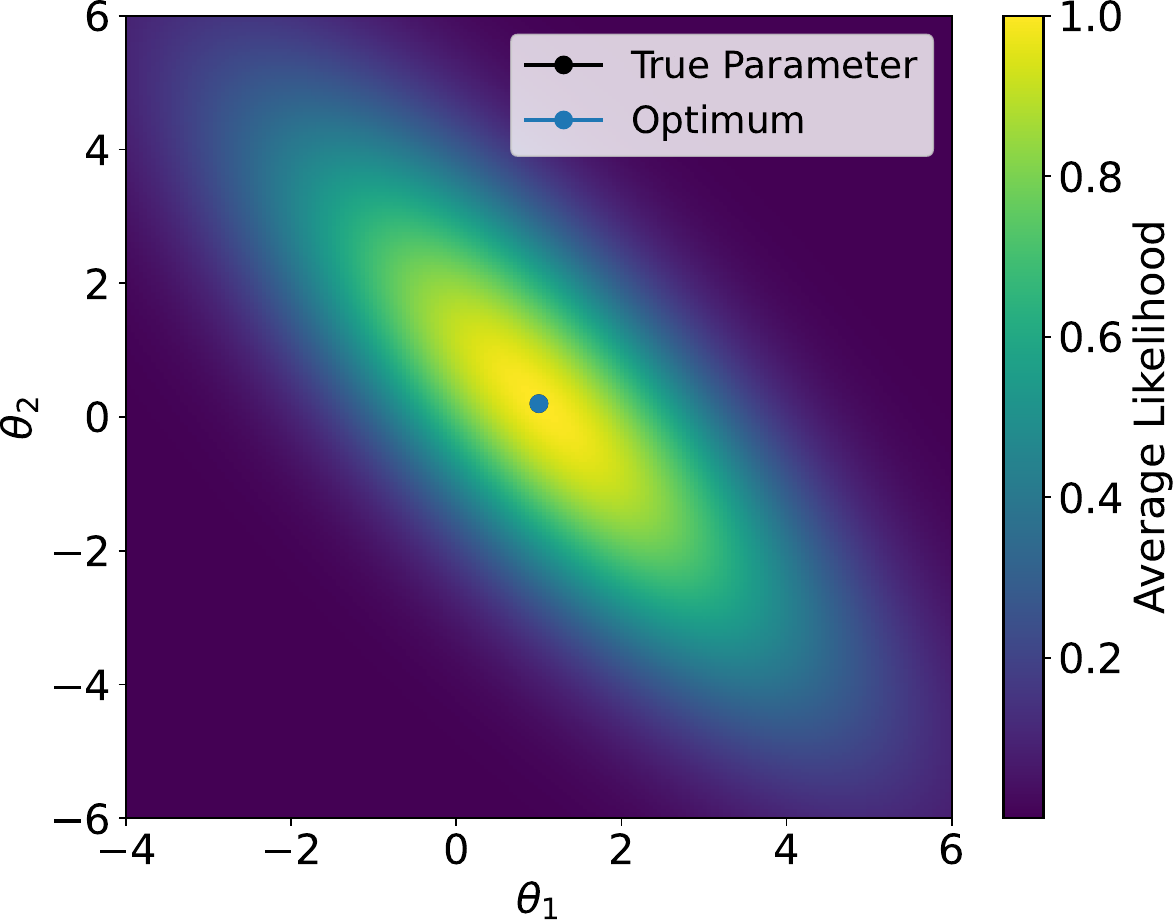}
    \caption{$N=3$.}
    \label{fig:3a}
  \end{subfigure}\hfill
  \begin{subfigure}{0.325\linewidth}
    \includegraphics[width=\linewidth]{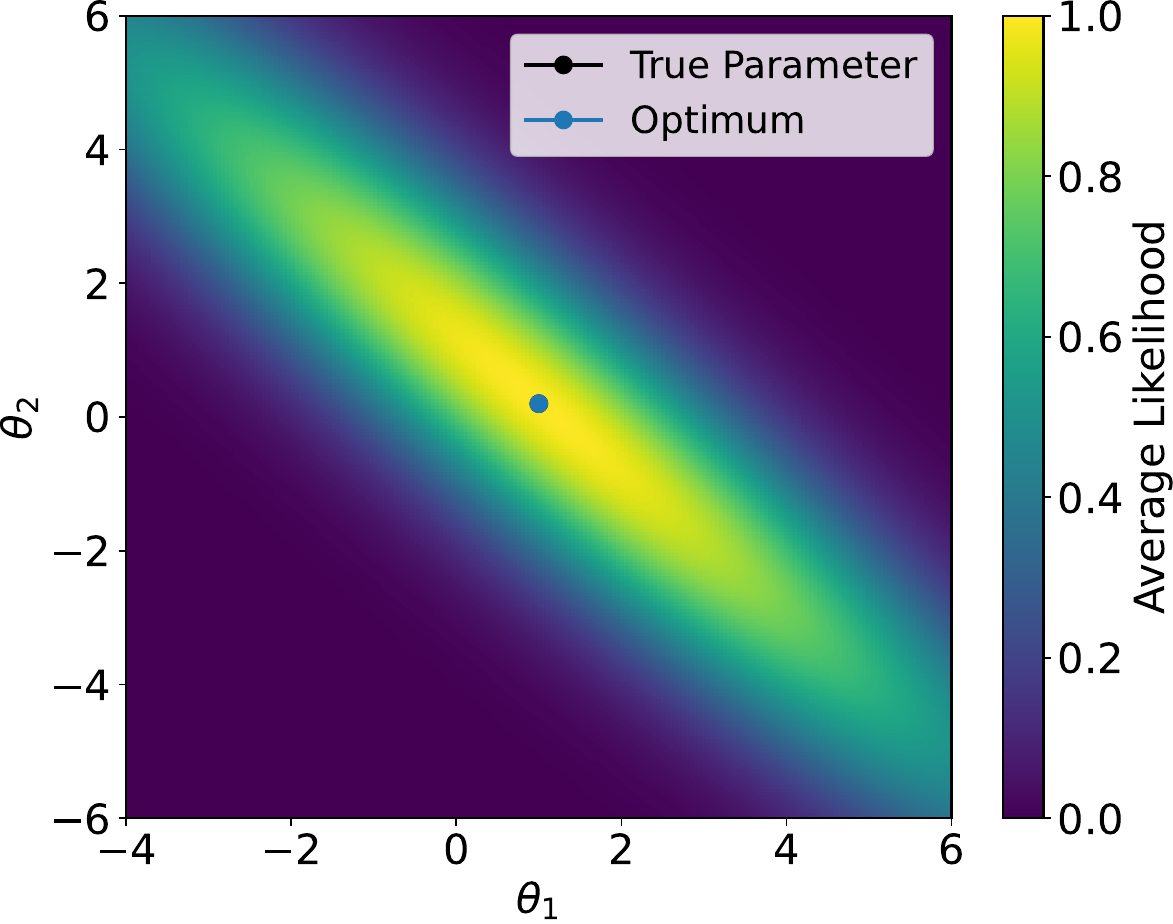}
    \caption{$N=10$.}
    \label{fig:3b}
  \end{subfigure}
  \begin{subfigure}{0.325\linewidth}
    \includegraphics[width=\linewidth]{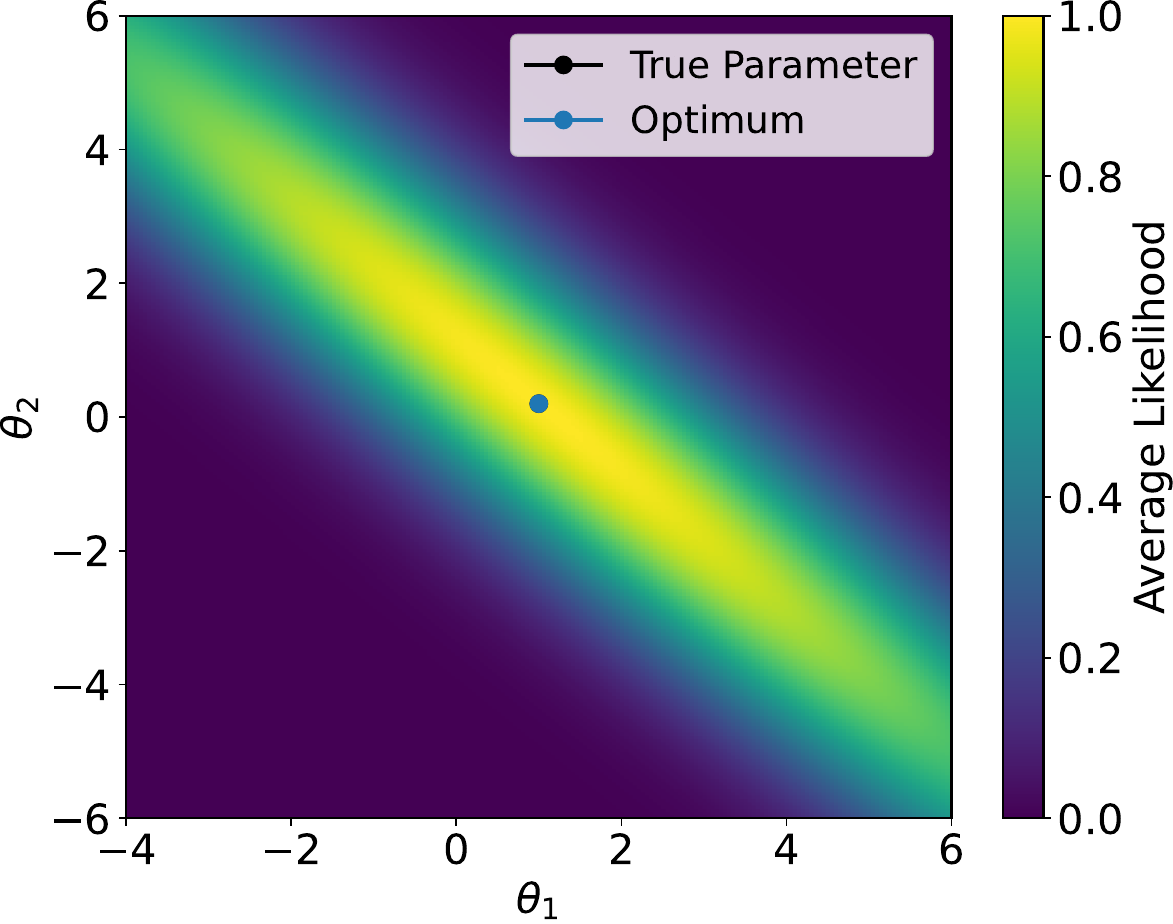}
    \caption{$N=20$.}
    \label{fig:3c}
  \end{subfigure}
  \captionsetup{width=\textwidth}
  \caption{\textbf{The asymptotic pseudo log-likelihood function $\mathcal{L}^{i,N}$ for a model with quadratic confinement potential and quadratic interaction potential.} We plot the time-averaged likelihood function of the IPS for $N\in\{3,10,20\}$.}
  \label{fig:3}
\end{figure}

\begin{figure}[t!]
  \centering
  \begin{subfigure}{0.325\linewidth}
    \includegraphics[width=\linewidth]{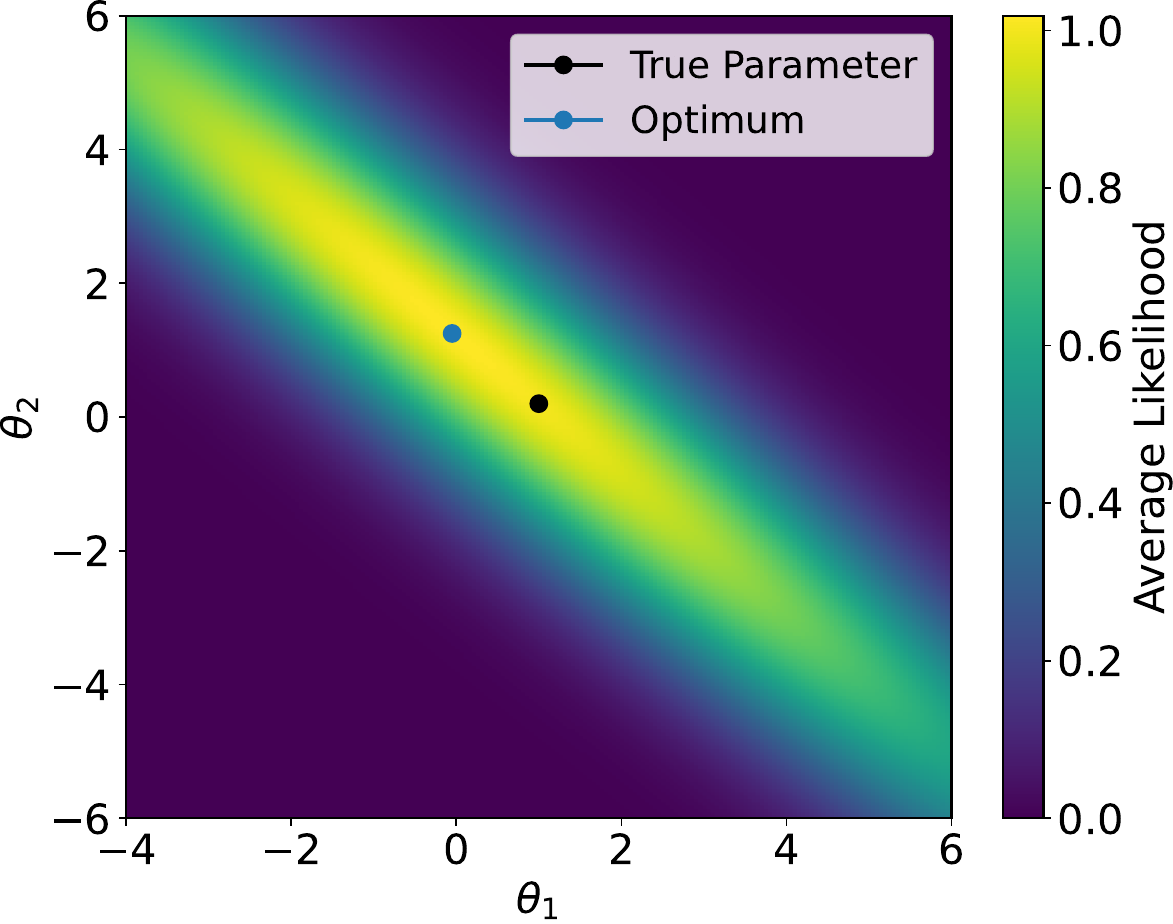}
    \caption{$N=3$.}
    \label{fig:4a}
  \end{subfigure}\hfill
  \begin{subfigure}{0.325\linewidth}
    \includegraphics[width=\linewidth]{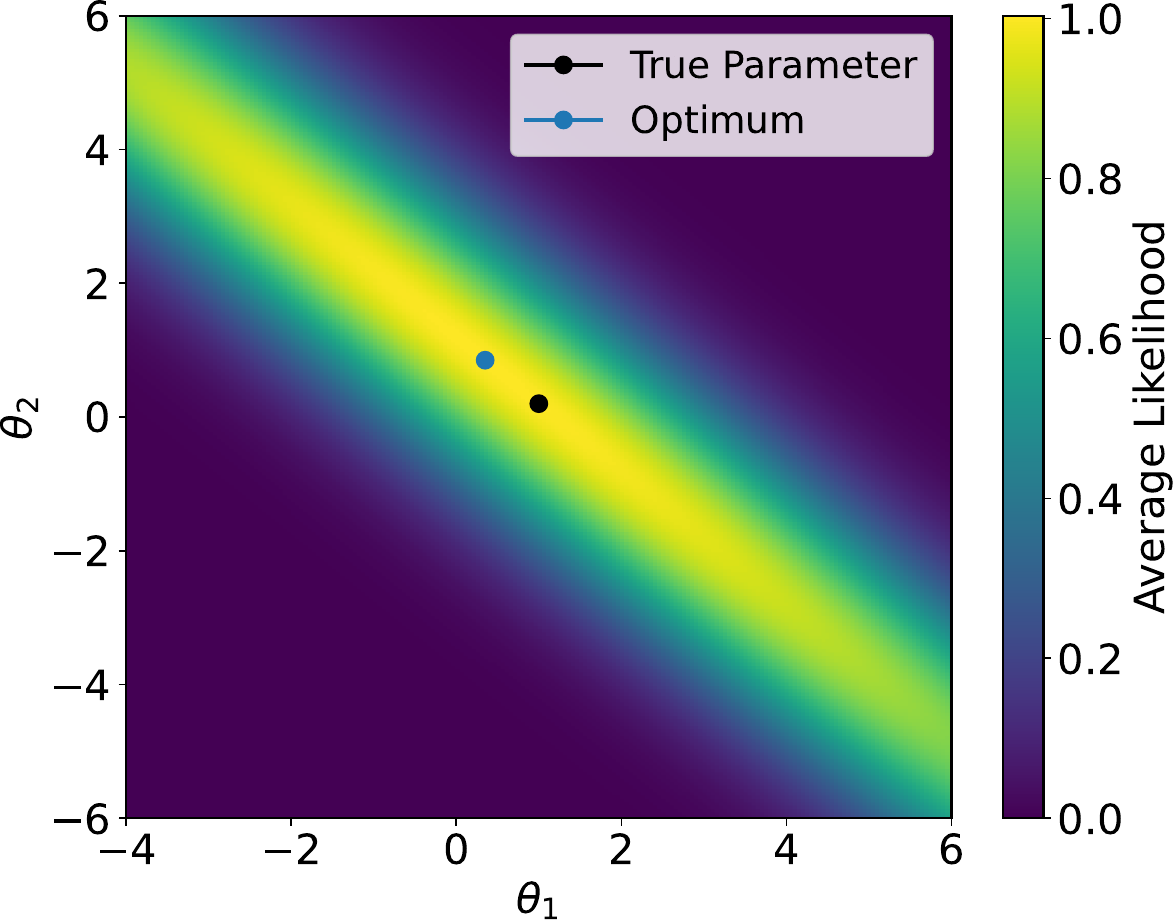}
    \caption{$N=10$.}
    \label{fig:4b}
  \end{subfigure}
  \begin{subfigure}{0.325\linewidth}
    \includegraphics[width=\linewidth]{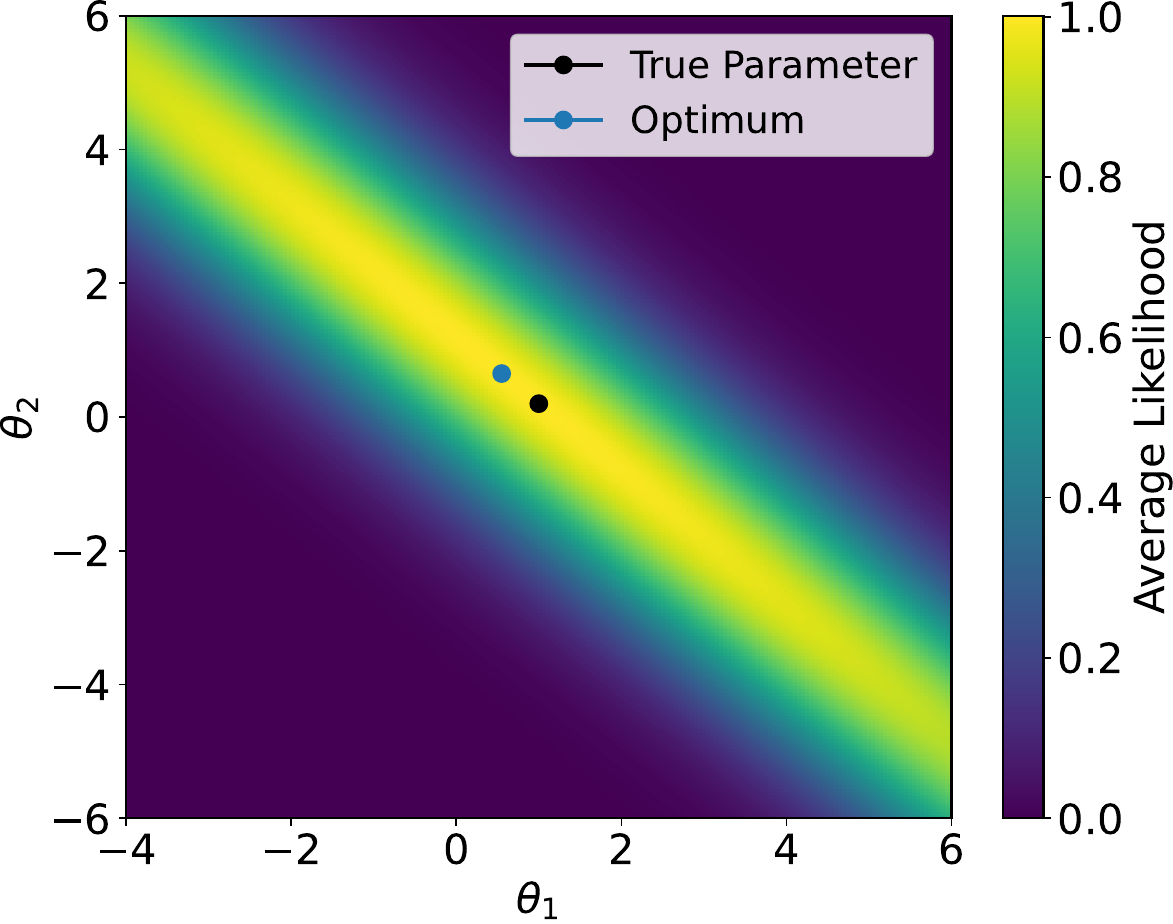}
    \caption{$N=20$.}
    \label{fig:4c}
  \end{subfigure}
  \captionsetup{width=\textwidth}
  \caption{\textbf{The asymptotic pseudo log-likelihood function $\mathcal{L}^{i,j,k,N}$ for a model with quadratic confinement potential and quadratic interaction potential.} We plot the time-averaged likelihood function of the IPS for $N\in\{3,10,20\}$.}
  \label{fig:4}
\end{figure}

\subsection{Double Well Confinement Potential, Quadratic Interaction Potential}
We next consider a model consisting of a double-well confinement potential, and a quadratic (i.e., Curie-Weiss) interaction potential, parametrised by $\theta = (\theta_1,\theta_2,\theta_3)^{\top}\in\mathbb{R}^3$. That is, $V(\theta,x) = \frac{\theta_1}{4}x^4 - \frac{\theta_2}{2} x^2$ and $W(\theta,x) = \frac{\theta_3}{2}x^2$. In this case, the IPS reads
\begin{equation}    
\label{eq:IPS_double_well_model}
    \mathrm{d}x_t^{\theta,i,N} = \Big[-\Big(\theta_1 (x_t^{\theta,i,N})^3 - \theta_{2} x_t^{\theta,i,N}\Big)  - \frac{\theta_3}{N} \sum_{j=1}^N \Big(x_t^{\theta,i,N} - x_t^{\theta,j,N}\Big)\Big] \mathrm{d}t + \sigma\mathrm{d}w_t^{i,N}.
\end{equation}
We will assume that the interaction parameter $\theta_3$ is known, and consider estimation of the confinement parameters $(\theta_1,\theta_2)^{\top}$. The update equations for these two online parameter estimators are given by
\small
\begin{align}
\hspace{-4mm} \mathrm{d}\begin{bmatrix} \bar{\theta}_{t,1}^{i,N} \\ \bar\theta_{t,2}^{i,N} \end{bmatrix} &= 
-\gamma_t 
\begin{bmatrix} -(x_t^{i,N})^3 \\[1mm] x_t^{i,N}  \end{bmatrix}
(\sigma\sigma^{\top})^{-1}\Bigg[  
\Big(-\left(\bar\theta_{t,1}^{i,N} (x_t^{i,N})^3 - \bar\theta_{t,2}^{i,N} x_t^{i,N}\right) - \theta_3(x_t^{i,N} - \bar{x}_t^{N} )\Big)
\mathrm{d}t - \mathrm{d}x_t^{i,N} \Bigg] \label{eq:IPS_bistable_update1} \\
\hspace{-4mm} \mathrm{d}\begin{bmatrix} \theta_{t,1}^{i,j,k,N} \\ \theta_{t,2}^{i,j,k,N} \end{bmatrix} &= 
-\gamma_t 
\begin{bmatrix} -(x_t^{i,N})^3 \\[1mm] x_t^{i,N}  \end{bmatrix}
(\sigma\sigma^{\top})^{-1}\Bigg[ 
\left(-\left(\theta_{t,1}^{i,j,k,N} (x_t^{i,N})^3 - \theta_{t,2}^{i,j,k,N} x_t^{i,N}\right) - \theta_{3} (x_t^{i,N} - x_t^{k,N} )\right)
\mathrm{d}t - \mathrm{d}x_t^{i,N}\Bigg] \hspace{-.5mm} \label{eq:IPS_bistable_update2}
\end{align}
\normalsize
where, once again, $\smash{\bar{x}_t^N = \frac{1}{N}\sum_{j=1}^N x_t^{j,N}}$ denotes the empirical mean of the particles. For our first experiment, we will suppose that the true parameter is given by $\theta_0 = (\theta_{0,1},\theta_{0,2},\theta_{0,3})^{\top} = (1.0,2.0,2.0)^{\top}$. Meanwhile, we will consider two values of $\sigma\in\{1.0,2.0\}$. The reason for this is that the mean-field limit of this model exhibits a phase transition \citep[e.g.,][]{gomes2018mean,dawson1983critical}: for values of $\sigma>\sigma_{c}$, the model admits a unique invariant distribution, while for values of $\sigma<\sigma_c$, a continuous phase transition occurs and there exist two stationary states. In our case, the critical noise strength is given by $\sigma_{c}\approx 1.9$, and thus the two considered values of $\sigma\in\{1.0,2.0\}$ place us in both regimes. Similar to before, we simulate trajectories from the IPS with $N\in\{3,10,50\}$ particles and for $T=5000$ iterations, with initial condition $x_0^{i,N}\sim\mathcal{N}(0,1)$. We use a constant learning rate, this time given by $\gamma = (\gamma_1,\gamma_2) = (2\times 10^{-3}, 2\times 10^{-2})^{\top}$. 

Illustrative results for this experiment are reported in Figures~\ref{fig:5} and \ref{fig:6}. We make several observations. First, similar to before, given a suitably chosen value of the learning rate, the sequence of online parameter estimates converges to the true values of the parameters. Second, aside from their transient behaviours, both estimators appear agnostic as to whether $\sigma<\sigma_c$ or $\sigma>\sigma_c$, suggesting that our methodology can be applied even in the absence of a unique invariant distribution. Finally, these results are once more consistent with our theory (i.e., Theorem~\ref{theorem:main-theorem-1-2-finite-n}). To be specific, for each considered value of $N$, the averaged estimator (blue, green) converges to the true parameters $(\theta_{0,1},\theta_{0,2})^{\top}$ as $t\rightarrow\infty$. On the other hand, the non-averaged estimator (orange, red) exhibits a persistent bias as $t\rightarrow\infty$ when $N$ is small (Fig.~\ref{fig:5a} and Fig.~\ref{fig:6a}), which diminishes as $N$ is increased (Fig.~\ref{fig:5c} and Fig.~\ref{fig:6c}). 

\begin{figure}[t!]
  \centering
  \begin{subfigure}{0.325\linewidth}
    \includegraphics[width=\linewidth]{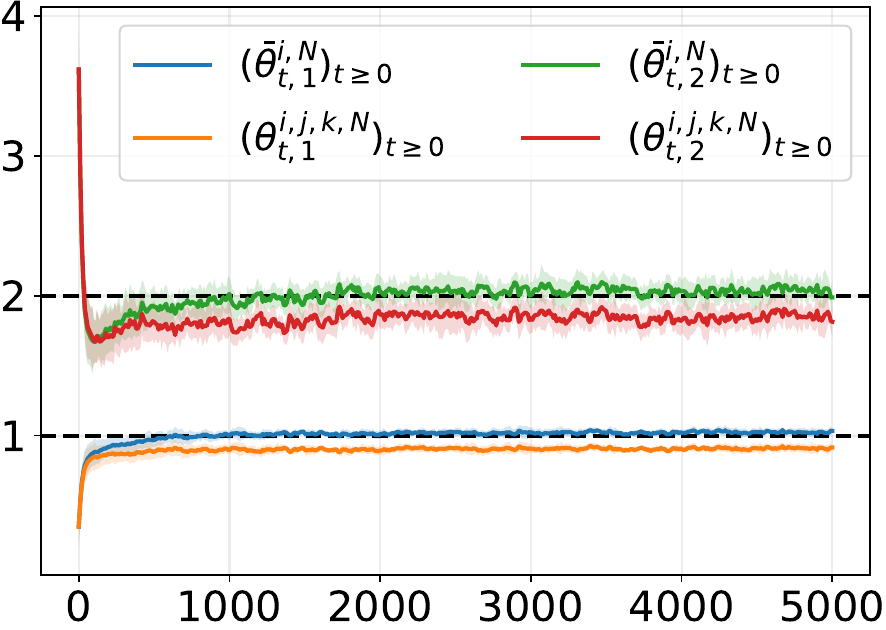}
    \caption{$N=3$.}
    \label{fig:5a}
  \end{subfigure}
  \hfill
  \begin{subfigure}{0.325\linewidth}
    \includegraphics[width=\linewidth]{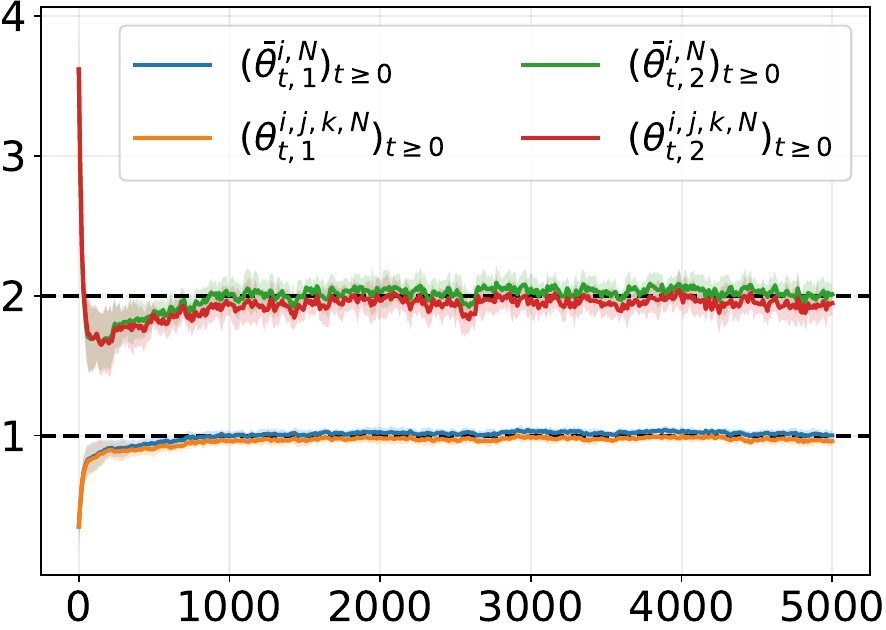}
    \caption{$N=10$.}
    \label{fig:5b}
  \end{subfigure}
  \hfill
  \begin{subfigure}{0.325\linewidth}
    \includegraphics[width=\linewidth]{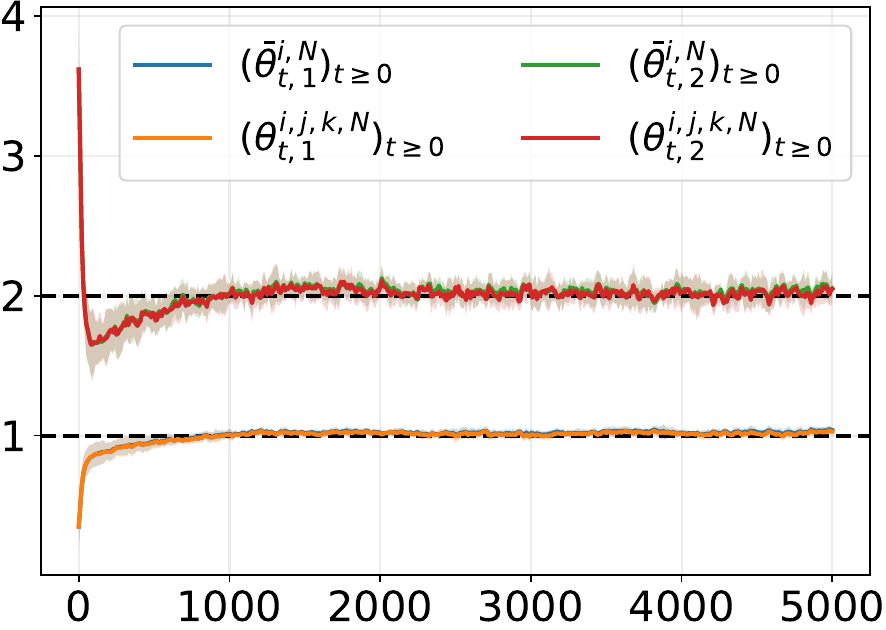}
    \caption{$N=50$.}
    \label{fig:5c}
  \end{subfigure}
  \captionsetup{width=\textwidth}
  \caption{\textbf{Online parameter estimation for a model with double-well confinement potential and quadratic interaction potential}. We plot the sequence of online parameter estimates, as defined by the update equations in \eqref{eq:IPS_bistable_update1} and \eqref{eq:IPS_bistable_update2}. The true parameters (black, dashed) are given by $\smash{\theta_0 = (1.0, 2.0, 2.0)^{\top}}$, with the third of these parameters assumed known. The noise coefficient is given by $\sigma=1.0$. The initial parameter estimates are given by $\smash{\theta_{\mathrm{init},1}\sim \mathcal{U}[0.1,0.6]}$ and $\smash{\theta_{\mathrm{init},2}\sim\mathcal{U}[3.0,4.0]}$.}
  \label{fig:5}
\end{figure}

\begin{figure}[t!]
  \centering
  \begin{subfigure}{0.325\linewidth}
    \includegraphics[width=\linewidth]{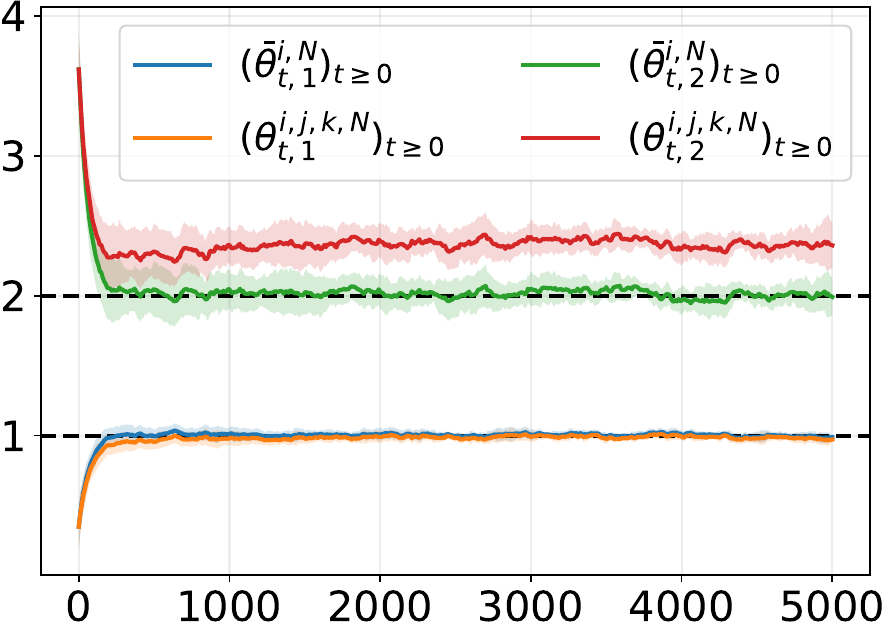}
    \caption{$N=3$.}
    \label{fig:6a}
  \end{subfigure}
  \hfill
  \begin{subfigure}{0.325\linewidth}
    \includegraphics[width=\linewidth]{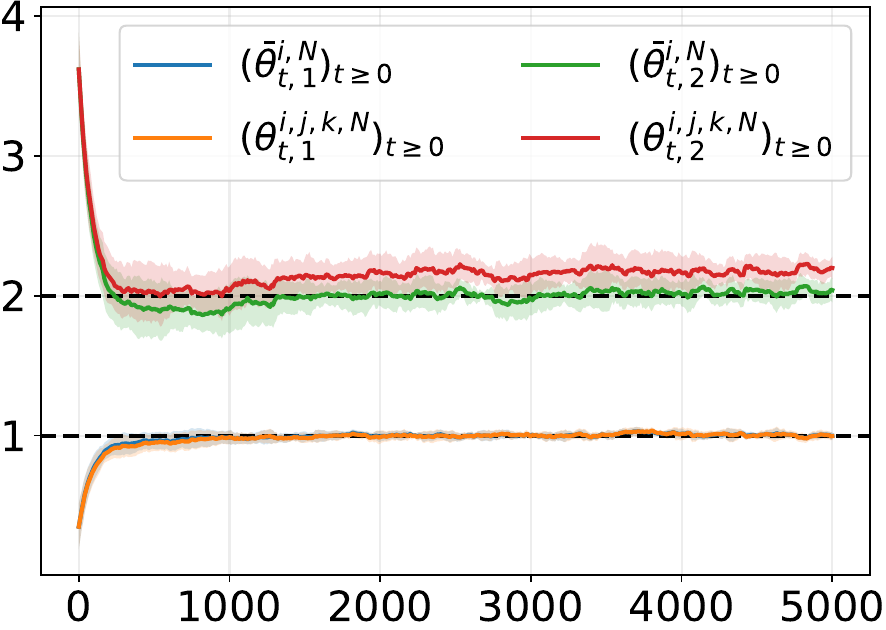}
    \caption{$N=10$.}
    \label{fig:6b}
  \end{subfigure}
  \hfill
  \begin{subfigure}{0.325\linewidth}
    \includegraphics[width=\linewidth]{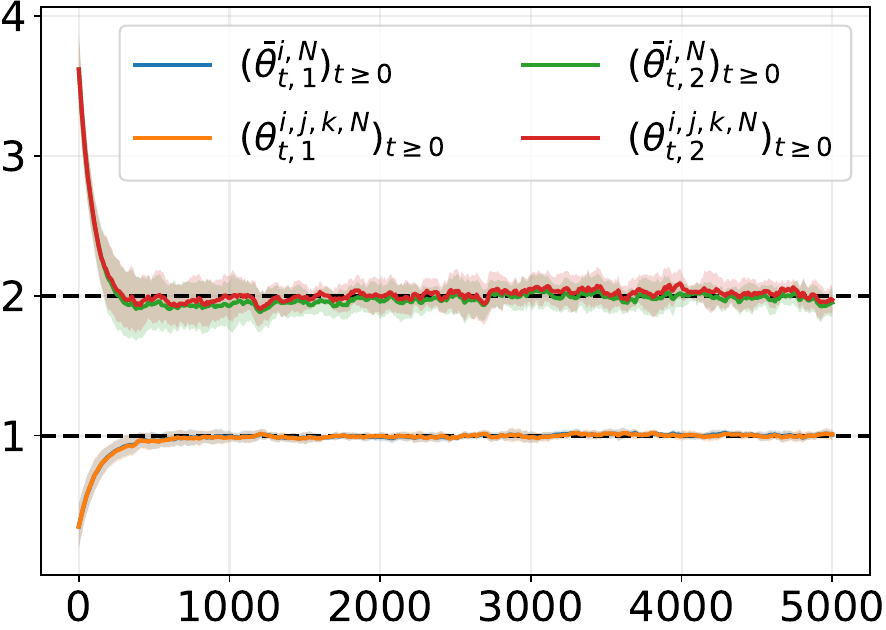}
    \caption{$N=50$.}
    \label{fig:6c}
  \end{subfigure}
  \captionsetup{width=\textwidth}
  \caption{\textbf{Online parameter estimation for a model with double-well confinement potential and quadratic interaction potential}. We plot the sequence of online parameter estimates, as defined by the update equations in \eqref{eq:IPS_bistable_update1} and \eqref{eq:IPS_bistable_update2}. The true parameters (black, dashed) are given by $\smash{\theta_0 = (1.0, 2.0, 2.0)^{\top}}$, with the third of these parameters assumed known. The noise coefficient is given by $\sigma=2.0$. The initial parameter estimates are given by $\smash{\theta_{\mathrm{init},1}\sim \mathcal{U}[0.1,0.6]}$ and $\smash{\theta_{\mathrm{init},2}\sim\mathcal{U}[3.0,4.0]}$.}
  \label{fig:6}
\end{figure}

Once again, it is worth emphasising the importance of well chosen learning rates. In this case, it is the \emph{relative} size of the learning rate(s) for the two parameters that plays a particularly important role. Similar to the first example, this is a consequence of the fact that, for large values of $N$, the objective function is somewhat ill conditioned (see Figure~\ref{fig:6a}). In principle, one could alleviate this by incorporating standard techniques from the optimisation literature into our update equations, e.g., preconditioning, adaptive step-sizes, parameter-free methods, etc. \citep[e.g.,][]{hinton2012neural}. We provide an initial demonstration of one such approach (RMSProp) in Figure~\ref{fig:7}, with a more detailed study of such extensions left to future work. 

\begin{figure}[t!]
  \centering
  \begin{subfigure}{0.45\linewidth}
    \includegraphics[width=\linewidth]{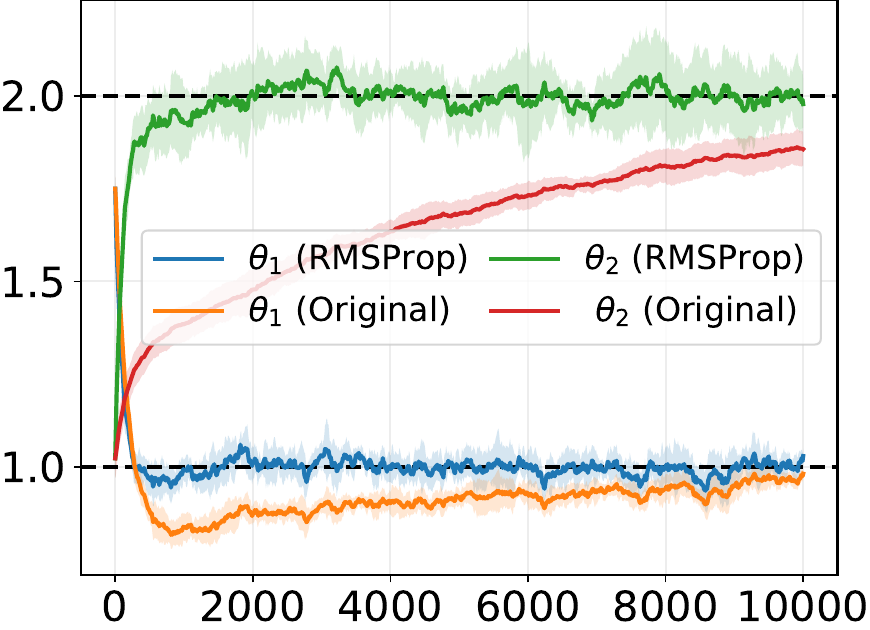}
    \caption{The sequence of online parameter estimates (coloured, solid) and the true parameters (black, dashed), with and without RMSProp.}
    \label{fig:7a}
  \end{subfigure}
  \hfill
  \begin{subfigure}{0.45\linewidth}
    \includegraphics[width=\linewidth]{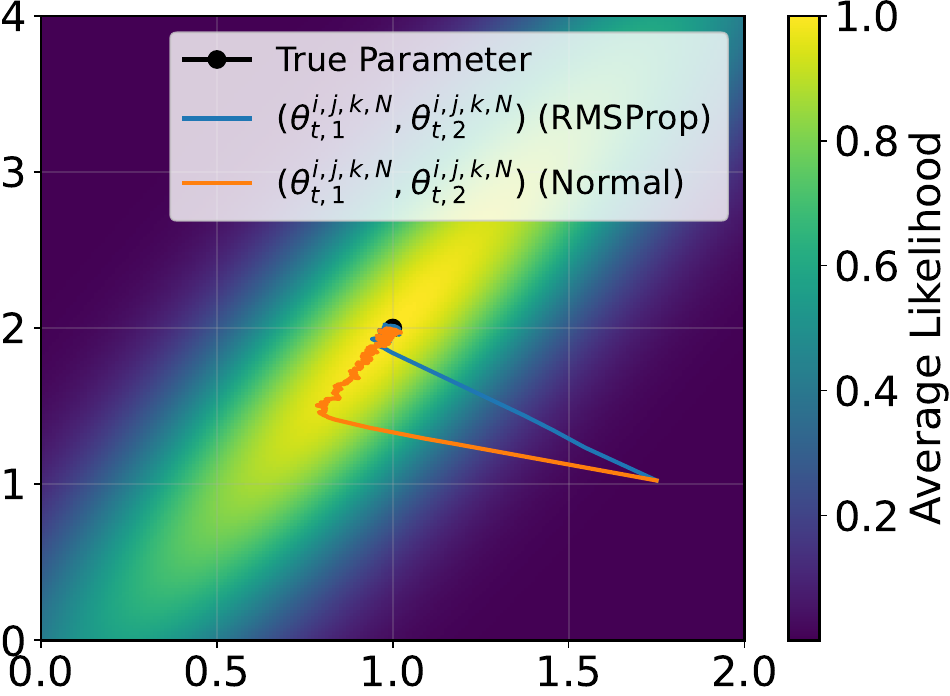}
    \caption{The trajectory of the mean parameter estimates (coloured) and the true parameter (black), overlaid on the asymptotic likelihood function.}
    \label{fig:7b}
  \end{subfigure}
  \captionsetup{width=\textwidth}
  \caption{\textbf{Online parameter estimation for a model with double-well confinement potential and quadratic interaction potential}. We plot the sequence of online parameter estimates $\smash{(\theta_{t,1}^{i,j,k,N},\theta_{t,2}^{i,j,k,N})_{t\geq 0}}$, as defined by the update equation in \eqref{eq:IPS_bistable_update2}, as well as a modified version which incorporates RMSProp \citep[e.g.,][]{hinton2012neural}. The true parameters are once again given by $\smash{\theta_0 = (1.0, 2.0, 2.0)^{\top}}$, with the final parameter assumed known. The noise coefficient is given by $\sigma=2.0$. The initial parameter estimates are now given by $\smash{\theta_{\mathrm{init},1}\sim \mathcal{U}[1.7,1.8]}$ and $\smash{\theta_{\mathrm{init},2}\sim\mathcal{U}[0.9,1.1]}$. In this case, the learning rate is given by $\smash{\gamma=(\gamma_1,\gamma_2) = (2\times 10^{-3},2\times 10^{-3})^{\top}}$.}
  \label{fig:7}
\end{figure}

\subsection{Stochastic FitzHugh--Nagumo Model}
\label{sec:fitzhugh}
We next consider a stochastic FitzHugh--Nagumo model, parametrised by $\theta = (\theta_1,\theta_2,\theta_3, \theta_4)^{\top}\in\mathbb{R}^4$, and defined by
\begin{align}
\mathrm{d}x_t^{\theta,i,N} &= \Big[ \theta_1 \Big(x_t^{\theta,i,N} - \frac{1}{3}(x_t^{\theta,i,N})^3 - y_t^{\theta,i,N}\Big) - \frac{\theta_2}{N}\sum_{j=1}^N (x_t^{\theta,i,N} - x_t^{\theta,j,N})\Big] \mathrm{d}t + \sigma\mathrm{d}w_t^{i,N} \label{eq:fitzhugh_nagumo1_IPS} \\
\mathrm{d}y_t^{\theta,i,N}& = \Big[ x_t^{\theta,i,N} + \theta_3 - \theta_4 y_{t}^{\theta,i,N} \Big] \mathrm{d}t, \label{eq:fitzhugh_nagumo2_IPS} 
\end{align}
This model originates in neuroscience, modelling the evolution of a collection of neurons of FitzHugh--Nagumo type, each being represented by its voltage $x_t^i$ and recovery variable $y_t^i$, and coupled through a linear mean-field interaction which corresponds to a coupling via electrical synapses. We refer to \cite{luccon2021periodicity,baladron2012meanfield} for further details.

\begin{remark}
This model is degenerate, since the noise acts only on the first equation. It is therefore not possible to use Girsanov's theorem to obtain a likelihood. Fortunately, our methodology can still be applied after a minor modification to our objective function. In particular, we will simply no longer weight the inner product in the objective by the inverse of the diffusion coefficient (since this is now undefined). Under standard identifiability assumptions, the average of the resulting contrast function is still uniquely minimised at the true parameter $\theta_0$, and thus this provides a suitable objective function for the statistical learning procedure. In practice, in terms of the parameter update equations, the only difference is that the term $(\sigma\sigma^{\top})^{-1}$ is now replaced by the identity.
\end{remark}

We report illustrative results for these estimators in the case that the first three parameters are to be (jointly) estimated, and the final parameter is known and fixed equal to the ground truth. We assume that the true parameter $\theta_0 = (\theta_{0,1},\theta_{0,2},\theta_{0,3},\theta_{0,4})^{\top}$ has first three components drawn at random according to $\theta_{0,1}\sim\mathcal{U}[0.0,1.0]$, $\theta_{0,2}\sim \mathcal{U}[0.0,1.0]$ and $\theta_{0,3}\sim\mathcal{U}[0.0,1.0]$, with $\theta_{0,4}=1.0$. Meanwhile, the initial parameter estimates are given by $\theta_{1,\mathrm{init}}\sim\mathcal{U}[1.0,2.0]$, $\theta_{2,\mathrm{init}}\sim \mathcal{U}[0.0,1.0]$ and $\theta_{3,\mathrm{init}}\sim\mathcal{U}[0.0,0.5]$. We simulate trajectories from the IPS with $N=50$ particles and for $T=10000$ iterations, with initial condition $x_0^{i,N}\sim\mathcal{N}(0,1)$. We use a constant learning rate, this time given by $\gamma = (\gamma_1,\gamma_2,\gamma_3) = (1\times 10^{-3}, 1\times 10^{-3}, 1\times 10^{-3})^{\top}$. 

\begin{figure}[t!]
  \centering
  \begin{subfigure}{0.325\linewidth}
    \includegraphics[width=\linewidth]{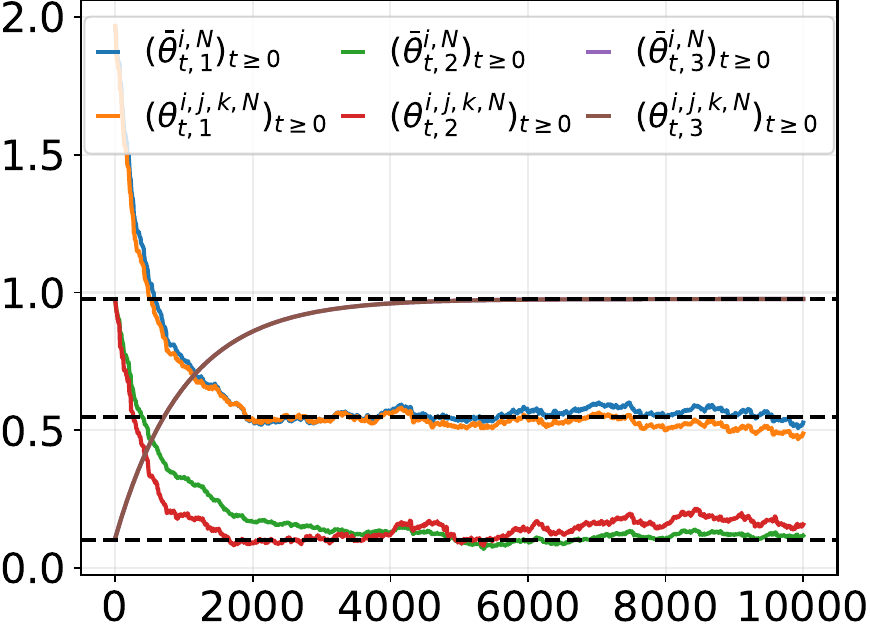}
    \caption{$N=3$ (Trajectory 1).}
    \label{fig:8a}
  \end{subfigure}\hfill
  \begin{subfigure}{0.325\linewidth}
    \includegraphics[width=\linewidth]{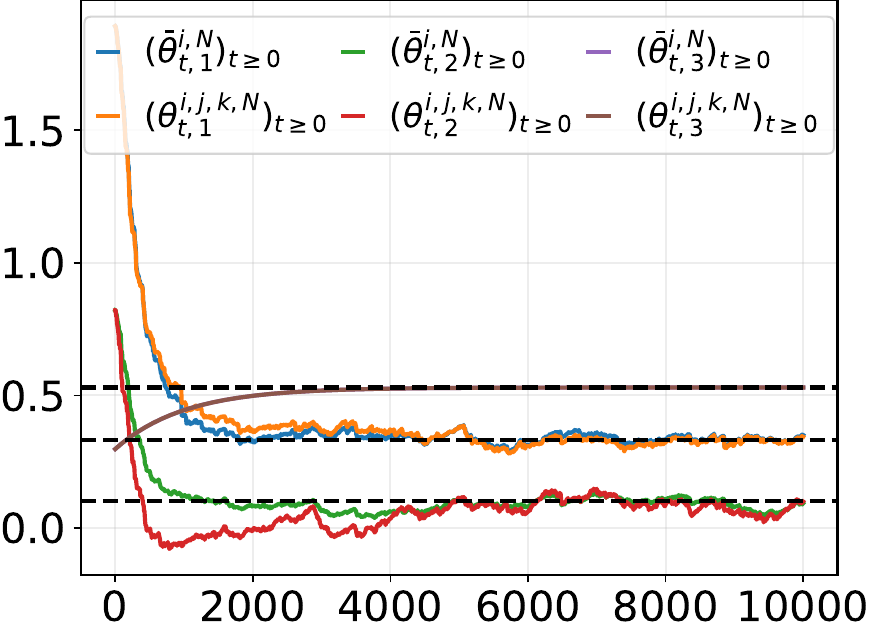}
    \caption{$N=3$ (Trajectory 2).}
    \label{fig:8b}
  \end{subfigure}
  \begin{subfigure}{0.325\linewidth}
    \includegraphics[width=\linewidth]{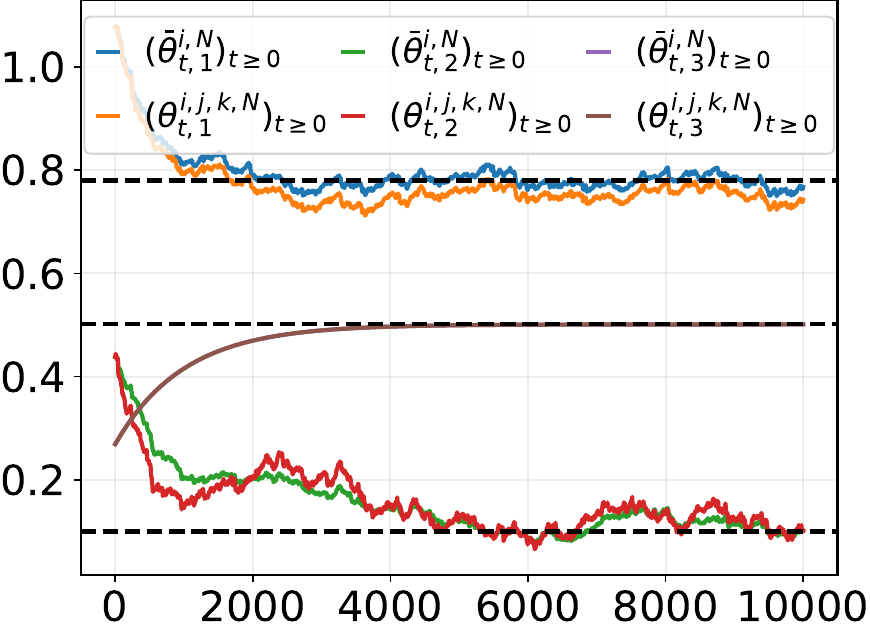}
    \caption{$N=3$ (Trajectory 3).}
    \label{fig:8c}
  \end{subfigure}
  ~\\[3mm]
    \begin{subfigure}{0.325\linewidth}
    \includegraphics[width=\linewidth]{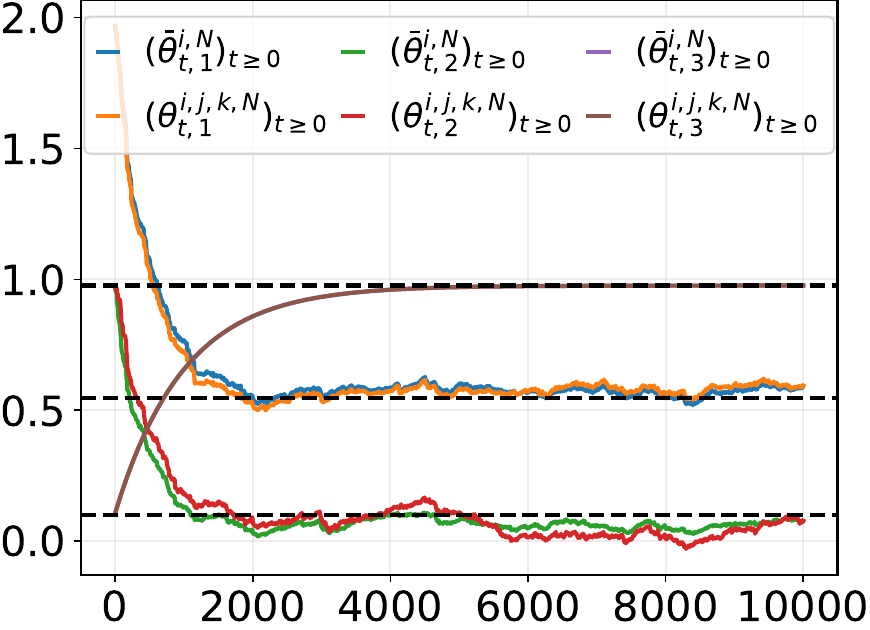}
    \caption{$N=50$ (Trajectory 1).}
    \label{fig:8d}
  \end{subfigure}\hfill
  \begin{subfigure}{0.325\linewidth}
    \includegraphics[width=\linewidth]{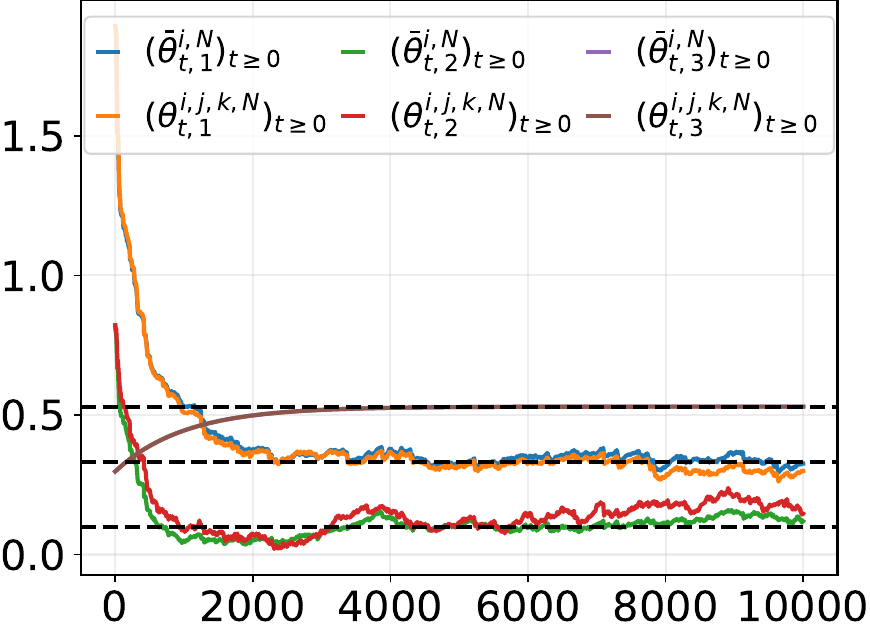}
    \caption{$N=50$ (Trajectory 2).}
    \label{fig:8e}
  \end{subfigure}
  \begin{subfigure}{0.325\linewidth}
    \includegraphics[width=\linewidth]{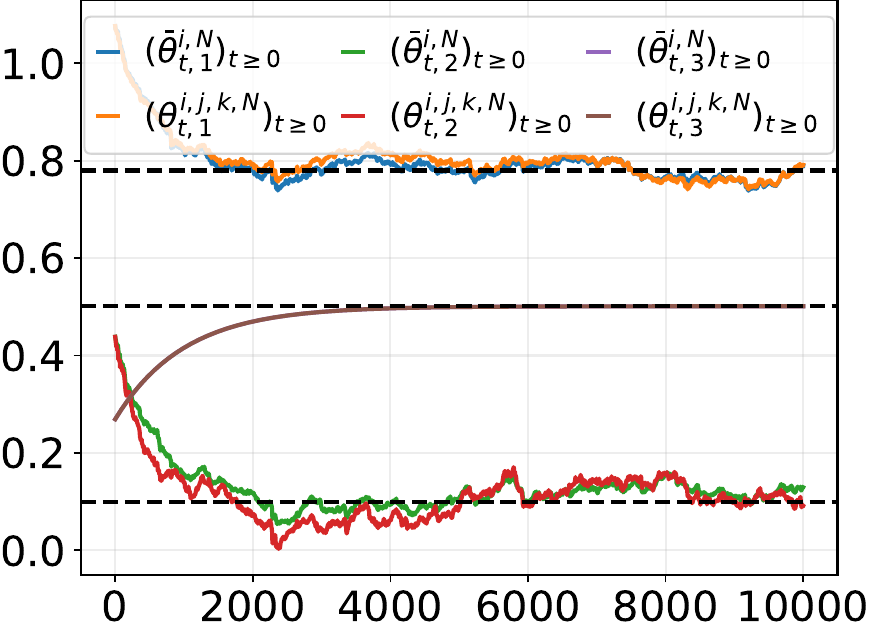}
    \caption{$N=50$ (Trajectory 3).}
    \label{fig:8f}
  \end{subfigure}
  \captionsetup{width=\textwidth}
  \caption{\textbf{Online parameter estimation for the stochastic FitzHugh--Nagumo model}. We plot three trajectories of the online parameter estimates $\smash{(\bar{\theta}_t^{i,N})_{t\geq 0}}$ and $\smash{(\theta_t^{i,j,k,N})_{t\geq 0}}$, each corresponding to a different random initial condition, true parameter value, and random seed, for $N\in\{3,50\}$. The true parameters are given by $\theta_{0,1}\sim\mathcal{U}[0.0,1.0]$, $\theta_{0,2}\sim \mathcal{U}[0.0,1.0]$ and $\theta_{0,3}\sim\mathcal{U}[0.0,1.0]$, and $\theta_{0,4}=1.0$, with the final parameter assumed known. The initial parameter estimates are given by $\smash{\theta_{\mathrm{init},1}\sim \mathcal{U}[1.0,2.0]}$,  $\smash{\theta_{\mathrm{init},2}\sim\mathcal{U}[0.0,1.0]}$, and $\smash{\theta_{\mathrm{init},3}\sim\mathcal{U}[0.0,0.5]}$.}
  \label{fig:8}
\end{figure}

Our results are shown in Figure~\ref{fig:8}. On this occasion, we report three representative individual trajectories of the online parameter estimates, corresponding to three different initial parameter values, true parameter values, and random seeds. In this case there is little to distinguish between the performance of the ``averaged'' estimator and the ``non-averaged'' estimator, even at the level of the individual trajectories. In addition, both the averaged and the non-averaged estimators converge to the true parameter values, regardless of the number of particles. Given our theoretical results (i.e., Theorem~\ref{theorem:main-theorem-1-2}), this suggests that there is little or no disagreement between the true parameter value $\theta_0$ and the minimiser $\theta_{0}^{i,j,k,N}$ of the pseudo log-likelihood $\mathcal{L}^{i,j,k,N}$.

\subsection{Stochastic Kuramoto Model}
We next consider the stochastic Kuramoto model, also known as the Kuramoto--Shinomoto--Sakaguchi model \citep[e.g.,][]{bertini2010dynamical,kuramoto1981rhythms,acebron2005kuramoto,sakaguchi1988phase}. In particular, we consider the IPS defined according to
\begin{equation}
    \mathrm{d}x_t^{\theta,i,N} = -\frac{\theta}{N}\sum_{j=1}^N \sin(x_t^{\theta,i,N}-x_t^{\theta,j,N})\,\mathrm{d}t + \sigma\mathrm{d}w_t^{i,N}.
\end{equation}
where $\theta\in\mathbb{R}$ is the coupling strength. This system of interacting particles models the synchronisation of noisy oscillators interacting through their phases, and finds application in various fields including physics, chemistry, and biology (see, e.g., \cite{acebron2005kuramoto} and references therein). Similar to previous examples, the mean-field limit of this model exhibits a phase transition \citep[e.g.,][]{bertini2010dynamical}. In particular, when $\sigma>\sigma_c$, for some critical noise strength $\sigma_c$, the noise dominates and there is a unique invariant distribution (i.e., the uniform distribution). On the other hand, when $\sigma<\sigma_c$, there exists a family of non-trivial coherent equilibria, and the population tends to synchronise. Equivalently, given a fixed value of $\sigma>0$: there is a unique invariant distribution when $\theta<\theta_c$, and multiple invariant distributions when $\theta>\theta_c$, for some critical coupling strength $\theta_c:=\sigma^2$. 

We illustrate the performance of our estimators in Figure~\ref{fig:10}. Similar to before, we simulate trajectories from the IPS with $N=50$ particles and for $T=10000$ iterations. Now, however, we consider two \emph{time-varying} specifications of the true parameter:
\begin{align}
    \theta_{0,t} &= \begin{cases}
    \theta_{0,1}, & t\in[0,5000),\\
    \theta_{0,2}, & t\in[5000,10000],
    \end{cases}
    \qquad \text{or} \qquad \theta_{0,t} = \theta_{0,1} + (\theta_{0,2} - \theta_{0,1}) \frac{t}{10000}, \label{eq:true-param-kuramoto}
\end{align}
where $\theta_{0,1} = 1.5$ and $\theta_{0,2} = 0.2$. We also assume that $\sigma=1.0 \implies \theta_c = \sigma^2 = 1.0$. Thus, in particular, for certain values of $t$, the coupling strength is above its critical value (since $\theta_{0,1}>\theta_{c}$), while at others it is below the critical value (since $\theta_{0,2}<\theta_{c}$). While, strictly speaking, this scenario is outside the scope of our theoretical results, it demonstrates another advantage of our online estimation procedure in comparison to a batch or offline approach. In particular, our estimators are able to accurately track changes in the true parameter in real time. 

\begin{figure}[t!]
  \centering
  \begin{subfigure}{0.42\linewidth}
    \includegraphics[width=\linewidth]{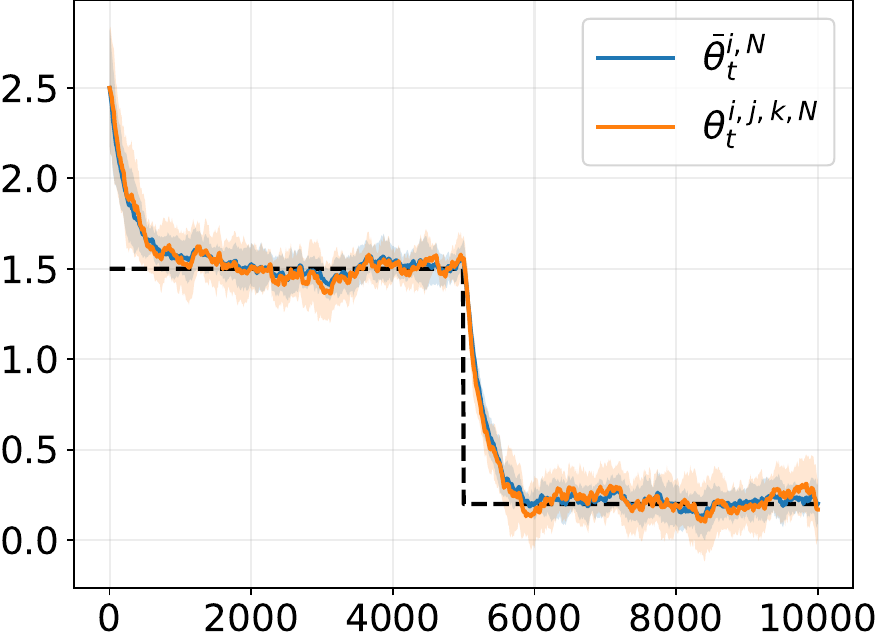}
    \caption{Changepoint.}
    \label{fig:10a}
  \end{subfigure}
  \hspace{10mm}
  \begin{subfigure}{0.42\linewidth}
    \includegraphics[width=\linewidth]{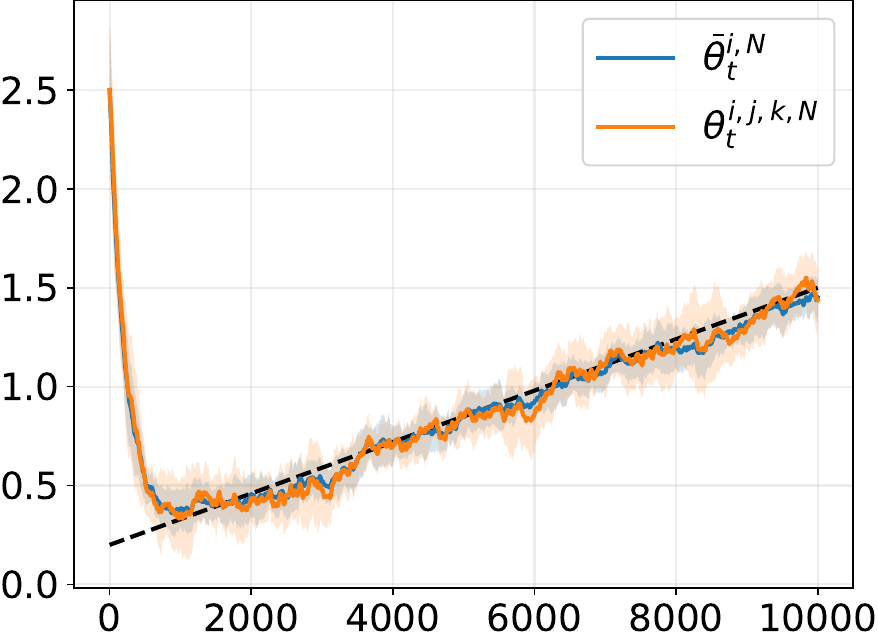}
    \caption{Linear Interpolation.}
    \label{fig:10b}
  \end{subfigure}
  \captionsetup{width=\textwidth}
  \caption{\textbf{Online parameter estimation for the stochastic Kuramoto model}. We plot the sequence of online parameter estimates $\smash{(\bar{\theta}_t^{i,N})_{t\geq 0}}$ and $\smash{(\theta_t^{i,j,k,N})_{t\geq 0}}$. The true time-varying parameter is given by the two specifications in \eqref{eq:true-param-kuramoto}. Meanwhile, the initial parameter estimate is given by $\smash{\theta_{\mathrm{init}}\sim \mathcal{U}[2,3]}$.}
  \label{fig:10}
\end{figure}

\subsection{Stochastic Cucker--Smale Model}
Our next model is a stochastic Cucker--Smale flocking model \citep[e.g.,][]{cucker2007emergent,cucker2007mathematics,cucker2008flocking,cattiaux2018stochastic,ahn2010stochastic}, parametrised by $\theta = (\theta_1,\theta_2,\theta_3)^{\top}\in\mathbb{R}^2\times\mathbb{R}_{+}$, and defined according to
\begin{align}
\mathrm{d}x_t^{\theta,i,N} & = v_t^{\theta,i,N}\mathrm{d}t \label{eq:cucker_smale_IPS1} \\
\mathrm{d}v_t^{\theta,i,N} & = -\Big[\theta_1 x_t^{\theta,i,N} + \frac{\theta_2}{N} \sum_{j=1}^N \psi^{ij}_t(\theta_3)(v_t^{\theta,i,N} - v_t^{\theta,j,N}) \Big]\mathrm{d}t + \sigma\mathrm{d}w_t^{i,N}, \label{eq:cucker_smale_IPS2}
\end{align}
where $\psi_t^{ij}$ is a non-negative function known as the \emph{communication rate}, which in this case we define according to $\smash{\psi^{ij}_t(\theta_3) = \psi(\theta_3,\|x_t^{\theta,i,N}-x_t^{\theta,j,N}\|^2)}$ with $\smash{\psi(\theta_3,u) = (1+u)^{-\theta_3}}$. This model, which originates in \cite{cucker2007emergent,cucker2007mathematics}, is intended to describe the self-organisation of individuals within a population, each individual being represented by its position and velocity $(x_t^{i},v_t^{i})\in\mathbb{R}^d\times\mathbb{R}^d$. Similar to the FitzHugh--Nagumo model (see Section \ref{sec:fitzhugh}), the stochastic Cucker--Smale model is degenerate, since the noise acts only on the second variable. We thus proceed as in Section \ref{sec:fitzhugh}, replacing the weighting with respect to the diffusion coefficient with the identity. 

We report illustrative results for this model in Figure~\ref{fig:11}. Once more, we investigate numerically the performance of the estimators as a function of the number of particles $N$. In this case, we assume that the true parameters are given by $\theta_{0} = (\theta_{0,1},\theta_{0,2},\theta_{0,3})^{\top} =(0.2,1.0,0.5)^{\top}$. We estimate the parameters $\theta_2$ and $\theta_3$ separately, with the other parameters fixed and equal to the true value. The initial parameter estimates are then given by $\theta_{2,\mathrm{init}}\sim\mathcal{U}[2,3]$ and $\theta_{3,\mathrm{init}}\sim\mathcal{U}[0,0.2]$. We simulate trajectories from the IPS with $N\in\{3,5,50\}$ particles and for $T=5000$ iterations, with initial conditions $\smash{x_0^{i,N}\sim\mathcal{N}(0,1)}$ and $\smash{v_{0}^{i,N}\sim\mathcal{N}(0,1)}$. Finally, we use constant learning rates, given by $\smash{\gamma = (\gamma_2,\gamma_3)^{\top} = (0.01,0.005)^{\top}}$.

\begin{figure}[t!]
  \centering
  \begin{subfigure}{0.32\linewidth}
    \includegraphics[width=\linewidth]{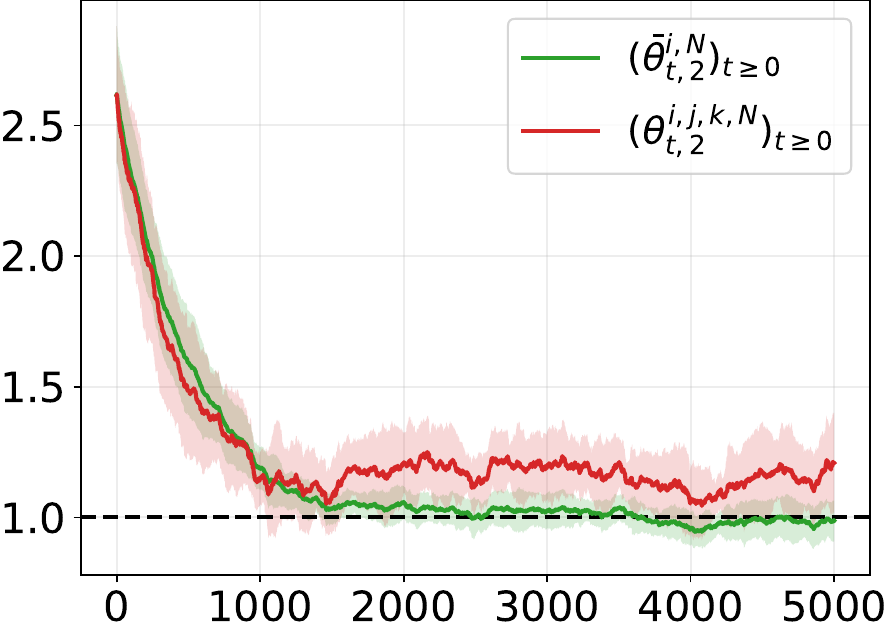}
    \caption{$N=3$.}
    \label{fig:11a}
  \end{subfigure}
  \hfill
  \begin{subfigure}{0.32\linewidth}
    \includegraphics[width=\linewidth]{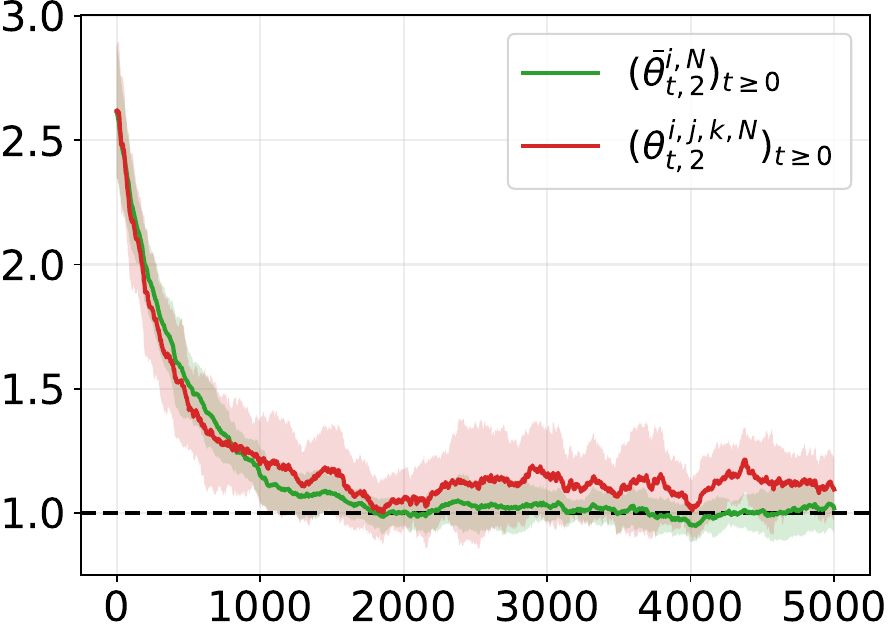}
    \caption{$N=5$.}
    \label{fig:11b}
  \end{subfigure}
  \hfill
  \begin{subfigure}{0.32\linewidth}
    \includegraphics[width=\linewidth]{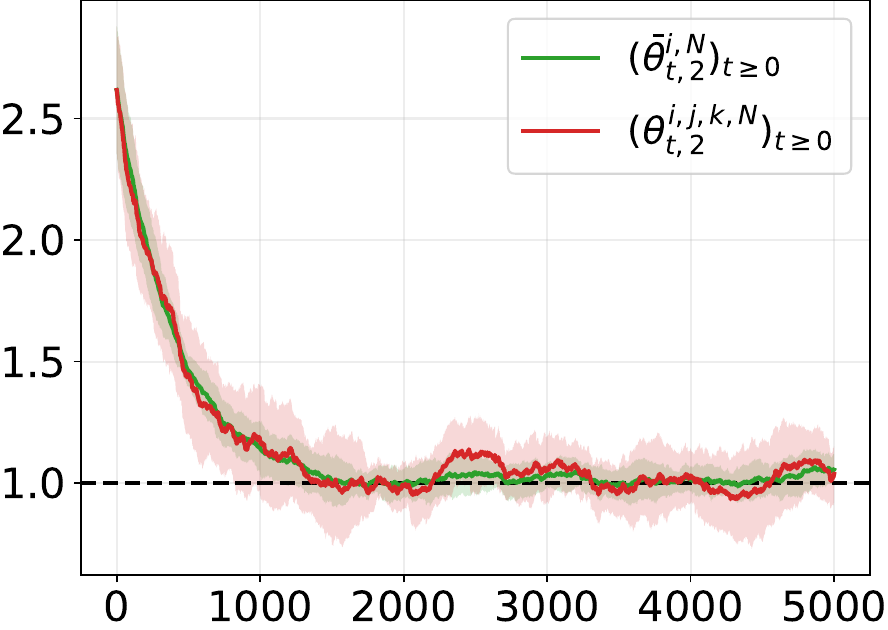}
    \caption{$N=50$.}
    \label{fig:11c}
  \end{subfigure}
  \hfill
  \vspace{3mm}
  \begin{subfigure}{0.32\linewidth}
    \includegraphics[width=\linewidth]{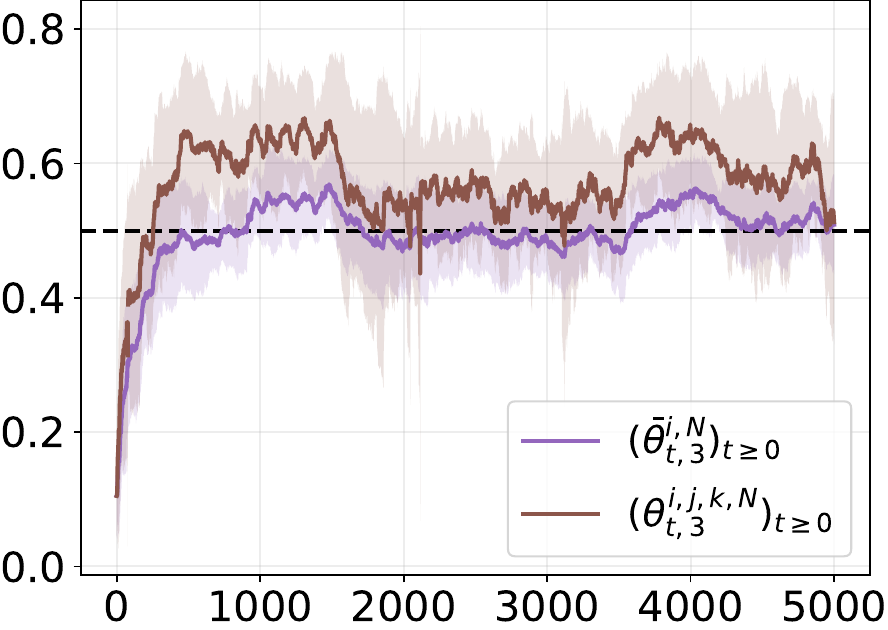}
    \caption{$N=3$.}
    \label{fig:11d}
  \end{subfigure}
  \hfill
  \begin{subfigure}{0.32\linewidth}
    \includegraphics[width=\linewidth]{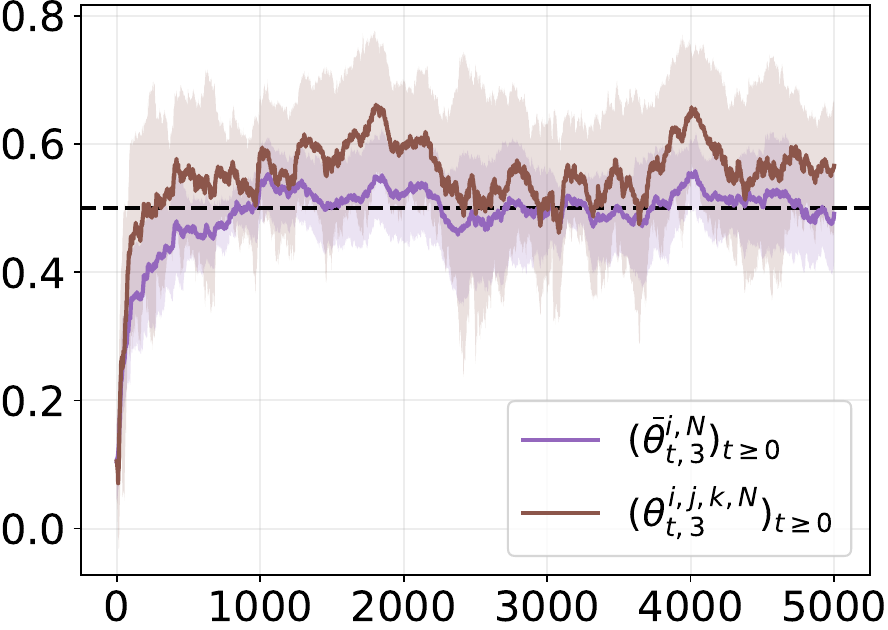}
    \caption{$N=5$.}
    \label{fig:11e}
  \end{subfigure}
  \hfill
  \begin{subfigure}{0.32\linewidth}
    \includegraphics[width=\linewidth]{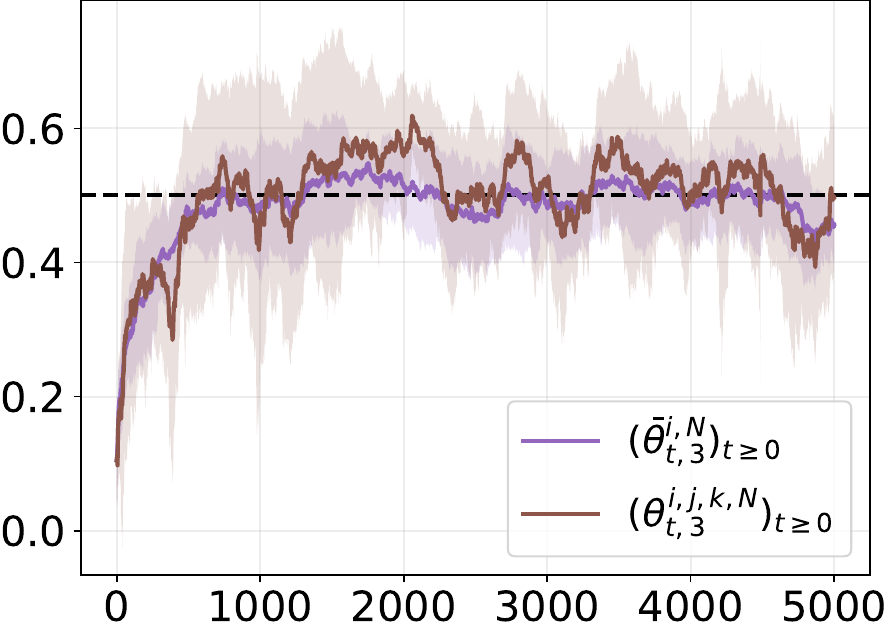}
    \caption{$N=50$.}
    \label{fig:11f}
  \end{subfigure}
  \hfill
  \captionsetup{width=\textwidth}
  \caption{\textbf{Online parameter estimation for the stochastic Cucker--Smale model}. We plot the sequence of online parameter estimates $\smash{(\bar\theta_{t,2}^{i,N})_{t\geq 0}}$ and $\smash{(\theta_{t,2}^{i,j,k,N})_{t\geq 0}}$ (top panels), and $\smash{(\bar\theta_{t,3}^{i,N})_{t\geq 0}}$ and $\smash{(\theta_{t,3}^{i,j,k,N})_{t\geq 0}}$ (bottom panels), for $N\in\{3,5,50\}$. The true parameters are given by $\smash{\theta_{0} = (\theta_{0,1},\theta_{0,2},\theta_{0,3})^{\top} =(0.2,1.0,0.5)^{\top}}$. The initial parameter estimates are given by $\smash{\theta_{\mathrm{init},2}\sim \mathcal{U}[2,3]}$ and $\smash{\theta_{\mathrm{init},3}\sim\mathcal{U}[0,0.2]}$.}
  \label{fig:11}
\end{figure}

Our numerical results once more highlight the different behaviour of the two online estimators. In particular, the performance of the averaged estimator (green, purple) is insensitive to the number of particles in the IPS. This is true both for the confinement parameter (results omitted), and for both of the interaction parameters. By contrast, the asymptotic bias of the non-averaged estimator (red, brown) decreases as the number of particles increases. This is evident in Figure~\ref{fig:11}, where the non-averaged estimator overestimates the true interaction parameter(s) when the number of particles in the IPS is small (Fig.~\ref{fig:11a} - \ref{fig:11b} and Fig.~\ref{fig:11d} - \ref{fig:11e}), but this asymptotic bias vanishes when the number of particles is sufficiently large (Fig.~\ref{fig:11c} and Fig.~\ref{fig:11f}). It is worth emphasising that this performance improvement is not a function of the number of \emph{observed} particles but rather of the number of particles in the underlying data generating process.

\subsection{Mean-Field \texorpdfstring{$\frac{3}{2}$}{3/2} Stochastic Volatility Model}
Finally, we consider a one-dimensional $\frac{3}{2}$ stochastic volatility model \citep[e.g.,][]{kumar2022wellposedness}, parametrised by $\theta=(\theta_1,\theta_2,\theta_3)^{\top}\in\mathbb{R}^3$ and $\eta:=\eta_1 \in\mathbb{R}_{+}$, which can be written as
\begin{equation}
\mathrm{d}x_t^{\theta,\eta,i,N} = - \Big[ x_t^{\theta,\eta,i,N} (\theta_1 |x_t^{\theta,\eta,i,N}| - \theta_2) + \theta_3 \frac{1}{N}\sum_{j=1}^N (x_t^{\theta,\eta,i,N} -x_t^{\theta,\eta,j,N}) \Big]\mathrm{d}t + \eta_1 |x_t^{\theta,\eta,i,N}|^{\frac{3}{2}}\mathrm{d}w_t^{i,N} \label{eq:stochastic_volatility_IPS} \vspace{-2mm}
\end{equation}
where, once again, $(w_t^{i,N})_{t\geq 0}^{i\in[N]}$ are a set of independent standard Brownian motions. This model, which represents a reparametrisation of the one introduced in  \cite[][Section 5]{kumar2022wellposedness}, can be viewed as the mean-field extension of the well-known $\frac{3}{2}$ model, which is used for pricing VIX options and modelling certain (non-affine) stochastic volatility processes \citep[e.g.,][]{goard2013stochastic}.

\begin{remark}
    In this model, we would like to estimate unknown parameters appearing in both the drift and the diffusion. It is now no longer possible to use Girsanov's theorem to obtain a likelihood, since the path measures corresponding to different values of the parameters appearing in the diffusion are, in general, mutually singular. Nonetheless, subject to a small modification, it is still possible to apply our methodology; see also \cite[][Section 4]{sirignano2017stochastic}. Consider, in the general case, an IPS parametrised by $\theta\in\mathbb{R}^p$ and $\eta\in\mathbb{R}^m$ of the form
    \begin{equation}
        \mathrm{d}x_t^{\theta,\eta,i,N} = \Big[\underbrace{\frac{1}{N}\sum_{j=1}^Nb(\theta,x_t^{\theta,\eta,i,N},x_t^{\theta,\eta,j,N})}_{B[\theta,x_t^{\theta,\eta,i,N},\mu_t^{\theta,\eta,N}]}\Big]\mathrm{d}t + \Big[\underbrace{\frac{1}{N}\sum_{j=1}^N \sigma(\eta,x_t^{\theta,\eta,i,N},x_t^{\theta,\eta,j,N})}_{\Sigma(\eta,x_t^{\theta,\eta,i,N},\mu_t^{\theta,\eta,N})}\Big]\mathrm{d}w_t^{i,N}. \vspace{-2mm}
    \end{equation}
    We will suppose, as before, that there exists true but unknown static parameters $\theta_0\in\Theta\subseteq\mathbb{R}^p$ and $\eta_0\in\mathcal{H}\subseteq\mathbb{R}^m$ which generate the observed paths. To estimate the parameters, we can now simply consider the modified objective functions
    \begin{align}
    \tilde{\mathcal{L}}(\theta) &:= \int_{\mathbb{R}^d} \frac{1}{2}\|B(\theta,x,\pi_{\theta_0,\eta_0})-B(\theta_0,x,\pi_{\theta_0,\eta_0})\|^2 \pi_{\theta_0,\eta_0}(\mathrm{d}x) \\
    \tilde{\mathcal{J}}(\eta) &:= \int_{\mathbb{R}^d} \frac{1}{2}\|\Sigma(\eta,x,\pi_{\theta_0,\eta_0})\Sigma^{\top}(\eta,x,\pi_{\theta_0,\eta_0}) -\Sigma(\eta_0,x,\pi_{\theta_0,\eta_0})\Sigma^{\top}(\eta_0,x,\pi_{\theta_0,\eta_0})\|^2 \pi_{\theta_0,\eta_0}(\mathrm{d}x)
    \end{align}
    Following similar steps to before (see Sections~\ref{sec:algorithm-1} - \ref{sec:algorithm-2}), we can define online estimators based on these objectives. For the drift parameters, we simply obtain an unweighted version of the update equations in \eqref{eq:IPS_update1-a} and \eqref{eq:IPS_update2-a-v0}, with $(\sigma\sigma^{\top})^{-1}$ replaced by the identity. Meanwhile, for the diffusion parameters, we obtain
    \begin{align}
    \label{eq:diffusion-update-average}
        \mathrm{d}\bar\eta_t^{i,N}&= -\delta_t \nabla_{\eta}\left(\Sigma\Sigma^{\top}(\bar\eta_t^{i,N},x_t^{i,N},\mu_t^N)\right)  \left( \Sigma\Sigma^{\top}(\bar\eta_t^{i,N},x_t^{i,N},\mu_t^N) \mathrm{d}t - \mathrm{d}\langle x^{i,N}, x^{i,N}\rangle_t \right) \\
    \label{eq:diffusion-update-non-average}
        \mathrm{d}\eta_t^{i,j,k,N}&= -\delta_t \nabla_{\eta}\left(\sigma\sigma^{\top}(\eta_t^{i,j,k,N},x_t^{i,N},x_t^{j,N}) \right) \left( \sigma\sigma^{\top}(\eta_t^{i,j,k,N},x_t^{i,N},x_t^{k,N}) \mathrm{d}t - \mathrm{d}\langle x^{i,N}, x^{i,N}\rangle_t \right) 
    \end{align}
    where $(\delta_t)_{t\geq 0}$ is the learning rate for the diffusion parameters. Arguing as before, we can establish results for these estimators analogous to those proved in the previous sections (see Sections~\ref{sec:main-results-convergence} - \ref{sec:main-results-clt}).
\end{remark}

We can now write down the online parameter estimate(s) for the mean-field $\frac{3}{2}$ stochastic volatility model. Based on an unweighted version of \eqref{eq:IPS_update1-a} or \eqref{eq:IPS_update2-a-v0}, for the drift parameters we have
\begin{align}
\mathrm{d}\bar\theta_t^{i,N} &= 
\gamma_t
\begin{bmatrix} -x_t^{i,N} |x_t^{i,N}|  \\ x_t^{i,N} \\  -(x_t^{i,N} - \bar{x}_t^N)\end{bmatrix}
\big[\mathrm{d}x_t^{i,N} - 
\big(-x_t^{i,N}(\bar\theta_{t,1}^{i,N}|x_t^{i,N}| - \bar\theta_{t,2}^{i,N}) - \bar\theta_{t,3}^{i,N} (x_t^{i,N} - \bar{x}_t^N) \big)
\mathrm{d}t \big] \label{eq:stochastic_volatility_update1_ips} \\
\mathrm{d}\theta_t^{i,j,k,N} &= 
\gamma_t 
\begin{bmatrix} -x_t^{i,N} |x_t^{i,N}|  \\ x_t^{i,N} \\  -(x_t^{i,N} - {x}_t^{j,N})\end{bmatrix}
\big[\mathrm{d}x_t^{i,N} - \big(-x_t^{i,N}(\theta_{t,1}^{i,j,k,N}|x_t^{i,N}| - \theta_{t,2}^{i,j,k,N}) - \theta_{t,3}^{i,j,k,N} (x_t^{i,N} - {x}_t^{k,N}) \big)
\mathrm{d}t \big]. \label{eq:stochastic_volatility_update2_ips}
\end{align}

Meanwhile, according to \eqref{eq:diffusion-update-average} and \eqref{eq:diffusion-update-non-average}, the update equation for the diffusion parameter is the same in both cases. In particular, writing $\eta_t^{i,N}$ for either $\bar\eta_t^{i,N}$ or $\eta_t^{i,j,k,N}$, we have
\begin{align}
\mathrm{d}\eta_t^{i,N} &=  \delta_t \big[2 \eta_t^{i,N} |x_t^{i,N}|^{3}\big]  \big[\mathrm{d}\langle x^{i,N}, x^{i,N}\rangle_t - (\eta_t^{i,N})^2 |x_t^{i,N}|^{3} \mathrm{d}t \big]. \label{eq:eta_update_ips}
\end{align}
We report illustrative results for this model in Figure~\ref{fig:12}. We assume that the true parameters are given by $\smash{\theta_{0} = (\theta_{0,1},\theta_{0,2},\theta_{0,3})^{\top} =(2.7,2.3,1.0)^{\top}}$ and $\eta_{0} = 0.7$, while the initial parameter estimates are given by $\smash{\theta_{\mathrm{init},1}\sim \mathcal{U}[1.0,1.5]}$, $\smash{\theta_{\mathrm{init},2}\sim\mathcal{U}[3.5,4.0]}$, $\smash{\theta_{\mathrm{init},3}\sim\mathcal{U}[0.0,0.2]}$ and $\eta_{\mathrm{init}}\sim\mathcal{U}[1.5,2.0]$. We simulate trajectories from the IPS with $N\in\{3,10,50\}$ particles and $T=5000$ iterations, with $\smash{x_0^{i,N}\sim\mathcal{N}(0,1)}$. Finally, we use constant learning rates, given by $\smash{\gamma = (\gamma_1,\gamma_2,\gamma_3)^{\top} = (0.01,0.01,0.05)^{\top}}$ and $\smash{\delta = 0.01}$, respectively. 

\begin{figure}[b!]
  \centering
  \begin{subfigure}{0.31\linewidth}
    \includegraphics[width=\linewidth]{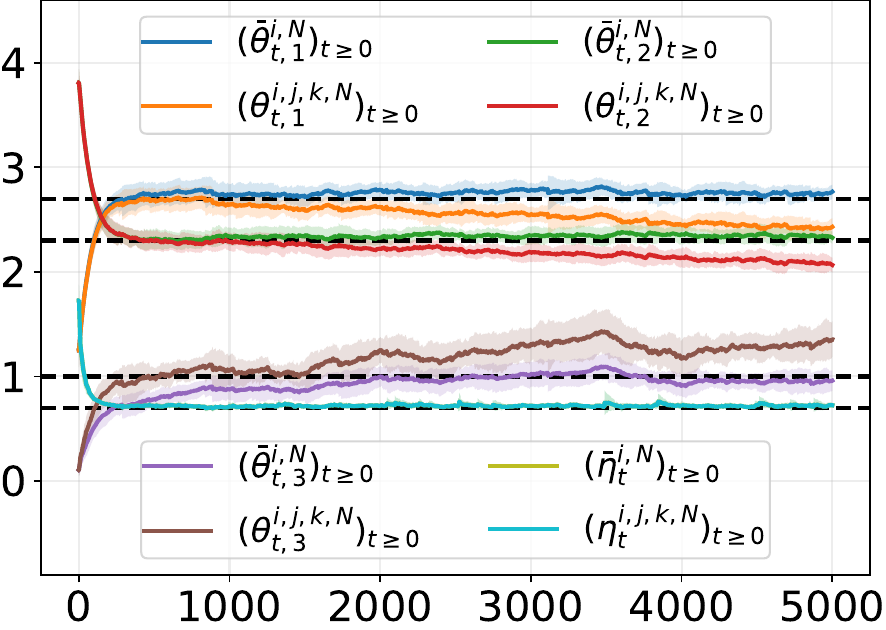}
    \caption{$N=3$.}
    \label{fig:12a}
  \end{subfigure}
  \hfill
  \begin{subfigure}{0.31\linewidth}
    \includegraphics[width=\linewidth]{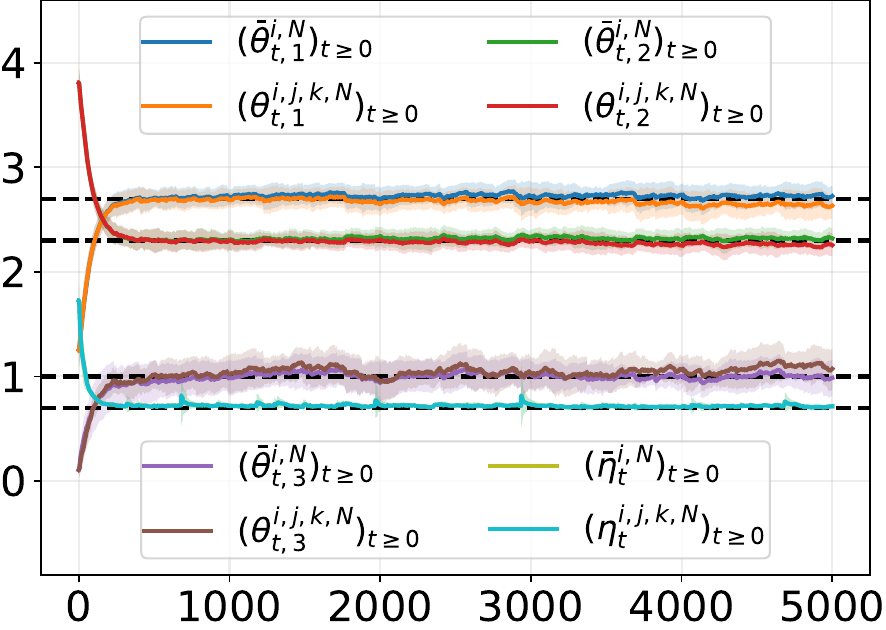}
    \caption{$N=10$.}
    \label{fig:12b}
  \end{subfigure}
  \hfill
  \begin{subfigure}{0.31\linewidth}
    \includegraphics[width=\linewidth]{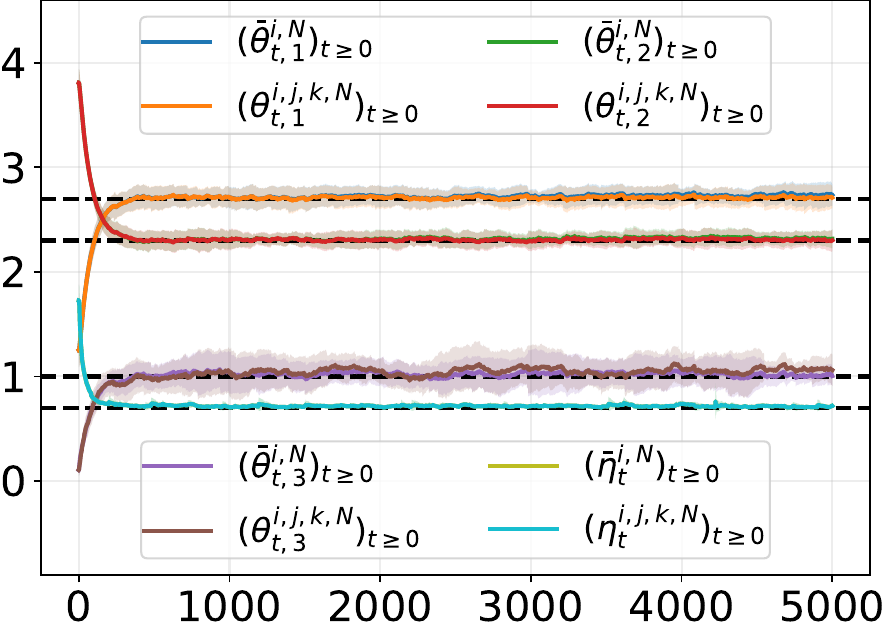}
    \caption{$N=50$.}
    \label{fig:12c}
  \end{subfigure}
  \captionsetup{width=\textwidth}
  \caption{\textbf{Online parameter estimation for the mean-field $\frac{3}{2}$ stochastic volatility model}. We plot the sequence of online parameter estimates as defined by the update equations in \eqref{eq:stochastic_volatility_update1_ips}, \eqref{eq:stochastic_volatility_update2_ips}, and \eqref{eq:eta_update_ips}. The true parameters are given by $\smash{\theta_{0} = (\theta_{0,1},\theta_{0,2},\theta_{0,3})^{\top} =(2.7,2.3,1.0)^{\top}}$ and $\eta_{0} = 0.7$. The initial parameter estimates are given by $\smash{\theta_{\mathrm{init},1}\sim \mathcal{U}[1.0,1.5]}$, $\smash{\theta_{\mathrm{init},2}\sim\mathcal{U}[3.5,4.0]}$, $\smash{\theta_{\mathrm{init},3}\sim\mathcal{U}[0.0,0.2]}$ and $\eta_{\mathrm{init}}\sim\mathcal{U}[1.5,2.0]$.}
  \label{fig:12}
  \vspace{-4mm}
\end{figure}

\section{Conclusion}
\label{sec:conclusion}

In this paper, we introduced new algorithms for online parameter estimation in interacting particle systems (IPSs), based on continuous observation of a small number of particles from the system. In comparison to previous approaches, which required observation of the entire IPS, our approach offers a significant computational advantage. Under mild assumptions, we established convergence of our proposed estimator to the stationary points of an asymptotic log-likelihood function. Under additional conditions (e.g., strong convexity), we also established an $\mathrm{L}^2$ convergence rate and a central limit theorem. 

There are a number of natural extensions to the work presented here. In terms of theory, an interesting question is whether it is possible to extend our results to the hypoelliptic setting \citep[e.g.,][]{iguchi2025parameter}. Regarding methodology, a natural extension is to generalise our approach to the nonparametric or semiparametric setting, i.e., to the case where the functional form of the drift (or diffusion) is not known \citep[e.g.,][]{belomestny2023semiparametric}. Finally, it would be of interest to extend our approach to the partially observed setting \citep[e.g.,][]{jasra2025bayesian}.

 \section*{Acknowledgements} G.A.P. is partially supported by an ERC-EPSRC Frontier Research Guarantee through Grant No. EP/X038645, ERC Advanced Grant No. 247031, and a Leverhulme Trust Senior Research Fellowship, SRF\textbackslash{}R1\textbackslash{}241055.

\bibliography{references}

\appendix 


\section{Existing Results}
\label{app:existing-results}
In this section we recall some classical results on the IPS and the associated MVSDE $\vphantom{(w_t^{i,N})_{t\geq 0}^{i\in[N]}}$ which hold under our standing assumptions. The proofs of these results can be found in \cite{cattiaux2008probabilistic} $\vphantom{(w_t^{i,N})_{t\geq 0}^{i\in[N]}}$. We begin by recalling some notation. Let $(x_t^{i,N})_{t\geq 0}^{i\in[N]}$ denote the solutions of the observed IPS, with initial conditions $(x_0^{i,N})^{i\in[N]} \sim \mu_0^{\otimes N}$. Let $(x_t^{i})^{i\in[N]}_{t\geq 0}$ denote the family of independent solutions of the corresponding MVSDE, driven by the same Brownian motions $(w_t^{i,N})_{t\geq 0}^{i\in[N]}$ and with the same random initial conditions $(x_0^{i})^{i\in[N]} \sim\mu_0^{\otimes N}$ as the IPS.

\begin{theorem}[Moment Bounds]
\label{thm:moment-bounds}
Suppose that Assumption \ref{assumption:moments} and Assumption \ref{assumption:drift} hold. Then, for all $k\geq 1$, there exists a constant $C_k>0$ such that, for all $i=1,\dots,N$, 
\begin{align}
    &\sup_{t\geq 0} \mathbb{E}\left[\|x_t^{i,N}\|^{2k}\right] \leq C_k \left[ 1 + \mu_0(\|x\|^{2mk})\right] \\
    &\sup_{t\geq 0} \mathbb{E}\left[\|x_t^{i}\|^{2k}\right] \leq C_k \left[ 1 + \mu_0(\|x\|^{2mk})\right].
\end{align}
\end{theorem}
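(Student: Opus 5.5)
We prove uniform-in-time moment bounds by Itô's formula applied to $\|x\|^{2k}$, using the dissipativity that the $C(A,\alpha)$ condition provides at large distances. We treat case (a) of Assumption~\ref{assumption:drift} first, and the MVSDE and the IPS in parallel.

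\emph{Step 1: dissipativity.} Fix $\varepsilon=1/2$ in the $C(A,\alpha)$ condition for $V(\theta_0,\cdot)$ and take $y=0$. This gives
\begin{equation}
\langle x,\nabla V(\theta_0,x)\rangle \geq A2^{-\alpha}\|x\|^2 - A2^{-\alpha-2} - \|x\|\|\nabla V(\theta_0,0)\|,
\end{equation}
so $\langle x,\nabla V(\theta_0,x)\rangle\geq c\|x\|^2-C$ for some $c>0$.

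For the interaction, symmetry of $W$ makes $\nabla W(\theta_0,\cdot)$ odd, and convexity gives $\langle u,\nabla W(\theta_0,u)\rangle\geq 0$. Hence, averaging over pairs,
\begin{equation}
\sum_{i,j}\langle x^i,\nabla W(\theta_0,x^i-x^j)\rangle=\tfrac12\sum_{i,j}\langle x^i-x^j,\nabla W(\theta_0,x^i-x^j)\rangle\geq 0.
\end{equation}
For the MVSDE, take an independent copy $\tilde x_t$ and use the same exchange trick in expectation: $\mathbb{E}\langle x_t,\nabla W(\theta_0,x_t-\tilde x_t)\rangle\geq0$.

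\emph{Step 2: a Gronwall bound.} Apply Itô's formula to $\|x\|^{2k}$. For the IPS, sum over $i$ and use exchangeability, so that $\mathbb{E}\|x_t^{i,N}\|^{2k}=\frac1N\sum_j\mathbb{E}\|x_t^{j,N}\|^{2k}$. The interaction term must be symmetrised with the weight $\|x^i\|^{2k-2}$, which breaks the clean pairwise identity. To fix this, write
\begin{equation}
\|x^i\|^{2k-2}x^i=\tfrac12\big(\|x^i\|^{2k-2}x^i-\|x^j\|^{2k-2}x^j\big)+\tfrac12\big(\|x^i\|^{2k-2}x^i+\|x^j\|^{2k-2}x^j\big).
\end{equation}
The map $x\mapsto\|x\|^{2k-2}x$ is the gradient of the convex function $\|x\|^{2k}/2k$, so it is monotone. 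When $d=1$ this gives $\langle \|x\|^{2k-2}x-\|y\|^{2k-2}y,\nabla W(x-y)\rangle\geq0$ by convexity. In general dimension this needs more care; the fallback is the Cattiaux--Guillin--Malrieu approach, which bounds $\mathbb{E}\|x\|^2$ first and then uses the polynomial growth of $\nabla W$ to control higher moments inductively in $k$.

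Combining the two pieces, the drift gives
\begin{equation}
\frac{\mathrm{d}}{\mathrm{d}t}\mathbb{E}\|x_t\|^{2k}\leq -c\,\mathbb{E}\|x_t\|^{2k}+C\,\mathbb{E}\|x_t\|^{2k-2}+C'.
\end{equation}
The middle term comes from the constant $-C$ in the dissipativity bound and from the Itô correction $\mathrm{tr}(\sigma\sigma^\top)$-type terms. By Young's inequality, $C\,\mathbb{E}\|x_t\|^{2k-2}\leq \tfrac c2\,\mathbb{E}\|x_t\|^{2k}+C''$. Gronwall then gives
\begin{equation}
\sup_t\mathbb{E}\|x_t\|^{2k}\leq \mathbb{E}\|x_0\|^{2k}+C.
\end{equation}
This is dominated by $C_k(1+\mu_0(\|x\|^{2mk}))$, since $m\geq1$ may be assumed. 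The stochastic integrals are true martingales after localisation by stopping times combined with Fatou's lemma.

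\emph{Step 3: case (b).} Here $V=0$, and all dissipativity must come from $W$ through its $C(A,\alpha)$ condition. This only controls the relative positions $x^i-x^j$, which is exactly why the centred process $y^{i,N}$ is used. Applying the same computation, $\langle x-y,\nabla W(x-y)\rangle\geq c\|x-y\|^2-C$, and the constraint $\sum_i y^i=0$ gives $\sum_{i,j}\|y^i-y^j\|^2=2N\sum_i\|y^i\|^2$. This restores coercivity for $\sum_i\|y^i\|^2$. For the MVSDE, the analogous fact is $\mathbb{E}\|x_t-\tilde x_t\|^2=2\,\mathbb{E}\|x_t\|^2$ when the mean is zero, and the mean is conserved because $\nabla W$ is odd.

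The main obstacle is the higher-moment ($k>1$) symmetrisation in Step 2 and its analogue in case (b). These are handled by induction on $k$, using the polynomial growth bounds on $\nabla W$ together with the already-established lower moments. This induction is where the $\mu_0(\|x\|^{2mk})$ dependence enters.
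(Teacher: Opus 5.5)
\citet{cattiaux2008probabilistic} is the only source the paper gives for this statement (their Proposition~2.7); it offers no proof of its own. Your It\^o--Lyapunov argument therefore has to stand on its own.

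Where your argument is complete, it is correct. The ingredients are:
the bound $\langle x,\nabla V(\theta_0,x)\rangle\ge c\|x\|^2-C$ from the $C(A,\alpha)$ condition;
the exchange identity $\mathbb{E}\langle x,\nabla W(\theta_0,x-\tilde x)\rangle=\frac12\mathbb{E}\langle x-\tilde x,\nabla W(\theta_0,x-\tilde x)\rangle$;
the centring identity $\sum_{i,j}\|y^i-y^j\|^2=2N\sum_i\|y^i\|^2$ in case (b);
and Gronwall with localisation.
Together these give the bound for $k=1$ in every dimension. They also give it for every $k$ when $d=1$, since then $(\phi(a)-\phi(b))\nabla W(\theta_0,a-b)$ with $\phi(a)=|a|^{2k-2}a$ is a nonnegative multiple of $(a-b)\nabla W(\theta_0,a-b)$. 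The same reasoning works for radial $W(\theta_0,\cdot)$.

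The gap is exactly the step you flag, $k\ge 2$ with $d\ge 2$, and your fallback does not close it.

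First, the sign genuinely fails, not just pointwise. Take $W(\theta_0,u)=\frac12\langle u,Mu\rangle$ with the positive definite $M=\begin{pmatrix}1&-0.1\\-0.1&0.02\end{pmatrix}$, $x=(R,0)$ and $y=(R,R)$. Then $\langle\|x\|^2x-\|y\|^2y,\,M(x-y)\rangle=R^4(M_{12}+2M_{22})=-0.06\,R^4$. So for $k=2$ and the law $\frac12(\delta_x+\delta_y)$, the expected symmetrised interaction term is negative.

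Second, the polynomial-growth fallback produces terms of too high an order. Split $\|x\|^{2k-2}x-\|\tilde x\|^{2k-2}\tilde x$ into $\frac12(\|x\|^{2k-2}+\|\tilde x\|^{2k-2})(x-\tilde x)$, which pairs nonnegatively with $\nabla W(\theta_0,x-\tilde x)$, plus the remainder $\frac12(\|x\|^{2k-2}-\|\tilde x\|^{2k-2})(x+\tilde x)$. Estimating the remainder via the polynomial growth of $\nabla W(\theta_0,\cdot)$ yields terms like $\mathbb{E}\|x_t\|^{2k+m}$. Assumption~\ref{assumption:drift}(a) only guarantees confinement dissipation of degree $2k$; for example, $V(\theta_0,x)=\frac12\|x\|^2$ satisfies $C(1,0)$. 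Yet the assumption allows $W(\theta_0,u)=\frac14\langle u,Mu\rangle^2$. For this $W$ the same configuration gives $M_{22}(M_{12}+2M_{22})R^6<0$, a term of order $R^6$ against dissipation of order $R^4$.

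No bound on moments of order below $2k$ can absorb a term of degree $2k+m$. So the asserted induction is not an argument, and neither is the claim that it produces the $\mu_0(\|x\|^{2mk})$ dependence.

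For quadratic $W(\theta_0,\cdot)$ you can still finish without symmetrising. There the interaction enters only through the mean (or the empirical mean), whose moments can be controlled separately. The real obstruction is superlinear, anisotropic $\nabla W$.

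Among additive Lyapunov functions $\sum_i f(x^i)$, only isotropic quadratics $f$ (up to affine terms) pair with the right sign against every convex even $W$. A genuinely different ingredient is therefore needed. One possibility is an estimate in which the favourable pairs (one particle in the bulk, one in the tail) dominate the unfavourable tail--tail pairs.

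As written, your proof establishes the theorem only for $k=1$, for $d=1$, or for radial $W(\theta_0,\cdot)$. The same caveat applies to case (b) for $k\ge 2$.
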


\begin{theorem}[Propagation of Chaos]
\label{thm:poc}
Suppose that Assumption \ref{assumption:moments} and Assumption \ref{assumption:drift} hold. Then, for all $N\in\mathbb{N}$, there exists a constant $K>0$ such that, for all $i=1,\dots,N$,
\begin{equation}
    \sup_{t\geq 0}\mathbb{E}\left[\|x_t^{i,N} - x_t^{i}\|^2\right] \leq \frac{K}{N^{\frac{1}{1+\alpha}}}.
\end{equation}
\end{theorem}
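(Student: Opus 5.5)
The plan is the classical synchronous-coupling argument of \citet{cattiaux2008probabilistic}, with the free parameter $\varepsilon$ in the $C(A,\alpha)$ condition optimised against $N$. Write $\Delta_t^i := x_t^{i,N}-x_t^{i}$, where $x^{i,N}$ and $x^i$ share the same Brownian motion and initial condition. Since $\sigma$ is constant, the noise cancels, so $\Delta^i$ has finite variation, $\Delta_0^i=0$, and
\begin{equation}
\tfrac{\mathrm{d}}{\mathrm{d}t}\|\Delta_t^i\|^2 = 2\big\langle \Delta_t^i, B(\theta_0,x_t^{i,N},\mu_t^N) - B(\theta_0,x_t^i,\mu_t^{\theta_0})\big\rangle .
\end{equation}
I will split the drift difference into three parts. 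The first is the confinement part $-(\nabla V(\theta_0,x_t^{i,N})-\nabla V(\theta_0,x_t^i))$. The second is the coupled interaction part $-\frac1N\sum_j[\nabla W(\theta_0,x_t^{i,N}-x_t^{j,N})-\nabla W(\theta_0,x_t^i-x_t^j)]$. The third is the fluctuation part $-R_t^i$, where $R_t^i:=\frac1N\sum_j\nabla W(\theta_0,x_t^i-x_t^j)-\int\nabla W(\theta_0,x_t^i-y)\mu_t^{\theta_0}(\mathrm{d}y)$. By exchangeability, $e_t:=\mathbb{E}\|\Delta_t^i\|^2$ does not depend on $i$, so I will sum over $i$ and divide by $N$.

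\emph{Case (a).} The $C(A,\alpha)$ condition on $V$ gives $-\langle\Delta^i,\nabla V(x^{i,N})-\nabla V(x^i)\rangle\le -A\varepsilon^\alpha(\|\Delta^i\|^2-\varepsilon^2)$. Because $W$ is symmetric, $\nabla W$ is odd, so after summing over $i$ the interaction part symmetrises to
\begin{equation}
-\tfrac{1}{2N}\textstyle\sum_{i,j}\langle\Delta^i-\Delta^j,\nabla W(x^{i,N}-x^{j,N})-\nabla W(x^i-x^j)\rangle\le 0
\end{equation}
by convexity of $W$. For the fluctuation part I use Cauchy--Schwarz, $|\mathbb{E}\langle\Delta^i,R^i\rangle|\le e_t^{1/2}(\mathbb{E}\|R^i_t\|^2)^{1/2}$. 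Conditionally on $x^i_t$, the $x^j_t$ for $j\neq i$ are i.i.d.\ with law $\mu_t^{\theta_0}$, so $R^i_t$ is an average of centred independent terms plus an $O(1/N)$ diagonal term. The polynomial growth of $\nabla W$ together with Theorem~\ref{thm:moment-bounds} then gives $\mathbb{E}\|R^i_t\|^2\le C/N$ uniformly in $t$. Applying Young's inequality to the cross term, I expect
\begin{equation}
\dot e_t\le -A\varepsilon^\alpha e_t + 2A\varepsilon^{\alpha+2} + \frac{C}{\varepsilon^{\alpha}N},
\end{equation}
and Gronwall's inequality with $e_0=0$ yields $\sup_t e_t\le 2\varepsilon^2+C/(A\varepsilon^{2\alpha}N)$. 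Choosing $\varepsilon=N^{-1/(2(1+\alpha))}$ balances the two terms, since $\varepsilon^2=N^{-1/(1+\alpha)}$ and $\varepsilon^{2\alpha}N=N^{1/(1+\alpha)}$. This gives $K/N^{1/(1+\alpha)}$. For small $N$ the constraint $\varepsilon<1$ is harmless, because the uniform moment bounds give $e_t\le C$ directly.

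\emph{Case (b).} Now the dissipation must come from $W$, and I work with the centred processes as in Remark~\ref{remark:centering}. Summing over $i$ and applying $C(A,\alpha)$ to $\nabla W$ gives a negative term
\begin{equation}
-\tfrac{A\varepsilon^\alpha}{2N}\textstyle\sum_{i,j}(\|\Delta^i-\Delta^j\|^2-\varepsilon^2),
\qquad
\textstyle\sum_{i,j}\|\Delta^i-\Delta^j\|^2=2N\sum_i\|\Delta^i\|^2-2\|\sum_i\Delta^i\|^2 .
\end{equation}
Since $\sum_i x^{i,N}=0$, we have $\frac1N\sum_i\Delta^i=-\frac1N\sum_i x^i$, and this empirical mean of i.i.d.\ mean-zero variables has second moment $O(1/N)$. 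The loss is therefore an extra $O(\varepsilon^\alpha/N)$ term, which is dominated by $C/(\varepsilon^\alpha N)$, and the same Gronwall argument and choice of $\varepsilon$ go through.

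The main obstacle I expect is the rigorous control of the fluctuation term $R^i$, and in case (b) of the centring correction. Both need moment bounds that are uniform in time and in $N$, and for the centred system these must hold on the hyperplane $\mathcal{M}_N$. I will take these from Theorem~\ref{thm:moment-bounds}. Time-differentiating the expectation also needs care because $\nabla V$ and $\nabla W$ are only locally Lipschitz. I would handle this by localising with stopping times and passing to the limit by monotone convergence, again using the moment bounds.
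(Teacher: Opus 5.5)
Your argument is the synchronous-coupling proof of \citet[Theorem~3.1]{cattiaux2008probabilistic}. That is all the paper relies on here, since it recalls the result without proof. The ingredients are the right ones: you symmetrise the interaction term using oddness of $\nabla W$, bound $\mathbb{E}\|R_t^i\|^2\le C/N$ by conditional independence and the uniform moment bounds, absorb the cross term with Young's inequality, and tune $\varepsilon=N^{-1/(2(1+\alpha))}$. Case (a) is correct as written.

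In case (b), however, your opening claims that the noise cancels and that $\Delta_0^i=0$ are false for the coupling you use.

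\emph{The noise.} The centred particle solves $\mathrm{d}y_t^{i,N}=-\frac1N\sum_j\nabla W(\theta_0,y_t^{i,N}-y_t^{j,N})\,\mathrm{d}t+\sigma\,\mathrm{d}w_t^{i,N}-\sigma\,\mathrm{d}\bar w_t$, where $\bar w_t:=\frac1N\sum_j w_t^{j,N}$. By contrast, $x^i$ is driven by $\sigma\,\mathrm{d}w_t^{i,N}$ alone.

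\emph{The initial condition.} Because $x_0^{i,N}=x_0^i$, you have $\Delta_0^i=-\frac1N\sum_j x_0^j\neq 0$.

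So $\Delta^i$ is a genuine semimartingale and you must use It\^o's formula rather than the chain rule. This adds $\frac1N\mathrm{tr}(\sigma\sigma^{\top})$ to the right-hand side of your differential inequality, together with a local martingale that your localisation removes. It also starts you from $e_0=\frac1N\int\|x\|^2\mu_0(\mathrm{d}x)$ rather than $0$. Both corrections are $O(1/N)$, hence dominated by $C/(\varepsilon^{\alpha}N)$, so the rate $N^{-1/(1+\alpha)}$ survives once you account for them.

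A cleaner fix is to compare $y^{i,N}$ with the recentred copies $\tilde x^i:=x^i-\frac1N\sum_j x^j$.
\begin{itemize}
\item These carry exactly the same noise and initial data as $y^{i,N}$.
\item The differences $y^{i,N}-\tilde x^i$ sum to zero, so your centring loss disappears.
\item The extra drift term $\frac1N\sum_j\int\nabla W(\theta_0,x_t^j-y)\,\mu_t^{\theta_0}(\mathrm{d}y)$ is common to all $i$, so it drops out after summation.
\item When you return to $x^i$ you pay only $2\mathbb{E}\|\frac1N\sum_j x_t^j\|^2=O(1/N)$.
\end{itemize}

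Either way, as in your own bound on the centring loss, you need $\mu_0$ itself to have mean zero. When $V=0$ the mean of the MVSDE is conserved, by oddness of $\nabla W$. If $\mu_0$ had mean $m\neq 0$, then $\mathbb{E}\|y_t^{i,N}-x_t^i\|^2\ge\|m\|^2$ and the stated bound would fail. This is what the paper's zero-mean convention for the invariant measure implicitly imposes.
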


\begin{theorem}[Ergodicity]
\label{thm:invariant-distribution}
Suppose that Assumption \ref{assumption:moments} and Assumption \ref{assumption:drift} hold. Then the observed IPS and the corresponding MVSDE are ergodic, with unique invariant distributions $\pi_{\theta_0}^N\in\mathcal{P}((\mathbb{R}^d)^N)$ and $\pi_{\theta_0}\in\mathcal{P}(\mathbb{R}^d)$.
\end{theorem}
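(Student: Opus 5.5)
The statement is recalled from \citet[][Theorem 4.1]{cattiaux2008probabilistic}, and the proof follows theirs. There are two components: the finite-$N$ IPS \eqref{eq:vector-sde} on $(\mathbb{R}^d)^N$, or on the hyperplane $\mathcal{M}_N$ under Assumption~\ref{assumption:drift}(b), and the nonlinear MVSDE \eqref{eq:MVSDE}. In both cases the plan is to obtain a contraction in Wasserstein distance, using a synchronous coupling together with the $C(A,\alpha)$ condition, and then deduce existence, uniqueness and convergence to the invariant measure.

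\textbf{The IPS.} The drift $B^N(\theta_0,\cdot)=-\nabla U^N$ is a gradient field, with
\[
U^N(\boldsymbol{x})=\sum_i V(\theta_0,x^i)+\frac{1}{2N}\sum_{i,j}W(\theta_0,x^i-x^j);
\]
here the symmetry of $W$ is used. Take two solutions driven by the same Brownian motion. Applying Itô's formula to $\|\boldsymbol{x}_t-\boldsymbol{y}_t\|^2$ (no martingale term appears), the $C(A,\alpha)$ condition on $V$ or on $W$, combined with convexity of the remaining term, gives
\[
\frac{\mathrm{d}}{\mathrm{d}t}\|\boldsymbol{x}_t-\boldsymbol{y}_t\|^2\le -2A\varepsilon^\alpha\big(\|\boldsymbol{x}_t-\boldsymbol{y}_t\|^2-N\varepsilon^2\big).
\]
Under (b), the lower bound on $W$ only controls differences of differences, which is why one projects onto $\mathcal{M}_N$: there, $\sum_{i,j}\|(x^i-y^i)-(x^j-y^j)\|^2=2N\|\boldsymbol{x}-\boldsymbol{y}\|^2$.

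Optimising over $\varepsilon$ in Gronwall's inequality then gives a polynomial-rate contraction of $\mathsf{W}_2$ between the laws at time $t$. This rate is uniform in the initial laws once the moment bounds of Theorem~\ref{thm:moment-bounds} are used. Consequently $(\mathcal{L}(\boldsymbol{x}_t))$ is Cauchy in $\mathsf{W}_2$, its limit $\pi^N_{\theta_0}$ is invariant, and it is unique because any two invariant laws contract to each other. Alternatively, one can note that $\pi^N_{\theta_0}\propto e^{-2U^N/\sigma^2}$ when $\sigma$ is scalar. Ergodicity, in the sense of the law of large numbers for additive functionals, then follows from uniqueness of the invariant measure together with the Feller property.

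\textbf{The MVSDE.} Apply the same coupling to two solutions with laws $\mu_t,\nu_t$. The interaction term splits into $\mathbb{E}\langle x-y,\nabla W(x-\tilde{x})-\nabla W(y-\tilde{y})\rangle$, where $(\tilde{x},\tilde{y})$ is an independent copy of the coupled pair. Symmetrising over the pair and its copy turns this into
\[
\tfrac12\,\mathbb{E}\langle (x-\tilde{x})-(y-\tilde{y}),\nabla W(x-\tilde{x})-\nabla W(y-\tilde{y})\rangle,
\]
which is nonnegative under (a) and coercive under (b) once the means are matched. This yields the same differential inequality for $\mathbb{E}\|x_t-y_t\|^2$, hence a unique stationary point of the nonlinear semigroup, which is $\pi_{\theta_0}$. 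Fixing the mean to be zero under (b) selects one element of the family of invariant measures.

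\textbf{Main obstacle.} The $C(A,\alpha)$ condition with $\alpha>0$ only gives a degenerate, non-uniform contraction. Turning the $\varepsilon$-dependent inequality into genuine convergence therefore needs the uniform moment bounds to control the $\varepsilon^2$ slack, with $\varepsilon=\varepsilon(t)\to0$ chosen appropriately. It also requires care with the centering under (b).
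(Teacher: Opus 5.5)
Your proposal is correct and is essentially the argument of \citet[Theorem~4.1]{cattiaux2008probabilistic}, which the paper simply cites: synchronous coupling, the $C(A,\alpha)$ condition, symmetrisation of the interaction term, and centering (with a fixed mean) under Assumption~\ref{assumption:drift}(b). One small correction to your ``main obstacle'': the moment bounds are not what controls the $\varepsilon^2$ slack. Instead, you take $\varepsilon^2$ proportional to the current normalised squared distance $\phi(t)$ (with $\varepsilon$ close to $1$ while $\phi(t)$ is large), which closes the inequality into $\dot\phi\le -c\,\phi^{1+\alpha/2}$ and yields the rate $a_t$ of Theorem~\ref{thm:invariant-distribution-2}; the moment bounds are needed only in the Cauchy/existence step.
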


We now introduce some additional notation. Let $(\bar{x}_t^{i,N})^{i\in[N]}_{t\geq 0}$ denote the solutions of the observed IPS, driven by the same Brownian motions $(w_t^{i,N})_{t\geq 0}^{i\in[N]}$ as above, but now with the initial condition $(\bar{x}_0^{i,N})^{i\in[N]}\sim\pi_{\theta_0}^{N}$. Let $(\bar{x}_t^{i})^{i\in[N]}_{t\geq 0}$ denote independent solutions of the MVSDE, driven by the same Brownian motions $(w_t^{i,N})_{t\geq 0}^{i\in[N]}$ as above, but now with the initial condition $(\bar{x}_0^{i})^{i\in[N]}\sim\pi_{\theta_0}^{\otimes N}$. Finally, let $\gamma_0^{*,N}\in\Gamma(\mu_0^{\otimes N},\pi_{\theta_0}^N)$ denote the exchangeable optimal coupling of $\mu_0^{\otimes N}$ and $\pi_{\theta_0}^N$, and $\gamma_0^{*}\in\Gamma(\mu_0,\pi_{\theta_0})\vphantom{(\bar{x}_0^{i})^{i\in[N]}\sim\pi_{\theta_0}^{\otimes N}}$ denote the optimal coupling of $\mu_0$, and $\pi_{\theta_0}\vphantom{(\bar{x}_t^{i})^{i\in[N]}_{t\geq 0}}$, both with respect to the quadratic cost. $\vphantom{(\bar{x}_t^{i})^{i\in[N]}_{t\geq 0}}$.

\begin{theorem}[Convergence to Invariant Measure]
\label{thm:invariant-distribution-2}
Suppose that Assumption \ref{assumption:moments} and Assumption \ref{assumption:drift} hold. Then, for all $t\geq 0$, and for all $i=1,\dots,N$, it holds that
\begin{align}
&\mathsf{W}_2^2(\mu_t^{i,N},\pi_{\theta_0}^{i,N}) \le \mathbb{E}_{\gamma_0^{*,N}}[\|{{x}}_t^{i,N} - \bar{{x}}_t^{i,N}\|^2] = \frac{1}{N}\sum_{j=1}^N\mathbb{E}_{\gamma_0^{*,N}}[\|x_t^{j,N} - \bar{x}_t^{j,N}\|^2] \leq a_t(\frac{1}{\sqrt{N}} \mathsf{W}_2(\mu_0^{\otimes N},\pi_{\theta_0}^{N}) ).\\
&\mathsf{W}_2^2(\mu_t, \pi_{\theta_0}) \leq \mathbb{E}_{\gamma_0^{*}}\big[\|x_t^{i} - \bar{x}_t^{i}\|^2\big] \leq a_t(\mathsf{W}_2(\mu_0,\pi_{\theta_0})),
\end{align}
where the function $\smash{a_t:\mathbb{R}_{+}\rightarrow\mathbb{R}_{+}}$ is defined according to
\begin{equation}
\label{eq:rate-function}
    a_t(x) = \left\{\begin{array}{lll} \Big[x^{-\alpha} + A\big(\tfrac{\alpha}{2+\alpha}\big)^{1+\frac{\alpha}{2}}t\Big]^{-\frac{2}{\alpha}} & , & \alpha>0 \\[2mm]
    C^2x^2 e^{-2At} & , & \alpha = 0.
    \end{array}\right.
\end{equation}
\end{theorem}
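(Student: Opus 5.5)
The plan is a synchronous coupling argument, as in \cite{cattiaux2008probabilistic}. Drive $(x_t^{i,N})$ and $(\bar{x}_t^{i,N})$ by the same Brownian motions, and choose the initial conditions jointly according to the exchangeable optimal coupling $\gamma_0^{*,N}$. Set $\Delta_t^j = x_t^{j,N}-\bar{x}_t^{j,N}$. Because $\sigma$ is constant, the noise cancels and each $\Delta^j$ solves a random ODE.

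\textbf{Reduction and setup.} Since $\bar{x}_0^{\cdot,N}\sim\pi_{\theta_0}^N$ and $\pi_{\theta_0}^N$ is invariant (Theorem~\ref{thm:invariant-distribution}), we have $\bar{x}_t^{\cdot,N}\sim\pi_{\theta_0}^N$ for all $t$. So the pair $(x_t^{i,N},\bar{x}_t^{i,N})$ couples $\mu_t^{i,N}$ with $\pi_{\theta_0}^{i,N}$, which gives the first inequality. Exchangeability of $\mu_0^{\otimes N}$, of $\pi^N_{\theta_0}$, of $\gamma_0^{*,N}$ and of the dynamics gives the equality with the average $u(t):=\frac1N\sum_j\mathbb{E}\|\Delta_t^j\|^2$. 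It also gives $u(0)=\frac1N\mathsf{W}_2^2(\mu_0^{\otimes N},\pi^N_{\theta_0})$. It therefore remains to show $u(t)\le a_t(\sqrt{u(0)})$.

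\textbf{Differential inequality.} Differentiate $\sum_j\|\Delta_t^j\|^2$. The interaction contribution is $-\frac{2}{N}\sum_{j,l}\langle \Delta^j,\nabla W(x^j-x^l)-\nabla W(\bar x^j-\bar x^l)\rangle$. Since $W$ is symmetric, $\nabla W$ is odd, and swapping $j\leftrightarrow l$ rewrites this as $-\frac1N\sum_{j,l}\langle \Delta^j-\Delta^l,\nabla W(x^j-x^l)-\nabla W(\bar x^j-\bar x^l)\rangle$. The two cases then go as follows.
\begin{itemize}
\item Under Assumption~\ref{assumption:drift}(a), convexity of $W$ makes the interaction term nonpositive. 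The $C(A,\alpha)$ condition on $V$ then gives $\frac{\mathrm{d}}{\mathrm{d}t}\sum_j\|\Delta^j\|^2\le -2A\varepsilon^\alpha\sum_j(\|\Delta^j\|^2-\varepsilon^2)$.
\item Under Assumption~\ref{assumption:drift}(b), apply $C(A,\alpha)$ to $W$ on the pairwise differences. For the centered processes $\sum_j\Delta^j=0$, so $\sum_{j,l}\|\Delta^j-\Delta^l\|^2=2N\sum_j\|\Delta^j\|^2$. This yields the same inequality, up to harmless adjustments of the $\varepsilon^2$ term.
\end{itemize}
Taking expectations in either case gives $u'(t)\le-2A\varepsilon^\alpha(u(t)-\varepsilon^2)$ for every $\varepsilon\in(0,1)$.

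\textbf{Optimisation in $\varepsilon$ and comparison.} If $\alpha=0$, let $\varepsilon\to0$ to get $u'\le -2Au$, and hence $u(t)\le u(0)e^{-2At}$; this is the $\alpha=0$ branch of $a_t$, with the constant $C$ absorbing normalisation. If $\alpha>0$, choose $\varepsilon^2=\frac{\alpha}{2+\alpha}u(t)$ to maximise $\varepsilon^\alpha(u-\varepsilon^2)$. This gives $u'\le -c\,u^{1+\alpha/2}$ with $c=A\big(\tfrac{\alpha}{2+\alpha}\big)^{\alpha/2}\tfrac{4}{2+\alpha}$, which matches the stated constant up to normalisation. Integrating $\frac{\mathrm{d}}{\mathrm{d}t}u^{-\alpha/2}\ge \frac{\alpha}{2}c$ gives $u(t)\le[u(0)^{-\alpha/2}+\tilde c\,t]^{-2/\alpha}=a_t(\sqrt{u(0)})$.

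The step I expect to be the main obstacle is the constraint $\varepsilon<1$. When $u(t)$ is large, the optimal choice is not admissible. There I would take $\varepsilon$ close to $1$, which gives linear decay $u'\le -Au$ while $u$ is large. I would then check that the resulting piecewise comparison is still dominated by $a_t$, possibly after enlarging constants, or else use the uniform moment bounds of Theorem~\ref{thm:moment-bounds} to control the initial large-$u$ phase.

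\textbf{MVSDE bound.} The argument is analogous. Couple $x_t^i$ and $\bar x_t^i$ synchronously with initial law $\gamma_0^*$, and note that $\bar{x}_t^i\sim\pi_{\theta_0}$ for all $t$ by invariance. The nonlinear term $\mathbb{E}\langle\Delta_t,\int\nabla W(x_t-y)\mu_t(\mathrm{d}y)-\int\nabla W(\bar x_t-y)\pi_{\theta_0}(\mathrm{d}y)\rangle$ is handled by writing both integrals through an independent copy $(x_t',\bar x_t')$ of the coupled pair. One then symmetrises exactly as above, using $\mathbb{E}\|\Delta-\Delta'\|^2=2\mathbb{E}\|\Delta\|^2$ in the centered case (b). The same ODE comparison gives $\mathsf{W}_2^2(\mu_t,\pi_{\theta_0})\le a_t(\mathsf{W}_2(\mu_0,\pi_{\theta_0}))$.
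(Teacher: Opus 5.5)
Your route is the synchronous-coupling argument of \cite{cattiaux2008probabilistic}. That is exactly what the paper relies on here, since it states this result without proof. The structural steps are right:
\begin{itemize}
\item the reduction to $u(t)=\frac1N\sum_j\mathbb{E}\|\Delta^j_t\|^2$ with $u(0)=\frac1N\mathsf{W}_2^2(\mu_0^{\otimes N},\pi_{\theta_0}^N)$;
\item the symmetrisation via oddness of $\nabla W$;
\item both cases of Assumption~\ref{assumption:drift};
\item the independent-copy trick for the MVSDE;
\item the $\alpha=0$ branch.
\end{itemize}
One small precision on the MVSDE in case (b): your identity $\mathbb{E}\|\Delta-\Delta'\|^2=2\mathbb{E}\|\Delta\|^2$ needs $\mathbb{E}\Delta_t=0$. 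This means $\mu_0$ must be centred like $\pi_{\theta_0}$, which is then preserved because the mean is conserved when $V=0$. Your constant $2A(\tfrac{\alpha}{2+\alpha})^{1+\alpha/2}$ is in fact twice the stated one, so no renormalisation is needed. The genuine gap is exactly the step you flag for $\alpha>0$, and neither proposed repair can close it.

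Once $u(t)\ge\frac{2+\alpha}{\alpha}$, the optimiser $\varepsilon^2=\frac{\alpha}{2+\alpha}u(t)$ is inadmissible. Letting $\varepsilon\uparrow1$ then gives only $u'\le-2A(u-1)$, i.e.\ exponential decay. By contrast, $a_t(\sqrt{u(0)})$ equals $u(0)$ at $t=0$, has initial slope of order $-u(0)^{1+\alpha/2}$, and is bounded by $(A(\tfrac{\alpha}{2+\alpha})^{1+\alpha/2}t)^{-2/\alpha}$ uniformly in $u(0)$. No choice of constants makes an exponentially decaying bound lie below this. Theorem~\ref{thm:moment-bounds} controls $u$ only through higher moments of $\mu_0$ and gives no decay at all.

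In fact, with $C(A,\alpha)$ required only for $\varepsilon\in(0,1)$, the stated bound fails in this regime. Take $d=1$, $W\equiv0$, and $V''(x)=\min(x^2,1)$, and write $r=|x-y|$.
\begin{itemize}
\item One checks $(x-y)(V'(x)-V'(y))\ge\min(r^4/12,r^2/3)$. Hence $V$ satisfies $C(1/12,2)$ for all $\varepsilon\in(0,1)$ (though not for large $\varepsilon$), and Assumption~\ref{assumption:drift}(a) holds.
\item Since $V'(x)\le x+1$, comparison with $\mathrm{d}y_t=-(y_t+1)\mathrm{d}t+\sigma\mathrm{d}w_t$ from $\mu_0=\delta_R$ gives $\mathbb{E}[x_1]\ge(R+1)e^{-1}-1$.
\item $\pi_{\theta_0}$ is symmetric, so $\mathsf{W}_2^2(\mu_1,\pi_{\theta_0})\to\infty$ as $R\to\infty$, whereas $a_1(\cdot)\le48$.
\end{itemize}

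To close your argument you need an extra hypothesis. One option is $C(A,\alpha)$ for all $\varepsilon>0$ (true for $|x|^{2+\alpha}$ or $|x|^3$); then your optimisation is always admissible and the proof is complete. Another is $u(0)\le\frac{2+\alpha}{\alpha}$. This suffices because letting $\varepsilon\downarrow0$ gives $u'\le0$, so $u$ never leaves the admissible region. Without either, the honest conclusion is a two-phase rate. First, $u$ decays exponentially at rate $\frac{4A}{2+\alpha}$ until it falls below $\frac{2+\alpha}{\alpha}$. Then $a_t$ applies, restarted from that time.
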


\section{The Centered Interacting Particle System}
\label{app:centered-ips}

Under Assumption~\ref{assumption:drift}(b), the results in Appendix~\ref{app:existing-results} hold for a \emph{projected} or \emph{centered} version of the observed IPS, defined by $\smash{y_t^{i,N} = x_t^{i,N} - \frac{1}{N}\sum_{j=1}^N x_t^{j,N}}$ \citep[][Section 2]{cattiaux2008probabilistic}. This still defines a diffusion process, but now on the hyperplane $\mathcal{M}_N = \{\boldsymbol{x}^N\in(\mathbb{R}^d)^N: \sum_{i=1}^N x^{i,N} = \mathbf{0}\}$. In particular, the SDE governing the dynamics of the (observed) centered IPS $(\boldsymbol{y}_t^N)_{t\geq 0}$ is given by 
 \begin{equation}
     \mathrm{d}\boldsymbol{y}_t^{N} = B^N(\theta_0,\boldsymbol{y}_t^{N})\mathrm{d}t + \tilde{\Sigma}_N\mathrm{d}\boldsymbol{w}_t^N
 \end{equation}
 where $\tilde{\Sigma}_N = P_N\otimes \sigma$, $P_N:=\mathbf{I}_N - \frac{1}{N}\mathbf{1}\mathbf{1}^{\top}$ denotes the orthogonal projection operator, and $\mathbf{1}\in\mathbb{R}^N$ denotes the all-ones vector \citep[e.g.,][Section~2]{cattiaux2008probabilistic}. 

\subsection{The Log-Likelihood} 
In this case, we must consider the log-likelihood associated with the centered process. This requires additional care, since the diffusion coefficient $\tilde\Sigma_N$ is now singular on $(\mathbb{R}^d)^N$. Fortunately, we can still apply Girsanov's theorem on $\mathcal{M}_N$, since $\tilde\Sigma_N$ is non-degenerate when restricted to this space. This yields 
\begin{align}
\label{eq:ips-log-likelihood-centered}
\mathcal{L}_t^{N}(\theta)
&=\int_0^t  \langle B^N(\theta,\boldsymbol{y}_s^N), (\tilde\Sigma_N\tilde\Sigma_N^{\top})^{+}\mathrm{d}\boldsymbol{y}_s^N\rangle - \frac{1}{2}\int_0^t \langle B^N(\theta,\boldsymbol{y}_s^N), (\tilde\Sigma_N\tilde\Sigma_N^{\top})^{+}B^N(\theta,\boldsymbol{y}_s^N)\rangle\mathrm{d}s 
\end{align}
where $(\cdot)^{+}$ denotes the pseudo-inverse (i.e., Moore--Penrose inverse). In our case, the relevant pseudo-inverse simplifies as $(\tilde{\Sigma}_N \tilde{\Sigma}_N^{\top})^{+} = ((P_N\otimes \sigma)(P_N\otimes \sigma)^{\top})^{+} = (P_N\otimes(\sigma\sigma^\top))^{+} = P_N\otimes(\sigma\sigma^{\top})^{-1}$, where we have used the fact that $P_N=P_N^{+}$ since it is an orthogonal projection. Thus, in particular, \eqref{eq:ips-log-likelihood-centered} can be rewritten as
\begin{align}
\label{eq:ips-log-likelihood-centered-v2}
\mathcal{L}_t^{N}(\theta)
&=\int_0^t  \langle B^N(\theta,\boldsymbol{y}_s^N), P_N\otimes(\sigma\sigma^{\top})^{-1}\mathrm{d}\boldsymbol{y}_s^N\rangle - \frac{1}{2}\int_0^t \langle B^N(\theta,\boldsymbol{y}_s^N), P_N\otimes(\sigma\sigma^{\top})^{-1}B^N(\theta,\boldsymbol{y}_s^N)\rangle\mathrm{d}s 
\end{align}
In order for our methodology to remain applicable in the centered case, we require the log-likelihood to factorise in a similar fashion to \eqref{ips:log-likelihood-2} - \eqref{ips:log-likelihood}. In particular, we must be able to simplify \eqref{eq:ips-log-likelihood-centered-v2} as
\begin{align}
\mathcal{L}_t^{N}(\theta)
&=\sum_{i=1}^N\Big[ \int_0^t \big \langle  B^{i,N}(\theta,\boldsymbol{y}_s^N), (\sigma\sigma^{\top})^{-1}\mathrm{d}y_s^{i,N}\big\rangle - \frac{1}{2} \int_0^t \left|\left| B^{i,N}(\theta,\boldsymbol{y}_s^N)\right|\right|_{\sigma\sigma^{\top}}^2\mathrm{d}s \Big]  \label{ips:log-likelihood-2-centered} \\
&=\sum_{i=1}^N\Big[ \int_0^t \big \langle  B(\theta,y_s^{i,N},\mu_s^{N}), (\sigma\sigma^{\top})^{-1}\mathrm{d}y_s^{i,N}\big\rangle - \frac{1}{2} \int_0^t \left|\left| B(\theta,y_s^{i,N},\mu_s^{N})\right|\right|_{\sigma\sigma^{\top}}^2\mathrm{d}s \Big] \label{ips:log-likelihood-centered}
\end{align}
where now $\smash{\mu_s^N = \frac{1}{N}\sum_{j=1}^N\delta_{y_s^{j,N}}}$ denotes the empirical law of the observed centered IPS, with all other notation as before. This is exactly the same coordinate-wise functional form as in the non-centered case. A sufficient condition for this simplification to hold is that both $\smash{\mathrm{d}\boldsymbol{y}_s^N}$ and
$\smash{B^N(\theta,\boldsymbol{y}_s^N)}$ take values in $\mathcal{M}_N$, so that $P_N$ acts as the
identity on these vectors. More precisely, if $\boldsymbol{u},\boldsymbol{v}\in\mathcal{M}_N$,
then $(P_N\otimes I_d)\boldsymbol{u}=\boldsymbol{u}$ and $(P_N\otimes I_d)\boldsymbol{v}=\boldsymbol{v}$,
and hence
\begin{equation}
\langle \boldsymbol{u}, (P_N\otimes(\sigma\sigma^\top)^{-1})\boldsymbol{v}\rangle
= \langle \boldsymbol{u}, (\mathbf{I}_N\otimes(\sigma\sigma^\top)^{-1})\boldsymbol{v}\rangle
= \sum_{i=1}^N \langle u^i,(\sigma\sigma^\top)^{-1}v^i\rangle.
\end{equation}
Applying this with $\boldsymbol{u}=B^N(\theta,\boldsymbol{y}_s^N)$ and
$\boldsymbol{v}=\mathrm{d}\boldsymbol{y}_s^N$ for the first inner product in \eqref{eq:ips-log-likelihood-centered-v2}, and with $\boldsymbol{u}=\boldsymbol{v}=B^N(\theta,\boldsymbol{y}_s^N)$ for the second inner product in \eqref{eq:ips-log-likelihood-centered-v2}, it is clear that \eqref{eq:ips-log-likelihood-centered-v2} simplifies to \eqref{ips:log-likelihood-2-centered}. It remains to ensure that, for all $\theta\in\Theta$, the function $\boldsymbol{x}^N\mapsto B^N(\theta,\boldsymbol{x}^N)$ is \emph{centered}. That is, $\sum_{i=1}^N B^{i,N}(\theta,\boldsymbol{y})=0$ for all $\boldsymbol{y}\in\mathcal{M}_N$. A sufficient condition for this is the following addenda to Assumption~\ref{assumption:drift}(b).

\begin{assumptionrecall}[\ref{assumption:drift}(b*)]
\label{assumption:drift-recall}
The functions $x\mapsto V(\theta_0,x)$ and $x\mapsto W(\theta_0,x)$ satisfy all of the existing conditions of Assumption~\ref{assumption:drift}(b). In addition, 
    \begin{enumerate}[labelindent=0pt, labelwidth=!, leftmargin=*, align=left]
        \item[(b*)(i)] For all $\theta\in\Theta$, $V(\theta,\cdot)=0$.
        \item[(b*)(ii)] For all $\theta\in\Theta$, $W(\theta,\cdot)$ is symmetric.
    \end{enumerate}
\end{assumptionrecall}

\noindent Thus, in particular, we now assume that the confinement potential is null and the interaction potential is symmetric for all $\theta\in\Theta$, rather than just at the true parameter $\theta_0$. Since the factorisation of the log-likelihood is essential for our subsequent methodological developments, we will henceforth assume that Assumption~\ref{assumption:drift}(b*) always subsumes the relevant parts of Assumption~\ref{assumption:drift}(b). 

\section{Additional Proofs}
\label{app:additional-proofs}


\subsection{Proofs for Section \ref{sec:log-lik}}
\label{app:additional-proofs-log-lik}

\subsubsection{Proofs for Section \ref{sec:log-lik-ips}}
\label{app:additional-proofs-log-lik-ips}
\begin{proof}[Proof of Proposition \ref{prop:ips-likelihood-t-limit}]
    Using the definition of the log-likelihood function in \eqref{ips:log-likelihood-2}, and the data-generating process in \eqref{eq:IPS}, we have that
    \begin{align}
        \frac{1}{t}\left[\mathcal{L}_t^N(\theta) - \mathcal{L}_t^N(\theta_0)\right]
        &=  -\sum_{i=1}^N\Big[\frac{1}{t}\int_0^t L^{i,N}(\theta,\boldsymbol{x}_s^N) \mathrm{d}s \Big] + \sum_{i=1}^N \Big[\frac{1}{t}  \int_0^t \big\langle \Delta B^{i,N}(\theta,\boldsymbol{x}_s^N), \sigma \,\mathrm{d}w_s^{i,N}\big\rangle_{\sigma\sigma^{\top}} \Big]. \label{eq:particle-lik-decomp-v2}
    \end{align}
    with $\smash{L^{i,N}(\theta,\boldsymbol{x}^N) = \frac{1}{2}\|B^{i,N}(\theta,\boldsymbol{x}^N) - B^{i,N}(\theta_0,\boldsymbol{x}^N)\|^2_{\sigma\sigma^{\top}}}$ and $\smash{\Delta B^{i,N}(\theta,\boldsymbol{x}^N) = B^{i,N}(\theta,\boldsymbol{x}^N) - B^{i,N}(\theta_0,\boldsymbol{x}^N)}$.
    We begin by considering the first term in \eqref{eq:particle-lik-decomp-v2}. By Theorem \ref{thm:invariant-distribution}, the IPS is ergodic, and admits a unique invariant measure $\pi_{\theta_0}^N \in \mathcal{P}((\mathbb{R}^d)^N)$. In addition, due to Theorem~\ref{thm:moment-bounds} (i.e., uniform-in-time moment bounds for the IPS) and Corollary~\ref{cor:empirical-L-lip} (i.e., the polynomial growth property of $L^{i,N}$), it follows that $L^{i,N}(\theta,\boldsymbol{x}^N)\in \mathrm{L}^1(\pi_{\theta_0}^N)$. Thus, by the ergodic theorem \citep[e.g.,][Theorem 4.2]{khasminskii2012stochastic}, we have that 
    \begin{align}
        \frac{1}{t} \int_0^t L^{i,N}(\theta,\boldsymbol{x}_s^N)\mathrm{d}s 
        \stackrel{\mathrm{a.s.}}{\longrightarrow}
        \int_{(\mathbb{R}^d)^{N}}  L^{i,N}(\theta,\boldsymbol{x}^N) \pi_{\theta_0}^N(\mathrm{d}\boldsymbol{x}^N) \label{eq:b-diff-convergence}
    \end{align}
    as $t\rightarrow\infty$. We now show that second term in \eqref{eq:particle-lik-decomp-v2} converges a.s. to zero. To do so, let us define the continuous local martingales $\smash{M_{i,t}^N = \int_0^t \langle  \Delta B^{i,N}(\theta,\boldsymbol{x}_s^N) , (\sigma\sigma^{\top})^{-1}\sigma \,\mathrm{d}w_s^{i,N}\rangle}$, with quadratic variations given by $\smash{\langle M_{i}^N\rangle_{t} = \int_0^t \|\Delta B^{i,N}(\theta,\boldsymbol{x}_s^N) \|_{\sigma\sigma^{\top}}^2 \mathrm{d}s = \int_0^t 2L^{i,N}(\theta,\boldsymbol{x}_s^N) \mathrm{d}s}$. Reasoning as above, the ergodic theorem yields
    \begin{align}
        \frac{1}{t}\langle M_{i}^N\rangle_{t} 
        &\stackrel{\mathrm{a.s.}}{\longrightarrow} \int_{(\mathbb{R}^d)^N} 2L^{i,N}(\theta,\boldsymbol{x}^N) \pi_{\theta_0}^N(\mathrm{d}\boldsymbol{x}^N) <\infty
    \end{align}
    as $t\rightarrow\infty$. It follows, using the strong law of large numbers for continuous local martingales \citep[e.g.,][Theorem 1.3.4]{mao2008stochastic} that 
    \begin{equation} 
    \frac{1}{t}M_{i,t}^N = \frac{1}{t}\int_0^t \langle  \Delta B^{i,N}(\theta,\boldsymbol{x}_s^N) , (\sigma\sigma^{\top})^{-1}\sigma\, \mathrm{d}w_s^{i,N}\rangle \stackrel{\mathrm{a.s.}}{\longrightarrow} 0. \label{eq:mart-convergence}
    \end{equation}
    as $t\rightarrow\infty$. Finally, combining \eqref{eq:b-diff-convergence} and \eqref{eq:mart-convergence}, and summing over $i\in[N]$, we have the required a.s. convergence result. It remains to show that the convergence also holds in $\mathbb \mathrm{L}_1$. By Corollary~\ref{cor:empirical-L-lip} (i.e., polynomial growth of $L^{i,N}$) and Theorem~\ref{thm:moment-bounds} (i.e., uniform-in-time moment bounds for the IPS), for each $\delta>0$ there exists $K_{\delta}<\infty$ such that $\smash{\sup_{s\ge0}\mathbb{E}\big[|L^{i,N}(\theta,\boldsymbol x_s^N)|^{1+\delta}\big]<K_{\delta}}$. Thus, using Jensen's inequality, it holds uniformly in $t\geq 1$ that
\begin{equation}
\mathbb{E}\Big[\Big|\frac1t\int_0^t L^{i,N}(\theta,\boldsymbol{x}_s^N)\,\mathrm ds\Big|^{1+\delta}\Big]
\le \frac1t\int_0^t \mathbb{E}\big[|L^{i,N}(\theta,\boldsymbol x_s^N)|^{1+\delta}\big]\,\mathrm ds
\le K_{\delta}.
\end{equation}
It follows that the family of random variables $\{\frac{1}{t}\int_0^t L^{i,N}(\theta,\boldsymbol{x}_s^N)\,\mathrm ds\}_{t\geq 1}$ is uniformly integrable. This, combined with the a.s. convergence already established in \eqref{eq:b-diff-convergence}, and Vitali's theorem, yields
\begin{align}
        \frac{1}{t} \int_0^t L^{i,N}(\theta,\boldsymbol{x}_s^N)\mathrm{d}s \stackrel{\mathrm{L}^1}{\longrightarrow}
        \int_{(\mathbb{R}^d)^{N}}  L^{i,N}(\theta,\boldsymbol{x}^N) \pi_{\theta_0}^N(\mathrm{d}\boldsymbol{x}^N). \label{eq:b-diff-convergence-l1}
    \end{align}
For the martingale term, using the Burkholder-Davis-Gundy (BDG) inequality and Jensen's inequality, we have (allowing the value of the constant $K$ to increase from line to line) that
\begin{align}
\mathbb{E}\Big[\Big|\frac{1}{t}M_{i,t}^N\Big|\Big]\le \frac{K}{t}\mathbb{E}\big[\langle M_i^N\rangle_t^{1/2}\big] &\le \frac{K}{t}\Big(\mathbb{E}[\langle M_i^N\rangle_t]\Big)^{1/2}= \frac{K}{t}\Big(\int_0^t \mathbb{E}[2L^{i,N}(\theta,\boldsymbol{x}_s^N)]\,\mathrm ds\Big)^{1/2}
\end{align}
Once more using Corollary~\ref{cor:empirical-L-lip} (i.e., polynomial growth of $L^{i,N}$) and Theorem~\ref{thm:moment-bounds} (i.e., uniform-in-time moment bounds for the IPS), there exists $K<\infty$ such that $\smash{\sup_{s\ge0}\mathbb{E}\big[|L^{i,N}(\theta,\boldsymbol x_s^N)|\big]}<K$. Combining this with the previous display, it follows that $\smash{\mathbb{E}[|\frac{1}{t}M_{i,t}^N|]\le \frac{K}{t} t^{\frac{1}{2}} = Kt^{-\frac{1}{2}}}$, which implies in particular that
\begin{equation}
\label{eq:mart-convergence-l1}
\frac{1}{t}M_{i,t}^N = \frac{1}{t}\int_0^t \langle  \Delta B^{i,N}(\theta,\boldsymbol{x}_s^N) , \sigma \,\mathrm{d}w_s^{i,N}\rangle_{\sigma\sigma^{\top}}\stackrel{\mathrm{L}^1}{\longrightarrow}0.
\end{equation}
Combining \eqref{eq:mart-convergence-l1} and \eqref{eq:b-diff-convergence-l1}, and summing over $i\in[N]$, we obtain the stated convergence in $\mathbb \mathrm{L}_1$. 
\end{proof}

\begin{proof}[Proof of Proposition \ref{prop:ips-likelihood-n-limit}]
We begin similarly to the previous proof. In particular, from the definition of the log-likelihood in \eqref{ips:log-likelihood}, we have 
\begin{align}
\label{eq:log-likelihood-ips-recall}
        \frac{1}{N}\left[\mathcal{L}_t^N(\theta) - \mathcal{L}_t^N(\theta_0)\right] 
        &= -\frac{1}{N}\sum_{i=1}^N\big[ \int_0^t L(\theta,x_s^{i,N},\mu_s^N)\mathrm{d}s \big] +\frac{1}{N}\sum_{i=1}^N \big[ \int_0^t \big\langle  \Delta B(\theta,x_s^{i,N},\mu_s^N), \sigma \,\mathrm{d}w_s^{i,N}\big\rangle_{\sigma\sigma^{\top}} \big].
    \end{align}
We will establish convergence in $\mathrm{L}^1$, which will also imply convergence in probability. We begin with the first term. We would like to show that
\begin{align}
    &\mathbb{E}\Big[\Big|\frac{1}{N}\sum_{i=1}^N\int_0^t L(\theta,x_s^{i,N},\mu_s^N)  \mathrm{d}s - \int_0^t \int_{\mathbb{R}^d} L(\theta,x_s,\mu_s)  \mu_s(\mathrm{d}x) \mathrm{d}s\Big|\Big] \\
    &=\mathbb{E}\Big[\Big|\int_0^t \Big[\frac{1}{N}\sum_{i=1}^NL(\theta,x_s^{i,N},\mu_s^N) -  \int_{\mathbb{R}^d} L(\theta,x,\mu_s) \,\mu_s(\mathrm{d}x)\Big]\, \mathrm{d}s\Big|\Big] \stackrel{N\rightarrow\infty}{\longrightarrow} 0.
    \label{eq:required-limit}
\end{align}
First note that the LHS of \eqref{eq:required-limit} is bounded by $\smash{ \int_0^t \mathbb{E}[ |\frac{1}{N}\sum_{i=1}^NL(\theta,x_s^{i,N},\mu_s^N) -  \int_{\mathbb{R}^d} L(\theta,x,\mu_s) \,\mu_s(\mathrm{d}x)|\, ] \mathrm{d}s}$. We thus seek an upper bound for the integrand. Using the triangle inequality, we have 
\begin{align}
    &\mathbb{E}\Big[ \Big|\frac{1}{N}\sum_{i=1}^NL(\theta,x_s^{i,N},\mu_s^N) -  \int_{\mathbb{R}^d} L(\theta,x,\mu_s) \,\mu_s(\mathrm{d}x)\Big|\, \Big] \\
    &\leq\frac{1}{N}\sum_{i=1}^N\mathbb{E}\Big[ \Big|L(\theta,x_s^{i,N},\mu_s^N) -  L(\theta,x_s^{i},\mu_s)\Big| \Big]  + \mathbb{E}\Big[ \Big|\frac{1}{N}\sum_{i=1}^N\Big(L(\theta,x_s^{i},\mu_s) - \int_{\mathbb{R}^d} L(\theta,x,\mu_s) \,\mu_s(\mathrm{d}x)\Big)\Big|\, \Big] \label{eq:phi-decomp}
\end{align}
where $\smash{(x_s^{i})_{s\geq 0}^{i\in[N]}}$ denote $N$ independent solutions of the MVSDE, driven by the same Brownian motions $\smash{(w_s^{i,N})_{s\geq 0}^{i\in[N]}}$ as the interacting particles $\smash{(x_s^{i,N})_{s\geq 0}^{i\in[N]}}$ (i.e., the standard synchronous coupling). Due to Lemma \ref{lem:Phi-L1-stability} (i.e., the fact that $(x,\mu)\mapsto L(\theta,x,\mu)$ is locally Lipschitz with polynomial growth), the Cauchy-Schwarz inequality, and Theorem~\ref{thm:moment-bounds} (i.e., uniform-in-time moment bounds for the IPS and the MVSDE), there exists a constant $K<\infty$ such that, for all $s\geq 0$,  
\begin{align}
\label{eq:Phi-L1-bound-recall}
\sup_{s\geq 0} \mathbb{E}\Big[\big|L(\theta,x_s^{i,N},\mu_s^N)-L(\theta,x_s^i,\mu_s)\big|\Big]
&\le
K\sup_{s\geq 0}\Big[
\big(\mathbb{E}\left[\|x_s^{i,N}-x_s^i\|^2\right]\big)^{1/2}
+
\big(\mathbb{E}\left[\,\mathsf{W}_2^2(\mu_s^N,\mu_s)\right]\big)^{1/2}
\Big]
\end{align}
By Theorem \ref{thm:poc} (i.e., uniform-in-time propagation-of-chaos), there exists a constant $K_1<\infty$ such that, for each $i\in[N]$, 
\begin{equation}
    \sup_{s\geq 0}\mathbb{E}\left[\|x_s^{i,N}-x_s^i\|^2\right]\leq 
\frac{K_1}{N^{\frac{1}{1+\alpha}}} \label{eq:phi-bound-1}
\end{equation}
Meanwhile, using the triangle inequality, once more Theorem \ref{thm:poc} (i.e., uniform-in-time propagation-of-chaos), and also now Theorem 1 in \cite{fournier2015rate},  there exist constants $K_{2,1},K_{2,2}<\infty$ such that
\begin{align}
    \sup_{s\geq 0} \mathbb{E}\big[\mathsf{W}_2^2(\mu_s^N,\mu_s)\big] \leq \sup_{s\geq 0}\left(2 \mathbb{E}\big[\mathsf{W}_2^2(\mu_s^N, \mu_s^{[N]})\big] + 2 \mathbb{E}\big[\mathsf{W}_2^2(\mu_s^{[N]},\mu_s)\big]\right)
    \leq \frac{2K_{2,1}}{N^{\frac{1}{1+\alpha}}} + 2K_{2,2}\rho^2(N). \label{eq:phi-bound-2}
\end{align}
Substituting \eqref{eq:phi-bound-1} and \eqref{eq:phi-bound-2} back into \eqref{eq:Phi-L1-bound-recall}, it follows that 
\begin{equation}
    \sup_{s\geq 0} \mathbb{E}\Big[\big|L(\theta,x_s^{i,N},\mu_s^N)-L(\theta,x_s^i,\mu_s)\big|\Big]\le K\left[\left(\frac{K_1}{N^{\frac{1}{1+\alpha}}}\right)^{\frac{1}{2}}  + \left(\frac{2K_{2,1}}{N^{\frac{1}{1+\alpha}}} + 2K_{2,2}\rho^2(N)\right)^{\frac{1}{2}}\right].
\end{equation}
We now turn our attention to the second term in \eqref{eq:phi-decomp}. Using the Cauchy-Schwarz inequality, and the fact that $\smash{(x_s^{i})_{s\geq 0}^{i\in[N]}}$ are \emph{independent} solutions of the MVSDE, we have for all $s\geq 0$ that
\begin{align}
&\mathbb{E}\Big[\Big|\frac{1}{N}\sum_{i=1}^N \Big(L(\theta,x_s^{i},\mu_s)-\int_{\mathbb{R}^d} L(\theta,x_s,\mu_s)\mu_s(\mathrm{d}x)\Big)\Big|\Big] \le \mathbb{E}\Big[\Big|\frac{1}{N}\sum_{i=1}^N \Big(L(\theta,x_s^{i},\mu_s)-\int_{\mathbb{R}^d} L(\theta,x_s,\mu_s)\mu_s(\mathrm{d}x)\Big)\Big|^2\Big]^{1/2} \notag \\ 
&=\Big[\frac{1}{N^2}\sum_{i=1}^N \mathrm{Var}(L(\theta,x_s,\mu_s))\Big]^{1/2} = \Big[\frac{1}{N}\mathrm{Var}(L(\theta,x_s,\mu_s))\Big]^{1/2} \leq \frac{1}{N^{\frac{1}{2}}}\Big[\mathbb{E}\left[(L(\theta,x_s,\mu_s))^2\right]\Big]^{1/2} \leq \frac{K_3}{N^{\frac{1}{2}}} \label{eq:Phi-L1-bound-recall-v2}
\end{align}
where in the final display we have used Lemma \ref{lem:Phi-L1-stability} (i.e., the polynomial growth of $L$), and Theorem~\ref{thm:moment-bounds} (i.e., the bounded moments of the MVSDE). Finally, substituting the bounds in \eqref{eq:Phi-L1-bound-recall} and \eqref{eq:Phi-L1-bound-recall-v2} two bounds back into \eqref{eq:phi-decomp}, we have that
\begin{align}
&\int_0^t\mathbb{E}\Big[\Big|\frac{1}{N}\sum_{i=1}^N L(\theta,x_s^{i,N},\mu_s^N)  - \frac{1}{N}\sum_{i=1}^N \int_{\mathbb{R}^d} L(\theta,x,\mu_s) \,\mu_s(\mathrm{d}x)\Big|\Big]\mathrm{d}s 
\stackrel{N\rightarrow\infty}{\longrightarrow} 0. \label{eq:large-n-limit-0}
\end{align}
We now turn our attention to the martingale term in \eqref{eq:log-likelihood-ips-recall}.  To establish the desired limit, we need to show that $\smash{\mathbb{E}[|\frac{1}{N}\sum_{i=1}^N\int_0^t \langle \Delta B(\theta,x_s^{i,N},\mu_s^N), (\sigma\sigma^{\top})^{-1}\sigma\,\mathrm{d}w_s^{i,N}\rangle |^2] \stackrel{N\rightarrow\infty}{\longrightarrow} 0}$. First note that the martingales
$\int_0^\cdot \langle \Delta B(\theta,x_s^{i,N},\mu_s^N),(\sigma\sigma^{\top})^{-1}\sigma\,\mathrm{d}w_s^{i,N}\rangle$ are orthogonal for different $i$, since the Brownian motions $(w^{i,N})_{i=1}^N$ are independent. It follows, using this and the It\^{o} isometry, that 
\begin{align}
\label{eq:martingale-ito-0}
&\mathbb{E}\Big[\Big|\frac{1}{N}\sum_{i=1}^N\int_0^t \left\langle \Delta B(\theta,x_s^{i,N},\mu_s^N), (\sigma\sigma^{\top})^{-1}\sigma\,\mathrm{d}w_s^{i,N}\right\rangle \Big|^2\Big] \\
&= \frac{1}{N^2}\sum_{i=1}^N
\int_0^t \mathbb{E}\Big[\big\| \Delta B(\theta,x_s^{i,N},\mu_s^N) \big\|_{\sigma\sigma^{\top}}^2\Big]\mathrm{d}s \le \frac{c_\sigma^2}{N^2}\sum_{i=1}^N
\int_0^t \mathbb{E}\left[\big\|\Delta B(\theta,x_s^{i,N},\mu_s^N) \big\|^2\right]\mathrm{d}s,
\label{eq:martingale-ito}
\end{align}
where $c_\sigma := \|\sigma^\top(\sigma\sigma^\top)^{-1}\|_{\mathrm{op}}<\infty$ is a constant depending only on $\sigma$. Meanwhile, by Corollary~\ref{cor:empirical-drift-lip} (i.e., the polynomial growth of $B$) and Theorem \ref{thm:moment-bounds} (i.e, moment bounds for the IPS, uniform in time), there exist a constant $K<\infty$ such that $\sup_{s\geq 0}\mathbb{E}[\|\Delta B(\theta,x_s^{i,N},\mu_s^N) \|^2] \le K$. 
Substituting this into \eqref{eq:martingale-ito-0} - \eqref{eq:martingale-ito}, we have, as required, that
\begin{equation}
\mathbb{E}\Big[\Big|\frac{1}{N}\sum_{i=1}^N\int_0^t \left\langle B(\theta,x_s^{i,N},\mu_s^N) - B(\theta_0,x_s^{i,N},\mu_s^N), \sigma\,\mathrm{d}w_s^{i,N}\right\rangle_{\sigma\sigma^{\top}} \Big|^2\Big]
\leq \frac{c_{\sigma}^2}{N^2}\sum_{i=1}^N \int_0^t K\mathrm{d}s =\frac{K c_\sigma^2\,t}{N}\stackrel{N\rightarrow\infty}{\longrightarrow}0. \label{eq:large-n-limit-2}
\end{equation}
\end{proof}

\begin{proof}[Proof of Corollary \ref{cor:ips-likelihood-n-t-limit}] 
    We begin with the observation that, by essentially the same argument as the one used in the proof of Proposition \ref{prop:ips-likelihood-n-limit}, we have that
    \begin{align}
        \lim_{N\rightarrow\infty}\frac{1}{Nt}\left[\mathcal{L}_t^N(\theta) - \mathcal{L}_t^N(\theta_0)\right]  &\stackrel{\mathrm{L}^1}{=}-\frac{1}{t} \int_0^t \int_{\mathbb{R}^d} L(\theta,x,\mu_s) \mu_s(\mathrm{d}x)\,\mathrm{d}s.
        \label{eq:IPS-log-likelihood-large-N-a-recall} 
    \end{align}
    It remains to establish $\smash{\lim_{t\rightarrow\infty}\frac{1}{t}\int_0^t \int_{\mathbb{R}^d} L(\theta,x,\mu_s)\mu_s(\mathrm{d}x)\,\mathrm{d}s= \int_{\mathbb{R}^d}L(\theta,x,\pi_{\theta_0})\pi_{\theta_0}(\mathrm{d}x)}$. To establish this limit, we begin by using the triangle inequality to write 
    \begin{align}
    \label{eq:phi-new-triangle-ineq}
        &\left|\frac{1}{t}\int_0^t \int_{\mathbb{R}^d} L(\theta,x,\mu_s)\mu_s(\mathrm{d}x)\,\mathrm{d}s - \int_{\mathbb{R}^d}L(\theta,x,\pi_{\theta_0})\pi_{\theta_0}(\mathrm{d}x)\right| \leq I_t^{(1)} + I_t^{(2)} 
    \end{align}
    where the two quantities on the RHS are defined as $\smash{I_t^{(1)}:=\frac{1}{t}\int_0^t \int_{\mathbb{R}^d} \left|L(\theta,x,\mu_s) - L(\theta,x,\pi_{\theta_0})\right|\mu_s(\mathrm{d}x)\,\mathrm{d}s}$ and $\smash{I_t^{(2)} :=\frac{1}{t}\int_0^t \left|\int_{\mathbb{R}^d} L(\theta,x,\pi_{\theta_0})\mu_s(\mathrm{d}x)-\int_{\mathbb{R}^d}L(\theta,x,\pi_{\theta_0})\pi_{\theta_0}(\mathrm{d}x)\right|\mathrm{d}s}$. For the first term, using Lemma \ref{lem:Phi-L1-stability}, there exists a constant $K<\infty$ and an integer $q\geq 1$ such that, for all $s\geq 0$, $|L(\theta,x,\mu_s)-L(\theta,x,\pi_{\theta_0})|\le K\,\mathsf{W}_2(\mu_s,\pi_{\theta_0})(1+\|x\|^q+\mu_s(\|\cdot\|^q)+\pi_{\theta_0}(\|\cdot\|^q))$. Taking expectations, using Cauchy--Schwarz, and Theorem~\ref{thm:moment-bounds} (i.e., moment bounds of the MVSDE, uniform-in-time), we obtain
\begin{equation}
\int_{\mathbb{R}^d} \big|L(\theta,x_s,\mu_s)-L(\theta,x_s,\pi_{\theta_0})\big| \mu_s(\mathrm{d}x)
\le K\,\mathsf{W}_2(\mu_s,\pi_{\theta_0}), \label{eq:phi-exp-bound}
\end{equation}
for some finite constant $K<\infty$. Next, using Theorem~\ref{thm:invariant-distribution-2} (i.e., convergence to the invariant distribution of the MVSDE), we have $\mathsf{W}_2(\mu_s,\pi_{\theta_0})\rightarrow 0$ as $s\rightarrow \infty$. Using this, substituting the bound in \eqref{eq:phi-exp-bound} back into \eqref{eq:phi-new-triangle-ineq}, and using Ces\`aro's Theorem, it follows that
    \begin{align}
I_t^{(1)}:= \frac{1}{t}\int_0^t \int_{\mathbb{R}^d} \left|L(\theta,x,\mu_s) - L(\theta,x,\pi_{\theta_0})\right|\mu_s(\mathrm{d}x)\,\mathrm{d}s
&\le \frac{K}{t}\int_0^t \mathsf{W}_2(\mu_s,\pi_{\theta_0})\,\mathrm{d}s \stackrel{t\rightarrow \infty}{\longrightarrow 0}.
\label{eq:i1-bound-final}
\end{align}
    For the second term, let $\gamma_s\in\Pi(\mu_s,\pi_{\theta_0})$ be any coupling between $\mu_s$ and $\pi_{\theta_0}$. Then, using the triangle inequality (integral version), we have that
    \begin{align}
    \label{eq:i2-bound}
         &\frac{1}{t}\int_0^t \left|\int_{\mathbb{R}^d} L(\theta,x,\pi_{\theta_0})\mu_s(\mathrm{d}x)-\int_{\mathbb{R}^d}L(\theta,x,\pi_{\theta_0})\pi_{\theta_0}(\mathrm{d}x)\right|\mathrm{d}s \\
        & = \frac{1}{t}\int_0^t \left|\int_{\mathbb{R}^d\times\mathbb{R}^d} \left[L(\theta,x,\pi_{\theta_0}) - L(\theta,y,\pi_{\theta_0})\right]\gamma_s(\mathrm{d}x,\mathrm{d}y)\right| \leq \frac{1}{t}\int_0^t \int_{\mathbb{R}^d\times\mathbb{R}^d} \left|L(\theta,x,\pi_{\theta_0}) - L(\theta,y,\pi_{\theta_0})\right|\gamma_s(\mathrm{d}x,\mathrm{d}y) \notag
    \end{align}
    By Lemma~\ref{lem:Phi-L1-stability}, there exist a constant $K<\infty$ and an integer $q\ge 1$ such that for all $x,y\in\mathbb R^d$, $|L(\theta,x,\pi_{\theta_0}) - L(\theta,y,\pi_{\theta_0})|\le K\,\|x-y\|(1+\|x\|^q+\|y\|^q+\pi_{\theta_0}(\|\cdot\|^q))$. Using this fact, Cauchy-Schwarz, and Theorem~\ref{thm:moment-bounds} (i.e., moment bounds for the MVSDE, uniform-in-time), it follows that
\begin{align}
&\int_{\mathbb{R}^d\times\mathbb{R}^d} \Big|L(\theta,x,\pi_{\theta_0}) - L(\theta,y,\pi_{\theta_0})\Big|\gamma_s(\mathrm{d}x,\mathrm{d}y) \\
&\le K\Big(\int \|x-y\|^2\,\gamma_s(dx,dy)\Big)^{1/2}
\Big(\int \Big(1+\|x\|^q+\|y\|^q+\pi_{\theta_0}(\|\cdot\|^q)\Big)^2\,\gamma_s(dx,dy)\Big)^{1/2} \\
&\le K\left(\int \|x-y\|^2\,\gamma_s(dx,dy)\right)^{1/2}.
\end{align}
where as elsewhere, the value of the constant is allowed to increase from line to line. In particular, this inequality holds for the optimal coupling $\gamma_s^{*}\in\Pi(\mu_s,\pi_{\theta_0})$, in which case it rewrites as
\begin{equation}
\int_{\mathbb{R}^d\times\mathbb{R}^d} \left|L(\theta,x,\pi_{\theta_0}) - L(\theta,y,\pi_{\theta_0})\right|\gamma_s^{*}(\mathrm{d}x,\mathrm{d}y)\le K\,\mathsf{W}_2(\mu_s,\pi_{\theta_0}).
\end{equation}
Substituting this bound into \eqref{eq:i2-bound}, using Theorem~\ref{thm:invariant-distribution-2} (i.e., convergence to the invariant distribution), and once again the fact that $\mathsf{W}_2(\mu_s,\pi_{\theta_0})\rightarrow 0$ and C\'esaro's Theorem, it follows as required that
\begin{align}
I_t^{(2)}:=\frac{1}{t}\int_0^t \Big|\int_{\mathbb{R}^d} L(\theta,x,\pi_{\theta_0})\mu_s(\mathrm{d}x)-\int_{\mathbb{R}^d}L(\theta,x,\pi_{\theta_0})\pi_{\theta_0}(\mathrm{d}x)\Big|\mathrm{d}s
&\le \frac{1}{t}\int_0^t K \mathsf{W}_2(\mu_s,\pi_{\theta_0})\,\mathrm{d}s
\stackrel{t\rightarrow \infty}{\longrightarrow 0}.
\label{eq:i2-bound-final}
\end{align}\end{proof}

\subsubsection{Proofs for Section \ref{sec:log-lik-mvsde}}
\label{app:additional-proofs-log-lik-mvsde}

\begin{proof}[Proof of Proposition \ref{prop:mvsde-likelihood-t-limit}] Using the definition of the log-likelihood of the MVSDE in \eqref{eq:MVSDE-log-likelihood}, and the MVSDE in \eqref{eq:MVSDE}, we have that
\begin{align}
\label{eq:i1-i2-decomp} 
\frac{1}{t}\left[\mathcal{L}_t(\theta) - \mathcal{L}_t(\theta_0)\right] &= -\frac{1}{t} \Big[ \int_0^t J(\theta,x_s,\mu_s^{\theta},\mu_s^{\theta_0})\mathrm{d}s\Big] + \frac{1}{t}\Big[\int_0^t \big\langle  \Delta B(\theta,x_s,\mu_s^{\theta},\mu_s^{\theta_0}) , (\sigma\sigma^{\top})^{-1}\sigma\, \mathrm{d}w_s\big\rangle\Big].
\end{align}
where $\smash{J(\theta,x,\mu^{\theta},\mu^{\theta_0}) :=\frac{1}{2}\|B(\theta,x,\mu^{\theta}) - B(\theta_0,x,\mu^{\theta_0})\|_{\sigma\sigma^{\top}}^2}$ and $\smash{\Delta B(\theta,x,\mu^{\theta},\mu^{\theta_0}) = B(\theta,x,\mu^{\theta}) - B(\theta_0,x,\mu^{\theta_0})}$. We start by considering the first term in \eqref{eq:i1-i2-decomp}. We would like to show that in $\mathrm{L}^1$ 
    \begin{equation}
    \label{eq:j-limit}
        \lim_{t\rightarrow\infty}\frac{1}{t}\int_0^t J(\theta,x_s,\mu_s^{\theta},\mu_s^{\theta_0})\mathrm{d}s = \int_{\mathbb{R}^d} J(\theta,x,\pi_{\theta},\pi_{\theta_0})\pi_{\theta_0}(\mathrm{d}x).
    \end{equation}
    We need to show that $\smash{\mathbb{E}[|\frac{1}{t}\int_0^t  J(\theta,x_s,\mu_s^{\theta},\mu_s^{\theta_0})\mathrm{d}s - \int_{\mathbb{R}^d}J(\theta,x,\pi_{\theta},\pi_{\theta_0})\pi_{\theta_0}(\mathrm{d}x)|] \stackrel{t\rightarrow\infty}{\longrightarrow}0}$. To do so, we begin by using the triangle inequality to write
    \begin{align}
        &\Big|\frac{1}{t}\int_0^t J(\theta,x_s,\mu_s^{\theta},\mu_s^{\theta_0})\mathrm{d}s - \int_{\mathbb{R}^d} J(\theta,x,\pi_{\theta},\pi_{\theta_0})\pi_{\theta_0}(\mathrm{d}x)\Big| \leq H_t^{(1)} + H_t^{(2)} + H_t^{(3)}, \label{eq:h-t-decomp}
    \end{align}
    where  $\smash{H_t^{(1)}:=\frac{1}{t}\int_0^t |J(\theta,x_s,\mu_s^{\theta},\mu_s^{\theta_0}) - J(\theta,x_s,\pi_{\theta},\pi_{\theta_0})|\mathrm{d}s}$, $\smash{{H_t^{(2)}:=\frac{1}{t} \int_0^t|J(\theta,x_s,\pi_{\theta},\pi_{\theta_0}) - J(\theta,\bar{x}_s,\pi_{\theta},\pi_{\theta_0})|\mathrm{d}s}}$, and $\smash{{H_t^{(3)}:=|\frac{1}{t}\int_0^t J(\theta,\bar{x}_s,\pi_{\theta},\pi_{\theta_0}) \mathrm{d}s - \int_{\mathbb{R}^d} J(\theta,x,\pi_{\theta},\pi_{\theta_0})\pi_{\theta_0}(\mathrm{d}x)|}}$, and where $(x_t)_{t\geq 0}$ and $(\bar{x}_t)_{t\ge 0}$ denote two solutions of the MVSDE, both driven by the same Brownian motion, but initialized with $x_0\sim{\mu_0}$ and $\bar{x}_0\sim \pi_{\theta_0}$, respectively.\footnote{We note that $\mathrm{Law}(\bar{x}_t)=\pi_{\theta_0}$ for all $t\ge 0$, since $\pi_{\theta_0}$ is the unique stationary distribution of the MVSDE \citep[e.g.,][]{genoncatalot2023parametric}.It follows, in particular, that $(\bar{x}_t)_{t\geq 0}$ is a positive recurrent ergodic diffusion in its stationary regime \citep[][Proposition 2]{genoncatalot2023parametric}, given by $\smash{\mathrm{d}\bar{x}_t = B(\theta_0,\bar{x}_t,\pi_{\theta_0})\,\mathrm{d}t + \sigma\,\mathrm{d}w_t}$ with $\smash{\bar{x}_0\sim \pi_{\theta_0}}$. \label{footnote:ergodic}} We assume that $(x_0,\bar{x}_0)\sim \gamma_0^{*}$, where $\gamma_0^{*}\in\Gamma(\mu_0,\pi_{\theta_0})$ is the optimal coupling between the initial conditions $\mu_0$ and $\pi_{\theta_0}$ w.r.t. quadratic cost. 
    
    We begin by bounding $\smash{H_t^{(1)}}$. Under our assumptions, there exists a constant $K<\infty$ and an integer $q\geq 1$ such that, for all $s\geq 0$, $|J(\theta,x,\mu_s^{\theta},\mu_s^{\theta_0})-J(\theta,x,\pi_{\theta},\pi_{\theta_0})|\le K\,(\mathsf{W}_2(\mu_s^{\theta},\pi_{\theta}) +\mathsf{W}_2(\mu_s^{\theta_0},\pi_{\theta_0}))(1+\|x\|^q+\mu_s^{\theta}(\|\cdot\|^q)+ \mu_s^{\theta_0}(\|\cdot\|^q)+\pi_{\theta}(\|\cdot\|^q) + \pi_{\theta_0}(\|\cdot\|^q))$. Furthermore, under the additional assumption in Proposition~\ref{prop:mvsde-likelihood-t-limit} (i.e., Assumption~\ref{assumption:drift} holds for all $\nu\in\Theta$), Theorem~\ref{thm:moment-bounds} (i.e., moment bounds of the MVSDE, uniform-in-time) hold for each fixed $\nu\in\Theta$ and thus, in particular, for $\nu\in\{\theta,\theta_0\}$. It follows that, for all $s\geq 0$, 
\begin{equation} \mathbb{E}\big[\big|J(\theta,x_s,\mu_s^{\theta},\mu_s^{\theta_0})-J(\theta,x_s,\pi_{\theta},\pi_{\theta_0})\big|\big] 
\le K\,\big(\mathsf{W}_2(\mu_s^{\theta},\pi_{\theta}) + \mathsf{W}_2(\mu_s^{\theta_0},\pi_{\theta_0})\big), \label{eq:j-exp-bound}
\end{equation}
for some finite constant $0<K<\infty$. Next, under the additional Assumption in Proposition~\ref{prop:mvsde-likelihood-t-limit} (i.e., Assumption~\ref{assumption:drift} holds for all $\nu\in\Theta$), the results of Theorem~\ref{thm:invariant-distribution-2} (i.e., convergence to the invariant distribution of the MVSDE) hold with $\theta_0$ replaced by any $\nu\in\Theta$. Thus, in particular, we have $\mathsf{W}_2(\mu_s^{\nu},\pi_{\nu})\rightarrow 0$ as $s\rightarrow\infty$ for all $\nu\in\Theta$. Using this result, the bound in \eqref{eq:j-exp-bound}, the definition of $\smash{H_t^{(1)}}$, and C\'esaro's Theorem, we thus have
    \begin{align}
\mathbb{E}[H_t^{(1)}]
&\le \mathbb{E}\Big[\frac{1}{t}\int_0^t |J(\theta,x_s,\mu_s^{\theta},\mu_s^{\theta_0}) - J(\theta,x_s,\pi_{\theta},\pi_{\theta_0})|\mathrm{d}s\Big] \stackrel{t\rightarrow \infty}{\longrightarrow 0}.
\label{eq:h1-bound-final}
\end{align}
We next consider $H_t^{(2)}$. Similar to above, using a minor generalization of Lemma \ref{lem:Phi-L1-stability}, there exists a constant $K<\infty$ and an integer $q\geq 1$ such that, for all $s\geq 0$, $|J(\theta,x,\pi_{\theta},\pi_{\theta_0})-J(\theta,\bar{x},\pi_{\theta},\pi_{\theta_0})|\le K\,
\|x-\bar{x}\|(1+\|x\|^q+\pi_{\theta}(\|\cdot\|^q) + \pi_{\theta_0}(\|\cdot\|^q))$. Taking expectations, using Cauchy-Schwarz, and Theorem~\ref{thm:moment-bounds} (i.e., uniform in time moment bounds for the MVSDE) for $\nu\in\{\theta,\theta_0\}$, it follows that, for all $s\geq 0$,
\begin{align}
& \mathbb{E}\big[|J(\theta,x_s,\pi_{\theta},\pi_{\theta_0})-J(\theta,\bar{x}_s,\pi_{\theta},\pi_{\theta_0})|\big] \leq K \big(\mathbb{E}\left[\|x_s-\bar x_s\|^2\right]\big)^{1/2}.
\label{eq:Phi-x-L1-simplified-final}
\end{align}
Meanwhile, by Theorem~\ref{thm:invariant-distribution-2} (i.e., convergence to the invariant distribution of the MVSDE), we have that $\mathbb{E}[\|x_s-\bar{x}_s\|^2]\rightarrow 0$ as $s\rightarrow\infty$. Since, by Theorem~\ref{thm:moment-bounds} (i.e., moment bounds for the MVSDE, uniform-in-time), the integrand is uniformly bounded in $\mathrm{L}^1$, it follows by Ces\`aro's theorem that
\begin{align}
\mathbb{E}[H_t^{(2)}]:=\mathbb{E}\Big[\frac{1}{t} \int_0^t\left|J(\theta,x_s,\pi_{\theta},\pi_{\theta_0}) - J(\theta,\bar{x}_s,\pi_{\theta},\pi_{\theta_0})\right|\mathrm{d}s\Big]\stackrel{t\rightarrow\infty}{\longrightarrow}0. \label{eq:h2-final-limit}
\end{align}
We now consider $\smash{H_t^{(3)}}$. By Theorem \ref{thm:moment-bounds} (i.e., moment bounds for the MVSDE, uniform-in-time) the function $x\mapsto J(\theta,x,\pi_{\theta},\pi_{\theta_0})\in \mathrm{L}^1(\pi_{\theta_0})$. In addition, $(\bar{x}_t)_{t\geq 0}$ is a positive recurrent ergodic diffusion process in its stationary regime (see Footnote~\ref{footnote:ergodic}). We can thus apply the ergodic theorem (e.g., Theorem 4.2, \citealt{khasminskii2012stochastic}; Theorem 17.0.1, \citealt{meyn2009markov}) to conclude that, both a.s. and in $\mathrm{L}^1$, $\smash{\frac{1}{t}\int_0^t J(\theta,\bar{x}_s,\pi_{\theta},\pi_{\theta_0}) \mathrm{d}s \stackrel{t\rightarrow\infty}{\longrightarrow} \int_{\mathbb{R}^d} J(\theta,x,\pi_{\theta},\pi_{\theta_0})\pi_{\theta_0}(\mathrm{d}x)}$. Thus, in particular, we have shown that 
    \begin{equation}
        \mathbb{E}[H_t^{(3)}]:=\mathbb{E}\Big[\Big|\frac{1}{t}\int_0^t J(\theta,\bar{x}_s,\pi_{\theta},\pi_{\theta_0}) \mathrm{d}s - \int_{\mathbb{R}^d} J(\theta,x,\pi_{\theta},\pi_{\theta_0})\pi_{\theta_0}(\mathrm{d}x)\Big|\Big]\stackrel{t\rightarrow\infty}{\longrightarrow} 0. \label{eq:h3-final-limit}
    \end{equation}
    Taking expectations in \eqref{eq:h-t-decomp}, and substituting the bounds in \eqref{eq:h1-bound-final}, \eqref{eq:h2-final-limit}, and \eqref{eq:h3-final-limit}, it follows at last that $\smash{\mathbb{E}[|\frac{1}{t}\int_0^t  J(\theta,x_s,\mu_s^{\theta},\mu_s^{\theta_0})\mathrm{d}s - \int_{\mathbb{R}^d}J(\theta,x,\pi_{\theta},\pi_{\theta_0})\pi_{\theta_0}(\mathrm{d}x)|] \stackrel{t\rightarrow\infty}{\longrightarrow}0}$. This establishes the limit in \eqref{eq:j-limit}.

    It remains to establish $\mathrm{L}^1$ convergence of the second term in \eqref{eq:i1-i2-decomp} to zero. To do so, define the continuous local martingale $\smash{M_t := \int_0^t \big\langle \Delta B(\theta,x_s,\mu_s^{\theta},\mu_s^{\theta_0}),\,(\sigma\sigma^\top)^{-1}\sigma\,\mathrm{d}w_s\big\rangle}$. By the It\^o isometry, we then have $\smash{\mathbb{E}[M_t^2]= \int_0^t \mathbb{E}[\|B(\theta,x_s,\mu_s^{\theta}) - B(\theta_0,x_s,\mu_s^{\theta_0})\|^2_{\sigma\sigma^\top}]\mathrm{d}s = 2 \int_0^t \mathbb{E}[J(\theta,x_s,\mu_s^{\theta},\mu_s^{\theta_0})]\,\mathrm{d}s}$. Using the PGP of the function $J$, and Theorem~\ref{thm:moment-bounds} (i.e., the moment bounds for the MVSDE, uniform-in-time), there exists a constant $K<\infty$ such that $\sup_{s\geq 0}\mathbb{E}[|J(\theta,x_s,\mu_s^{\theta},\mu_s^{\theta_0})|]\le K$. We thus have, as required, that
\begin{equation}
\mathbb{E}\big[\big|\frac{1}{t}M_t\big|\big]
\le \big(\mathbb{E}\big[\big|\frac{1}{t}M_t\big|^2\big]\big)^{1/2}
= \frac{1}{t}\big(\mathbb{E}[M_t^2]\big)^{\frac{1}{2}}
\le \frac{1}{t}\sqrt{Kt} = \frac{K}{\sqrt{t}} \stackrel{t\to\infty}{\longrightarrow}0.
\end{equation}
This establishes $\mathrm{L}^1$ convergence of the second term in \eqref{eq:i1-i2-decomp} to zero, and thus completes the proof.
\end{proof}

\subsection{Proofs for Section \ref{sec:grad-log-lik}}
\label{app:additional-proofs-grad-log-lik}

\begin{proof}[Proof of Proposition \ref{prop:ips-likelihood-n-t-limit-grad}]
Recall, from \eqref{eq:obj-func-1}, that the negative asymptotic log-likelihood of the IPS is defined according to $\smash{\mathcal L(\theta)=\int_{\mathbb{R}^d} L(\theta,x,\pi_{\theta_0})\,\pi_{\theta_0}(\mathrm{d}x)}$, where $L(\theta,x,\mu):=\frac12\|B(\theta,x,\mu)-B(\theta_0,x,\mu)\|_{\sigma\sigma^{\top}}^2$. Using the chain rule, it is straightforward to compute the derivative of the integrand as
\begin{equation}
\partial_{\theta} L(\theta,x,\pi_{\theta_0})
=
\partial_{\theta}B(\theta,x,\pi_{\theta_0})(\sigma\sigma^{\top})^{-1}\big(B(\theta,x,\pi_{\theta_0})-B(\theta_0,x,\pi_{\theta_0})\big),
\label{eq:grad-pointwise}
\end{equation}
where $\smash{\partial_{\theta}B(\theta,x,\pi_{\theta_0})\in\mathbb{R}^{p\times d}}$. By Lemma~\ref{lem:weighted-lip-g}, $\smash{(x,y)\mapsto \partial_{\theta}b(\theta,x,y)}$ satisfies a polynomial growth property, uniformly in $\theta\in\Theta$. Meanwhile, by Theorem~\ref{thm:moment-bounds}, $\pi_{\theta_0}$ has finite moments of all orders. Thus, by the dominated convergence theorem (DCT), we have $\smash{\partial_\theta B(\theta,x,\pi_{\theta_0}) = \int_{\mathbb{R}^d} \partial_{\theta} b(\theta,x,y)\,\pi_{\theta_0}(\mathrm dy)
=:G(\theta,x,\pi_{\theta_0})}$ for each $x\in\mathbb{R}^d$. Substituting this into \eqref{eq:grad-pointwise} yields 
\begin{equation}
\partial_{\theta} L(\theta,x,\pi_{\theta_0})
= G(\theta,x,\pi_{\theta_0}) (\sigma\sigma^{\top})^{-1} (B(\theta,x,\pi_{\theta_0}) - B(\theta_0,x,\pi_{\theta_0})) =: H(\theta,x,\pi_{\theta_0}). 
\end{equation}
It remains to justify that we can differentiate under the integral sign in the asymptotic log-likelihood function. By Lemma~\ref{lem:Phi-hH-stability}, the map $x\mapsto \partial_{\theta}L(\theta,x,\pi_{\theta_0})$ satisfies a polynomial growth property, uniformly over $\theta\in\Theta$. By Theorem~\ref{thm:moment-bounds}, $\pi_{\theta_0}$ has finite moments of all orders. Therefore, once more using the DCT, we conclude as required that $\smash{\partial_{\theta} \mathcal L(\theta)=\int_{\mathbb{R}^d} \partial_{\theta} L(\theta,x,\pi_{\theta_0})\,\pi_{\theta_0}(\mathrm{d}x)=\int_{\mathbb{R}^d} H(\theta,x,\pi_{\theta_0})\,\pi_{\theta_0}(\mathrm{d}x)}$.
\end{proof}

\subsection{Proofs for Section \ref{sec:theory-add-notation}}
\label{app:additional-proofs-theory-add-notation}

\begin{proof}[Proof of Proposition \ref{prop:asymptotic-partial-log-lik-grad}]
We begin by proving the statement for $\mathcal L^{i,N}$ in \eqref{eq:incomplete-log-lik-1-grad}. This part of the proof is very similar to the proof of Proposition~\ref{prop:ips-likelihood-n-t-limit-grad}. Recall, from \eqref{eq:partial-lik-1}, that  $\smash{\mathcal{L}^{i,N}(\theta)=\int_{(\mathbb R^d)^N} L^{i,N}(\theta,\boldsymbol x^N)\,\pi_{\theta_0}^N(\mathrm d\boldsymbol x^N)}$, where $L^{i,N}(\theta,\boldsymbol x^N):=
\frac{1}{2}\|B^{i,N}(\theta,\boldsymbol x^N)-B^{i,N}(\theta_0,\boldsymbol x^N)\|_{\sigma\sigma^\top}^2$. By the chain rule, for all fixed $\boldsymbol x^N\in(\mathbb R^d)^N$, we can compute the derivative of the integrand as 
\begin{equation}
\partial_{\theta} L^{i,N}(\theta,\boldsymbol x^N)
=
\partial_{\theta} B^{i,N}(\theta,\boldsymbol x^N)\,(\sigma\sigma^\top)^{-1}\,
\big(B^{i,N}(\theta,\boldsymbol x^N)-B^{i,N}(\theta_0,\boldsymbol x^N)\big).
\label{eq:grad-LiN-config}
\end{equation}
In addition, recalling that $\smash{B^{i,N}(\theta,\boldsymbol x^N)=\frac1N\sum_{j=1}^N b(\theta,x^{i,N},x^{j,N})}$, we can also compute $\smash{\partial_{\theta} B^{i,N}(\theta,\boldsymbol x^N)=}$ $\smash{
\frac1N\sum_{j=1}^N \partial_{\theta}b(\theta,x^{i,N},x^{j,N})
=
\int_{\mathbb R^d} \partial_{\theta} b(\theta,x^{i,N},y)\,\mu^N(\mathrm{d}y)
=G(\theta,x^{i,N},\mu^N) = G^{i,N}(\theta,\boldsymbol{x}^N)}$.
Substituting this into \eqref{eq:grad-LiN-config}, we arrive at
\begin{equation}
    \partial_{\theta} L^{i,N}(\theta,\boldsymbol x^N)= G^{i,N}(\theta,\boldsymbol{x}^N)(\sigma\sigma^\top)^{-1}\,
\big(B^{i,N}(\theta,\boldsymbol x^N)-B^{i,N}(\theta_0,\boldsymbol x^N)\big) =: H^{i,N}(\theta,\boldsymbol x^N).
\end{equation}
It remains to justify differentiation under the outer integral in the definition of $\mathcal{L}^{i,N}$. By Corollary~\ref{cor:empirical-H-lip}, there exist a constant $K<\infty$ and an integer $q\ge1$ such that for all $\theta\in\Theta$, all $N\in\mathbb N$, and all $\boldsymbol x^N\in(\mathbb R^d)^N$, $\smash{\|\partial_{\theta} L^{i,N}(\theta,\boldsymbol x^N)\| = \|H^{i,N}(\theta,\boldsymbol x^N)\| \le
K(1+\|x^{i,N}\|^q+\frac1N\sum_{j=1}^N\|x^{j,N}\|^q)}$.
By Theorem~\ref{thm:moment-bounds}, $\pi_{\theta_0}^N$ has finite moments of all orders. By the DCT, we may thus differentiate under the integral sign to obtain $\smash{\partial_{\theta} \mathcal L^{i,N}(\theta)=\int_{(\mathbb R^d)^N}\partial_{\theta} L^{i,N}(\theta,\boldsymbol x^N)\,\pi_{\theta_0}^N(\mathrm d\boldsymbol x^N)=\int_{(\mathbb R^d)^N} H^{i,N}(\theta,\boldsymbol x^N)\,\pi_{\theta_0}^N(\mathrm d\boldsymbol x^N)}$.

We now turn to establish the result for $\mathcal{L}^{i,j,k,N}$ in \eqref{eq:incomplete-log-lik-2-grad}. Recall that $\smash{\mathcal L^{i,j,k,N}(\theta)=\int_{(\mathbb R^d)^N}\ell^{i,j,k,N}(\theta,\boldsymbol x^N)\,\pi_{\theta_0}^N(\mathrm d\boldsymbol x^N)}$, where $\smash{\ell^{i,j,k,N}(\theta,\boldsymbol x^N):=
\frac12\langle
b(\theta,x^{i,N},x^{j,N})-B\!(\theta_0,x^{i,N},\mu^N),\, b(\theta,x^{i,N},x^{k,N})-B\!(\theta_0,x^{i,N},\mu^N) \rangle_{\sigma\sigma^\top}}$. Using the product rule, we can compute the gradient of the integrand as
\begin{align}
\partial_{\theta} \ell^{i,j,k,N}(\theta,\boldsymbol x^N)&=\frac12\big(
\partial_{\theta} b(\theta,x^{i,N},x^{j,N})\,(\sigma\sigma^\top)^{-1}\big[b(\theta,x^{i,N},x^{k,N})-B(\theta_0,x^{i,N},\mu^N)\big]\\
&\hspace{6mm}+\partial_{\theta} b(\theta,x^{i,N},x^{k,N})\,(\sigma\sigma^\top)^{-1}\big[b(\theta,x^{i,N},x^{j,N})-B(\theta_0,x^{i,N},\mu^N)\big]\big)\nonumber\\
&=\frac12\Big(h^{i,j,k,N}(\theta,\boldsymbol x^N)+h^{i,k,j,N}(\theta,\boldsymbol x^N)\Big),
\label{eq:grad-ell-ijk-config}
\end{align}
Once more, it remains to justify differentiation under the integral sign. By Corollary~\ref{cor:empirical-H-lip}, there exist a constant $K<\infty$ and an integer $q\ge1$ such that, for all $\theta\in\Theta$, for all $N\in\mathbb{N}$, and for all $\boldsymbol{x}^N\in(\mathbb{R}^d)^N$, $\smash{\|\partial_{\theta} \ell^{i,j,k,N}(\theta,\boldsymbol x^N)\| \le
K(1+\sum_{a\in\{i,j,k\}}\|x^{a,N}\|^q+\frac1N\sum_{a=1}^N\|x^{a,N}\|^q)}$. Similar to above, the right-hand side of this bound is integrable with respect to $\pi_{\theta_0}^N$ by Theorem~\ref{thm:moment-bounds}. Thus, using the DCT, we can conclude that 
\begin{align}
\partial_{\theta} \mathcal L^{i,j,k,N}(\theta)=\int_{(\mathbb R^d)^N} \!\!\! \partial_{\theta} \ell^{i,j,k,N}(\theta,\boldsymbol x^N)\,\pi_{\theta_0}^N(\mathrm d\boldsymbol x^N)=\frac12\int_{(\mathbb R^d)^N}\!\!\Big(h^{i,j,k,N}(\theta,\boldsymbol x^N)+h^{i,k,j,N}(\theta,\boldsymbol x^N)\Big)\,\pi_{\theta_0}^N(\mathrm d\boldsymbol x^N).
\notag 
\end{align}
Finally, using the definition of $h^{i,j,k,N}$, and the exchangeability of $\pi_{\theta_0}^N$, the two integrals coincide. We thus have, as required, $\smash{\partial_{\theta} \mathcal L^{i,j,k,N}(\theta)=\int_{(\mathbb R^d)^N} h^{i,j,k,N}(\theta,\boldsymbol x^N)\,\pi_{\theta_0}^N(\mathrm d\boldsymbol x^N)}$.
\end{proof}

\begin{proof}[Proof of Proposition \ref{prop:inf-n-convergence-1}]
We will first require some additional notation. We begin by defining two \emph{pseudo log-likelihood} functions $\mathcal{L}_t^{i,N}:\mathbb{R}^p\rightarrow\mathbb{R}$ and $\mathcal{L}_t^{i,j,k,N}:\mathbb{R}^p\rightarrow\mathbb{R}$ for the IPS according to the identities
\begin{align}
\mathcal{L}_t^{i,N}(\theta) - \mathcal{L}_t^{i,N}(\theta_0) 
&:=  -\int_0^t L(\theta,x_s^{i,N},\mu_s^N)\,\mathrm{d}s + \int_0^t \big\langle \Delta B(\theta,x_s^{i,N},\mu_s^N) , \sigma \mathrm{d}w_s^{i,N}\big\rangle_{\sigma\sigma^{\top}} \label{eq:IPS-incomplete-likelihood-1} \\[-1mm]
\mathcal{L}_t^{i,j,k,N}(\theta) - \mathcal{L}_t^{i,j,k,N}(\theta_0) &\stackrel{+c}{:=} -\int_{0}^t \ell(\theta,x_s^{i,N},x_s^{j,N},x_s^{k,N},\mu_s^N) \mathrm{d}s + \int_0^t \big\langle  \Delta b(\theta,x_s^{i,N},x_s^{j,N}) , \sigma \mathrm{d}w_s^{i,N}\big\rangle_{\sigma\sigma^{\top}} \label{eq:IPS-incomplete-likelihood-2} 
\end{align}
where $\smash{\stackrel{+c}{:=}}$ indicates that the definition is up to an additive constant, determined by the constraint the right-hand side must equal zero at $\theta=\theta_0$. We note that this constant is independent of $\theta$, and thus vanishes after applying $\partial_{\theta}$. We next define the functions $\smash{\mathcal{L}_t^{[i,N]}:\mathbb{R}^p\rightarrow\mathbb{R}}$ and $\smash{\mathcal{L}_t^{[i,j,k,N]}:\mathbb{R}^p\rightarrow\mathbb{R}}$ according to 
\begin{align}
\mathcal{L}_t^{[i,N]}(\theta) - \mathcal{L}_t^{[i,N]}(\theta_0) 
&:=  -\int_0^t L(\theta,x_s^{i},\mu_s^{[N]})\,\mathrm{d}s + \int_0^t \big\langle \Delta B(\theta,x_s^{i},\mu_s^{[N]}) , \sigma \mathrm{d}w_s^{i}\big\rangle_{\sigma\sigma^{\top}} \label{eq:mckean-copy-incomplete-likelihood-1} \\
\mathcal{L}_t^{[i,j,k,N]}(\theta) - \mathcal{L}_t^{[i,j,k,N]}(\theta_0) &\stackrel{+c}{:=} -\int_{0}^t \ell(\theta,x_s^{i},x_s^{j},x_s^{k},\mu_s^{[N]}) \mathrm{d}s + \int_0^t \big\langle  \Delta b(\theta,x_s^{i},x_s^{j}) , \sigma \mathrm{d}w_s^{i}\big\rangle_{\sigma\sigma^{\top}}  \label{eq:mckean-copy-incomplete-likelihood-2} 
\end{align}
where $\smash{(x_t^{i})^{i\in[N]}_{t\geq 0}}$ denotes an independent family of solutions of the MVSDE, driven by the same Brownian motions as the corresponding particles $\smash{(x_t^{i,N})_{t\geq0}^{i\in[N]}}$, and with the same initial conditions (i.e., the synchronous coupling), and where $\smash{\mu_t^{[N]} = \frac{1}{N}\sum_{j=1}^N \delta_{x_t^{j}}}$. Finally, we define the functions $\mathcal{L}_t^{i}:\mathbb{R}^p\rightarrow\mathbb{R}$ and $\mathcal{L}_t^{i,j,k}:\mathbb{R}^p\rightarrow\mathbb{R}$ via
\begin{align}
\mathcal{L}_t^{i}(\theta) - \mathcal{L}_t^{i}(\theta_0) 
&:=  -\int_0^t L(\theta,x_s^{i},\mu_s)\,\mathrm{d}s + \int_0^t \big\langle \Delta B(\theta,x_s^{i},\mu_s) , \sigma \mathrm{d}w_s^{i}\big\rangle_{\sigma\sigma^{\top}} \label{eq:mckean-incomplete-likelihood-1} \\
\mathcal{L}_t^{i,j,k}(\theta) - \mathcal{L}_t^{i,j,k}(\theta_0) &\stackrel{+c}{:=} -\int_{0}^t \ell(\theta,x_s^{i},x_s^{j},x_s^{k},\mu_s) \mathrm{d}s + \int_0^t \big\langle  \Delta b(\theta,x_s^{i},x_s^{j}) , \sigma \mathrm{d}w_s^{i}\big\rangle_{\sigma\sigma^{\top}}. \label{eq:mckean-incomplete-likelihood-2} 
\end{align}
 
\noindent We can now prove the stated results. Using the triangle inequality, and the fact the LHS is deterministic (it is an integral w.r.t. the invariant measure of the IPS), so that $\|\partial_{\theta}\mathcal{L}^{i,N}(\theta) - \partial_{\theta}\mathcal{L}(\theta) \|=\mathbb{E}[\|\partial_{\theta}\mathcal{L}^{i,N}(\theta) - \partial_{\theta}\mathcal{L}(\theta) \|]$ and $\|\partial_{\theta}\mathcal{L}^{i,j,k,N}(\theta) - \partial_{\theta}\mathcal{L}(\theta) \|=\mathbb{E}[\|\partial_{\theta}\mathcal{L}^{i,j,k,N}(\theta) - \partial_{\theta}\mathcal{L}(\theta) \|]$, we have
\begin{align}
\|\partial_{\theta}\mathcal{L}^{i,N}(\theta) - \partial_{\theta}\mathcal{L}(\theta) \| &\leq  \mathbb{E}\left[\|\partial_{\theta}\mathcal{L}^{i,N}(\theta) - \tfrac{1}{t}\partial_{\theta}{\mathcal{L}}_t^{i,N}(\theta) \|\right] + \mathbb{E} \left[\|\tfrac{1}{t}\partial_{\theta}\mathcal{L}_t^{i,N}(\theta) - \tfrac{1}{t}\partial_{\theta}{\mathcal{L}}_t^{[i,N]}(\theta) \|\right]  \nonumber \\
&+\mathbb{E} \left[\|\tfrac{1}{t}\partial_{\theta}{\mathcal{L}}_t^{[i,N]}(\theta) - \tfrac{1}{t}\partial_{\theta}{\mathcal{L}}_t^{i}(\theta) \|\right] +\mathbb{E}\left[\|\tfrac{1}{t}\partial_{\theta}{\mathcal{L}}_t^{i}(\theta) - \partial_{\theta}\mathcal{L}(\theta)\|\right] \label{eq:grad-l-decomp-1} \\
\|\partial_{\theta}\mathcal{L}^{i,j,k,N}(\theta) - \partial_{\theta}\mathcal{L}(\theta) \| 
&\leq \mathbb{E}\left[\|\partial_{\theta}\mathcal{L}^{i,j,k,N}(\theta) - \tfrac{1}{t}\partial_{\theta}{\mathcal{L}}_t^{i,j,k,N}(\theta) \|\right] + \mathbb{E} \left[\|\tfrac{1}{t}\partial_{\theta}\mathcal{L}_t^{i,j,k,N}(\theta) - \tfrac{1}{t}\partial_{\theta}{\mathcal{L}}_t^{[i,j,k,N]}(\theta) \|\right]  \nonumber \\
&+\mathbb{E} \left[\|\tfrac{1}{t}\partial_{\theta}{\mathcal{L}}_t^{[i,j,k,N]}(\theta) - \tfrac{1}{t}\partial_{\theta}{\mathcal{L}}_t^{i,j,k}(\theta) \|\right] +\mathbb{E}\left[\|\tfrac{1}{t}\partial_{\theta}{\mathcal{L}}_t^{i,j,k}(\theta) - \partial_{\theta}\mathcal{L}(\theta)\|\right]. \label{eq:grad-l-decomp-2}
\end{align}
By Lemma \ref{prop:asymptotic-partial-log-lik-grad-l1-convergence} and Lemma \ref{prop:asymptotic-partial-log-lik-grad-l1-convergence-mvsde} (see Appendix \ref{app:additional-results-prop-inf-n}), we have that 
\begin{alignat}{2}
    \lim_{t\rightarrow\infty} \mathbb{E}\big[\|\partial_{\theta}\mathcal{L}^{i,N}(\theta) - \tfrac{1}{t}\partial_{\theta}{\mathcal{L}}_t^{i,N}(\theta) \|\big] &= 0, \qquad &&\lim_{t\rightarrow\infty} \mathbb{E}\big[\|\tfrac{1}{t}\partial_{\theta}{\mathcal{L}}_t^{i}(\theta) - \partial_{\theta}\mathcal{L}(\theta)\|\big]  = 0 \label{eq:l-i-n-bounds}\\
    \lim_{t\rightarrow\infty} \mathbb{E}\big[\|\partial_{\theta}\mathcal{L}^{i,j,k,N}(\theta) - \tfrac{1}{t}\partial_{\theta}{\mathcal{L}}_t^{i,j,k,N}(\theta) \|\big] &= 0 ,\qquad  &&\lim_{t\rightarrow\infty} \mathbb{E}\big[\|\tfrac{1}{t}\partial_{\theta}{\mathcal{L}}_t^{i,j,k}(\theta) - \partial_{\theta}\mathcal{L}(\theta)\|\big] = 0 \label{eq:l-i-j-k-n-bounds}
\end{alignat}
By Lemma \ref{lemma:main-theorem-1-2-lemma-1} and Lemma \ref{lemma:main-theorem-1-2-lemma-2} (see Appendix \ref{app:additional-results-prop-inf-n}), 
there exist constants $K_1,K_1^{\dagger},K_2,K_2^{\dagger}$ such that, for all $t\geq t_0>0$ (e.g., $t_0=1$), and for all $N\in\mathbb{N}$, 
    \begin{align} 
        \limsup_{t\rightarrow\infty} \mathbb{E}\big[\big\| \tfrac{1}{t}\partial_{\theta}\mathcal{L}_t^{[i,N]}(\theta) - \tfrac{1}{t}\partial_{\theta}{\mathcal{L}}_t^{i}(\theta)\big\|\big] &\leq K_1\rho(N)
        \label{eq:lt-quant-v1-v2-recall} \\
        \limsup_{t\rightarrow\infty}\mathbb{E}\big[\big\| \tfrac{1}{t}\partial_{\theta}\mathcal{L}_t^{i,N}(\theta) - \tfrac{1}{t}\partial_{\theta}\mathcal{L}_t^{[i,N]}(\theta)\big\|\big] &\leq K_2 N^{-\frac{1}{2(1+\alpha)}} 
        \label{eq:lt-quant-v1-recall}
    \end{align}
and
    \begin{align} 
        \limsup_{t\rightarrow\infty}\mathbb{E}\big[\big\| \tfrac{1}{t}\partial_{\theta}\mathcal{L}_t^{[i,j,k,N]}(\theta) - \tfrac{1}{t}\partial_{\theta}{\mathcal{L}}_t^{i,j,k}(\theta)\big\|\big] &\leq K_1^{\dagger}\rho(N). \label{eq:lt-quant-v2-v2-recall} \\
        \limsup_{t\rightarrow\infty}\mathbb{E}\big[\big\| \tfrac{1}{t}\partial_{\theta}\mathcal{L}_t^{i,j,k,N}(\theta) - \tfrac{1}{t}\partial_{\theta}\mathcal{L}_t^{[i,j,k,N]}(\theta)\big\|\big] &\leq K_2^{\dagger}N^{-\frac{1}{2(1+\alpha)}}. 
        \label{eq:lt-quant-v2-recall}
    \end{align}
Taking $\limsup_{t\rightarrow\infty}$ of \eqref{eq:grad-l-decomp-1} and \eqref{eq:grad-l-decomp-2}, substituting  the bounds in \eqref{eq:l-i-n-bounds}, \eqref{eq:lt-quant-v1-v2-recall}, \eqref{eq:lt-quant-v1-recall},  or \eqref{eq:l-i-j-k-n-bounds}, \eqref{eq:lt-quant-v2-v2-recall}, \eqref{eq:lt-quant-v2-recall}, and noting that the bound holds uniformly over $\theta\in\Theta$, we have the stated result. 
\end{proof}

\subsection{Proofs for Section \ref{sec:main-results-first-estimator}}
\label{app:additional-proofs-main-results}


\subsubsection{Proofs for Section \ref{sec:main-results-convergence}}
\label{app:additional-proofs-main-results-convergence}

\begin{proof}[Proof of Proposition \ref{prop:inf-t-convergence-1}]
     We follow closely the proof of \cite[Theorem 2.4]{sirignano2017stochastic}, adapted appropriately to the current setting. Let $\kappa>0$. Define the sequence of stopping times $0=\sigma_0\leq \tau_1\leq \sigma_1\leq \tau_2\leq \sigma_2\leq \dots$ according to
     \small
    \begin{align}
        \tau_r &= \inf \Big\{t>\sigma_{r-1}: \|\partial_{\theta}\mathcal{L}^{i,N}(\bar{\theta}_t^{i,N})\|\geq \kappa\Big\} \label{eq:stop-times-i-eq-1}\\[2mm]
        \sigma_r &= \sup \Big\{t\geq\tau_r: \frac{1}{2} \|\partial_{\theta}\mathcal{L}^{i,N}(\bar{\theta}_{\tau_r}^{i,N}) \|\leq \|\partial_{\theta}\mathcal{L}^{i,N}(\bar{\theta}_{s}^{i,N}) \|\leq 2 \|\partial_{\theta}\mathcal{L}^{i,N}(\bar{\theta}_{\tau_r}^{i,N}) \| ~~\forall s\in[\tau_r,t], \textstyle \int_{\tau_r}^t \gamma_s\mathrm{d}s \leq \lambda \Big\}. \label{eq:stop-times-i-eq-2} 
    \intertext{or}
        \tau_r &= \inf \Big\{t>\sigma_{r-1}: \|\partial_{\theta}\mathcal{L}^{i,j,k,N}(\theta_t^{i,j,k,N})\|\geq \kappa\Big\} \label{eq:stop-times-ijk-eq-1} \\[2mm]
        \sigma_r &= \sup \Big\{t\geq\tau_r: \frac{1}{2} \|\partial_{\theta}\mathcal{L}^{i,j,k,N}(\theta_{\tau_r}^{i,j,k,N}) \|\leq \|\partial_{\theta}\mathcal{L}^{i,j,k,N}(\theta_{s}^{i,j,k,N}) \|\leq 2 \|\partial_{\theta}\mathcal{L}^{i,j,k,N}(\theta_{\tau_r}^{i,j,k,N}) \| ~~\forall s\in[\tau_r,t], \textstyle \int_{\tau_r}^t \gamma_s\mathrm{d}s \leq \lambda \Big\}. \label{eq:stop-times-ijk-eq-2}
    \end{align} 
    \normalsize
    We will first prove the result for $(\theta_t^{i,j,k,N})_{t\geq 0}$, using the stopping times defined in \eqref{eq:stop-times-ijk-eq-1} - \eqref{eq:stop-times-ijk-eq-2}. We will consider two subcases. First, suppose that there are a finite number of stopping times $\tau_r$. In this case, there exists a finite $t_0$ such that $\smash{\|\partial_{\theta}\mathcal{L}^{i,j,k,N}(\theta_t^{i,j,k,N})\|<\kappa}$ for all $t\geq t_0$. Since $\kappa>0$ can be chosen arbitrarily small, this implies that $\smash{\lim_{t\rightarrow\infty}\|\partial_{\theta}\mathcal{L}^{i,j,k,N}(\theta_t^{i,j,k,N})\|=0}$. Second, suppose there are an infinite number of stopping times $\tau_r$. By Lemma \ref{lemma:main-theorem-lemma-3} and Lemma \ref{lemma:main-theorem-lemma-4}, there exist $0<\beta_1<\beta$ such that, for sufficiently large $k$, it holds almost surely that
    \begin{equation}
        \mathcal{L}^{i,j,k,N}(\theta_{\sigma_r}^{i,j,k,N}) - \mathcal{L}^{i,j,k,N}(\theta_{\tau_r}^{i,j,k,N})\leq -\beta, \qquad \mathcal{L}^{i,j,k,N}(\theta_{\tau_r}^{i,j,k,N}) - \mathcal{L}^{i,j,k,N}(\theta_{\sigma_{r-1}}^{i,j,k,N})\leq \beta_1. \label{eq:l-decrease-increase-bounds}
    \end{equation}
    It follows, choosing $r_0\in\mathbb{N}$ such that \eqref{eq:l-decrease-increase-bounds} holds for all $r\geq r_0$, that
    \begin{align}
        &\mathcal{L}^{i,j,k,N}(\theta_{\tau_{n+1}}^{i,j,k,N}) - \mathcal{L}^{i,j,k,N}(\theta_{\tau_{r_0}}^{i,j,k,N}) = \sum_{k=k_0}^n\left[\mathcal{L}^{i,j,k,N}(\theta_{\tau_{r+1}}^{i,j,k,N}) - \mathcal{L}^{i,j,k,N}(\theta_{\tau_r}^{i,j,k,N})\right] \\
        &= \sum_{k=k_0}^n\left[\mathcal{L}^{i,j,k,N}(\theta_{\sigma_r}^{i,j,k,N}) - \mathcal{L}^{i,j,k,N}(\theta_{\tau_{r}}^{i,j,k,N}) + \mathcal{L}^{i,j,k,N}(\theta_{\tau_{r+1}}^{i,j,k,N}) - \mathcal{L}^{i,j,k,N}(\theta_{\sigma_{r}}^{i,j,k,N}) \right] \\[2mm]
        &\leq (n+1 - r_0) (-\beta + \beta_1). 
    \end{align}
    Since $-\beta+\beta_1<0$, this display implies that $\smash{\mathcal{L}^{i,j,k,N}(\theta_{\tau_{n+1}}^{i,j,k,N})\rightarrow - \infty}$ as $n\rightarrow\infty$ a.s. But this is a contradiction since $\mathcal{L}^{i,j,k,N}(\theta)$ is bounded below for all $\theta\in\Theta$ (see Lemma~\ref{lemma:main-theorem-lemma-2-a}). It follows that there must a.s. exist a finite number of stopping times $\tau_r$. Thus, in particular, there exists a finite time $t_0$ such that $\smash{\|\partial_{\theta}\mathcal{L}^{i,j,k,N}(\theta_t^{i,j,k,N})\|<\kappa}$ a.s. for all $t\geq t_0$. Since $\kappa>0$ was chosen arbitrarily, this establishes that $\lim_{t\rightarrow\infty} \|\partial_{\theta}\mathcal{L}^{i,j,k,N}(\theta_t^{i,j,k,N})\| = 0$ a.s. 
    It remains to prove that $\lim_{t\rightarrow\infty}\|\partial_{\theta}\mathcal{L}^{i,N}(\bar\theta_t^{i,N})\| = 0$ a.s. The proof in this case is entirely analogous, noting that all of the required lemmas (i.e., Lemma \ref{lemma:main-theorem-lemma-3} and Lemma \ref{lemma:main-theorem-lemma-4}) also hold for this estimator.
\end{proof}

\begin{proof}[Proof of Theorem \ref{theorem:main-result-1}]
We prove the claim for $(\theta_t^{i,j,k,N})_{t\ge 0}$. Fix $N\in\mathbb N$ and $t\ge 0$. By the triangle inequality, we have that $\smash{\|\partial_{\theta}\mathcal{L}(\theta_t^{i,j,k,N})\|\le \|\partial_{\theta}\mathcal{L}(\theta_t^{i,j,k,N})-\partial_{\theta}\mathcal{L}^{i,j,k,N}(\theta_t^{i,j,k,N})\|+\|\partial_{\theta}\mathcal{L}^{i,j,k,N}(\theta_t^{i,j,k,N})\|}$. Using also the fact that $\theta_t^{i,j,k,N}\in\Theta$ a.s., it follows that
\begin{align}
\|\partial_{\theta}\mathcal{L}(\theta_t^{i,j,k,N})\|
&\le
\sup_{\theta\in\Theta}\big\|\partial_{\theta}\mathcal{L}(\theta)
-\partial_{\theta}\mathcal{L}^{i,j,k,N}(\theta)\big\|
+
\|\partial_{\theta}\mathcal{L}^{i,j,k,N}(\theta_t^{i,j,k,N})\|.
\label{eq:exp-bound}
\end{align}
By Proposition \ref{prop:inf-t-convergence-1}, we have $\lim_{t\rightarrow\infty}\|\partial_{\theta}\mathcal{L}^{i,j,k,N}(\theta_t^{i,j,k,N})\| = 0$ a.s., for each fixed $N\in\mathbb{N}$. Meanwhile, by Proposition \ref{prop:inf-n-convergence-1}, we have that $\smash{\lim_{N\rightarrow\infty}\sup_{\theta\in\Theta}\|\partial_{\theta}\mathcal{L}(\theta)
-\partial_{\theta}\mathcal{L}^{i,j,k,N}(\theta)\|=0}$. Taking $\lim_{N\rightarrow\infty}\limsup_{t\rightarrow\infty}$ in \eqref{eq:exp-bound}, and using both of these bounds, it follows that
\begin{equation}
\lim_{N\to\infty}\,\limsup_{t\to\infty}\|\partial_{\theta}\mathcal{L}(\theta_t^{i,j,k,N})\|=0.
\end{equation}
It remains to prove the corresponding claim for $(\bar{\theta}_t^{i,N})_{t\geq 0}$, i.e., that $\smash{\lim_{N\to\infty}\,\limsup_{t\to\infty}\|\partial_{\theta}\mathcal{L}(\bar\theta_t^{i,N})\|=0}$. Similar to before, the proof is essentially identical, noting once more that all of the relevant results (i.e., Proposition~\ref{prop:inf-n-convergence-1} and Proposition~\ref{prop:inf-t-convergence-1}) also hold for this estimator.
\end{proof}

\subsubsection{Proofs for Section \ref{sec:main-results-convergence-rates}}
\label{app:additional-proofs-main-results-convergence-rates}

\begin{proof}[Proof of Theorem \ref{theorem:main-theorem-1-2-finite-n}]
    We begin by proving \eqref{eq:average-l2-rate-1} and \eqref{eq:non-average-l2-rate-1}. In particular, we will prove \eqref{eq:non-average-l2-rate-1} (i.e., the result for the non-averaged estimator), detailing subsequently how to adapt the proof to obtain \eqref{eq:average-l2-rate-1} (i.e., the results for the averaged estimator). We follow the approach used in the proof of \cite[Theorem 1, Proposition 1]{sirignano2020stochastic}. We begin by writing the update equation in the following form:
\begin{align}
\mathrm{d}\theta_t^{i,j,k,N} &= -\underbrace{\gamma_t\partial_{\theta}\mathcal{L}^{i,j,k,N}(\theta_t^{i,j,k,N})\mathrm{d}t}_{\text{true descent term}} - \underbrace{\gamma_t (h^{i,j,k,N}(\theta_t^{i,j,k,N},\boldsymbol{x}_t^N)-\partial_{\theta}\mathcal{L}^{i,j,k,N}(\theta_t^{i,j,k,N}))\mathrm{d}t}_{\text{fluctuations term}}  \\*
&~~~~~+ \underbrace{\gamma_t g^{i,j,N}(\theta_t^{i,j,k,N},\boldsymbol{x}_t^N)\sigma^{-\top} \mathrm{d}w_t^{i,N}}_{\text{noise term}} \label{theta_ideal_2-a-i-ii-finite-n}
\end{align}
Let $\theta_0^{i,j,k,N}$ denote the (unique) minimiser of $\mathcal{L}^{i,j,k,N}$. Then, using a first order Taylor expansion, and the fact that $\partial_{\theta}\mathcal{L}^{i,j,k,N}(\theta_0^{i,j,k,N})=0$, we have that 
\begin{align}
\label{eq:l2-convergence-proof-1-start}
\partial_{\theta}\mathcal{L}^{i,j,k,N}(\theta_t^{i,j,k,N}) &= \partial_{\theta}\mathcal{L}^{i,j,k,N}(\theta_0^{i,j,k,N}) + \partial_{\theta}^2\mathcal{L}^{i,j,k,N}({\tilde{\theta}}_t^{i,j,k,N})(\theta_t^{i,j,k,N}- \theta_{0}^{i,j,k,N}) \\
&=  \partial_{\theta}^2\mathcal{L}^{i,j,k,N}(\tilde{\theta}_t^{i,j,k,N})(\theta_t^{i,j,k,N}- \theta_{0}^{i,j,k,N}) \label{eq_4_107-finite-n} 
\end{align}
where $\partial_{\theta}^2{\mathcal{L}}^{i,j,k,N}(\cdot)$ denotes the Hessian, and ${\tilde{\theta}}_t^{i,j,k,N}$ is a point in the segment connecting $\theta_t^{i,j,k,N}$ and $\theta_0^{i,j,k,N}$. Substituting \eqref{eq_4_107-finite-n} into \eqref{theta_ideal_2-a-i-ii-finite-n}, we obtain the following equations for $z_t^{i,j,k,N} = \theta_t^{i,j,k,N} - \theta_{0}^{i,j,k,N}$
\begin{align}
\mathrm{d}z_t^{i,j,k,N}&= -\gamma_t\partial^2_{\theta}\mathcal{L}^{i,j,k,N}(\tilde{\theta}_t^{i,j,k,N})z_t^{i,j,k,N}\mathrm{d}t - \gamma_t\big(h^{i,j,k,N}(\theta_t^{i,j,k,N},\boldsymbol{x}_t^{N})-\partial_{\theta}\mathcal{L}^{i,j,k,N}(\theta_t^{i,j,k,N})\big)\mathrm{d}t  \\
&~~~~+ \gamma_tg^{i,j,N}(\theta_t^{i,j,k,N},\boldsymbol{x}_t^N)\sigma^{-\top}\mathrm{d}w_t^{i,N}.
\end{align}
Applying It\^o's formula to the function $\|\cdot\|^2$,
and using the strong convexity of $\mathcal{L}^{i,j,k,N}$ (Assumption \ref{assumption:convexity-finite-n}), it follows that
\begin{align}
\mathrm{d}\|z_t^{i,j,k,N}\|^2 + 2\eta^{i,j,k,N}\gamma_t\|z_t^{i,j,k,N}\|^2\mathrm{d}t  &\leq -2\gamma_t\big\langle z_t^{i,j,k,N}, h^{i,j,k,N}(\theta_t^{i,j,k,N},\boldsymbol{x}_t^N)-\partial_{\theta}\mathcal{L}^{i,j,k,N}(\theta_t^{i,j,k,N})\big\rangle\mathrm{d}t \label{eq_4_111-finite-n} \\
&\hspace{-27.5mm}+ 2\gamma_t\big\langle z_t^{i,j,k,N}, g^{i,j,N}(\theta_t^{i,j,k,N},\boldsymbol{x}_t^N)\sigma^{-\top}\mathrm{d}w_t^{i,N}\big\rangle + \gamma_t^2 \big\|g^{i,j,N}(\theta_t^{i,j,k,N},\boldsymbol{x}_t^N)\sigma^{-\top}\big\|_{F}^2\mathrm{d}t 
\end{align}
where $\|\cdot\|_{F}$ denotes the Frobenius norm. Let us define the function $\Phi_{s,t}= \exp[-2\eta^{i,j,k,N}\int_{s}^{t}\gamma_u\mathrm{d}u]$, with $\partial_{s}\Phi_{s,t} = 2\eta^{i,j,k,N}\gamma_s \Phi_{s,t}$. Using the product rule, and the previous display, we obtain
\begin{align}
&\mathrm{d}\big[\Phi_{s,t}\|z_s^{i,j,k,N}\|^2\big] = \Phi_{s,t}\big[\mathrm{d}\|z_s^{i,j,k,N}\|^2 + 2\eta^{i,j,k,N}\gamma_s\|z_s^{i,j,k,N}\|^2\mathrm{d}s\big] \\
&\leq -2\gamma_s\Phi_{s,t}\langle z_s^{i,j,k,N}, h^{i,j,k,N}(\theta_s^{i,j,k,N},\boldsymbol{x}_s^N)-\partial_{\theta}\mathcal{L}^{i,j,k,N}(\theta_s^{i,j,k,N})\rangle\mathrm{d}s  \\
&~~~+ 2\gamma_s\Phi_{s,t}\langle z_s^{i,j,k,N}, g^{i,j,N}(\theta_s^{i,j,k,N},\boldsymbol{x}_s^{N})\sigma^{-\top}\mathrm{d}w_s^{i,N}\rangle + \gamma_s^2\Phi_{s,t}\|g^{i,j,N}(\theta_s^{i,j,k,N},\boldsymbol{x}_s^{N})\sigma^{-\top}\|_{F}^2\mathrm{d}s 
\label{eq:l2-convergence-proof-1-end}
\end{align} 
Rewriting this inequality in integral form, and taking expectations, we arrive at
\begin{align}
\label{eq_defs-finite-n}
\mathbb{E}\big[\|z_t^{i,j,k,N}\|^2\big] &\leq  \mathbb{E}\big[\Phi_{1,t}\|z_1^{i,j,k,N}\|^2\big] + \mathbb{E}\big[\int_1^t \gamma_s^2\Phi_{s,t} \big\|g^{i,j,N}(\theta_s^{i,j,k,N},\boldsymbol{x}_s^N)\sigma^{-\top}\big\|_{F}^2\mathrm{d}s\big] \\[-2mm]
&\hspace{1mm}+ \mathbb{E}\big[\int_1^t 2\gamma_s\Phi_{s,t}\big\langle z_s^{i,j,k,N}, \partial_{\theta}\mathcal{L}^{i,j,k,N}(\theta_s^{i,j,k,N}) - h^{i,j,k,N}(\theta_s^{i,j,k,N},\boldsymbol{x}_s^N)\big\rangle\mathrm{d}s\big]  \\
&:=\mathbb{E}\big[\Omega_{t,i,j,k,N}^{(1)}\big] + \mathbb{E}\big[\Omega_{t,i,j,k,N}^{(2)}\big] + \mathbb{E}\big[\Omega_{t,i,j,k,N}^{(3)}\big]  \label{eq_defs_2-finite-n}
\end{align}
We will deal with each of these terms separately, beginning with $\smash{\Omega_{t,i,j,k,N}^{(1)}}$. For this term, we have that, for sufficiently large $t\geq0$, 
\begin{equation}
\mathbb{E}\left[\Omega_{t,i,j,k,N}^{(1)}\right] 
= \Phi_{1,t}\mathbb{E}\left[\|z_1^{i,j,k,N}\|^2\right] \leq K^{(1)} \gamma_t \label{eq3155-finite-n}
\end{equation}
where the inequality follows from the uniform-in-time moment bounds for the online parameter estimate, and Assumption~\ref{assumption:learning-rate-v2} (i.e., the conditions on the learning rate). We next consider $\smash{\Omega_{t,i,j,k,N}^{(2)}}$. By Corollary \ref{cor:empirical-drift-grad-lip} (i.e., the polynomial growth of $g^{i,j,N}$), Theorem~\ref{thm:moment-bounds} (i.e., the moment bounds for the IPS), the moment bounds for the online parameter estimate, and Assumption~\ref{assumption:learning-rate-v2} (i.e., the conditions on the learning rate), we have
\begin{equation}
\mathbb{E}\left[\Omega_{t,i,j,k,N}^{(2)}\right] = \mathbb{E}\left[\int_1^t\gamma_s^2\Phi_{s,t} \big\|g^{i,j,N}(\theta_s^{i,j,k,N},\boldsymbol{x}_s^{N})\big\|_{F}^2\mathrm{d}s\right]\leq K\int_1^t\gamma_s^2\Phi_{s,t}\mathrm{d}s\leq K^{(2)}\gamma_t. \label{eq3169-finite-n}
\end{equation}
Finally, we consider $\smash{\Omega_{t,i,j,k,N}^{(3)}}$. We will analyse this term by constructing an appropriate Poisson equation. Let us define 
\begin{equation}
    R^{i,j,k,N}(\theta,{\boldsymbol{x}}^N) = \langle \theta-\theta_{0}^{i,j,k,N},\partial_{\theta}\mathcal{L}^{i,j,k,N}(\theta) - h^{i,j,k,N}(\theta,{\boldsymbol{x}}^N)\rangle,
\end{equation} 
By Corollary \ref{cor:empirical-H-lip} (i.e., $\boldsymbol{x}^N\mapsto h^{i,j,k,N}(\theta,{\boldsymbol{x}}^N)$ and its derivatives are locally Lipschitz with polynomial growth), and Lemma~\ref{lemma:main-theorem-lemma-2-a} (i.e., the boundedness of the asymptotic log-likelihood and its derivatives), for $l=0,1,2$, $\smash{|\partial_{\theta}^l  R^{i,j,k,N}(\theta,\boldsymbol{x}^{N})  - \partial_{\theta}^l  R^{i,j,k,N}(\theta,\boldsymbol{y}^{N})|}$ satisfies a bound of the type given in Corollary~\ref{cor:empirical-H-lip}, with an additional multiplicative factor of $[1+\|\theta\|]$. In addition, by definition, this function is centered with respect to $\pi_{\theta_0}^N$. Thus, using (a minor variation of) Lemma 17 in \cite{sharrock2023online} (with $r=1$), the Poisson equation
\begin{equation}
\mathcal{A}_{\boldsymbol{x}^N} v^{i,j,k,N}(\theta,{\boldsymbol{x}}^N) = R^{i,j,k,N}(\theta,{\boldsymbol{x}}^N)~~~,~~~\int_{(\mathbb{R}^{d})^N} v^{i,j,k,N}(\theta,{\boldsymbol{x}}^N)\pi_{\theta_0}^{N}(\mathrm{d}{\boldsymbol{x}}^N)=0
\end{equation}
has a unique twice differentiable solution which satisfies $\sum_{\ell=0}^{2} \|\frac{\partial^\ell }{\partial \theta^{\ell}}v^{i,j,k,N}(\theta,{\boldsymbol{x}}^N)\| + \|\frac{\partial^2 }{\partial \theta\partial {\boldsymbol{x}}^N}v^{i,j,k,N}(\theta,{\boldsymbol{x}}^N)\| \leq K\big[1+\|\theta\|\big]  \big[1+\sum_{a\in\{i,j,k\}}\|x^{a,N}\|^{q} + \frac{1}{N}\sum_{a=1}^N \|x^{a,N}\|^{q}\big]$. Using It\^o's formula, we have that
\begin{align}
&v^{i,j,k,N}(\theta_t^{i,j,k,N},{\boldsymbol{x}}_t^N) - v^{i,j,k,N}(\theta_s^{i,j,k,N},{\boldsymbol{x}}_s^N) \\
&~~~~~~~= \int_s^t \mathcal{A}_{\theta} v^{i,j,k,N}(\theta_u^{i,j,k,N},{\boldsymbol{x}}^N_u)\mathrm{d}u + \int_s^{t} \mathcal{A}_{{\boldsymbol{x}}^N} v^{i,j,k,N}(\theta_u^{i,j,k,N},{\boldsymbol{x}}^N_u)\mathrm{d}u \hspace{-5mm} \label{eq_4147-finite-n} \\
&~~~~~~~+\int_s^t \gamma_u\partial_{\theta}v^{i,j,k,N}(\theta_u^{i,j,k,N},{\boldsymbol{x}}^N_u)g^{i,j,N}(\theta_u^{i,j,k,N},\boldsymbol{x}_u^N)\sigma^{-\top}\mathrm{d}w_u^{i,N}  \\
&~~~~~~~+\int_s^t \langle \partial_{\boldsymbol{x}^N}v^{i,j,k,N}(\theta_u^{i,j,k,N},{\boldsymbol{x}}_u^N),(I_N\otimes\sigma)\mathrm{d}{w}_u^N\rangle \\
&~~~~~~~+\int_s^{t}\gamma_u\partial_{\theta}\partial_{x^{i,N}}v^{i,j,k,N}(\theta_u^{i,j,k,N},{\boldsymbol{x}}_u^N)g^{i,j,N}(\theta_u^{i,j,k,N},{\boldsymbol{x}}_u^N)\mathrm{d}u
\end{align}
where ${w}_u^N = (w_u^{1,N},\dots,w_u^{N,N})^{\top}$ is the vector-valued Brownian motion defined in \eqref{eq:vector-sde}. It follows, writing $v_t^{i,j,k,N}:= v^{i,j,k,N}(\theta_t^{i,j,k,N},{\boldsymbol{x}}_t^N)$, that
\begin{align}
&R^{i,j,k,N}(\theta_t^{i,j,k,N},{\boldsymbol{x}}_t^N)\mathrm{d}t = \mathcal{A}_{{\boldsymbol{x}}^N} v^{i,j,k,N}(\theta_t^{i,j,k,N},{\boldsymbol{x}}_t^N)\mathrm{d}t \\
&=\mathrm{d}v_t^{i,j,k,N} - \mathcal{A}_{\theta}v^{i,j,k,N}(\theta_t^{i,j,k,N},{\boldsymbol{x}}_t^N)\mathrm{d}t - \gamma_t\partial_{\theta}v^{i,j,k,N}(\theta_t^{i,j,k,N},{\boldsymbol{x}}_t^N)g^{i,j,N}(\theta_t^{i,j,k,N},{\boldsymbol{x}}_t^N)\sigma^{-\top}\mathrm{d}w_t^{i,N} \\
&- \partial_{\boldsymbol{x}^N}v^{i,j,k,N}(\theta_t^{i,j,k,N},{\boldsymbol{x}}_t^N)\sigma\mathrm{d}{w}_t^N 
- \gamma_t\partial_{\theta}\partial_{{x}^{i,N}}v^{i,j,k,N}(\theta_t^{i,j,k,N},{\boldsymbol{x}}_t^N)g^{i,j,N}(\theta_t^{i,j,k,N},{\boldsymbol{x}}_t^N)\mathrm{d}t. 
\end{align}
Using this identity, we can rewrite $\Omega_{t,i,j,k,N}^{(3)}$ as
\begin{align}
\Omega_{t,i,j,k,N}^{(3)} &= \int_1^t 2\gamma_s\Phi_{s,t}\underbrace{\langle \theta_s^{i,j,k,N}-\theta_{0}^{i,j,k,N}, \partial_{\theta}\mathcal{L}^{i,j,k,N}(\theta_s^{i,j,k,N}) - h^{i,j,k,N}(\theta_s^{i,j,k,N},{\boldsymbol{x}}_s^N)\rangle\mathrm{d}s}_{R^{i,j,k,N}(\theta_s,{\boldsymbol{x}}_s^N)\mathrm{d}s}  \\[-.5mm]
&=\int_1^t 2\gamma_s \Phi_{s,t}\mathrm{d}v_s^{i,j,k,N}- \int_{1}^t2\gamma_s\Phi_{s,t}\mathcal{A}_{\theta}v^{i,j,k,N}(\theta_s^{i,j,k,N},{\boldsymbol{x}}_s^N)\mathrm{d}s \label{eq3163-finite-n} \\
&-  \int_{1}^t2\gamma_s^2\Phi_{s,t}\partial_{\theta}v^{i,j,k,N}(\theta_s^{i,j,k,N},{\boldsymbol{x}}_s^N)g^{i,j,N}(\theta_s^{i,j,k,N},{\boldsymbol{x}}_s^N)\sigma^{-\top}\mathrm{d}w_s^{i,N}  \\
&-  \int_{1}^t2\gamma_s\Phi_{s,t}\partial_{\boldsymbol{x}^N}v^{i,j,k,N}(\theta_s^{i,j,k,N},{\boldsymbol{x}}_s^N)\sigma \mathrm{d}{w}^N_s  \\
 &-  \int_{1}^t2\gamma^2_s\Phi_{s,t}\partial_{\theta}\partial_{x^{i,N}}v^{i,j,k,N}(\theta_s^{i,j,k,N},{\boldsymbol{x}}_s^N)g^{i,j,N}(\theta_s^{i,j,k,N},{\boldsymbol{x}}_s^N)\mathrm{d}s.
\end{align}
We can further rewrite the first term in this expression by applying It\^o's formula to $f(s,v_s^{i,j,k,N}) = 2\gamma_s\Phi_{s,t}v_s^{i,j,k,N}$. In particular, this yields 
\begin{equation}
2\gamma_t\Phi_{t,t}v_t^{i,j,k,N} - 2\gamma_1\Phi_{1,t}v_1^{i,j,k,N} = \int_1^t 2\gamma_s\Phi_{s,t}\mathrm{d}v_s^{i,j,k,N} + \int_{1}^t2\dot{\gamma}_s\Phi_{s,t}v_s^{i,j,k,N}\mathrm{d}s + \int_{1}^t4\eta^{i,j,k,N}\gamma^2_s\Phi_{s,t}v_s^{i,j,k,N}\mathrm{d}s. 
\end{equation}
Rearranging, substituting into \eqref{eq3163-finite-n}, and then taking expectations (upon which the stochastic integrals vanish), we have that
\begin{align}
\mathbb{E}\big[\Omega_{t,i,j,k,N}^{(3)}\big] 
&= 2\gamma_t\mathbb{E}\big[v^{i,j,k,N}(\theta_t^{i,j,k,N},{\boldsymbol{x}}_t^N)\big]- 2\gamma_1\Phi_{1,t} \mathbb{E}\big[v^{i,j,k,N}(\theta_1^{i,j,k,N},\boldsymbol{x}_1^N)\big] \\
&- 2 \int_{1}^t\dot{\gamma}_s\Phi_{s,t}\mathbb{E} \big[v^{i,j,k,N}(\theta_s^{i,j,k,N},{\boldsymbol{x}}_s^N)\big]\mathrm{d}s - 4\eta^{i,j,k,N} \int_{1}^t\gamma^2_s\Phi_{s,t}\mathbb{E}\big[v^{i,j,k,N}(\theta_s^{i,j,k,N},{\boldsymbol{x}}_s^N)\big]\mathrm{d}s  \\
&- 2 \int_{1}^t\gamma_s\Phi_{s,t}\mathbb{E}\big[\mathcal{A}_{\theta}v^{i,j,k,N}(\theta_s^{i,j,k,N},{\boldsymbol{x}}_s^N)\big]\mathrm{d}s \\
&- 2  \int_{1}^t\gamma^2_s\Phi_{s,t}\mathbb{E}\big[\partial_{\theta}\partial_{{x}^{i,N}}v^{i,j,k,N}(\theta_s^{i,j,k,N},{\boldsymbol{x}}_s^N) g^{i,j,N}(\theta_s^{i,j,k,N},{\boldsymbol{x}}_s^N)\big]\mathrm{d}s  \\
&\leq K\big[\gamma_t + \int_1^t \big( |\dot{\gamma}_s|+\gamma_s^2\big)\Phi_{s,t}\mathrm{d}s\big] \leq K^{(3)}\gamma_t, \label{eq3168-finite-n}
\end{align}
where in the penultimate inequality we have used the polynomial growth of $\boldsymbol{x}^N\mapsto v^{i,j,k,N}(\theta,{\boldsymbol{x}}^N)$ and $\boldsymbol{x}^N\mapsto\partial_{\theta}\partial_{x^{i,N}}v^{i,j,k,N}(\theta,{\boldsymbol{x}}^N)$,  Corollary \ref{cor:empirical-drift-grad-lip} (i.e., the polynomial growth of $g^{i,j,N}$), Theorem~\ref{thm:moment-bounds} (i.e., the moment bounds for the IPS), the moment bounds for the parameter estimator; and in the final inequality we have used Assumption~\ref{assumption:learning-rate-v2} (i.e., the conditions on the learning rate).

Combining \eqref{eq3155-finite-n}, \eqref{eq3169-finite-n}, and \eqref{eq3168-finite-n}, and setting $\smash{K_{1}^{\dagger}= 2\max\{K^{(1)},K^{(3)}\}}$ and $\smash{K_{2}^{\dagger} = K^{(2)}}$, we obtain the bound in \eqref{eq:non-average-l2-rate-1}.
The proof of \eqref{eq:average-l2-rate-1} is essentially identical, replacing $\smash{\theta_t^{i,j,k,N} \mapsto \bar\theta_t^{i,N}}$, $\smash{g^{i,j,N}\mapsto G^{i,N}}$ and $\smash{h^{i,j,k,N}\mapsto H^{i,N}}$, $\smash{\mathcal{L}^{i,j,k,N}\mapsto \mathcal{L}^{i,N}}$ and noting that we can apply the same arguments since $G^{i,N}$ and $H^{i,N}$ satisfy appropriate polynomial growth conditions (see Corollary~\ref{cor:empirical-drift-grad-lip}, Corollary~\ref{cor:empirical-H-lip}), and $\mathcal{L}^{i,N}$ is strongly convex (by assumption). 

It remains to establish \eqref{eq:average-l2-rate-2} and \eqref{eq:non-average-l2-rate-2}, i.e., the convergence rates w.r.t. the true parameter. We begin with \eqref{eq:average-l2-rate-2}. First note that $\mathcal{L}^{i,N}(\theta)\geq 0$ for all $\theta\in\Theta$, with equality iff $\theta=\theta_0$. Thus, $\theta_0$ is a global minimiser of $\mathcal{L}^{i,N}$. In addition, since $\mathcal{L}^{i,N}$ is $\eta$-strongly convex on $\Theta$, it has at most one minimiser. Since $\theta_0$ is \emph{a} minimiser, it must be \emph{the} minimiser. That is, $\theta_0^{i,N}=\theta_0$. The bound in \eqref{eq:average-l2-rate-2} now follows immediately from \eqref{eq:average-l2-rate-1}, i.e., the $\mathrm{L}^2$ convergence rate just established for the averaged estimator. 

We now turn our attention to \eqref{eq:non-average-l2-rate-2}. In this case, using the inequality $(a+b)^2 \leq 2(a^2+b^2)$ and \eqref{eq:non-average-l2-rate-1}, i.e., the $\mathrm{L}^2$ convergence rate just proven for the non-averaged estimator, we have 
    \begin{align}
    \mathbb{E}\left[\|\theta_t^{i,j,k,N}-\theta_0\|^2\right] &\leq 2\mathbb{E}\left[\|\theta_t^{i,j,k,N}-\theta_{0}^{i,j,k,N}\|^2\right] + 2\mathbb{E}\left[\|\theta_0^{i,j,k,N}-\theta_0\|^2\right]  \\
    &\leq2(K_{1}^{\dagger}+K_{2}^{\dagger})\gamma_t + 2\|\theta_0^{i,j,k,N}-\theta_0\|^2, \label{eq:non-average-l2-rate-2-decomp}
    \end{align}
    It remains to bound the Euclidean distance between the minimiser $\theta_0^{i,j,k,N}$ and the true parameter $\theta_0$. First note that, due to strong convexity, $\smash{\langle \partial_{\theta}\mathcal{L}^{i,j,k,N}(\theta_0) - \partial_{\theta}\mathcal{L}^{i,j,k,N}(\theta_0^{i,j,k,N}), \theta_0 - \theta_0^{i,j,k,N}\rangle \geq \eta^{i,j,k,N}\| \theta_0 - \theta_0^{i,j,k,N} \|^2}$. Using in addition the fact that $\partial_{\theta}\mathcal{L}^{i,j,k,N}(\theta_0^{i,j,k,N})=0$, it follows that $\eta^{i,j,k,N}\| \theta_0 - \theta_0^{i,j,k,N} \|^2 \leq \langle \partial_{\theta}\mathcal{L}^{i,j,k,N}(\theta_0) , \theta_0 - \theta_0^{i,j,k,N}\rangle \leq \|\partial_{\theta}\mathcal{L}^{i,j,k,N}(\theta_0)\|\,\|\theta_0 - \theta_0^{i,j,k,N}\|$. Dividing both sides by $\smash{\eta^{i,j,k,N}}$, adding and subtracting $\partial_{\theta}\mathcal{L}(\theta_0)$, again using the inequality $(a+b)^2\leq 2(a^2+b^2)$, and finally the fact that $\partial_{\theta}\mathcal{L}(\theta_0)=0$, it follows that 
     \begin{align}
        \| \theta_0 - \theta_0^{i,j,k,N} \|^2 &\leq \frac{1}{(\eta^{i,j,k,N})^2}  \|\partial_{\theta}\mathcal{L}^{i,j,k,N}(\theta_0)\|^2 \\
        &\leq \frac{2}{(\eta^{i,j,k,N})^2} \left[\|\partial_{\theta}\mathcal{L}^{i,j,k,N}(\theta_0) - \partial_{\theta}\mathcal{L}(\theta_0)\|^2 + \|\partial_{\theta}\mathcal{L}(\theta_0)\|^2\right ] \\
        &= \frac{2}{(\eta^{i,j,k,N})^2} \|\partial_{\theta}\mathcal{L}^{i,j,k,N}(\theta_0) - \partial_{\theta}\mathcal{L}(\theta_0)\|^2. \label{eq:theta-l-bound}
    \end{align}
    By Proposition \ref{prop:inf-n-convergence-1}, and one final use of the inequality $(a+b)^2\leq 2a^2 + 2b^2$, 
    there exist constants $K_3^{\dagger},K_4^{\dagger}<\infty$
    such that $\smash{\| \partial_{\theta}\mathcal{L}^{i,j,k,N}(\theta) - \partial_{\theta} \mathcal{L}(\theta)\|^2 \leq K_3^{\dagger}\rho^2(N) + K_4^{\dagger}N^{-\frac{1}{1+\alpha}}}$. Substituting this into \eqref{eq:theta-l-bound}, and allowing the constants $K_3^{\dagger},K_4^{\dagger}$ to absorb the factor $2$, we then have
    \begin{equation}
        \| \theta_0 - \theta_0^{i,j,k,N} \|^2 \leq \frac{1}{(\eta^{i,j,k,N})^2}\Big[K_3^{\dagger}\rho^2(N) + \frac{K_4^{\dagger}}{N^{\frac{1}{1+\alpha}}}\Big]. \label{eq:l2-param-bound}
    \end{equation}
    Finally, substituting \eqref{eq:l2-param-bound} into \eqref{eq:non-average-l2-rate-2-decomp}, we have the required result.
\end{proof}

\begin{proof}[Proof of Corollary~\ref{corollary:main-theorem-1-2-finite-n-with-m}]
The proof is a direct modification of the proof of Theorem~\ref{theorem:main-theorem-1-2-finite-n}. We will thus highlight only the relevant differences. Once again, we focus on the non-averaged estimator; the averaged case is analogous. 

We begin, similar to before, by writing the update equation in the form
\begin{align}
\mathrm{d}\theta_t^{N,M}
&= -\underbrace{\gamma_t\,\partial_{\theta}\mathcal{L}^{i,j,k,N}(\theta_t^{N,M})\,\mathrm{d}t}_{\text{true descent term}}
-\underbrace{\gamma_t\Big(\tfrac{1}{M}\textstyle\sum_{(i,j,k)\in\mathcal C(\Pi)}
\big[h^{i,j,k,N}(\theta_t^{N,M},\boldsymbol{x}_t^N)-\partial_{\theta}\mathcal{L}^{i,j,k,N}(\theta_t^{N,M})\big]\Big)\mathrm{d}t}_{\text{fluctuations term}}
\\*
&\qquad
+\underbrace{\gamma_t\Big(\tfrac{1}{M}\textstyle\sum_{(i,j,k)\in\mathcal C(\Pi)}
g^{i,j,N}(\theta_t^{N,M},\boldsymbol{x}_t^N)\Big)\sigma^{-\top}\mathrm{d}w_t^{i,N}}_{\text{noise term}} .
\label{theta_ideal_2-a-i-ii-finite-n-with-M}
\end{align}
Let $z_t^{N,M}:=\theta_t^{N,M}-\theta_0^{i,j,k,N}$. Then, repeating the steps in \eqref{eq:l2-convergence-proof-1-start} - \eqref{eq:l2-convergence-proof-1-end} (i.e., considering Taylor expansion around the minimiser, applying It\^o's formula to $\|\cdot\|^2$, using strong convexity of the asymptotic log-likelihood $\mathcal{L}^{i,j,k,N}$), we arrive at
\begin{align}
\mathbb E\big[\|z_t^{N,M}\|^2\big]
&\le \mathbb{E}\big[\Phi_{1,t}\|z_1^{N,M}\|^2\big] +\mathbb{E}\big[\int_1^t \gamma_s^2\,\Phi_{s,t}\,
\big\|
\frac1M\sum_{(i,j,k)\in\mathcal C(\Pi)}
g^{i,j,N}(\theta_s^{N,M},\boldsymbol x_s^N)\sigma^{-\top}
\big\|_F^2\,\mathrm ds\big] \\
&+\mathbb{E}\big[\int_1^t 2\gamma_s\,\Phi_{s,t}\,
\big\langle
z_s^{N,M},\,
\partial_\theta\mathcal L^{i,j,k,N}(\theta_s^{N,M})
-\frac1M\sum_{(i,j,k)\in\mathcal C(\Pi)}
h^{i,j,k,N}(\theta_s^{N,M},\boldsymbol x_s^N)
\big\rangle\,\mathrm ds\big]
\\
&:=\mathbb E[\Omega^{(1)}_{t,N,M}] + \mathbb E[\Omega^{(2)}_{t,N,M}] + \mathbb E[\Omega^{(3)}_{t,N,M}], \label{eq:new-z-t-m}
\end{align}
where, as in the previous proof, we have defined $\smash{\Phi_{s,t}= \exp[-2\eta^{i,j,k,N}\int_{s}^{t}\gamma_u\mathrm{d}u]}$. It remains to bound the three terms on the RHS. 

The bound for $\Omega_{t,N,M}^{(1)}$ follows identically to the bound for $\Omega_{t,i,j,k,N}^{(1)}$ in the proof of Theorem~\ref{theorem:main-theorem-1-2-finite-n}. In particular, we have
\begin{equation}
\label{eq:omega-1-M}
\mathbb E[\Omega_{t,N,M}^{(1)}]
=\mathbb E\!\left[\Phi_{1,t}\|z_1^{N,M}\|^2\right]
=\Phi_{1,t}\,\mathbb E\!\left[\|z_1^{N,M}\|^2\right]
\le K^{(1)}\gamma_t
\end{equation}
where the final inequality uses the assumed uniform moment bounds on $(\theta_t^{N,M})_{t\ge0}$, and Assumption~\ref{assumption:learning-rate-v2} (i.e., the conditions on the learning rate). 

The term $\smash{\Omega^{(2)}_{t,N,M}}$ now contains an average over noise terms. It follows, using the elementary inequality $\smash{
\|\frac1M\sum_{r=1}^M A_r\|_F^2 \le \frac1M\sum_{r=1}^M \|A_r\|_F^2}$, and arguing as in \eqref{eq3169-finite-n}, that
\begin{align}
\label{eq3169-finite-n-with-M}
\mathbb{E}\!\left[\Omega_{t,N,M}^{(2)}\right]
&\le \frac{1}{M}\,
\mathbb{E}\!\left[\int_{1}^{t}\gamma_s^{2}\Phi_{s,t}
\sum_{(i,j,k)\in\mathcal{C}(\Pi)}
\big\|g^{i,j,N}(\theta_s^{N,M},\boldsymbol{x}_s^{N})\sigma^{-\top}\big\|_{F}^{2}\,\mathrm{d}s\right] \\
&\leq \frac{K}{M}\int_1^t\gamma_s^2\Phi_{s,t}\mathrm{d}s\leq \frac{K^{(2)}}{M}\gamma_t. 
\end{align}

The argument for $\Omega_{t,N,M}^{(3)}$ is exactly the same as for $\Omega_{t,i,j,k,N}^{(3)}$ in the proof of Theorem~\ref{theorem:main-theorem-1-2-finite-n}, with the only change being that $h^{i,j,k,N}$ is replaced by its average over $(i,j,k)\in\mathcal C(\Pi)$. In particular, defining $R^{N,M}(\theta,\boldsymbol x^N)
:=\langle \theta-\theta_0^{i,j,k,N},\,
\partial_\theta\mathcal L^{i,j,k,N}(\theta)
-\frac1M\sum_{(i,j,k)\in\mathcal C(\Pi)} h^{i,j,k,N}(\theta,\boldsymbol x^N)
\rangle$, 
we have $\int R^{N,M}(\theta,\boldsymbol x^N)\,\pi_{\theta_0}^N(\mathrm d\boldsymbol x^N)=0$ for each $\theta$, and $R^{N,M}$ satisfies the same polynomial-growth bounds as before (since it is an average of $M$ terms with the same bounds). Thus, the same Poisson equation construction applies, and the resulting algebraic manipulation yields
\begin{equation}
\mathbb E[\Omega_{t,N,M}^{(3)}]
\le K\Big[\gamma_t+\int_1^t\big(|\dot\gamma_s|+\gamma_s^2\big)\Phi_{s,t}\,\mathrm ds\Big]
\le K^{(3)}\gamma_t,
\label{eq:original-bounds}
\end{equation}
where the final inequality uses Assumption~\ref{assumption:learning-rate-v2}. Finally, substituting the bounds in \eqref{eq:omega-1-M}, \eqref{eq3169-finite-n-with-M}, and \eqref{eq:original-bounds} into \eqref{eq:new-z-t-m}, and once more setting $\smash{K_{1}^{\dagger}= 2\max\{K^{(1)},K^{(3)}\}}$ and $\smash{K_{2}^{\dagger} = K^{(2)}}$, we obtain the bound in \eqref{eq:non-average-l2-rate-1-with-m}. 

The proof of the second half of the theorem, i.e., the bounds in \eqref{eq:average-l2-rate-2-with-m} and \eqref{eq:non-average-l2-rate-2-with-m}, follows verbatim from the final part of the proof of Theorem~\ref{theorem:main-theorem-1-2-finite-n}. 
\end{proof}

\begin{proof}[Proof of Theorem \ref{theorem:main-theorem-1-2}]
The proof of this result follows closely the proof of the previous theorem. In this case, however, since we assume convexity of the mean-field negative log-likelihood $\mathcal{L}$, rather than finite-particle pseudo negative log-likelihood $\mathcal{L}^{i,N}$ or $\mathcal{L}^{i,j,k,N}$, we will need to obtain bounds for some additional terms. Once again, we will focus on proving the case for the non-averaged estimator, later detailing how to adapt our proof for the averaged estimator. We begin by recalling the update equation for this estimator, now in the following form:
\begin{align}
\mathrm{d}\theta_t^{i,j,k,N} 
&= -\underbrace{\gamma_t\partial_{\theta}\mathcal{L}(\theta_t^{i,j,k,N})\mathrm{d}t}_{\text{true descent term}} -\underbrace{\gamma_t(\partial_{\theta}\mathcal{L}^{i,j,k,N}(\theta_t^{i,j,k,N}) - \partial_{\theta}\mathcal{L}(\theta_t^{i,j,k,N}))\mathrm{d}t}_{\text{finite particle fluctuation term}} \\
&~~~~~-\underbrace{\gamma_t (h^{i,j,k,N}(\theta_t^{i,j,k,N},\boldsymbol{x}_t^N)-\partial_{\theta}\mathcal{L}^{i,j,k,N}(\theta_t^{i,j,k,N}))\mathrm{d}t}_{\text{finite time fluctuation term}} + \underbrace{\gamma_t g^{i,j,N}(\theta_t^{i,j,k,N},\boldsymbol{x}_t^N)\sigma^{-\top} \mathrm{d}w_t^{i,N}}_{\text{noise term}} \label{eq:parameter-update-full-expansion}
\end{align}
We proceed similarly to the proof of the previous theorem, but now using a first order Taylor expansion for $\mathcal{L}$ around the true parameter $\theta_0$. Arguing as before, cf. \eqref{eq:l2-convergence-proof-1-start} - \eqref{eq:l2-convergence-proof-1-end}, we can show that 
\begin{align}
\mathbb{E}\big[\|z_t^{i,j,k,N}\|^2\big] &\leq  \mathbb{E}\big[\Phi_{1,t}\|z_1^{i,j,k,N}\|^2\big]  + \mathbb{E}\big[\int_1^t \gamma_s^2\Phi_{s,t} \big\|g^{i,j,N}(\theta_s^{i,j,k,N},\boldsymbol{x}_s^{N})\sigma^{-\top}\big\|_{F}^2\mathrm{d}s\big] \label{eq_defs} \\[-.5mm]
&\hspace{2mm}+ \mathbb{E}\big[\int_1^t 2\gamma_s\Phi_{s,t}\big\langle z_s^{i,j,k,N}, \partial_{\theta}\mathcal{L}^{i,j,k,N}(\theta_s^{i,j,k,N}) - h^{i,j,k,N}(\theta_s^{i,j,k,N},\boldsymbol{x}_s^{N})\big\rangle\mathrm{d}s\big] \\[-.5mm]
&\hspace{2mm} + \mathbb{E}\big[\int_1^t 2\gamma_s\Phi_{s,t}\big\langle z_s^{i,j,k,N}, \partial_{\theta}\mathcal{L}(\theta_s^{i,j,k,N}) - \partial_{\theta}\mathcal{L}^{i,j,k,N}(\theta_s^{i,j,k,N})\big\rangle\mathrm{d}s\big] \\ 
&:=\mathbb{E}\big[\Omega_{t,i,j,k,N}^{(1)}\big] + \mathbb{E}\big[\Omega_{t,i,j,k,N}^{(2)}\big] + \mathbb{E}\big[\Omega_{t,i,j,k,N}^{(3)}\big] + \mathbb{E}\big[\Omega_{t,i,j,k,N}^{(4)}\big]. \label{eq_defs_2}
\end{align}
where now $\smash{z_t^{i,j,k,N}=\theta_t^{i,j,k,N} - \theta_0}$ and $\Phi_{s,t}= \exp[-2\eta\int_{s}^{t}\gamma_u\mathrm{d}u]$. This is essentially identical to the bound which appeared in the previous proof, cf. \eqref{eq_defs-finite-n} - \eqref{eq_defs_2-finite-n}, except for the additional final term. The bounds for $\smash{\Omega_{t,i,j,k,N}^{(1)}}$, $\smash{\Omega_{t,i,j,k,N}^{(2)}}$, and $\smash{\Omega_{t,i,j,k,N}^{(3)}}$ follow exactly as before, now with $\eta$ in place of $\eta^{i,j,k,N}$. In particular, we have that
\begin{equation}
    \mathbb{E}\big[\Omega_{t,i,j,k,N}^{(1)}\big] + \mathbb{E}\big[\Omega_{t,i,j,k,N}^{(2)}\big] + \mathbb{E}\big[\Omega_{t,i,j,k,N}^{(3)}\big] \leq (K_1^{\dagger}+K_2^{\dagger})\gamma_t. 
    \label{eq:omega-1-2-3}
\end{equation}
Thus, we just need to bound the additional final term. To do so, we begin by writing
\begin{align}
\mathbb{E}\big[\Omega_{t,i,j,k,N}^{(4)}\big] &\leq 2 \int_{1}^t \gamma_s\Phi_{s,t} \mathbb{E}\left[\|z_s^{i,j,k,N}\| \right] \, \|\partial_{\theta}\mathcal{L}^{i,j,k,N}(\theta_s^{i,j,k,N}) - \partial_{\theta}\mathcal{L}(\theta_s^{i,j,k,N})\| \mathrm{d}s. \label{eq:omega-4-decomp}
\end{align}
From Proposition~\ref{prop:inf-n-convergence-1}, there exists $K_3^{\dagger},K_4^{\dagger}<\infty$ such that $\smash{\|\partial_{\theta}\mathcal{L}(\theta) - \partial_{\theta}\mathcal{L}^{i,j,k,N}(\theta)\| \leq K_3^{\dagger}\rho(N) + {K_4^{\dagger}}{N^{-\frac{1}{2(1+\alpha)}}}}$ for all $\theta\in\Theta$. Since $\mathbb{P}(\theta_t\in\Theta ~\forall t\geq 0) = 1$ by assumption, it thus holds that
\begin{equation}
\|\partial_{\theta}\mathcal{L}(\theta_s^{i,j,k,N}) - \partial_{\theta}\mathcal{L}^{i,j,k,N}(\theta_s^{i,j,k,N})\| \leq K_3^{\dagger}\rho(N) + \frac{K_4^{\dagger}}{N^{\frac{1}{2(1+\alpha)}}}
\end{equation}
for almost all $s\geq 0$. Substituting this bound into \eqref{eq:omega-4-decomp}, and using also the assumption that the online parameter estimate has bounded moments, uniform-in-time, it follows that 
\begin{align}
\mathbb{E}\big[\Omega_{t,i,j,k,N}^{(4)}\big] &\leq K \Big[K_3^{\dagger}\rho(N) + \frac{K_4^{\dagger}}{N^{\frac{1}{2(1+\alpha)}}}\Big] \int_{1}^t \gamma_s\Phi_{s,t} \mathrm{d}s \leq K_3^{\dagger}\rho(N) + \frac{K_4^{\dagger}}{N^{\frac{1}{2(1+\alpha)}}}\label{eq:omega-4-decomp-final}
\end{align}
where in the final bound we have used Assumption~\ref{assumption:learning-rate-v2} (i.e., the conditions on the learning rate), and allowed the values of the constant $K_3^{\dagger}$ and $K_4^{\dagger}$ to increase from the previous display, absorbing all other constants. Substituting \eqref{eq:omega-1-2-3} and \eqref{eq:omega-4-decomp-final} into \eqref{eq_defs} - \eqref{eq_defs_2}, we have
\begin{align}
\mathbb{E}\left[\|\theta_t-\theta_{0}\|^2\right]
&\leq (K_{1}^{\dagger}+K_{2}^{\dagger})\gamma_t + K_3^{\dagger}\rho(N) + \frac{K_4^{\dagger}}{N^{\frac{1}{2(1+\alpha)}}}, \label{eq_final}
\end{align}
which completes the proof for the non-averaged estimator. Once again, the proof for the averaged estimator proceeds in essentially the same way, replacing $\theta_t^{i,j,k,N} \mapsto \bar{\theta}_t^{i,N}$, $g^{i,j,N}\mapsto G^{i,N}$ and $h^{i,j,k,N}\mapsto H^{i,N}$, $\mathcal{L}^{i,j,k,N}\mapsto \mathcal{L}^{i,N}$.

It remains to prove the second part of theorem, i.e., the convergence rates in \eqref{eq:average-l2-rate-2} - \eqref{eq:non-average-l2-rate-2}. We will show that, under our additional assumptions, $\mathcal{L}^{i,N}$ and $\mathcal{L}^{i,j,k,N}$ are themselves strongly convex, with constants $\eta - \delta_{i,N}$ and $\eta - \delta_{i,j,k,N}$, respectively. In this case, the desired rates follow as an immediate consequence of Theorem~\ref{theorem:main-theorem-1-2-finite-n}. We prove the result for $\mathcal{L}^{i,N}$, with the proof for $\mathcal{L}^{i,j,k,N}$ entirely analogous. Fix $\theta\in\Theta$ and let $u\in\mathbb{R}^p$ with $\|u\|=1$. Then
\begin{align}
u^\top \partial_{\theta}^2 \mathcal{L}^{i,N}(\theta) u
&= u^\top \partial_{\theta}^2 \mathcal{L}(\theta) u
  + u^\top\big(\partial_{\theta}^2 \mathcal{L}^{i,N}(\theta)-\partial_{\theta}^2\mathcal{L}(\theta)\big)u \\
&\ge \eta + u^\top\big(\partial_{\theta}^2 \mathcal{L}^{i,N}(\theta)-\partial_{\theta}^2\mathcal{L}(\theta)\big)u \ge \eta - \big\|\partial_{\theta}^2 \mathcal{L}^{i,N}(\theta)-\partial_{\theta}^2\mathcal{L}(\theta)\big\|_{\mathrm{op}} \ge \eta-\delta_{i,N},
\end{align}
where in the third line we have used the bound
$|u^\top A u|\le \|A\|_{\mathrm{op}}$ for symmetric $A$, and in the final line the assumption that $\|\partial_{\theta}^2\mathcal{L}^{i,N}(\theta) - \partial_{\theta}^2\mathcal{L}(\theta)\|_{\mathrm{op}}\leq \delta_{i,N}$. Since this holds for all unit vectors $u$, it follows that
$\partial_{\theta}^2 \mathcal{L}^{i,N}(\theta)\succeq (\eta-\delta_{i,N})I_p$ for all $\theta\in\Theta$. Thus, in particular, $\mathcal{L}^{i,N}$ is $(\eta-\delta_{i,N})$-strongly convex on $\Theta$.
\end{proof}

\subsubsection{Proofs for Section~\ref{sec:main-results-clt}}
\label{app:additional-proofs-main-results-clt}

\begin{proof}[Proof of Theorem~\ref{theorem:clt}]
Similar to elsewhere, we will prove the result for the non-averaged estimator, before detailing how to adapt the proof for the averaged estimator. Our proof is adapted from the proof of \cite[Theorem 2, Proposition 1]{sirignano2020stochastic}. Once again, we begin by recalling the update equation for this estimator in the following form:
\begin{align}
\mathrm{d}\theta_t^{i,j,k,N} &= -\gamma_t\partial_{\theta}\mathcal{L}^{i,j,k,N}(\theta_t^{i,j,k,N})\mathrm{d}t - \gamma_t (h^{i,j,k,N}(\theta_t^{i,j,k,N},\boldsymbol{x}_t^N)-\partial_{\theta}\mathcal{L}^{i,j,k,N}(\theta_t^{i,j,k,N}))\mathrm{d}t \\*
&~~~~~+ \gamma_t g^{i,j,N}(\theta_t^{i,j,k,N},\boldsymbol{x}_t^N)\sigma^{-\top} \mathrm{d}w_t^{i,N} \label{theta_ideal_2-a-i-ii-clt}
\end{align}
We will now use a second order Taylor expansion. In particular, using the fact that $\partial_{\theta}\mathcal{L}^{i,j,k,N}(\theta_0^{i,j,k,N}) =0$, we have that 
\begin{align}
\label{eq_4_107_clt}
\partial_{\theta}\mathcal{L}^{i,j,k,N}(\theta_t^{i,j,k,N}) 
&=  \partial_{\theta}^2\mathcal{L}^{i,j,k,N}(\theta_0^{i,j,k,N})(\theta_t^{i,j,k,N}- \theta_{0}^{i,j,k,N})   \\
&+ \tfrac{1}{2} \partial_{\theta}^3\mathcal{L}^{i,j,k,N}(\tilde{\theta}_t^{i,j,k,N})(\theta_t^{i,j,k,N} - \theta_0^{i,j,k,N})(\theta_t^{i,j,k,N} - \theta_0^{i,j,k,N})^{\top} 
\end{align}
where $\partial_{\theta}^2{\mathcal{L}}^{i,j,k,N}(\cdot)$ denotes the Hessian, the last term is a tensor-matrix product, and ${\tilde{\theta}}_t^{i,j,k,N}$ is a point in the segment connecting $\theta_t^{i,j,k,N}$ and $\theta_0^{i,j,k,N}$. Substituting \eqref{eq_4_107_clt} into \eqref{theta_ideal_2-a-i-ii-clt}, and rearranging, we obtain the following equations for $z_t^{i,j,k,N} = \theta_t^{i,j,k,N} - \theta_{0}^{i,j,k,N}$
\begin{align}
\mathrm{d}z_t^{i,j,k,N} +\gamma_t\partial^2_{\theta}\mathcal{L}^{i,j,k,N}(\theta_0^{i,j,k,N})z_t^{i,j,k,N}\mathrm{d}t &=- \tfrac{1}{2}\gamma_t \partial_{\theta}^3\mathcal{L}^{i,j,k,N}(\tilde{\theta}_t^{i,j,k,N})z_t^{i,j,k,N}z_t^{i,j,k,N,\top}\mathrm{d}t \\
&~~~~- \gamma_t\big(h^{i,j,k,N}(\theta_t^{i,j,k,N},\boldsymbol{x}_t^N)-\partial_{\theta}\mathcal{L}^{i,j,k,N}(\theta_t^{i,j,k,N})\big)\mathrm{d}t   \\
&~~~~+ \gamma_tg^{i,j,N}(\theta_t^{i,j,k,N},\boldsymbol{x}_t^N)\sigma^{-\top}\mathrm{d}w_t^{i,N}. 
\end{align}
Define $\smash{\Phi^{*,i,j,k,N}_{s,t}= \exp[-\partial_{\theta}^2\mathcal{L}^{i,j,k,N}(\theta_0^{i,j,k,N})\int_{s}^{t}\gamma_u\mathrm{d}u]}$, with $\smash{\partial_{s}\Phi^{*,i,j,k,N}_{s,t} =  \gamma_s \partial_{\theta}^2\mathcal{L}^{i,j,k,N}(\theta_0^{i,j,k,N})\Phi^{*,i,j,k,N}_{s,t}}$ and $\smash{\Phi_{t,t}^{*,i,j,k,N} = \mathbf{I}_p}$. Under our assumption of strong convexity, it then holds that \citep[e.g.,][]{sirignano2020stochastic}
\begin{align}
&\| \Phi^{*,i,j,k,N}_{s,t} \|^2 \leq K e^{-2\eta^{i,j,k,N}\int_{s}^{t} \gamma_u\mathrm{d}u} = K\Phi_{s,t} \label{eq:phi-star-bounds} \\
&\|\partial_{t}\Phi^{*,i,j,k,N}_{s,t}\|^2\leq K\gamma_{t}^2 e^{-2\eta^{i,j,k,N} \int_{s}^{t} \gamma_u\mathrm{d}u}=K\gamma_{t}^2 \Phi_{s,t} 
\end{align}
where, as defined previously, $\Phi_{s,t} = e^{-2\eta^{i,j,k,N}\int_{s}^{t}\gamma_u\mathrm{d}u}$ (see, e.g., the proof of Theorem~\ref{theorem:main-theorem-1-2-finite-n}). Returning to the previous display, we have that
\begin{align}
\label{eq:clt-proof-1-end}
\mathrm{d}\big[\Phi_{s,t}^{*,i,j,k,N} z_s^{i,j,k,N} \big] &= \Phi_{s,t}^{*,i,j,k,N}\big[\mathrm{d}z_s^{i,j,k,N} + \gamma_s \partial_{\theta}^2\mathcal{L}^{i,j,k,N}(\theta_0^{i,j,k,N}) z_s^{i,j,k,N}\mathrm{d}s \big]   \\
&=- \tfrac{1}{2}\gamma_s \Phi_{s,t}^{*,i,j,k,N}\partial_{\theta}^3\mathcal{L}^{i,j,k,N}(\tilde{\theta}_s^{i,j,k,N})z_s^{i,j,k,N}z_s^{i,j,k,N,\top}\mathrm{d}s \\
&~~~~- \gamma_s\Phi_{s,t}^{*,i,j,k,N}\big(h^{i,j,k,N}(\theta_s^{i,j,k,N},\boldsymbol{x}_s^N)-\partial_{\theta}\mathcal{L}^{i,j,k,N}(\theta_s^{i,j,k,N})\big)\mathrm{d}s  \nonumber \\
&~~~~+ \gamma_s \Phi_{s,t}^{*,i,j,k,N}g^{i,j,N}(\theta_s^{i,j,k,N},\boldsymbol{x}_s^N)\sigma^{-\top}\mathrm{d}w_s^{i,N}. \nonumber
\end{align}
Rewriting this in integral form, and rearranging, it follows that
\begin{align}
\label{main-bound-clt-start}
z_t^{i,j,k,N} &=  \Phi^{*,i,j,k,N}_{1,t} z_1^{i,j,k,N}    - \int_1^t \Phi^{*,i,j,k,N}_{s,t}  \tfrac{1}{2}\gamma_s \partial_{\theta}^3 \mathcal{L}^{i,j,k,N} (\tilde{\theta}^{i,j,k,N}_s) z_s^{i,j,k,N} z_s^{i,j,k,N,\top} \mathrm{d}s   \\[-1mm]
&-   \int_1^t \Phi^{*,i,j,k,N}_{s,t} \gamma_s \big(h^{i,j,k,N}(\theta_s^{i,j,k,N},\boldsymbol{x}_s^N)-\partial_{\theta}\mathcal{L}^{i,j,k,N}(\theta_s^{i,j,k,N})\big) \mathrm{d}s \\[-1mm]
&+ \int_1^t   \Phi^{*,i,j,k,N}_{s,t}   \gamma_s g^{i,j,N}(\theta_s^{i,j,k,N},\boldsymbol{x}_s^N) \sigma^{-\top} \mathrm{d} w_s^{i,N}   \\
&= \Omega_{t,i,j,k,N}^{(1)} + \Omega_{t,i,j,k,N}^{(2)} + \Omega_{t,i,j,k,N}^{(3)} + \Omega_{t,i,j,k,N}^{(4)} 
\label{main-bound-clt}
\end{align}
We will consider each of these terms in turn, pre-multiplied by a factor of $\gamma_t^{-\frac{1}{2}}$. For the first term, using our previous bounds in \eqref{eq:phi-star-bounds}, we have
\begin{align}
        \|\gamma_{t}^{-\frac{1}{2}}\,\Omega_{t,i,j,k,N}^{(1)}\| 
        \leq \gamma_{t}^{-\frac{1}{2}} \|\Phi_{1,t}^{*,i,j,k,N}\| \, \|z_1^{i,j,k,N}\| 
        &\leq K\gamma_t^{-\frac{1}{2}}\Phi_{1,t}^{\frac{1}{2}} \|z_1^{i,j,k,N}\|. 
\end{align}
By Assumption \ref{assumption:learning-rate-v2} (i.e., our additional conditions on the learning rate), we have that $\smash{\Phi_{1,t}^{\frac{1}{2}} = o(\gamma_t^{\frac{1}{2}})}$. It thus follows that 
\begin{equation}
    \gamma_t^{-\frac{1}{2}}\,\Omega_{t,i,j,k,N}^{(1)} \stackrel{\mathrm{a.s.}}{\longrightarrow}0
    \label{eq:omega-t-1}
\end{equation}
as $t\rightarrow\infty$, and thus also in probability. We now turn our attention to the second term in \eqref{main-bound-clt}. In this case, working from the definition, we have that
\begin{align}
    \mathbb{E}\left[\|\gamma_{t}^{-\frac{1}{2}} \, \Omega_{t,i,j,k,N}^{(2)}\|_1\right] &\leq \mathbb{E} \Big[\gamma_{t}^{-\tfrac{1}{2}}\int_1^t \big\|\Phi^{*,i,j,k,N}_{s,t}  \tfrac{1}{2}\gamma_s \partial_{\theta}^3 \mathcal{L}^{i,j,k,N} (\tilde{\theta}_s^{i,j,k,N}) z_s^{i,j,k,N} z_s^{i,j,k,N,\top} \big\|_1 \mathrm{d}s\Big] \\
    &\leq K\gamma_{t}^{-\frac{1}{2}} \left[ \int_1^t \|\Phi^{*,i,j,k,N}_{s,t}\|  \gamma_s \,\mathbb{E}\left[\|z_s^{i,j,k,N}\|^2\right] \mathrm{d}s \right] \\
    &\leq K\gamma_{t}^{-\frac{1}{2}}\left[\int_1^t \Phi_{s,t}^{\frac{1}{2}} \gamma_s\, (K_1^{\dagger}+K_2^{\dagger})\gamma_s \,  \mathrm{d}s  \right] \leq K\gamma_{t}^{-\frac{1}{2}}\left[\int_1^t  \Phi_{s,t}^{\frac{1}{2}}\,\gamma_s^2  \mathrm{d}s  \right]
\end{align}
where in the second inequality we have used Lemma~\ref{lemma:main-theorem-lemma-2-a} (i.e., the boundedness of third derivative of the asymptotic log-likelihood), and in the final inequality we have used Theorem~\ref{theorem:main-theorem-1-2-finite-n} (i.e., our $\mathrm{L}^2$ convergence rate) and the bound on $\smash{\|\Phi_{s,t}^{*,i,j,k,N}\|}$ implied by \eqref{eq:phi-star-bounds}. By Assumption~\ref{assumption:learning-rate-v2} (i.e., our conditions on the learning rate), we have that $\smash{\int_1^t  \Phi_{s,t}^{\frac{1}{2}}\,\gamma_s^2  \mathrm{d}s = o(\gamma_t^{\frac{1}{2}})}$ as $t\rightarrow\infty$. It follows that 
\begin{equation}
    \gamma_{t}^{-\frac{1}{2}} \, \Omega_{t,i,j,k,N}^{(2)}\stackrel{\mathbb{L}_1}{\longrightarrow}0
    \label{eq:omega-t-2}
\end{equation}
as $t\rightarrow\infty$, and hence also in probability. We now turn our attention to $\smash{\Omega_{t,i,j,k,N}^{(3)}}$. We will analyse this term by constructing an appropriate Poisson equation, as in some of our earlier proofs. In this case, let us define 
\begin{equation}
    S^{i,j,k,N}(\theta,{\boldsymbol{x}}^N) = \partial_{\theta}\mathcal{L}^{i,j,k,N}(\theta) - h^{i,j,k,N}(\theta,{\boldsymbol{x}}^N),
\end{equation} 
Due to Corollary \ref{cor:empirical-H-lip} (i.e., the local Lipschitz and polynomial growth of $h^{i,j,k,N}(\theta,{\boldsymbol{x}}^N)$ and its derivatives) and Lemma~\ref{lemma:main-theorem-lemma-2-a} (i.e., the boundedness of the asymptotic log-likelihood and its derivatives), for $l=0,1,2$, $\smash{\|\partial_{\theta}^l  S^{i,j,k,N}(\theta,\mathbf{x}^{N})  - \partial_{\theta}^l  S^{i,j,k,N}(\theta,\mathbf{y}^{N}) \|}$ satisfies a bound of the type given in Corollary~\ref{cor:empirical-H-lip}. Moreover, by definition of $\partial_{\theta}\mathcal{L}^{i,j,k,N}$, this function is centered with respect to $\pi_{\theta_0}^N$. Thus, using (a minor variation of) Lemma 17 in \cite{sharrock2023online} (now with $r=0$), the Poisson equation
\begin{equation}
\mathcal{A}_{\boldsymbol{x}^N} v^{i,j,k,N}(\theta,{\boldsymbol{x}}^N) = S^{i,j,k,N}(\theta,{\boldsymbol{x}}^N)~~~,~~~\int_{(\mathbb{R}^{d})^N} v^{i,j,k,N}(\theta,{\boldsymbol{x}}^N)\pi_{\theta_0}^{N}(\mathrm{d}{\boldsymbol{x}}^N)=0
\end{equation}
has a unique twice differentiable solution which satisfies $\sum_{\ell=0}^{2} \|\frac{\partial^\ell }{\partial \theta^{\ell}}v^{i,j,k,N}(\theta,{\boldsymbol{x}}^N)\| + \|\frac{\partial^2 }{\partial \theta\partial {\boldsymbol{x}}^N}v^{i,j,k,N}(\theta,{\boldsymbol{x}}^N)\|  \leq K [1+\sum_{a\in\{i,j,k\}}\|x^{a,N}\|^{q} + \frac{1}{N}\sum_{a=1}^N \|x^{a,N}\|^{q}]$. Arguing similarly to before (see, e.g., the proof of Theorem~\ref{theorem:main-theorem-1-2-finite-n}), it is possible to rewrite $\Omega_{t,i,j,k,N}^{(3)}$ in terms of this (vector-valued) solution as
\begin{align}
\gamma_{t}^{-\frac{1}{2}} \Omega_{t,i,j,k,N}^{(3)} &= \gamma_{t}^{-\frac{1}{2}}\int_1^t \gamma_s\Phi^{*,i,j,k,N}_{s,t}\underbrace{\left(\partial_{\theta}\mathcal{L}^{i,j,k,N}(\theta_s^{i,j,k,N}) - h^{i,j,k,N}(\theta_s^{i,j,k,N},{\boldsymbol{x}}_s^N)\right)\mathrm{d}s}_{S^{i,j,k,N}(\theta_s^{i,j,k,N},{\boldsymbol{x}}_s^N)\mathrm{d}s} \label{eq3163-recall} \\[-2.5mm]
&=\gamma_{t}^{-\frac{1}{2}} \int_1^t \gamma_s \Phi^{*,i,j,k,N}_{s,t}\mathrm{d}v_s^{i,j,k,N}- \gamma_{t}^{-\frac{1}{2}} \int_{1}^t \gamma_s\Phi^{*,i,j,k,N}_{s,t}\mathcal{A}_{\theta}v^{i,j,k,N}(\theta_s^{i,j,k,N},{\boldsymbol{x}}_s^N)\mathrm{d}s \\
&- \gamma_{t}^{-\frac{1}{2}} \int_{1}^t \gamma_s^2 \Phi^{*,i,j,k,N}_{s,t}\partial_{\theta}v^{i,j,k,N}(\theta_s^{i,j,k,N},{\boldsymbol{x}}_s^N)g^{i,j,N}(\theta_s^{i,j,k,N},{\boldsymbol{x}}_s^N)\sigma^{-\top}\mathrm{d}w_s^{i,N} \\
&-  \gamma_{t}^{-\frac{1}{2}}\int_{1}^t \gamma_s\Phi^{*,i,j,k,N}_{s,t}\partial_{\boldsymbol{x}^N}v^{i,j,k,N}(\theta_s^{i,j,k,N},{\boldsymbol{x}}_s^N) (I_N\otimes \sigma) \mathrm{d}{w}^N_s \\
 &-  \gamma_{t}^{-\frac{1}{2}}\int_{1}^t\gamma^2_s\Phi^{*,i,j,k,N}_{s,t}\partial_{\theta}\partial_{x^{i,N}}v^{i,j,k,N}(\theta_s^{i,j,k,N},{\boldsymbol{x}}_s^N) g^{i,j,N}(\theta_s^{i,j,k,N},{\boldsymbol{x}}_s^N)\mathrm{d}s \\
 &:= \gamma_{t}^{-\frac{1}{2}}\Pi_{t,i,j,k,N}^{(1)} + \gamma_{t}^{-\frac{1}{2}}\Pi_{t,i,j,k,N}^{(2)} + \gamma_{t}^{-\frac{1}{2}}\Pi_{t,i,j,k,N}^{(3)} + \gamma_{t}^{-\frac{1}{2}}\Pi_{t,i,j,k,N}^{(4)} + \gamma_{t}^{-\frac{1}{2}}\Pi_{t,i,j,k,N}^{(5)} \label{eq:main-bound-clt-2}
\end{align}
Following very  similar steps to those used in the proof of Theorem~\ref{theorem:main-theorem-1-2-finite-n} (e.g., using the polynomial growth of $g^{i,j,N}$ from Corollary~\ref{cor:empirical-drift-grad-lip}, the uniform-in-time moment bounds from Theorem~\ref{thm:moment-bounds}, and the conditions on the learning rate from Assumption~\ref{assumption:learning-rate-v2}), we have that 
\begin{equation}
    \gamma_{t}^{-\frac{1}{2}}\left(\Pi_{t,i,j,k,N}^{(1)}  + \Pi_{t,i,j,k,N}^{(2)} + \Pi_{t,i,j,k,N}^{(3)} + \Pi_{t,i,j,k,N}^{(5)}\right) \stackrel{\mathbb{L}_1}{\longrightarrow} 0 \label{eq:omega-t-3-subset}
\end{equation}
as $t\rightarrow\infty$, and thus also in probability. Given the results in \eqref{eq:omega-t-1}, \eqref{eq:omega-t-2}, and \eqref{eq:omega-t-3-subset}, it remains to analyse $\smash{\gamma_t^{-\frac{1}{2}}(\Pi_{t,i,j,k,N}^{(4)} + \Omega_{t,i,j,k,N}^{(4)})}$, which will be responsible for the covariance of the limiting Gaussian random variable. From the definitions, we have 
\begin{align}
    &\gamma_t^{-\frac{1}{2}}\left[\Pi_{t,i,j,k,N}^{(4)} + \Omega_{t,i,j,k,N}^{(4)}\right] \\
    &= \gamma_{t}^{-\frac{1}{2}}\left[\int_1^t  \gamma_s  \Phi^{*,i,j,k,N}_{s,t} \left( g^{i,j,N}(\theta_s^{i,j,k,N},{\boldsymbol{x}}_s^N) (\sigma\sigma^{\top})^{-1} E_i^{\top} -\partial_{\boldsymbol{x}^N}v^{i,j,k,N}(\theta_s^{i,j,k,N},{\boldsymbol{x}}_s^N)\right)\left(I_N\otimes \sigma\right)\mathrm{d}{w}^N_s\right] \notag
\end{align}
where $E_i\in\mathbb{R}^{Nd\times d}$ denotes the block-selector matrix such that
$\mathrm{d}w_s^{i,N}=E_i^\top\,\mathrm{d}w_s^{N}$. The quadratic variation is thus given by
\begin{align}
    \Sigma_{t}^{i,j,k,N} &:= \gamma_{t}^{-1}\int_1^t  \gamma_s^2  \Phi^{*,i,j,k,N}_{s,t} \Gamma^{i,j,k,N}(\theta_s^{i,j,k,N},\boldsymbol{x}_s^{N})\Phi^{*,i,j,k,N,\top}_{s,t} \mathrm{d} s
\end{align}
where $\Gamma^{i,j,k,N}(\theta,\boldsymbol{x}^N):=(g^{i,j,N}(\theta,{\boldsymbol{x}}^N)(\sigma\sigma^{\top})^{-1}E_i^{\top}  - \partial_{\boldsymbol{x}^N}v^{i,j,k,N}(\theta,{\boldsymbol{x}}^N) ) (I_N\otimes (\sigma\sigma^{\top})) (g^{i,j,N}(\theta,{\boldsymbol{x}}^N)(\sigma\sigma^{\top})^{-1}E_i^{\top}  - \partial_{\boldsymbol{x}^N}v^{i,j,k,N}(\theta,{\boldsymbol{x}}^N) )^{\top}$.
We will establish the convergence of this covariation matrix in two steps. In particular, we will first show that there exists a limiting covariance matrix $\bar{\Sigma}^{i,j,k,N}$ such that
\begin{equation}
    \|\bar{\Sigma}_t^{i,j,k,N} - \bar{\Sigma}^{i,j,k,N}\|_{1} \longrightarrow 0 \label{eq:sigma-limit-1}
\end{equation}
as $t\rightarrow\infty$, where $\smash{\bar{\Sigma}_t^{i,j,k,N}}$ is a proxy for $\smash{\Sigma_t^{i,j,k,N}}$, in which the middle term has been replaced by its ergodic average evaluated at the minimizer, viz $\smash{\bar{\Sigma}_t^{i,j,k,N}= \gamma_{t}^{-1}\int_1^t  \gamma_s^2  \Phi^{*,i,j,k,N}_{s,t} \bar{\Gamma}^{i,j,k,N}(\theta_0^{i,j,k,N})\Phi^{*,i,j,k,N,\top}_{s,t} \mathrm{d} s}$ with $\smash{
\bar{\Gamma}^{i,j,k,N}(\theta) = \int_{(\mathbb{R}^d)^N}\Gamma^{i,j,k,N}(\theta,\boldsymbol{x}^{N}) \pi_{\theta_0}^N(\mathrm{d}\boldsymbol{x}^N)}$. We will subsequently also show that 
\begin{equation}
\mathbb{E}[\|\Sigma_t^{i,j,k,N} - \bar{\Sigma}_t^{i,j,k,N}\|_{1}]\rightarrow0
\end{equation}
as $t\rightarrow\infty$, and hence conclude that $\smash{\mathbb{E}[\|\Sigma_t^{i,j,k,N} - \bar{\Sigma}^{i,j,k,N}\|_{1}]\rightarrow0}$ as $t\rightarrow\infty$ using the triangle inequality. For now, to establish \eqref{eq:sigma-limit-1}, following the approach in \cite{sirignano2020stochastic}, we begin by rewriting $\smash{\Phi_{s,t}^{*,i,j,k,N}}$ in the form
\begin{equation}
\label{eq:limit-cov-start}
    \Phi_{s,t}^{*,i,j,k,N} = V\, e^{-\mathcal{K} \int_{s}^t\gamma_u\mathrm{d}u} \, V^{\top}:=V \mathcal{K}_{s,t} V^{\top}
\end{equation}
where $\mathcal{K} = \mathrm{diag}(\kappa_1,\dots,\kappa_p)$ is the diagonal matrix with (positive) eigenvalues $\kappa_i>0$, $i=1,\dots,p$, and $V=[v_1,\dots,v_p]$ is the corresponding matrix of orthogonal eigenvectors $v_1,\dots,v_p\in\mathbb{R}^p$, and $\mathcal{K}_{s,t}:=e^{-\mathcal{K} \int_s^t \gamma_u\mathrm{d}u} = \mathrm{diag}(e^{-\kappa_1\int_s^t \gamma_u\mathrm{d}u},\dots,e^{-\kappa_p\int_s^t \gamma_u\mathrm{d}u}):=\mathrm{diag}(\kappa_{s,t}^{1},\dots,\kappa_{s,t}^{p})$. It follows, in particular, that the $(m,n)^{\mathrm{th}}$ element of the matrix $\Phi^{*,i,j,k,N}_{s,t}$ takes the form
\begin{equation}
    [\Phi_{s,t}^{*,i,j,k,N}]_{m,n} = \sum_{p_1=1}^p \kappa_{s,t}^{p_1} v_{m}^{p_1} v_{n}^{p_1}, \qquad [\Phi_{s,t}^{*,i,j,k,N,\top}]_{m,n} = \sum_{p_3=1}^p \kappa_{s,t}^{p_3} v_{m}^{p_3} v_{n}^{p_3}
\end{equation}
We can now obtain an expression for the $(m,n)^{\mathrm{th}}$ element of the matrix $\smash{\bar{\Sigma}_t^{i,j,k,N}}$. In particular, substituting these identities into the definition, we have
\begin{align}
    [\bar{\Sigma}_{t}^{i,j,k,N}]_{m,n} &= \left[\gamma_{t}^{-1}\int_1^t  \gamma_s^2  \Phi^{*,i,j,k,N}_{s,t} \bar{\Gamma}^{i,j,k,N}(\theta_0^{i,j,k,N})\Phi^{*,\top}_{s,t} \mathrm{d} s\right]_{m,n} \\
    &=\sum_{p_0,p_1,p_2,p_3=1}^p \gamma_t^{-1}\int_1^t \gamma_s^2  \kappa_{s,t}^{p_1} v_{m}^{p_1} v_{p_0}^{p_1} [\bar{\Gamma}^{i,j,k,N}(\theta_0^{i,j,k,N})]_{p_0,p_2} \kappa_{s,t}^{p_3} v_{p_2}^{p_3} v_{n}^{p_3} \mathrm{d}s
\end{align}
It follows, in particular, that 
\begin{align}
[\bar{\Sigma}^{i,j,k,N}]_{m,n}&:= \lim_{t\rightarrow\infty} [\bar{\Sigma}_{t}^{i,j,k,N}]_{m,n} \\
&= \sum_{p_0,p_1,p_2,p_3=1}^p  \lim_{t\rightarrow\infty}\left[\gamma_t^{-1}\int_1^t \gamma_s^2  \kappa_{s,t}^{p_1} \kappa_{s,t}^{p_3} \mathrm{d}s\right] v_{m}^{p_1} v_{p_0}^{p_1} [\bar{\Gamma}^{i,j,k,N}(\theta_0^{i,j,k,N})]_{p_0,p_2}  v_{p_2}^{p_3} v_{n}^{p_3} \\
&= \sum_{p_0,p_1=1}^p  v_{m}^{p_1} v_{p_0}^{p_1} \sum_{p_2=1}^p [\bar{\Gamma}^{i,j,k,N}(\theta_0^{i,j,k,N})]_{p_0,p_2}\sum_{p_3=1}^p \lim_{t\rightarrow\infty}\left[\gamma_t^{-1}\int_1^t \gamma_s^2  e^{-(\kappa_{p_1} + \kappa_{p_3})\int_{s}^t \gamma_{u}\mathrm{d}u} \mathrm{d}s\right]   v_{p_2}^{p_3} v_{n}^{p_3}.
\label{eq:limit-cov-end}
\end{align}
It remains to show that $\smash{\mathbb{E}\|\Sigma_t^{i,j,k,N} - \bar{\Sigma}_t^{i,j,k,N}\|_{1}\rightarrow0}$ as $t\rightarrow\infty$. To do this, we will use the decomposition
\begin{align}
    \|\Sigma_{t}^{i,j,k,N} - \bar{\Sigma}_t^{i,j,k,N}\|_1 
    &\leq  \Big\| \gamma_{t}^{-1} \int_1^t  \gamma_s^2  \Phi^{*,i,j,k,N}_{s,t} \left[\Gamma^{i,j,k,N}(\theta_s^{i,j,k,N},\boldsymbol{x}_s^{N}) -  \bar{\Gamma}^{i,j,k,N}(\theta_s^{i,j,k,N})\right]\Phi^{*,i,j,k,N,\top}_{s,t} \mathrm{d} s \Big\|_1  \notag \\[-1mm] 
    & +  \Big\| \gamma_{t}^{-1}  \int_1^t  \gamma_s^2  \Phi^{*,i,j,k,N}_{s,t} \left[\bar{\Gamma}^{i,j,k,N}(\theta_s^{i,j,k,N})- \bar{\Gamma}^{i,j,k,N}(\theta_0^{i,j,k,N})\right]\Phi^{*,i,j,k,N,\top}_{s,t} \mathrm{d} s \Big\|_1 \label{eq:delta-triangle} \\
    &:=\Xi_{t,i,j,k,N}^{(1)} + \Xi_{t,i,j,k,N}^{(2)}
\end{align}
Similar to elsewhere, we can analyse $\Xi_{t,i,j,k,N}^{(1)}$ using an appropriate Poisson equation. In particular, we now define
\begin{equation}
    T^{i,j,k,N}(\theta,\boldsymbol{x}^N) = \Gamma^{i,j,k,N}(\theta,\boldsymbol{x}^N) -  \bar{\Gamma}^{i,j,k,N}(\theta)
\end{equation}
Due to Corollary~\ref{cor:empirical-drift-grad-lip} (i.e., the polynomial growth of $\boldsymbol{x}^N\mapsto g^{i,j,N}(\theta,\boldsymbol{x}^N)$ and its derivatives) and the polynomial growth of $\boldsymbol{x}^N\mapsto v^{i,j,k,N}(\theta,\boldsymbol{x}^N)$ and its derivatives, this function (and its derivatives) is locally Lipschitz with polynomial growth. Moreover, by definition, it is centered with respect to $\pi_{\theta_0}^N$. Thus, once more, we can apply (a variant) of Lemma~17 in \cite{sharrock2023online} (with $r=0$) to conclude that the Poisson equation 
\begin{equation}
\mathcal{A}_{\boldsymbol{x}^N} [w^{i,j,k,N}]_{m,n}(\theta,{\boldsymbol{x}}^N) = [T^{i,j,k,N}]_{m,n}(\theta,{\boldsymbol{x}}^N)~~~,~~~\int_{(\mathbb{R}^{d})^N} [w^{i,j,k,N}]_{m,n}(\theta,{\boldsymbol{x}}^N)\pi_{\theta_0}^{N}(\mathrm{d}{\boldsymbol{x}}^N)=0
\end{equation}
where $[A]_{m,n}$ denotes the $(m,n)^{\mathrm{th}}$ element of the matrix $A\in\mathbb{R}^{p\times p}$, has a solution which (element-wise) satisfies a polynomial growth property, similar to $v^{i,j,k,N}(\theta,\boldsymbol{x}^N)$. Thus, arguing as before (e.g., using the It\^o isometry and our moment bounds), it is possible to show that $\smash{\mathbb{E}[|[\Xi_{t,i,j,k,N}^{(1)}]_{m,n}|] \rightarrow 0}$
as $t\rightarrow\infty$. Thus, in particular, it follows that 
\begin{equation}
    \mathbb{E}[\| \Xi_{t,i,j,k,N}^{(1)}\|_{1}]\rightarrow 0 \label{eq:delta-1}
\end{equation}
as $t\rightarrow\infty$. We now turn our attention to $\Xi_{t,i,j,k,N}^{(2)}$. In this case, observe that the $(m,n)^{\text{th}}$ element of the matrix can be written as
\begin{align}
&[\Xi_{t,i,j,k,N}^{(2)}]_{m,n}\\
&= \gamma_t^{-1} \int_1^t  \gamma_s^2  \left[\Phi^{*,i,j,k,N}_{s,t} \left(\bar{\Gamma}^{i,j,k,N}(\theta_s^{i,j,k,N})- \bar{\Gamma}^{i,j,k,N}(\theta_0^{i,j,k,N})\right)\Phi^{*,i,j,k,N,\top}_{s,t}\right]_{m,n} \mathrm{d} s \\
&=\gamma_t^{-1}  \int_1^t  \gamma_s^2  \sum_{p_0=1}^p [\Phi^{*,i,j,k,N}_{s,t}]_{m,p_0}\sum_{p_1=1}^p\left[\bar{\Gamma}^{i,j,k,N}(\theta_s^{i,j,k,N})- \bar{\Gamma}^{i,j,k,N}(\theta_0^{i,j,k,N})\right]_{p_0,p_1}
[\Phi^{*,i,j,k,N,\top}_{s,t}]_{p_1,n} \mathrm{d} s \\
&=\gamma_t^{-1}  \int_1^t  \gamma_s^2  \sum_{p_0=1}^p [\Phi^{*,i,j,k,N}_{s,t}]_{m,p_0}\sum_{p_1=1}^p[\partial_{\theta}^{\top}\bar{\Gamma}^{i,j,k,N}(\tilde{\theta}_s^{i,j,k,N})]_{p_0,p_1}(\theta_s^{i,j,k,N} - \theta_0^{i,j,k,N})
[\Phi^{*,i,j,k,N,\top}_{s,t}]_{p_1,n} \mathrm{d} s
\end{align}
where $\tilde{\theta}_{s}^{i,j,k,N}$ is a point on the line segment connection $\theta_{s}^{i,j,k,N}$ and $\theta_0^{i,j,k,N}$. Due to Corollary~\ref{cor:empirical-drift-grad-lip} (i.e., the polynomial growth of $\boldsymbol{x}^N\mapsto g^{i,j,N}(\theta,\boldsymbol{x}^N)$ and its derivatives), and the polynomial growth of $\boldsymbol{x}^N\mapsto v^{i,j,k,N}(\theta,\boldsymbol{x}^N)$ and its derivatives, the function $\boldsymbol{x}^N\mapsto \partial_{\theta}{\Gamma}^{i,j,k,N}(\theta,\boldsymbol{x}^N)$ satisfies a polynomial growth property, uniformly in $\theta\in\Theta$. Thus, by Theorem~\ref{thm:moment-bounds} (i.e., the uniform-in-time moment bounds for the IPS), the function $\smash{\partial_{\theta}\bar{\Gamma}^{i,j,k,N}(\theta) = \int \partial_{\theta}\Gamma^{i,j,k,N}(\theta,\boldsymbol{x}^N)\pi_{\theta_0}^N(\mathrm{d}\boldsymbol{x}^N)}$ is bounded, uniformly in $\theta\in\Theta$. 

Using this, the Cauchy--Schwarz inequality, and the results of Theorem~\ref{theorem:main-theorem-1-2-finite-n} (i.e., the $\mathrm{L}^2$ convergence rate), it follows that 
\begin{align}
&\mathbb{E}\left[|[\Xi_{t,i,j,k,N}^{(2)}]_{m,n}|\right] \\
&\leq \gamma_t^{-1}  \int_1^t  \gamma_s^2  \mathbb{E}\left[\left|\sum_{p_0=1}^p [\Phi^{*,i,j,k,N}_{s,t}]_{m,p_0}\sum_{p_1=1}^p[\partial_{\theta}^{\top}\bar{\Gamma}^{i,j,k,N}(\tilde{\theta}_s^{i,j,k,N})]_{p_0,p_1}(\theta_s^{i,j,k,N} - \theta_0^{i,j,k,N})
[\Phi^{*,i,j,k,N,\top}_{s,t}]_{p_1,n} \right|\right]\mathrm{d} s \notag \\
&\leq K\gamma_t^{-1}  \int_1^t  \gamma_s^2  \|\Phi^{*,i,j,k,N}_{s,t}\|^2\mathbb{E}\Big[\|\theta_s^{i,j,k,N} - \theta_0^{i,j,k,N}\|^2\Big]^{\frac{1}{2}}
\mathrm{d} s 
\leq K\gamma_t^{-1}  \int_1^t  \gamma_s^2  e^{-2\eta^{i,j,k,N} \int_{s}^t \gamma_u\mathrm{d}u} \Big[(K_1^{\dagger} + K_2^{\dagger})\gamma_s \Big]^{\frac{1}{2}}\mathrm{d} s \notag \\
&\leq K\gamma_t^{-1}  \int_1^t  \gamma_s^{\frac{5}{2}}  \Phi_{s,t} \mathrm{d}s
\end{align}
where, as usual, we allow the value of the constants to increase from line to line. From Assumption \ref{assumption:learning-rate-v2} (our conditions on the learning rate), we have that $\smash{\int_1^t \gamma_s^{\frac{5}{2}} \Phi_{s,t}\mathrm{d}s = o(\gamma_t)}$. Thus, substituting this into the previous display, it follows that $\smash{\mathbb{E}[|[\Xi_{t,i,j,k,N}^{(2)}]_{m,n}|]\rightarrow 0}$ as $t\rightarrow\infty$ and thus, in particular, 
\begin{equation}
    \mathbb{E}[\|\Xi_{t,i,j,k,N}^{(2)}\|_1]\rightarrow 0 \label{eq:delta-2}
\end{equation}
as $t\rightarrow \infty$. Substituting \eqref{eq:delta-1} and \eqref{eq:delta-2} into \eqref{eq:delta-triangle}, it follows immediately that $\smash{\mathbb{E}\left[\|\Sigma_{t}^{i,j,k,N} - \bar{\Sigma}_{t}^{i,j,k,N}\|_1\right] \rightarrow 0}$ as $t\rightarrow \infty$. Using this result, the limit  previously established for $\bar{\Sigma}_t^{i,j,k,N}$ in \eqref{eq:sigma-limit-1}, and one final application of the triangle inequality, we finally arrive at
\begin{equation}
    \mathbb{E}\left[\|\Sigma_t^{i,j,k,N} - \bar{\Sigma}^{i,j,k,N}\|_1\right]\leq
    \mathbb{E}\left[\|\Sigma_t^{i,j,k,N} - \bar{\Sigma}_t^{i,j,k,N}\|_1\right]
    + \mathbb{E}\left[\|\bar{\Sigma}_t^{i,j,k,N} - \bar{\Sigma}^{i,j,k,N}\|_1\right] \longrightarrow 0
\end{equation}
as $t\rightarrow\infty$. This implies, in particular, that $\smash{\Sigma_{t}^{i,j,k,N} \stackrel{\mathbb{P}}{\longrightarrow}\bar{\Sigma}^{i,j,k,N}}$. That is, we have shown that the quadratic variation of the random variable $\smash{\gamma_t^{-\frac{1}{2}}[\Pi_{t,i,j,k,N}^{(4)} + \Omega_{t,i,j,k,N}^{(4)}]}$ converges in probability to $\smash{\bar{\Sigma}^{i,j,k,N}}$ as $t\rightarrow\infty$. It follows using standard results \citep[e.g.,][Section~1.2.2]{kutoyants2004statistical} that 
\begin{equation}
\gamma_t^{-\frac{1}{2}}\left[\Pi_{t,i,j,k,N}^{(4)} + \Omega_{t,i,j,k,N}^{(4)}\right]\stackrel{\mathrm{d}}{\longrightarrow}\mathcal{N}(0,\bar{\Sigma}^{i,j,k,N}).
\end{equation}
This result, combined with the decomposition in \eqref{main-bound-clt}, the decomposition in \eqref{eq:main-bound-clt-2}, and the convergence in probability of all other terms to zero, yields (via Slutsky's theorem) the result in \eqref{eq:clt-non-average-1}. The proof of \eqref{eq:clt-average-1} is essentially identical, replacing any quantities relating to the non-averaged estimator with their analogues for the averaged-estimator, and noting that all relevant results (e.g., solutions of the relevant Poisson equation, $\mathrm{L}^2$ convergence rate) continue to hold. 
\end{proof}

\begin{proof}[Proof of Theorem~\ref{theorem:clt-inf-n}]
    Once again, we will prove the result for the non-averaged estimator, before detailing how to adapt the proof for the averaged-estimator. Our proof will share some similarities with the proof of Theorem~\ref{theorem:clt}. Now, however, we will have to deal with additional terms arising in the $\mathrm{L}^2$ convergence rate w.r.t.\ the true parameter (see Theorem~\ref{theorem:main-theorem-1-2}). We begin, once again, by recalling the parameter update equation in a convenient form, namely
\begin{align}
\mathrm{d}\theta_t^{i,j,k,N} &= -\gamma_t\partial_{\theta}\mathcal{L}(\theta_t^{i,j,k,N})\mathrm{d}t - \gamma_t (h^{i,j,k,N}(\theta_t^{i,j,k,N},\boldsymbol{x}_t^N) - \partial_{\theta}\mathcal{L}(\theta_t^{i,j,k,N}))\mathrm{d}t
\label{eq:parameter-update-full-expansion_clt} \\*
&~~~~~+ \gamma_t g^{i,j,N}(\theta_t^{i,j,k,N},\boldsymbol{x}_t^N)\sigma^{-\top} \mathrm{d}w_t^{i,N} \notag
\end{align}
We proceed as in the previous proof, but now using a second order Taylor expansion for $\partial_{\theta}\mathcal{L}(\theta)$ around the true minimizer $\theta_0$. In particular, following similar arguments to those in \eqref{eq_4_107_clt} - \eqref{eq:clt-proof-1-end}, we can show that
\begin{align}
z_t^{i,j,k,N} &=  \Phi^{*}_{1,t} z_1^{i,j,k,N}    - \int_1^t \Phi^{*}_{s,t}  \frac{1}{2}\gamma_s \partial_{\theta}^3 \mathcal{L} (\tilde{\theta}_s^{i,j,k,N}) z_s^{i,j,k,N} z_s^{i,j,k,N,\top} \mathrm{d}s   \\[-1mm]
&- \int_1^t \Phi^{*}_{s,t} \gamma_s \big(h^{i,j,k,N}(\theta_s^{i,j,k,N},\boldsymbol{x}_s^N)-\partial_{\theta}\mathcal{L}(\theta_s^{i,j,k,N})\big) \mathrm{d}s  + \int_1^t   \Phi^{*}_{s,t}   \gamma_s g^{i,j,N}(\theta_s^{i,j,k,N},\boldsymbol{x}_s^N) \sigma^{-\top} \mathrm{d} w_s^{i,N}   \\
&= \Omega_{t,i,j,k,N}^{(1)} + \Omega_{t,i,j,k,N}^{(2)} + \Omega_{t,i,j,k,N}^{(3)} + \Omega_{t,i,j,k,N}^{(4)} 
\label{main-bound-clt-inf-n}
\end{align}
where now $z_t^{i,j,k,N}:=\theta_t^{i,j,k,N} - \theta_0$ and $\smash{\Phi^{*}_{s,t}= \exp[-\partial_{\theta}^2\mathcal{L}(\theta_0)\int_{s}^{t}\gamma_u\mathrm{d}u]}$. We will begin by bounding $\smash{\Omega_{t,i,j,k,N}^{(1)}}$. In this case, similar to before, we have that 
\begin{equation}
    \|\gamma_{t}^{-\frac{1}{2}}\,\Omega_{t,i,j,k,N}^{(1)}\| = \gamma_{t}^{-\frac{1}{2}}\|\Phi_{1,t}^{*}z_1^{i,j,k,N}\| \leq \gamma_{t}^{-\frac{1}{2}} \|\Phi_{1,t}^{*}\| \, \|z_1^{i,j,k,N}\| \leq K \gamma_{t}^{-\frac{1}{2}} \Phi_{1,t}^{\frac{1}{2}} \|z_1^{i,j,k,N}\|. 
\end{equation}
By Assumption \ref{assumption:learning-rate-v2} (i.e., our conditions on the learning rate), we have that $\smash{\Phi_{1,t}^{\frac{1}{2}} = o(\gamma_t^{\frac{1}{2}})}$. It follows from this and the previous display that
\begin{equation}
    \gamma_t^{-\frac{1}{2}}\,\Omega_{t,i,j,k,N}^{(1)} \stackrel{\mathrm{a.s.}}{\longrightarrow}0
    \label{eq:omega-t-1-recall}
\end{equation}
as $t\rightarrow\infty$, and thus also in probability. We now turn our attention to $\smash{\Omega_{t,i,j,k,N}^{(2)}}$. Arguing similarly to before, but now using the $\mathrm{L}^2$ convergence rate from Theorem~\ref{theorem:main-theorem-1-2}, we have
\begin{align}
    \mathbb{E}\left[\|\gamma_{t}^{-\frac{1}{2}} \, \Omega_{t,i,j,k,N}^{(2)}\|_1\right] 
    &\leq K\gamma_{t}^{-\frac{1}{2}}\Big[\int_1^t \Phi_{s,t}^{\frac{1}{2}}\,\gamma_s^2  \mathrm{d}s + \Big(\rho(N) + \frac{1}{N^{\frac{1}{2(1+\alpha)}}}\Big) \int_1^t \Phi_{s,t}^{\frac{1}{2}}\,\gamma_s \,\mathrm{d}s \Big]. \label{eq:tilde-omega-1}
\end{align}
By Assumption~\ref{assumption:learning-rate-v2} (i.e., our additional conditions on the learning rate), we have that $\smash{\int_1^t  \Phi_{s,t}^{\frac{1}{2}}\,\gamma_s^2  \mathrm{d}s = o(\gamma_t^{\frac{1}{2}})}$ and $\smash{\int_1^t  \Phi_{s,t}^{\frac{1}{2}}\,\gamma_s  \mathrm{d}s = O(1)}$ as $t\rightarrow\infty$. In addition, under our standing assumption, we have that $N=N(t)\rightarrow\infty$ as $t\rightarrow\infty$ at the rate $\smash{\rho(N) + N^{-\frac{1}{2(1+\alpha)}} = o(\gamma_t^{\frac{1}{2}})}$. These facts, together with \eqref{eq:tilde-omega-1}, imply that
\begin{equation}
    \gamma_{t}^{-\frac{1}{2}} \, \Omega_{t,i,j,k,N}^{(2)}\stackrel{\mathbb{L}_1}{\longrightarrow}0
    \label{eq:omega-t-2-recall}
\end{equation}
as $t\rightarrow\infty$, and hence also in probability. We now turn our attention to $\smash{\Omega_{t,i,j,k,N}^{(3)}}$. For this term, we will require a different strategy from the corresponding term in the previous proof, since it is not clear that the finite-particle Poisson equation (or its solution) are well-defined in the limit as $N\rightarrow\infty$. We begin by decomposing this term into two parts, namely, 
    \begin{align}
    \gamma_{t}^{-\frac{1}{2}}\Omega_{t,i,j,k,N}^{(3)}
     &=-   \gamma_{t}^{-\frac{1}{2}}\int_1^t \Phi^{*}_{s,t} \gamma_s \big(h(\theta_s^{i,j,k,N},x_s^{i,N},x_s^{j,N},x_s^{k,N},\mu_s^N)-h(\theta_s^{i,j,k,N},\bar{x}_s^{i},\bar{x}_s^{j},\bar{x}_s^{k},\
    \pi_{\theta_0})\big) \mathrm{d}s \\
    &~~~~-   \gamma_{t}^{-\frac{1}{2}}\int_1^t \Phi^{*}_{s,t} \gamma_s \big(h(\theta_s^{i,j,k,N},\bar{x}_s^{i},\bar{x}_s^{j},\bar{x}_s^{k},\pi_{\theta_0})-\partial_{\theta}\mathcal{L}(\theta_s^{i,j,k,N})\big) \mathrm{d}s \\
    &:=\gamma_{t}^{-\frac{1}{2}}\Psi_{t,i,j,k,N}^{(1)} + \gamma_{t}^{-\frac{1}{2}}\Psi_{t,i,j,k,N}^{(2)} 
\end{align}
where, similar to elsewhere, $(x_s^{i})_{s\geq 0}$, $(x_s^{j})_{s\geq 0}$ and $(x_s^{k})_{s\geq 0}$ are independent solutions of the MVSDE, driven by the same Brownian motions and with the same initial conditions as the particles $(x_s^{i,N})_{t\geq 0}$, $(x_s^{j,N})_{s\geq 0}$ and $(x_s^{k,N})_{s\geq 0}$, and $(\bar{x}_s^{i})_{s\geq 0}$, $(\bar{x}_s^{j})_{s\geq 0}$, and $(\bar{x}_s^k)_{s\geq 0}$ are solutions of the MVSDE, driven by the same Brownian motions, but initialised at the stationary distribution $\pi_{\theta_0}$. Similar to before, we assume that $(x_0^{a},\bar{x}_0^{a})\sim \gamma_0^{*}$, for $a\in\{i,j,k\}$, where $\gamma_0^{*}\in\Gamma(\mu_0,\pi_{\theta_0})$ denotes the optimal coupling between $\mu_0$ and $\pi_{\theta_0}$ w.r.t. the quadratic cost. By Lemma~\ref{cor:empirical-H-lip} (i.e., the function $h$ is locally Lipschitz with polynomial growth), the Cauchy--Schwarz inequality, and Theorem~\ref{thm:moment-bounds} (i.e., uniform-in-time moment bounds for the IPS and the MVSDE), we have that
\begin{align}
\label{eq:h-decomp}
    &\mathbb{E}\Big[\| h(\theta_s^{i,j,k,N},x_s^{i,N},x_s^{j,N},x_s^{k,N},\mu_s^N)- h(\theta_s^{i,j,k,N},\bar{x}_s^{i},\bar{x}_s^{j},\bar{x}_s^{k},\pi_{\theta_0})\|\Big] \\
    &\leq K \Big[(\mathbb{E}[\mathsf{W}_2^2(\mu_s^N,\pi_{\theta_0})])^{\frac{1}{2}}  + \textstyle \sum_{a\in\{i,j,k\}} (\mathbb{E}\big[\|x_s^{a,N} - \bar{x}_s^{a}\|^2\big])^{\frac{1}{2}} \Big] 
\end{align}
Meanwhile, using the triangle inequality, the elementary inequality $(a+b+c)^2\leq 3(a^2+b^2+c^2)$, and Theorem~\ref{thm:poc} (i.e., uniform-in-time propagation of chaos), Theorem~1 in \cite{fournier2015rate} (i.e., bounds on the W2 distance to the empirical measure), and Theorem~\ref{thm:invariant-distribution-2} (i.e., convergence to the invariant distribution), we have that
\begin{align}
\mathbb{E}[\mathsf{W}_2^2(\mu_s^N,\pi_{\theta_0})] &\leq 3 \mathbb{E}[\mathsf{W}_2^2(\mu_s^N,\mu_s^{[N]})] + 3 \mathbb{E}[\mathsf{W}_2^2(\mu_s^{[N]},\mu_s)] + 3 \mathbb{E}[\mathsf{W}_2^2(\mu_s,\pi_{\theta_0})] \\
&\leq K\Big[\frac{1}{N^{\frac{1}{1+\alpha}}} + \rho^2(N)  + a_s(\mathsf{W}_2(\mu_0,\pi_{\theta_0}) )\Big]
\end{align}
where $a_s:\mathbb{R}_{+}\rightarrow\mathbb{R}_{+}$ is the function defined in Theorem~\ref{thm:invariant-distribution-2}, and $\rho:\mathbb{N}\rightarrow\mathbb{R}_{+}$ is the function defined in \eqref{eq:rho}. Using similar arguments, we also have
\begin{align}
    \mathbb{E}\big[\|x_s^{a,N} - \bar{x}_s^{a}\|^2\big] \leq 2\mathbb{E}[\|x_s^{a,N} - x_s^{a}\|^2] + 2\mathbb{E}[\|x_s^{a} - \bar{x}_s^a\|^2] \leq K\left[\frac{1}{N^{\frac{1}{1+\alpha}}} + a_s(\mathsf{W}_2(\mu_0,\pi_{\theta_0}))\right]
\end{align}
Substituting these two bounds into \eqref{eq:h-decomp}, and setting $a_s:=a_s(\mathsf{W}_2(\mu_0,\pi_{\theta_0}))$, we have that
\begin{align}
    \mathbb{E}\left[\| h(\theta_s^{i,j,k,N},x_s^{i,N},x_s^{j,N},x_s^{k,N},\mu_s^N)- h(\theta_s^{i,j,k,N},\bar{x}_s^{i},\bar{x}_s^{j},\bar{x}_s^{k},\pi_{\theta_0})\|\right] \leq K \Big[\rho(N) + \frac{1}{N^{\frac{1}{2(1+\alpha)}}} + a_s^{\frac{1}{2}}\Big] 
\end{align}
Substituting this back into the definition of $\Psi_{t,i,j,k,N}^{(1)}$, and using our previous bounds on $\|\Phi_{s,t}^{*}\|$,  we thus have that
\begin{align}
    \mathbb{E}\left[\|\gamma_t^{-\frac{1}{2}}\Psi_{t,i,j,k,N}^{(1)}\|_1\right] &\leq K \gamma_t^{-\frac{1}{2}} \int_1^t \|\Phi_{s,t}^{*}\|\,\gamma_s \mathbb{E}\left[\| h(\theta_s^{i,j,k,N},x_s^{i,N},x_s^{j,N},x_s^{k,N}, \mu_s^N)-h(\theta_s^{i,j,k,N},\bar{x}_s^{i},\bar{x}_s^{j},\bar{x}_s^{k},\pi_{\theta_0}) \| \right]\mathrm{d}s \nonumber \\
    &= K \Big(\rho(N) + \frac{1}{N^{\frac{1}{2(1+\alpha)}}}\Big)\gamma_t^{-\frac{1}{2}} \int_1^t \Phi_{s,t}^{\frac{1}{2}}\, \gamma_s \, \mathrm{d}s + K\gamma_t^{-\frac{1}{2}} \int_1^t \Phi_{s,t}^{\frac{1}{2}}\, \gamma_s \,a_s^{\frac{1}{2}} \, \mathrm{d}s
\end{align}
By Assumption~\ref{assumption:learning-rate-v2}, we have that $\smash{\int_1^t  \Phi_{s,t}^{\frac{1}{2}}\,\gamma_s  \mathrm{d}s = O(1)}$ and that $\smash{\int_1^t  \Phi_{s,t}^{\frac{1}{2}}\,\gamma_s\,a_s^{\frac{1}{2}}\, \mathrm{d}s = o(\gamma_t^{\frac{1}{2}})}$ as $t\rightarrow\infty$. In addition, $N=N(t)\rightarrow\infty$ as $t\rightarrow\infty$ at a rate such that $\smash{\rho(N) + N^{-\frac{1}{2(1+\alpha)}} = o(\gamma_t^{\frac{1}{2}})}$. It follows immediately that 
\begin{equation}
    \gamma_t^{-\frac{1}{2}} \Psi_{t,i,j,k,N}^{(1)} \stackrel{\mathbb{L}_1}{\longrightarrow}0
\end{equation}
as $t\rightarrow\infty$, and hence also in probability. We now consider $\Psi_{t,i,j,k,N}^{(2)}$. We will analyse this term by constructing an appropriate Poisson equation, this time specified in terms of the (linearized) mean-field equation, rather than its finite-particle counterpart. Let us define
\begin{equation}
    R^{i,j,k}(\theta,\bar{\boldsymbol{x}}^{(i,j,k)}) = \partial_{\theta}\mathcal{L}(\theta) - h_{\pi_{\theta_0}}(\theta,\bar{\boldsymbol{x}}^{(i,j,k)}), \qquad h_{\pi_{\theta_0}}(\theta,\bar{\boldsymbol{x}}^{(i,j,k)}):=h(\theta,\bar{x}^i,\bar{x}^j,\bar{x}^k,\pi_{\theta_0}).
\end{equation} 
where $\bar{\boldsymbol{x}}^{(i,j,k)} = (\bar{x}^i, \bar{x}^j, \bar{x}^k)^{\top}$. Using the definition of $\partial_{\theta}\mathcal{L}$, this function is centered w.r.t. $\pi_{\theta_0}^{\otimes 3}$. Thus, by Lemma~\ref{lemma:main-theorem-lemma-2-a} (i.e., the boundedness of the asymptotic log-likelihood and its derivatives), and Lemma~\ref{lem:Phi-hH-stability} (i.e., the local Lipschitz and polynomial growth of $h$ and its derivatives), this function satisfies all of the conditions required by (a minor modification of) Lemma~17 in \cite{sharrock2023online}.  Thus, the Poisson equation 
\begin{equation}
\mathcal{A}_{\boldsymbol{x}} v^{i,j,k}(\theta,\boldsymbol{x}^{(i,j,k)}) = R^{i,j,k}(\theta,\bar{\boldsymbol{x}}^{(i,j,k)})~~~,~~~\int_{(\mathbb{R}^{d})^3} v^{i,j,k}(\theta,\bar{\boldsymbol{x}}^{(i,j,k)})\pi_{\theta_0}^{\otimes 3}(\mathrm{d}\bar{\boldsymbol{x}}^{(i,j,k)})=0
\end{equation}
has a unique twice differentiable solution satisfying $\sum_{k=0}^{2} \|\frac{\partial^k }{\partial \theta^{k}}v^{i,j,k}(\theta,\bar{\boldsymbol{x}}^{(i,j,k)})\| + \|\frac{\partial^2 }{\partial \theta\partial \bar{\boldsymbol{x}}^{(i,j,k)}}v^{i,j,k}(\theta,\bar{\boldsymbol{x}}^{(i,j,k)})\| \leq K [1+\|\bar{x}^i\|^{q}+\|\bar{x}^j\|^{q}+\|\bar{x}^k\|^{q} ]$. Arguing similar to before (e.g., using It\^{o}'s formula, then rearranging), it is possible to rewrite $\Psi_{t,i,j,k,N}^{(2)}$ in terms of this solution as
\begin{align}
\gamma_{t}^{-\frac{1}{2}} \Psi_{t,i,j,k,N}^{(2)} &= \gamma_{t}^{-\frac{1}{2}}\int_1^t \gamma_s\Phi^{*}_{s,t}\underbrace{\left(\partial_{\theta}\mathcal{L}(\theta_s^{i,j,k,N}) - h_{\pi_{\theta_0}}(\theta_s^{i,j,k,N},\bar{\boldsymbol{x}}_s^{(i,j,k)})\right)\mathrm{d}s}_{R^{i,j,k}(\theta_s,\boldsymbol{x}^{(i,j,k)})\mathrm{d}s} \label{eq3163-recall-2} \\[-1mm]
&=\gamma_{t}^{-\frac{1}{2}} \int_1^t \gamma_s \Phi^{*}_{s,t}\mathrm{d}v_s^{i,j,k}- \gamma_{t}^{-\frac{1}{2}} \int_{1}^t \gamma_s\Phi^{*}_{s,t}\mathcal{A}_{\theta}v^{i,j,k}(\theta_s^{i,j,k,N},\bar{\boldsymbol{x}}_s^{(i,j,k)})\mathrm{d}s \\
&- \gamma_{t}^{-\frac{1}{2}} \int_{1}^t \gamma_s^2 \Phi^{*}_{s,t}\partial_{\theta}v^{i,j,k}(\theta_s^{i,j,k,N},\bar{\boldsymbol{x}}_s^{(i,j,k)})g(\theta_s^{i,j,k,N},\bar{x}_s^{i},\bar{x}_s^{j})\sigma^{-\top}\mathrm{d}w_s^{i,N} \\
&-  \gamma_{t}^{-\frac{1}{2}}\int_{1}^t \gamma_s\Phi^{*}_{s,t}\partial_{\boldsymbol{x}}v^{i,j,k}(\theta_s,\bar{\boldsymbol{x}}_s^{(i,j,k)}) (I_3\otimes \sigma)\mathrm{d}{w}^{(i,j,k)}_s \\
 &-  \gamma_{t}^{-\frac{1}{2}}\int_{1}^t2\gamma^2_s\Phi^{*}_{s,t}\partial_{\theta}\partial_{x}v^{i,j,k}(\theta_s,\bar{\boldsymbol{x}}_s^{(i,j,k)}) g(\theta_s^{i,j,k,N},\bar{x}_s^{i},\bar{x}_s^{j}) \mathrm{d}s \\
 &:= \gamma_{t}^{-\frac{1}{2}}\Pi_{t,i,j,k,N}^{(1)} + \gamma_{t}^{-\frac{1}{2}}\Pi_{t,i,j,k,N}^{(2)} + \gamma_{t}^{-\frac{1}{2}}\Pi_{t,i,j,k,N}^{(3)} + \gamma_{t}^{-\frac{1}{2}}\Pi_{t,i,j,k,N}^{(4)} + \gamma_{t}^{-\frac{1}{2}}\Pi_{t,i,j,k,N}^{(5)}\label{eq:main-bound-clt-2-inf-n}
\end{align}
Following steps similar to those used in the proof of Theorem~\ref{theorem:main-theorem-1-2-finite-n} (e.g., using the PGP of $(x,y)\mapsto g(\theta,x,y)$ from Corollary~\ref{cor:empirical-drift-grad-lip}, the uniform-in-time moment bounds from Theorem~\ref{thm:moment-bounds}, and the conditions on the learning rate from Assumption~\ref{assumption:learning-rate-v2}), we have that 
\begin{equation}
    \gamma_{t}^{-\frac{1}{2}}\left(\Pi_{t,i,j,k,N}^{(1)}  + \Pi_{t,i,j,k,N}^{(2)} + \Pi_{t,i,j,k,N}^{(3)} + \Pi_{t,i,j,k,N}^{(5)}\right) \stackrel{\mathbb{L}_1}{\longrightarrow} 0
\end{equation}
as $t\rightarrow\infty$, and thus also in probability. We will later return to $\Pi_{t,i,j,k,N}^{(4)}$. For now, let us consider $\Omega_{t,i,j,k,N}^{(4)}$. In this case, we will once more make use of a further decomposition, namely,  
\begin{align}
\gamma_t^{-\frac{1}{2}}\Omega_{t,i,j,k,N}^{(4)} &= \gamma_t^{-\frac{1}{2}}\int_1^t   \Phi^{*}_{s,t}   \gamma_s (g(\theta_s^{i,j,k,N},\boldsymbol{x}_s^{i,j,N}) - g(\theta_s^{i,j,k,N},\bar{\boldsymbol{x}}_s^{(i,j)})) \sigma^{-\top} \mathrm{d} w_s^{i,N} 
\\
&+\gamma_t^{-\frac{1}{2}}\int_1^t   \Phi^{*}_{s,t}   \gamma_s g(\theta_s^{i,j,k,N},\bar{\boldsymbol{x}}_s^{(i,j)}) \sigma^{-\top} \mathrm{d} w_s^{i,N} :=\gamma_t^{-\frac{1}{2}}\Phi_{t,i,j,k,N}^{(1)} + \gamma_t^{-\frac{1}{2}}\Phi_{t,i,j,k,N}^{(2)}
\end{align}
We can bound the first term in this decomposition using Lemma~\ref{cor:empirical-drift-grad-lip} (i.e., the function $g$ is locally Lipschitz with polynomial growth). In particular, using this lemma, the Cauchy--Schwarz inequality, and Theorem~\ref{thm:moment-bounds} (i.e., uniform-in-time moment bounds for the IPS and the MVSDE), we have that 
\begin{align}
\label{eq:g-decomp}
    &\mathbb{E}\left[\| g(\theta_s^{i,j,k,N},x_s^{i,N},x_s^{j,N})- g(\theta_s^{i,j,k,N},\bar{x}_s^{i},\bar{x}_s^{j})\|\right] \\
    &\leq K \Big[ \textstyle \sum_{a\in\{i,j,k\}} (\mathbb{E}\left[\|x_s^{a,N} - x_s^{a}\|^2\right])^{\frac{1}{2}} + \sum_{a\in\{i,j,k\}} 
    \left(\mathbb{E}[\|x_s^{a}-\bar{x}_s^{a}\|^2]\right)^{\frac{1}{2}}\Big] 
\end{align}
Using Theorem~\ref{thm:poc} (i.e., uniform-in-time propagation of chaos) and Theorem~\ref{thm:invariant-distribution-2} (i.e., convergence to the invariant distribution), it follows that 
\begin{align*}
    &\mathbb{E}\left[\| g(\theta_s^{i,j,k,N},x_s^{i,N},x_s^{j,N})- g(\theta_s^{i,j,k,N},\bar{x}_s^{i},\bar{x}_s^{j})\|\right] \leq K \Big[\frac{1}{N^{\frac{1}{2(1+\alpha)}}} + a_s^{\frac{1}{2}}\Big].
\end{align*}
where once again we use the shorthand $a_s:=a_s(\mathsf{W}_2(\mu_0,\pi_{\theta_0}))$. Using the $\mathbb{L}_p$ interpolation inequality, and once more  Theorem~\ref{thm:moment-bounds} (i.e., uniform-in-time moment bounds for the IPS and the MVSDE), it follows that, for any $0<\varepsilon<1$, there exists $K=K(\varepsilon)<\infty$ such that 
\begin{align*}
    &\mathbb{E}\left[\| g(\theta_s^{i,j,k,N},x_s^{i,N},x_s^{j,N})- g(\theta_s^{i,j,k,N},\bar{x}_s^{i},\bar{x}_s^{j})\|^2\right] \leq K \Big[\frac{1}{N^{\frac{1-\varepsilon}{2(1+\alpha)}}} + a_s^{\frac{1}{2}(1-\varepsilon)}\Big].
\end{align*}
Recalling the definition of $\Phi_{t,i,j,k,N}^{(1)}$, using It\^{o}'s lemma, and also our previous bounds on $\|\Phi_{s,t}^{*}\|$, we then have that
\begin{align}
    \mathbb{E}\left[\|\gamma_t^{-\frac{1}{2}}\Phi_{t,i,j,k,N}^{(1)}\|^2\right] &\leq K \gamma_t^{-1} \int_1^t \|\Phi_{s,t}^{*}\|^2\,\gamma_s^2 \mathbb{E}\left[\| g(\theta_s^{i,j,k,N},x_s^{i,N},x_s^{j,N})-g(\theta_s^{i,j,k,N},\bar{x}_s^{i},\bar{x}_s^{j}) \|_{\sigma\sigma^{\top}}^2 \right]\mathrm{d}s \nonumber \\
    &= K \Big(\frac{1}{N^{\frac{1-\varepsilon}{2(1+\alpha)}}}\Big)\gamma_t^{-1} \int_1^t \Phi_{s,t}\, \gamma_s^2 \, \mathrm{d}s + K\gamma_t^{-1} \int_1^t \Phi_{s,t}\, \gamma_s^2 \,a_s^{\frac{1}{2}(1-\varepsilon)} \, \mathrm{d}s
\end{align}
By Assumption~\ref{assumption:learning-rate-v2} (i.e., our additional conditions on the learning rate), we have that $\smash{\int_1^t  \Phi_{s,t}\,\gamma_s^2  \mathrm{d}s = O(\gamma_t)}$ as $t\rightarrow\infty$. Moreover, $\smash{\int_1^t  \Phi_{s,t}\,\gamma_s^2\,a_s^{\frac{1}{2}}\, \mathrm{d}s = o(\gamma_t)}$ as $t\rightarrow\infty$, i.e., $\smash{\int_1^t  \Phi_{s,t}\,\gamma_s^2\,a_s^{\frac{1}{2}(1-\varepsilon)}\, \mathrm{d}s = o(\gamma_t)}$ with $\varepsilon=\frac{1}{2}$. It follows, using also the fact that $N=N(t)\rightarrow\infty$ as $t\rightarrow\infty$, that
\begin{equation}
    \gamma_t^{-\frac{1}{2}} \Phi_{t,i,j,k,N}^{(1)} \stackrel{\mathrm{L}^2}{\longrightarrow}0
\end{equation}
as $t\rightarrow\infty$, and so this term converges in probability to zero. It remains to analyse $\smash{\gamma_t^{-\frac{1}{2}}[\Pi_{t,i,j,k,N}^{(4)} + \Phi_{t,i,j,k,N}^{(2)}]}$, which will be responsible for the covariance of the limiting Gaussian random variable. From the definitions, arguing similarly to before, we have 
\begin{align}
    &\gamma_t^{-\frac{1}{2}}[\Pi_{t,i,j,k,N}^{(4)} + \Omega_{t,i,j,k,N}^{(4)}] \\
    &= \gamma_{t}^{-\frac{1}{2}}\left[\int_1^t  \gamma_s  \Phi^{*}_{s,t}    g(\theta_s^{i,j,k,N},\bar{\boldsymbol{x}}_s^{(i,j)}) \sigma^{-\top} \mathrm{d} w_s^{i} - \int_{1}^t \gamma_s\Phi^{*}_{s,t}\partial_{\boldsymbol{y}}v^{i,j,k}(\theta_s^{i,j,k,N},\bar{\boldsymbol{x}}_s^{(i,j,k)}) (I_3\otimes \sigma)\mathrm{d}{w}^{(i,j,k)}_s\right] \\
    &= \gamma_{t}^{-\frac{1}{2}}\left[\int_1^t  \gamma_s  \Phi^{*}_{s,t}  \left(g(\theta_s^{i,j,k,N},\bar{\boldsymbol{x}}_s^{(i,j)})(\sigma\sigma^{\top})^{-1} D_i^{\top} - \partial_{\boldsymbol{y}}v^{i,j,k}(\theta_s^{i,j,k,N},\bar{\boldsymbol{x}}_s^{(i,j,k)})\right)(I_3\otimes \sigma)\mathrm{d}{w}^{(i,j,k)}_s\right]
\end{align}
where $D_i\in\mathbb{R}^{3d\times d}$ is the block-selector matrix such that $\mathrm{d}w_s^{i} = D_i^{\top} \mathrm{d}w_s^{(i,j,k)}$. It is then straightforward to compute the quadratic covariation matrix of these terms as 
\begin{align}
    \Sigma_{t}^{i,j,k} &= \gamma_{t}^{-1}\int_1^t  \gamma_s^2  \Phi^{*}_{s,t} \Gamma^{i,j,k}(\theta_s^{i,j,k,N},\bar{\boldsymbol{x}}_s^{(i,j,k)})\Phi^{*,\top}_{s,t} \mathrm{d} s
\end{align}
where $\Gamma^{i,j,k}(\theta,\bar{\boldsymbol{x}}^{(i,j,k)})=(g(\theta,\bar{\boldsymbol{x}}^{(i,j)}) (\sigma\sigma^{\top})^{-1} D_i^{\top} - \partial_{\boldsymbol{x}}v^{i,j,k}(\theta,\bar{\boldsymbol{x}}^{(i,j,k)}) ) (I_3\otimes (\sigma\sigma^{\top})) (g(\theta,\bar{\boldsymbol{x}}^{(i,j)})(\sigma\sigma^{\top})^{-1} D_i^{\top}  - \partial_{\boldsymbol{x}}v^{i,j,k}(\theta,\bar{\boldsymbol{x}}^{(i,j,k)}))^{\top}$. Similar to before, we will establish the convergence of this covariation matrix in two steps. To be specific, we will first show that there exists a limiting covariance matrix $\bar{\Sigma}^{i,j,k}$ such that
\begin{equation}
\|\bar{\Sigma}_t^{i,j,k} - \bar{\Sigma}^{i,j,k}\|_{1}\rightarrow 0 \label{eq:limiting-cov-inf-n}
\end{equation}
as $t\rightarrow\infty$, where $\bar{\Sigma}_t^{i,j,k}$ is an approximation for $\Sigma_t^{i,j,k}$, in which the central term in the integrand has been replaced by its ergodic average, evaluated at the true parameter, viz $\smash{\bar{\Sigma}_t^{i,j,k} = \gamma_{t}^{-1}\int_1^t  \gamma_s^2  \Phi^{*}_{s,t} \bar{\Gamma}^{i,j,k}(\theta_0)\Phi^{*,\top}_{s,t} \mathrm{d} s}$ with $\smash{\bar{\Gamma}^{i,j,k}(\theta) = \int_{(\mathbb{R}^d)^3}\Gamma^{i,j,k}(\theta,\bar{\boldsymbol{x}}^{(i,j,k)}) \pi^{\otimes 3}_{\theta_0}(\mathrm{d}\bar{\boldsymbol{x}}^{(i,j,k)})}$. We will later also show that 
\begin{equation}
\mathbb{E}[\|\Sigma_t^{i,j,k} - \bar{\Sigma}_t^{i,j,k}\|_1]\rightarrow 0
\end{equation}
as $t\rightarrow\infty$, and hence conclude that $\mathbb{E}[\|\Sigma_t^{i,j,k} - \bar{\Sigma}^{i,j,k}\|_1]\rightarrow 0$ as $t\rightarrow\infty$ via the triangle inequality. For now, to establish \eqref{eq:limiting-cov-inf-n}, we can argue exactly as in the previous proof. In particular, following the steps in \eqref{eq:limit-cov-start} - \eqref{eq:limit-cov-end}, we can show that the limiting covariance matrix is given by
\begin{align}
[\bar{\Sigma}^{i,j,k}]_{m,n} &:= \lim_{t\rightarrow\infty} [\bar{\Sigma}_{t}^{i,j,k}]_{m,n} \\
&= \sum_{p_0,p_1=1}^p  v_{m}^{p_1} v_{p_0}^{p_1} \sum_{p_2=1}^p [\bar{\Gamma}^{i,j,k}(\theta_0)]_{p_0,p_2}\sum_{p_3=1}^p \lim_{t\rightarrow\infty}\left[\gamma_t^{-1}\int_1^t \gamma_s^2  e^{-(\kappa_{p_1} + \kappa_{p_3})\int_{s}^t \gamma_{u}\mathrm{d}u} \mathrm{d}s\right]   v_{p_2}^{p_3} v_{n}^{p_3},
\end{align}
which exists due to our conditions on the learning rate. It remains to show that $\mathbb{E}\|\Sigma_t^{i,j,k} - \bar{\Sigma}_t^{i,j,k}\|_{1}\rightarrow0$ as $t\rightarrow\infty$. To do this, we will consider a similar decomposition to before, namely, 
\begin{align}
    \|\Sigma_{t}^{i,j,k} - \bar{\Sigma}_t^{i,j,k}\| 
    &\leq  \Big\|  \gamma_{t}^{-1} \int_1^t  \gamma_s^2  \Phi^{*}_{s,t} \left[\Gamma^{i,j,k}(\theta_s^{i,j,k,N},\bar{\boldsymbol{x}}_s^{(i,j,k)}) -  \bar{\Gamma}^{i,j,k}(\theta_s^{i,j,k,N})\right]\Phi^{*,\top}_{s,t} \mathrm{d} s \Big\|  \label{eq:delta-triangle-inf-n} \\[-2mm] 
    & +  \Big\| \gamma_{t}^{-1}  \int_1^t  \gamma_s^2  \Phi^{*}_{s,t} \left[\bar{\Gamma}^{i,j,k}(\theta_s^{i,j,k,N})- \bar{\Gamma}^{i,j,k}(\theta_0)\right]\Phi^{*,\top}_{s,t} \mathrm{d} s \Big\| \\
    &:=\Xi_{t,i,j,k}^{(1)} + \Xi_{t,i,j,k}^{(2)}
\end{align}
Similar to the proof of the previous result, we can analyse $\Xi_{t,i,j,k}^{(1)}$ using an appropriate Poisson equation, now given in terms of the linearized, mean-field SDE. In particular, we now define
\begin{equation}
    T^{i,j,k}(\theta,\boldsymbol{x}^{(i,j,k)}) = \Gamma^{i,j,k}(\theta,\boldsymbol{x}^{(i,j,k)}) -  \bar{\Gamma}^{i,j,k}(\theta)
\end{equation}
Due to Corollary~\ref{cor:empirical-drift-grad-lip} (i.e., the polynomial growth of $(x,y)\mapsto g(\theta,{x},y)$ and its derivatives) and the bounds on our existing solution of the Poisson equation (i.e., the polynomial growth of $\bar{\boldsymbol{x}}^{(i,j,k)}\mapsto v^{i,j,k}(\theta,\bar{\boldsymbol{x}}^{(i,j,k)})$ and its derivatives), this function (and its derivatives) are locally Lipschitz with polynomial growth. Moreover, by definition, it is centered with respect to $\pi_{\theta_0}^{\otimes 3}$. Thus, the Poisson equation 
\begin{equation}
\mathcal{A}_{\boldsymbol{y}} [w^{i,j,k}]_{m,n}(\theta,{\boldsymbol{y}}^{(i,j,k)}) = [T^{i,j,k}]_{m,n}(\theta,{\boldsymbol{y}}^{(i,j,k)})~~~,~~~\int_{(\mathbb{R}^{d})^3} [w^{i,j,k}]_{m,n}(\theta,{\boldsymbol{y}}^{(i,j,k)})\pi_{\theta_0}^{\otimes 3}(\mathrm{d}{\boldsymbol{y}}^{(i,j,k)})=0
\end{equation}
where $[A]_{m,n}$ denotes the $(m,n)^{\mathrm{th}}$ element of the matrix $A\in\mathbb{R}^{p\times p}$, has a solution which (element-wise) satisfies a polynomial growth property. Thus, arguing as before (e.g., using the It\^o isometry and our moment bounds), it is possible to show that $\smash{\mathbb{E}[|[\Xi_{t,i,j,k,N}^{(1)}]_{m,n}|] \rightarrow 0}$
as $t\rightarrow\infty$. Thus, in particular, it follows that 
\begin{equation}
    \mathbb{E}[\| \Xi_{t,i,j,k}^{(1)}\|_{1}]\rightarrow 0 \label{eq:delta-1-inf-n}
\end{equation}
as $t\rightarrow\infty$. We now turn our attention to $\Xi_{t,i,j,k}^{(2)}$. In this case, observe that the $(m,n)^{\text{th}}$ element of the matrix can be written as
\begin{align}
[\Xi_{t,i,j,k}^{(2)}]_{m,n}&= \gamma_t^{-1} \int_1^t  \gamma_s^2  \left[\Phi^{*}_{s,t} \left(\bar{\Gamma}^{i,j,k}(\theta_s^{i,j,k,N})- \bar{\Gamma}^{i,j,k}(\theta_0)\right)\Phi^{*,\top}_{s,t}\right]_{m,n} \mathrm{d} s \\
&=\gamma_t^{-1}  \int_1^t  \gamma_s^2  \sum_{p_0=1}^p [\Phi^{*}_{s,t}]_{m,p_0}\sum_{p_1=1}^p[\partial_{\theta}^{\top}\bar{\Gamma}^{i,j,k}(\tilde{\theta}_s^{i,j,k,N})]_{p_0,p_1}(\theta_s^{i,j,k,N} - \theta_0)
[\Phi^{*,\top}_{s,t}]_{p_1,n} \mathrm{d} s
\end{align}
where $\tilde{\theta}_{s}^{i,j,k,N}$ is a point on the line segment connecting $\theta_{s}^{i,j,k,N}$ and $\theta_0^{i,j,k,N}$. Due to Corollary~\ref{cor:empirical-drift-grad-lip} (i.e., the polynomial growth of $(x,y)\mapsto g(\theta,x,y)$ and its derivatives), and the bounds on the solution of the earlier Poisson equation (i.e., the polynomial growth of $\bar{\boldsymbol{x}}^{(i,j,k)}\mapsto v^{i,j,k}(\theta,\bar{\boldsymbol{x}}^{(i,j,k)})$ and its derivatives), the function $\bar{\boldsymbol{x}}^{(i,j,k)}\mapsto \partial_{\theta}{\Gamma}^{i,j,k}(\theta,\bar{\boldsymbol{x}}^{(i,j,k)})$ satisfies a polynomial growth property, uniformly in $\theta\in\Theta$. Thus, by Theorem~\ref{thm:moment-bounds} (i.e., the uniform-in-time moment bounds for the IPS), the function $\smash{\partial_{\theta}\bar{\Gamma}^{i,j,k}(\theta) = \int \partial_{\theta}\Gamma^{i,j,k}(\theta,\bar{\boldsymbol{x}}^{(i,j,k)})\pi_{\theta_0}^{\otimes 3}(\mathrm{d}\bar{\boldsymbol{x}}^{(i,j,k)})}$ is bounded, uniformly in $\theta\in\Theta$. Using this, the Cauchy--Schwarz inequality, and the results of Theorem~\ref{theorem:main-theorem-1-2} (i.e., the $\mathrm{L}^2$ convergence rate), and otherwise arguing as before, it follows that 
\begin{align}
&\mathbb{E}\left[|[\Xi_{t,i,j,k}^{(2)}]_{m,n}|\right] 
\leq K \gamma_t^{-1}  \int_1^t  \gamma_s^{\frac{5}{2}} \Phi_{s,t} \mathrm{d}s + K\Big(\rho(N) + \frac{1}{N^{\frac{1}{2(1+\alpha)}}}\Big)^{\frac{1}{2}} \gamma_t^{-1}  \int_1^t \gamma_s^2 \Phi_{s,t} \mathrm{d}s
\end{align}
where, as usual, we allow the value of the constants to increase from line to line. From Assumption \ref{assumption:learning-rate-v2} (our conditions on the learning rate), we have that $\smash{\int_1^t \gamma_s^{\frac{5}{2}} \Phi_{s,t}\mathrm{d}s = o(\gamma_t)}$ and $\smash{\int_0^t \gamma_s^2\Phi_{s,t}\mathrm{d}s = O(\gamma_t)}$. Thus, using also the fact that $N=N(t)\rightarrow\infty$ as $t\rightarrow\infty$, we have that 
\begin{equation}
    \mathbb{E}[\|\Xi_{t,i,j,k}^{(2)}\|_1]\rightarrow 0 \label{eq:delta-2-inf-n}
\end{equation}
as $t\rightarrow \infty$. Thus, substituting \eqref{eq:delta-1-inf-n} and \eqref{eq:delta-2-inf-n} into \eqref{eq:delta-triangle-inf-n}, we have shown that $\smash{\mathbb{E}[\|\Sigma_{t}^{i,j,k} - \bar{\Sigma}_{t}^{i,j,k}\|_1] \rightarrow 0}$ as $t\rightarrow \infty$. Using this result, the limit \eqref{eq:limiting-cov-inf-n} previously established for $\bar{\Sigma}_t^{i,j,k}$, and the triangle inequality, it follows that
\begin{equation}
    \mathbb{E}\left[\|\Sigma_t^{i,j,k} - \bar{\Sigma}^{i,j,k}\|\right]\leq
    \mathbb{E}\left[\|\Sigma_t^{i,j,k} - \bar{\Sigma}_t^{i,j,k}\|_1\right]
    + \mathbb{E}\left[\|\bar{\Sigma}_t^{i,j,k} - \bar{\Sigma}^{i,j,k}\|_1\right] \longrightarrow 0
\end{equation}
as $t\rightarrow\infty$. This implies, in particular, that $\Sigma_{t}^{i,j,k} \stackrel{\mathbb{P}}{\longrightarrow}\bar{\Sigma}^{i,j,k}$ as $t\rightarrow\infty$. That is, we have established that the quadratic variation of the random variable $\smash{\gamma_t^{-\frac{1}{2}}[\Pi_{t,i,j,k,N}^{(4)} + \Phi_{t,i,j,k,N}^{(2)}]}$ converges in probability to $\bar{\Sigma}^{i,j,k}$ in the limit as $t\rightarrow\infty$ and $N\rightarrow\infty$ (at the required rate). It now follows using standard results \citep[e.g.,][Section~1.2.2]{kutoyants2004statistical} that 
\begin{equation}
\gamma_t^{-\frac{1}{2}}\left[\Pi_{t,i,j,k,N}^{(4)} + \Phi_{t,i,j,k,N}^{(2)}\right]\stackrel{\mathrm{d}}{\longrightarrow}\mathcal{N}(0,\bar{\Sigma}^{i,j,k})
\end{equation}
as $t\rightarrow\infty$ and $N\rightarrow\infty$ (at the required rate). This result, combined with the decomposition in \eqref{main-bound-clt-inf-n}, the further decomposition in \eqref{eq:main-bound-clt-2-inf-n}, and the convergence of all other terms to zero in probability, implies that 
\begin{align}
    \gamma_t^{-\frac{1}{2}}(\theta_t^{i,j,k,N} - \theta_0)
    \stackrel{\mathrm{d}}{\longrightarrow}\mathcal{N}(0,\bar{\Sigma}^{i,j,k}).
\end{align}
\end{proof}


\section{Additional Results}
\label{app:additional-results}

\subsection{Additional Lemmas for Proposition \ref{prop:inf-n-convergence-1}}
\label{app:additional-results-prop-inf-n}

\begin{proposition}
\label{prop:ips-likelihood-t-limit-recall}
    Suppose that Assumption \ref{assumption:moments}, Assumption \ref{assumption:drift}, and Assumption \ref{assumption:drift-grad} (with $k=0$) hold. Then, as $t\rightarrow\infty$, it holds that
    \begin{align}
         \frac{1}{t}[\mathcal{L}_t^{i,N}(\theta) - \mathcal{L}_t^{i,N}(\theta_0)] &\xrightarrow[\mathrm{L}^1]{\mathrm{a.s.}} -\frac{1}{2}\int_{(\mathbb{R}^d)^{N}}  \left\|B^{i,N}(\theta,\boldsymbol{x}^{N}) - B^{i,N}(\theta_0,\boldsymbol{x}^{N})\right\|_{\sigma\sigma^{\top}}^2 \pi_{\theta_0}^N(\mathrm{d}\boldsymbol{x}^N)  \label{eq:l-i-asymptotic} \\
    \frac{1}{t}\left[\mathcal{L}_t^{i,j,k,N}(\theta) - \mathcal{L}_t^{i,j,k,N}(\theta_0)\right] &\xrightarrow[\mathrm{L}^1]{\mathrm{a.s.}} -\frac{1}{2}\int_{(\mathbb{R}^d)^{N}}  \big\langle b(\theta,{x}^{i,N},{x}^{j,N}) - B^{i,N}(\theta_0,\boldsymbol{x}^N), \nonumber \\[-2mm]
    &\hspace{28mm}b(\theta,{x}^{i,N},{x}^{k,N}) - B^{i,N}(\theta_0,\boldsymbol{x}^{N})\big\rangle_{\sigma\sigma^{\top}} \pi_{\theta_0}^{N}(\mathrm{d}\boldsymbol{x}^N). \label{eq:l-ijk-asymptotic}
\end{align}
\end{proposition}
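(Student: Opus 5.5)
The plan is to mirror the proof of Proposition~\ref{prop:ips-likelihood-t-limit}, now applied to the single-particle contrasts \eqref{eq:IPS-incomplete-likelihood-1}--\eqref{eq:IPS-incomplete-likelihood-2}. For the first claim, \eqref{eq:IPS-incomplete-likelihood-1} already expresses $\tfrac{1}{t}[\mathcal{L}_t^{i,N}(\theta)-\mathcal{L}_t^{i,N}(\theta_0)]$ as
\[
-\tfrac{1}{t}\int_0^t L^{i,N}(\theta,\boldsymbol{x}_s^N)\,\mathrm{d}s+\tfrac{1}{t}M_{i,t}^N .
\]
This is exactly the $i$-th summand in \eqref{eq:particle-lik-decomp-v2}. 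The proof of Proposition~\ref{prop:ips-likelihood-t-limit} treated each summand separately before summing over $i$, so I will simply reuse it. The drift part converges a.s. by the ergodic theorem for the IPS (Theorem~\ref{thm:invariant-distribution}), using integrability of $L^{i,N}$ under $\pi_{\theta_0}^N$ from polynomial growth and Theorem~\ref{thm:moment-bounds}. It converges in $\mathrm{L}^1$ by uniform integrability via the $(1+\delta)$-moment bound and Vitali. The martingale part vanishes a.s. by the strong law for continuous local martingales, since $\tfrac1t\langle M_i^N\rangle_t$ converges, and in $\mathrm{L}^1$ by BDG, which gives a rate $t^{-1/2}$.

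For the second claim, I first need a concrete form of \eqref{eq:IPS-incomplete-likelihood-2}. I would define the contrast directly, as the paper does, by
\[
\int_0^t\langle b(\theta,x_s^{i,N},x_s^{j,N}),(\sigma\sigma^\top)^{-1}\mathrm{d}x_s^{i,N}\rangle-\tfrac12\int_0^t\langle b(\theta,x_s^{i,N},x_s^{j,N}),b(\theta,x_s^{i,N},x_s^{k,N})\rangle_{\sigma\sigma^\top}\mathrm{d}s .
\]
Substituting $\mathrm{d}x_s^{i,N}=B^{i,N}(\theta_0,\boldsymbol{x}_s^N)\mathrm{d}s+\sigma\mathrm{d}w_s^{i,N}$ and completing the square turns the drift part into $-\int_0^t\ell^{i,j,k,N}(\theta,\boldsymbol{x}_s^N)\,\mathrm{d}s$. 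The leftover terms are of two kinds.

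\begin{itemize}
\item Terms that do not depend on $\theta$. These are absorbed into the additive constant ``$+c$'', which is evaluated at $\theta_0$.
\item Terms of the form $\tfrac12\langle b(\theta,\cdot,x^j)-b(\theta,\cdot,x^k),\,B^{i,N}(\theta_0)\rangle$.
\end{itemize}

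The second kind requires care. One option is to symmetrise in $j$ and $k$ as in the proof of Proposition~\ref{prop:asymptotic-partial-log-lik-grad}. That works because $\ell$ is symmetric in $(j,k)$ and $\pi_{\theta_0}^N$ is exchangeable, so these cross terms have zero $\pi_{\theta_0}^N$-mean. Their time averages therefore tend to $0$ by the same ergodic argument. The remaining martingale is $\int_0^t\langle\Delta b(\theta,x_s^{i,N},x_s^{j,N}),\sigma\mathrm{d}w_s^{i,N}\rangle_{\sigma\sigma^\top}$, with the difference $\Delta b$ taken relative to the $\theta_0$ normalisation.

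The convergence steps for the second claim then repeat those of the first. The drift is $\ell^{i,j,k,N}(\theta,\boldsymbol{x}_s^N)$. It has polynomial growth by Assumption~\ref{assumption:drift-grad} with $k=0$, through the local Lipschitz and polynomial growth bounds on $b$ in the corollaries already cited. So it lies in $\mathrm{L}^{1+\delta}(\pi_{\theta_0}^N)$ with moment bounds uniform in time, and the ergodic theorem plus Vitali give a.s. and $\mathrm{L}^1$ convergence to $\int\ell^{i,j,k,N}\,\mathrm{d}\pi_{\theta_0}^N$. This limit is precisely the right-hand side of \eqref{eq:l-ijk-asymptotic}. The martingale has quadratic variation $\int_0^t\|\Delta b\|^2_{\sigma\sigma^\top}\mathrm{d}s=O(t)$ a.s. and in expectation, so both the strong law and BDG apply as before.

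I expect the main obstacle to be the bookkeeping in the second claim. The pseudo-likelihood \eqref{eq:IPS-incomplete-likelihood-2} is defined only up to a constant, and I have to check that the non-symmetric cross terms produced by completing the square either are genuinely $\theta$-independent or have vanishing ergodic average. If the exchangeability argument fails for a cross term, the fallback is to define $\mathcal{L}_t^{i,j,k,N}$ as the $(j,k)$-symmetrised contrast from the outset. That leaves both the limit and the gradients in Proposition~\ref{prop:asymptotic-partial-log-lik-grad} unchanged.
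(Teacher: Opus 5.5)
Your proposal is correct and follows the paper's proof. The first claim is read off from the $i$-th summand in the proof of Proposition~\ref{prop:ips-likelihood-t-limit}, and the second uses the same tools you list: the ergodic theorem with polynomial-growth integrability, the strong law for continuous local martingales, Vitali via a $(1+\delta)$-moment bound, and BDG. The one difference is that the paper takes \eqref{eq:IPS-incomplete-likelihood-2} as the definition of the contrast, so no cross term ever arises. Your extra step is still valid: $\tfrac12\langle b(\theta,x^{i,N},x^{j,N})-b(\theta,x^{i,N},x^{k,N}),B^{i,N}(\theta_0,\boldsymbol{x}^N)\rangle_{\sigma\sigma^{\top}}$ has zero $\pi_{\theta_0}^N$-mean because $\pi_{\theta_0}^N$ and $B^{i,N}(\theta_0,\cdot)$ are both invariant under swapping $x^{j,N}$ and $x^{k,N}$, not because $\ell$ is symmetric.
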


\begin{proof}
    The result in \eqref{eq:l-i-asymptotic} was established in the proof of Proposition~\ref{prop:ips-likelihood-t-limit}. It remains to prove the result in \eqref{eq:l-ijk-asymptotic}. The proof will follow essentially the same template. Recalling the definition of $\mathcal{L}_t^{i,j,k,N}$ from \eqref{eq:IPS-incomplete-likelihood-2}, we have that 
    \begin{align}
        &\frac{1}{t}\left[\mathcal{L}_t^{i,j,k,N}(\theta) - \mathcal{L}_t^{i,j,k,N}(\theta_0)\right]  
= -\frac{1}{t}\int_{0}^t \ell^{i,j,k,N}(\theta,\boldsymbol{x}_s^N) \mathrm{d}s + \frac{1}{t}\int_0^t \big\langle  \Delta b(\theta,x_s^{i,N},x_s^{j,N}), \sigma \mathrm{d}w_s^{i,N}\big\rangle_{\sigma\sigma^{\top}}. \label{eq:IPS-incomplete-likelihood-2-recall-2} 
    \end{align}
    By Theorem~\ref{thm:invariant-distribution}, the IPS is ergodic, and admits a unique invariant measure $\smash{\pi_{\theta_0}^N \in \mathcal{P}((\mathbb{R}^d)^N)}$. By Theorem~\ref{thm:moment-bounds} (i.e., uniform-in-time moment bounds for the IPS) and Corollary~\ref{cor:empirical-L-lip} (i.e., the polynomial growth of $\ell^{i,j,k,N}$),  $\ell^{i,j,k,N}(\theta,\boldsymbol{x}^N) \in L^1(\pi_{\theta_0}^N)$. It follows by the ergodic theorem \citep[e.g.,][Theorem 4.2]{khasminskii2012stochastic} that 
    \begin{align}
        & \frac{1}{t}\Big[ \int_0^t \ell^{i,j,k,N}(\theta,\boldsymbol{x}_s^N)\mathrm{d}s \Big] 
        \stackrel{\mathrm{a.s.}}{\longrightarrow}
        \int_{(\mathbb{R}^d)^{N}}  \ell^{i,j,k,N}(\theta,\boldsymbol{x}^N) \pi_{\theta_0}^N(\mathrm{d}\boldsymbol{x}^N) \label{eq:ell-ergodic}
    \end{align}
    as $t\rightarrow\infty$. We now show that the second term in \eqref{eq:IPS-incomplete-likelihood-2-recall-2} converges a.s. to zero. To do so, we first define the continuous local martingales $\smash{M_{i,j,t}^N 
    =\int_0^t \langle  \Delta b^{i,j,N}(\theta,\boldsymbol{x}_s^N) , \sigma \mathrm{d}w_s^{i,N}\big\rangle_{\sigma\sigma^{\top}}}$. It is straightforward to compute the quadratic variation of these martingales as $\smash{\langle M_{i,j}^N\rangle_{t} = \int_0^t \|\Delta b^{i,j,N}(\theta,\boldsymbol{x}_s^N)\|_{\sigma\sigma^{\top}}^2 \mathrm{d}s}$. Reasoning similarly to before, the integrand belongs to $L^1(\pi_{\theta_0}^N)$. Thus, by the ergodic theorem, we have a.s. that $\smash{\frac{1}{t}\langle M_{i,j}^N\rangle_{t} \rightarrow \int_{(\mathbb{R}^d)^N} \|\Delta b^{i,j,N}(\theta,\boldsymbol{x}^N) \|_{\sigma\sigma^{\top}}^2 \pi_{\theta_0}^N(\mathrm{d}\boldsymbol{x}^N) <\infty}$ as $t\rightarrow\infty$. It follows, using the strong law of large numbers for continuous local martingales \citep[e.g.,][Theorem 1.3.4]{mao2008stochastic} that, as $t\rightarrow\infty$,  
    \begin{equation} 
    \frac{1}{t}M_{i,j,t}^N:=\frac{1}{t}\int_0^t \langle  \Delta b(\theta,x_s^{i,N},x_s^{j,N}), \sigma \mathrm{d}w_s^{i,N}\big\rangle_{\sigma\sigma^{\top}}\stackrel{\mathrm{a.s.}}{\longrightarrow} 0. \label{eq:martingale-finish} 
    \end{equation}
    Substituting \eqref{eq:ell-ergodic} and \eqref{eq:martingale-finish} into \eqref{eq:IPS-incomplete-likelihood-2-recall-2} establishes the a.s. statement in \eqref{eq:l-ijk-asymptotic}. We now show that the convergence in \eqref{eq:l-ijk-asymptotic} also holds in $\mathrm{L}^1$. Using Corollary~\ref{cor:empirical-L-lip} (i.e., polynomial growth of $\ell^{i,j,k,N}$) and Theorem~\ref{thm:moment-bounds} (i.e., uniform-in-time moment bounds for the IPS), for each $\delta>0$ there exists $K_{\delta}<\infty$ such that $\sup_{s\ge0}\mathbb{E}\big[|\ell^{i,j,k,N}(\theta,\boldsymbol x_s^N)|^{1+\delta}\big] <K_{\delta}$. Thus, according to Jensen's inequality, it holds, uniformly in $t\geq 1$, that
\begin{equation}
\mathbb{E}\Big[\Big|\frac1t\int_0^t \ell^{i,j,k,N}(\theta,\boldsymbol{x}_s^N)\,\mathrm ds\Big|^{1+\delta}\Big]
\le \frac1t\int_0^t \mathbb{E}\big[|\ell^{i,j,k,N}(\theta,\boldsymbol x_s^N)|^{1+\delta}\big]\,\mathrm ds
\le K_{\delta},
\end{equation}
It follows that the family of random variables $\{\frac{1}{t}\int_0^t \ell^{i,j,k,N}(\theta,\boldsymbol{x}_s^N)\,\mathrm ds\}_{t\geq 1}$ is uniformly integrable. This, combined with the a.s. convergence already established in \eqref{eq:ell-ergodic}, and Vitali's theorem, yields
\begin{align}
        & \frac{1}{t}\Big[ \int_0^t \ell^{i,j,k,N}(\theta,\boldsymbol{x}_s^N)\mathrm{d}s \Big] 
        \stackrel{\mathrm{L}^1}{\longrightarrow}
        \int_{(\mathbb{R}^d)^{N}}  \ell^{i,j,k,N}(\theta,\boldsymbol{x}^N) \pi_{\theta_0}^N(\mathrm{d}\boldsymbol{x}^N). \label{eq:ell-ergodic-l1}
    \end{align}
For the martingale term, using the BDG inequality, together with Jensen's inequality, we have that $\smash{\mathbb{E}[|\frac{1}{t}M_{i,j,t}^N|]\le \frac{K}{t}\mathbb{E}[\langle M_{i,j}^N\rangle_t^{1/2}] \le \frac{K}{t}(\mathbb{E}[\langle M_{i,j}^N\rangle_t])^{1/2} = \frac{K}{t}(\int_0^t \mathbb{E}[\|\Delta b^{i,j,N}(\theta,\boldsymbol{x}_s^N)\|^2]\,\mathrm ds)^{1/2}}$. By Corollary~\ref{cor:empirical-drift-lip} (i.e., polynomial growth of $b^{i,j,N}$) and Theorem~\ref{thm:moment-bounds} (i.e., uniform-in-time moment bounds for the IPS), there exist $K<\infty$ such that $\smash{\sup_{s\ge0}\mathbb{E}[\|\Delta b^{i,j,N}(\theta,\boldsymbol{x}_s^N)\|^2]<K}$.
Thus, combining with the previous display, it follows that $\smash{\mathbb{E}[|\frac{1}{t}M_{i,j,t}^N|]\le \frac{K}{t} t^{\frac{1}{2}} = Kt^{-\frac{1}{2}}}$, which implies that
\begin{equation}
\frac{1}{t}M_{i,j,t}^N\stackrel{\mathrm{L}^1}{\longrightarrow}0. \label{eq:mart-ijk-l1}
\end{equation}
Substituting \eqref{eq:ell-ergodic-l1} and \eqref{eq:mart-ijk-l1} into \eqref{eq:IPS-incomplete-likelihood-2-recall-2} establishes the $\mathrm{L}^1$ convergence statement in \eqref{eq:l-ijk-asymptotic}. 
\end{proof}

\begin{lemma}
\label{prop:asymptotic-partial-log-lik-grad-l1-convergence}
    Suppose that Assumption \ref{assumption:moments}, Assumption \ref{assumption:drift}, and Assumption \ref{assumption:drift-grad} hold. Then, for all $N\in\mathbb{N}$, and for all distinct $i,j,k\in[N]$, as $t\rightarrow\infty$, it holds that
\begin{align}
    -\frac{1}{t}\partial_{\theta}\mathcal{L}_t^{i,N}(\theta)&\xrightarrow[\mathrm{L}^1]{\mathrm{a.s.}}\partial_{\theta}\mathcal{L}^{i,N}(\theta) \label{eq:lt-i-n-limit} \\
    -\frac{1}{t}\partial_{\theta}\mathcal{L}_t^{i,j,k,N}(\theta)&\xrightarrow[\mathrm{L}^1]{\mathrm{a.s.}}\partial_{\theta}\mathcal{L}^{i,j,k,N}(\theta) \label{eq:lt-ijk-n-limit}
\end{align}
\end{lemma}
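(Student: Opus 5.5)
We prove the lemma by differentiating the defining identities for the pseudo log-likelihoods in \eqref{eq:IPS-incomplete-likelihood-1}--\eqref{eq:IPS-incomplete-likelihood-2} with respect to $\theta$, and then repeating the ergodic and martingale argument used for Proposition~\ref{prop:ips-likelihood-t-limit-recall}.

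The first step is the differentiation. The additive constant in \eqref{eq:IPS-incomplete-likelihood-2} does not depend on $\theta$, so it disappears. Since $\partial_\theta L(\theta,x,\mu)=H(\theta,x,\mu)$ and $\partial_\theta \Delta B = G$, we obtain
\begin{equation}
-\tfrac{1}{t}\partial_\theta\mathcal{L}_t^{i,N}(\theta) = \tfrac1t\int_0^t H^{i,N}(\theta,\boldsymbol{x}_s^N)\,\mathrm{d}s - \tfrac1t\int_0^t G^{i,N}(\theta,\boldsymbol{x}_s^N)\sigma^{-\top}\mathrm{d}w_s^{i,N}.
\end{equation}
For the three-particle version we use $\partial_\theta \Delta b(\theta,x^i,x^j) = g(\theta,x^i,x^j)$. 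The drift term needs more care, because the drift in \eqref{eq:IPS-incomplete-likelihood-2} is $\ell$, whose $\theta$-gradient is the symmetrised $\tfrac12(h^{i,j,k,N}+h^{i,k,j,N})$ (cf.\ \eqref{eq:grad-ell-ijk-config}). The contrast must therefore be read so that the $\theta$-dependence of the drift integrand matches the update \eqref{eq:IPS_update2-a-v0}. Either way, the time average of $\partial_\theta\ell^{i,j,k,N}$ converges to the same limit, because exchangeability of $\pi^N_{\theta_0}$ makes the integrals of $h^{i,j,k,N}$ and $h^{i,k,j,N}$ coincide, exactly as at the end of the proof of Proposition~\ref{prop:asymptotic-partial-log-lik-grad}.

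The differentiation under the stochastic integral has to be justified. We would do this either by noting that $\theta\mapsto\Delta b$ is $C^3$ with polynomial-growth derivatives (Assumption~\ref{assumption:drift-grad} with $k=1,2$), so that a Kolmogorov-type continuity argument gives a $\theta$-differentiable version of the stochastic integral, or simply by taking the differentiated expression as the definition of $\partial_\theta\mathcal{L}^{i,j,k,N}_t$. This is the step we expect to be the main technical obstacle. We would handle it by citing standard results on differentiability of parameter-dependent stochastic integrals, using the local Lipschitz bounds in Corollary~\ref{cor:empirical-drift-grad-lip}.

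With the derivatives in hand, the drift integrands $H^{i,N}$ and $h^{i,j,k,N}$ have polynomial growth by Corollary~\ref{cor:empirical-H-lip}, so they lie in $L^1(\pi^N_{\theta_0})$ by Theorem~\ref{thm:moment-bounds}. The ergodic theorem (Theorem~\ref{thm:invariant-distribution}) then gives almost sure convergence of the time averages to $\int H^{i,N}\,\mathrm{d}\pi^N_{\theta_0}=\partial_\theta\mathcal{L}^{i,N}(\theta)$, and likewise for $h^{i,j,k,N}$, by Proposition~\ref{prop:asymptotic-partial-log-lik-grad}. Uniform-in-time bounds on the $(1+\delta)$-moments, Jensen's inequality and Vitali's theorem upgrade this to $\mathrm{L}^1$ convergence.

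For the martingale terms, applied componentwise, the quadratic variation is $\int_0^t\|G^{i,N}\sigma^{-\top}\|^2\,\mathrm{d}s$ (respectively with $g^{i,j,N}$). Its time average converges almost surely to a finite limit by the ergodic theorem. The strong law of large numbers for martingales \citep[Theorem 1.3.4]{mao2008stochastic} then gives almost sure convergence of $t^{-1}M_t$ to zero. For the $\mathrm{L}^1$ statement, the BDG inequality together with the uniform bound $\sup_s\mathbb{E}\|g^{i,j,N}\|^2<\infty$ gives $\mathbb{E}|t^{-1}M_t|\le Kt^{-1/2}$. Combining the drift and martingale limits proves \eqref{eq:lt-i-n-limit} and \eqref{eq:lt-ijk-n-limit}.
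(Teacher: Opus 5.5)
Your proposal is correct and follows essentially the same argument as the paper. You differentiate the contrast, apply the ergodic theorem to the symmetrised drift $\tfrac12(h^{i,j,k,N}+h^{i,k,j,N})$ and use exchangeability of $\pi^N_{\theta_0}$ to identify the limit, remove the martingale term with the strong law for continuous local martingales, and upgrade to $\mathrm{L}^1$ via uniform integrability plus an It\^o-isometry/BDG bound.

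There are two minor differences. First, the paper deduces \eqref{eq:lt-i-n-limit} from \eqref{eq:lt-ijk-n-limit} via the identity $\partial_\theta\mathcal{L}_t^{i,N}=N^{-2}\sum_{j,k}\partial_\theta\mathcal{L}_t^{i,j,k,N}$, whereas you rerun the argument directly with $H^{i,N}$ and $G^{i,N}$; your route avoids needing the three-particle result for non-distinct index triples. Second, the paper differentiates under the stochastic integral formally, without the justification you flag.
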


\begin{proof}
This proof will closely resemble the proof of Proposition \ref{prop:ips-likelihood-t-limit-recall}.  We begin by establishing \eqref{eq:lt-ijk-n-limit}. Working from the definition in \eqref{eq:IPS-incomplete-likelihood-2}, and simplifying, we have  
    \begin{align}
    -\frac{1}{t}\partial_{\theta}\mathcal{L}_t^{i,j,k,N}(\theta) 
    &:=  \frac{1}{t}\int_0^t \frac{1}{2}\left(h^{i,j,k,N}(\theta,\boldsymbol{x}_s^{N}) + h^{i,k,j,N}(\theta,\boldsymbol{x}_s^N)\right)\mathrm{d}s -\frac{1}{t}\int_0^t g^{i,j,N}(\theta,\boldsymbol{x}_s^N) (\sigma\sigma^{\top})^{-1} \sigma \mathrm{d}w_s^{i,N}.  \label{eq:grad-log-lik-ijk-1-v0}
\end{align}
By Theorem~\ref{thm:invariant-distribution}, the IPS is ergodic, and admits a unique invariant
measure $\smash{\pi_{\theta_0}^N}$. By
Corollary~\ref{cor:empirical-H-lip} (i.e., polynomial growth for $h^{i,j,k,N}$) and Theorem~\ref{thm:moment-bounds} (i.e., uniform-in-time moment bounds), the functions $\smash{\boldsymbol x^N\mapsto h^{i,j,k,N}(\theta,\boldsymbol x^N)}$ and $\smash{\boldsymbol x^N\mapsto h^{i,k,j,N}(\theta,\boldsymbol x^N)}$ belong to $\smash{L^1(\pi_{\theta_0}^N)}$. Hence, by the ergodic theorem, 
\begin{align}
\frac1t\int_0^t \frac{1}{2}\left(h^{i,j,k,N}(\theta,\boldsymbol x_s^N) + h^{i,k,j,N}(\theta,\boldsymbol x_s^N)\right)\,\mathrm ds
&\stackrel{a.s.}{\longrightarrow}
\int_{(\mathbb R^d)^N} \frac{1}{2}\left(h^{i,j,k,N}(\theta,\boldsymbol x^N)+h^{i,k,j,N}(\theta,\boldsymbol x^N)\right)\,\pi_{\theta_0}^N(\mathrm d\boldsymbol x^N) \notag
\\
&~~~=\int_{(\mathbb R^d)^N} h^{i,j,k,N}(\theta,\boldsymbol x^N)\,\pi_{\theta_0}^N(\mathrm d\boldsymbol x^N).  \label{eq:h-ergodic}
\end{align}
where the second line follows from the definition of $\smash{h^{i,j,k,N}}$ and the exchangeability of $\pi_{\theta_0}^N$. By Proposition~\ref{prop:asymptotic-partial-log-lik-grad}, the integral in  \eqref{eq:h-ergodic} is equal to $\smash{\partial_{\theta}\mathcal{L}^{i,j,k,N}(\theta)}$. Thus, we have established that the first term in \eqref{eq:grad-log-lik-ijk-1-v0} converges a.s. to the desired limit. We next show that the second term in \eqref{eq:grad-log-lik-ijk-1-v0} converges a.s. to zero. To do so, let us first define the continuous local martingale $\smash{M_{i,j,t}^N = \int_0^t g^{i,j,N}(\theta,\boldsymbol{x}_s^N) (\sigma\sigma^{\top})^{-1} \sigma \mathrm{d}w_s^{i,N}}$, with quadratic variation $\smash{\langle M_{i,j}^N\rangle_t= \int_0^t \|g^{i,j,N}(\theta,\boldsymbol x_s^N)\|_{\sigma\sigma^\top}^2\,\mathrm ds}$. By Corollary~\ref{cor:empirical-drift-grad-lip} (i.e., polynomial growth of $g^{i,j,N}$) and Theorem~\ref{thm:moment-bounds} (i.e., uniform-in-time moment bounds for the IPS), the integrand belongs to $L^1(\pi_{\theta_0}^N)$. Thus, once more applying the ergodic theorem, we have that $\smash{\frac1t\langle M_{i,j}^N\rangle_t
\stackrel{\mathrm{a.s.}}{\longrightarrow}
\int_{(\mathbb R^d)^N} \|g^{i,j,N}(\theta,\boldsymbol x^N)\|_{\sigma\sigma^\top}^2\,
\pi_{\theta_0}^N(\mathrm d\boldsymbol x^N)<\infty}$ as $t\rightarrow\infty$. It follows, via the strong law of large numbers for continuous local martingales \citep[e.g.,][Theorem 1.3.4]{mao2008stochastic}, that
\begin{equation}
\frac{M_{i,j,t}^{N}}{t}:=\frac{1}{t}\int_0^t g^{i,j,N}(\theta,\boldsymbol{x}_s^N) (\sigma\sigma^{\top})^{-1} \sigma \mathrm{d}w_s^{i,N}\stackrel{\mathrm{a.s.}}{\longrightarrow}0.
\label{eq:martingale-sl}
\end{equation}
Substituting \eqref{eq:h-ergodic} and \eqref{eq:martingale-sl} into \eqref{eq:grad-log-lik-ijk-1-v0}, establishes the a.s. convergence result in \eqref{eq:lt-ijk-n-limit}. We will now show that this convergence also holds in $\mathrm{L}^1$. First, using Jensen, Cauchy--Schwarz, and the elementary inequality $(a+b)^2\leq 2(a^2+b^2)$, we have $\mathbb{E}[\|\frac1t\int_0^t \frac{1}{2}(h^{i,j,k,N}(\theta,\boldsymbol x_s^N) + h^{i,k,j,N}(\theta,\boldsymbol{x}_s^N))\,\mathrm ds\|^2] \le \frac{1}{2t}\int_0^t [\mathbb{E}[\|h^{i,j,k,N}(\theta,\boldsymbol x_s^N)\|^2] + \mathbb{E}[\|h^{i,k,j,N}(\theta,\boldsymbol x_s^N)\|^2] ]\,\mathrm ds$. By Lemma~\ref{lem:Phi-hH-stability} (i.e., $h$ satisfies a polynomial growth property) and Theorem~\ref{thm:moment-bounds} (i.e., uniform-in-time moment bounds), we have $\sup_{s\ge0}\mathbb{E}[\|h^{i,j,k,N}(\theta,\boldsymbol x_s^N)\|^2]<\infty$ and $\sup_{s\ge0}\mathbb{E}[\|h^{i,k,j,N}(\theta,\boldsymbol x_s^N)\|^2]<\infty$. Thus, the family $\smash{\{\frac1t\int_0^t \frac{1}{2}\left(h^{i,j,k,N}(\theta,\boldsymbol x_s^N)+h^{i,k,j,N}(\theta,\boldsymbol x_s^N)\right)\mathrm ds\}_{t\ge1}}$ is bounded in $\mathrm{L}^2$, and so uniformly integrable. Together with the a.s. convergence from before, Vitali's theorem then implies
\begin{align}
\frac1t\int_0^t \frac{1}{2}\left(h^{i,j,k,N}(\theta,\boldsymbol x_s^N) + h^{i,k,j,N}(\theta,\boldsymbol x_s^N)\right)\,\mathrm ds
\stackrel{\mathrm{L}^1}{\longrightarrow}
\partial_{\theta}\mathcal{L}^{i,j,k,N}(\theta).
\label{eq:ergodic-h-l1}
\end{align}
We now consider the stochastic integral. By the  It\^{o} isometry,  $\smash{\mathbb E[\|\frac{1}{t}M_{i,j,t}^{N}\|^2]=\frac{1}{t^2}\,\mathbb E[\langle M_{i,j}^N\rangle_t]}$. Meanwhile, by Corollary~\ref{cor:empirical-drift-grad-lip} (i.e., $g^{i,j,N}$ satisfies a polynomial growth property) and Theorem~\ref{thm:moment-bounds} (i.e., uniform-in-time moment bounds for the IPS), there exists $K<\infty$ such that $\smash{\mathbb E[\langle M_{i,j}^N\rangle_t]=\mathbb{E}[\int_0^t \|g^{i,j,N}(\theta,\boldsymbol x_s^N)\|_{\sigma\sigma^\top}^2\,\mathrm ds]\leq Kt}$. It follows that $\smash{\mathbb E[\|\frac{1}{t}M_{i,j,t}^{N}\|^2]\leq \frac{K}{t}\rightarrow0}$, i.e., $\frac{1}{t}M_{i,j,t}^{N}(\theta)\to0$ in $\mathbb \mathrm{L}^2$ as $t\rightarrow\infty$. But this immediately implies that
\begin{equation}
    \frac{1}{t}M_{i,j,t}^{N}:=\frac{1}{t}\int_0^t g^{i,j,N}(\theta,\boldsymbol{x}_s^N) (\sigma\sigma^{\top})^{-1} \sigma \mathrm{d}w_s^{i,N}\stackrel{\mathrm{L}^1}{\longrightarrow}0 \label{eq:martingale-l1}
\end{equation}
Substituting \eqref{eq:ergodic-h-l1}  and \eqref{eq:martingale-l1} into \eqref{eq:grad-log-lik-ijk-1-v0}, we obtain the $\mathrm{L}^1$ convergence in \eqref{eq:lt-ijk-n-limit}. This completes the proof of \eqref{eq:lt-ijk-n-limit}. 

It remains to establish \eqref{eq:lt-i-n-limit}. From the definitions, we have that 
$\smash{\partial_{\theta} \mathcal L_t^{i,N}(\theta)
=
\frac{1}{N^2}\sum_{j,k=1}^N \partial_{\theta} \mathcal L_t^{i,j,k,N}(\theta)}$ and so $\smash{-\frac{1}{t}\partial_{\theta}\mathcal{L}_t^{i,N}(\theta) = -\frac{1}{t}[\frac{1}{N^2}\sum_{j,k=1}^N \partial_{\theta} \mathcal{L}_t^{i,j,k,N}(\theta)]= \frac{1}{N^2}\sum_{j,k=1}^N [-\frac{1}{t}\partial_{\theta} \mathcal{L}_t^{i,j,k,N}(\theta)]}$. Using this identity, the fact that the sum is finite (so we may interchange limits and sums), and the convergence results just established, we have as required that $\smash{-\frac{1}{t}\partial_{\theta}\mathcal{L}_t^{i,N}(\theta) =\frac{1}{N^2}\sum_{j,k=1}^N [-\frac{1}{t} \partial_{\theta} \mathcal{L}_t^{i,j,k,N}(\theta)] \rightarrow \frac{1}{N^2}\sum_{j,k=1}^N\partial_{\theta}\mathcal{L}^{i,j,k,N}(\theta) =\partial_{\theta}\mathcal{L}^{i,N}(\theta)}$ both a.s. and in $\mathrm{L}^1$.
\end{proof}

\begin{lemma}
\label{prop:asymptotic-partial-log-lik-grad-l1-convergence-mvsde}
    Suppose that Assumption \ref{assumption:moments}, Assumption \ref{assumption:drift}, and Assumption \ref{assumption:drift-grad} (with $k=0,1$) hold. Then, as $t\rightarrow\infty$, it holds that
\begin{align}
    -\frac{1}{t}\partial_{\theta}\mathcal{L}_t^{i}(\theta)&\xrightarrow[]{\mathrm{L}^1}\partial_{\theta}\mathcal{L}(\theta) \label{eq:lt-i-limit} \\
    -\frac{1}{t}\partial_{\theta}\mathcal{L}_t^{i,j,k}(\theta)&\xrightarrow[]{\mathrm{L}^1}\partial_{\theta}\mathcal{L}(\theta) \label{eq:lt-ijk-limit}
\end{align}
\end{lemma}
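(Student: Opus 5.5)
The plan is to mirror the proof of Lemma~\ref{prop:asymptotic-partial-log-lik-grad-l1-convergence}. The one change is that the ergodic average is now taken along independent McKean--Vlasov copies, whose laws $\mu_s$ are time-inhomogeneous. We therefore borrow the three-term splitting from the proof of Proposition~\ref{prop:mvsde-likelihood-t-limit}. We treat \eqref{eq:lt-ijk-limit} first; \eqref{eq:lt-i-limit} is identical with $h(\theta,x,y,z,\mu_s)$ replaced by $H(\theta,x,\mu_s)$ and $g$ by $G$.

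Differentiating \eqref{eq:mckean-incomplete-likelihood-2} under the integral, which is justified by the polynomial growth in Lemma~\ref{lem:Phi-hH-stability} and Theorem~\ref{thm:moment-bounds}, gives
\begin{equation}
-\tfrac1t\partial_\theta\mathcal L_t^{i,j,k}(\theta)=\tfrac1t\int_0^t \tfrac12\big(h(\theta,x_s^i,x_s^j,x_s^k,\mu_s)+h(\theta,x_s^i,x_s^k,x_s^j,\mu_s)\big)\mathrm ds-\tfrac1t\int_0^t g(\theta,x_s^i,x_s^j)(\sigma\sigma^\top)^{-1}\sigma\,\mathrm dw_s^i.
\end{equation}
The martingale term goes to zero in $\mathrm L^2$, hence in $\mathrm L^1$, at rate $t^{-1/2}$. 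This follows from the It\^o isometry, the polynomial growth of $g$ (Corollary~\ref{cor:empirical-drift-grad-lip}), and the uniform moment bounds of Theorem~\ref{thm:moment-bounds}, exactly as before.

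For the drift term, we introduce stationary copies $\bar x^a$, $a\in\{i,j,k\}$. These are driven by the same Brownian motions, with $(x_0^a,\bar x_0^a)\sim\gamma_0^*$ independent across $a$. We then split the error into three pieces, in the manner of $H^{(1)}_t,H^{(2)}_t,H^{(3)}_t$:
\begin{enumerate}
\item[(i)] replacing $\mu_s$ by $\pi_{\theta_0}$ inside $h$;
\item[(ii)] replacing $x_s^a$ by $\bar x_s^a$;
\item[(iii)] the ergodic average of $\tfrac12(h+h)(\theta,\bar x_s^i,\bar x_s^j,\bar x_s^k,\pi_{\theta_0})$.
\end{enumerate}
Pieces (i) and (ii) are bounded in expectation by $K\,\mathsf W_2(\mu_s,\pi_{\theta_0})$ and $K\sum_a(\mathbb E\|x_s^a-\bar x_s^a\|^2)^{1/2}$, respectively. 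This uses the local Lipschitz/polynomial growth property of $h$ in $(x,y,z,\mu)$ (Lemma~\ref{lem:Phi-hH-stability}), Cauchy--Schwarz, and the moment bounds. Both bounds tend to zero by Theorem~\ref{thm:invariant-distribution-2}, so Ces\`aro's theorem gives $\mathrm L^1$ convergence of the time averages to zero.

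For (iii), the triple $(\bar x^i,\bar x^j,\bar x^k)$ is a product of three independent stationary ergodic diffusions with drift $B(\theta_0,\cdot,\pi_{\theta_0})$ (footnote~\ref{footnote:ergodic}). It is therefore a stationary ergodic diffusion on $(\mathbb R^d)^3$ with invariant law $\pi_{\theta_0}^{\otimes3}$. The ergodic theorem, combined with uniform integrability from the $\mathrm L^2$ bound and Vitali, yields $\mathrm L^1$ convergence to $\int \tfrac12(h^{ijk}+h^{ikj})\,\mathrm d\pi_{\theta_0}^{\otimes3}$. By symmetry of $\pi_{\theta_0}^{\otimes3}$ in $(y,z)$ this equals $\int h\,\mathrm d\pi_{\theta_0}^{\otimes3}$, which is $\partial_\theta\mathcal L(\theta)$ by \eqref{eq:l1-explicit}. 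For \eqref{eq:lt-i-limit} the final identification is instead Proposition~\ref{prop:ips-likelihood-n-t-limit-grad}.

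The main obstacle is step (iii). We have to make sure that ergodicity of the single stationary MVSDE transfers to the product chain; product of independent ergodic Markov processes need not be ergodic in general. Here we would argue via the strong Feller/irreducibility of the nondegenerate linearised diffusion, or via the $\mathsf W_2$ contraction of Theorem~\ref{thm:invariant-distribution-2} applied componentwise, which gives mixing and hence ergodicity of the product. A secondary technical point is that the differentiation of the $\mu$-dependent term $B(\theta_0,x,\mu_s)$ is absent, since it does not depend on $\theta$. Consequently only $\mathsf W_2$-Lipschitz continuity of $h$ in the measure argument is needed, and this comes from the polynomial growth of $\nabla W(\theta_0,\cdot)$.
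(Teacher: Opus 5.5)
Your proposal is correct and follows essentially the same route as the paper. Both arguments differentiate under the integral and send the martingale term to zero via the It\^o isometry. Both handle the drift with the three-term splitting borrowed from Proposition~\ref{prop:mvsde-likelihood-t-limit}: replacing $\mu_s$ by $\pi_{\theta_0}$, replacing $x_s^a$ by $\bar x_s^a$, and taking the ergodic average of the stationary copies. The limit is then identified using the $(y,z)$-symmetry under $\pi_{\theta_0}^{\otimes 3}$. Your remark that ergodicity must be checked for the product of the three stationary copies (e.g.\ via nondegeneracy/strong Feller of the frozen-measure diffusion on $(\mathbb{R}^d)^3$) is a point the paper passes over with ``arguing exactly as above''. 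It is a worthwhile addition rather than a deviation.
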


\begin{proof}
We begin by establishing \eqref{eq:lt-i-limit}. Recalling the definition of the function $\mathcal{L}_t^i$  in \eqref{eq:mckean-incomplete-likelihood-1}, differentiating, and then simplifying, we have that
\begin{align}
    -\frac{1}{t}\partial_{\theta}\mathcal{L}_t^{i}(\theta) 
    &= \frac{1}{t}\int_0^t H(\theta,x_s^{i},\mu_s^{i}) \mathrm{d}s   - \frac{1}{t}\int_0^t G(\theta,x_s^{i},\mu_s^{i})  \mathrm{d}w_s^{i} 
    \label{eq:l-i-decomp} 
\end{align}
From here, the proof is very similar to the proof of Proposition \ref{prop:mvsde-likelihood-t-limit}. We first show that, as $t\rightarrow\infty$, $\smash{\mathbb{E}[|\frac{1}{t}\int_0^t  H(\theta,x_s^{i},\mu_s^{i})\mathrm{d}s - \int_{\mathbb{R}^d} H(\theta,x,\pi_{\theta_0})\pi_{\theta_0}(\mathrm{d}x)|] \rightarrow 0 }$.
To establish this limit, we use the triangle inequality to write 
    \begin{align}
        \mathbb{E}\Big[\Big|\frac{1}{t}\int_0^t H(\theta,x_s^{i},\mu_s^{i})\mathrm{d}s - \int_{\mathbb{R}^d} H(\theta,x,\pi_{\theta_0})\pi_{\theta_0}(\mathrm{d}x)\Big|\Big] \leq \mathbb{E}[H_{t,i}^{(1)}] + \mathbb{E}[H_{t,i}^{(2)}] + \mathbb{E}[H_{t,i}^{(3)}] \label{eq:H-start-decomp}
    \end{align}
    where $\smash{H_{t,i}^{(1)}=\frac{1}{t}\int_0^t |H(\theta,x_s^{i},\mu_s^{i}) -H(\theta,x_s^{i},\pi_{\theta_0})|\mathrm{d}s}$, $\smash{H_{t,i}^{(2)}=\frac{1}{t} \int_0^t|H(\theta,x_s^{i},\pi_{\theta_0})- H(\theta,\bar{x}_s^{i},\pi_{\theta_0})|\mathrm{d}s}$, and $\smash{H_{t,i}^{(3)}=}$ $\smash{|\frac{1}{t}\int_0^t H(\theta,\bar{x}_s^{i},\pi_{\theta_0}) \mathrm{d}s - \int_{\mathbb{R}^d} H(\theta,{x},\pi_{\theta_0})\pi_{\theta_0}(\mathrm{d}x)|}$.
    We can bound these terms by arguing exactly as in the proof of Proposition~\ref{prop:mvsde-likelihood-t-limit}. In particular, following the steps in \eqref{eq:j-exp-bound} - \eqref{eq:h3-final-limit}, we have $\smash{\mathbb E[H_{t,i}^{(1)}]+\mathbb E[H_{t,i}^{(2)}]+\mathbb E[H_{t,i}^{(3)}]\stackrel{t\rightarrow\infty}{\longrightarrow} 0}$.
This, together with \eqref{eq:H-start-decomp}, establishes that 
 \begin{align}
        \frac{1}{t}\int_0^t H(\theta,x_s^{i},\mu_s^{i})\mathrm{d}s &\stackrel{\mathrm{L}^1}{\longrightarrow} \int_{\mathbb{R}^d}H(\theta,x,\pi_{\theta_0})\pi_{\theta_0}(\mathrm{d}x) 
        \label{eq:h-limit-1} 
    \end{align}
It remains to establish $\mathrm{L}^1$ convergence of the second term in \eqref{eq:l-i-decomp} to zero. Let $\smash{M_{i,t} := \int_0^t G(\theta,x_s^{i},\mu_s^{i})\,dw_s^{i}}$. Using It\^{o}'s isometry, and Lemma~\ref{lem:weighted-lip-g}, the expectation of the quadratic variation of these martingales is finite for all $t\geq 0$. Thus, as in the proof of Proposition~\ref{prop:mvsde-likelihood-t-limit}, we have as required that
\begin{align}
    \frac{1}{t}M_{i,t} = \frac{1}{t}\int_0^t G(\theta,x_s^{i},\mu_s^{i})\,dw_s^{i} &\stackrel{\mathrm{L}^1}{\longrightarrow} 0 
\end{align}
This completes the proof of \eqref{eq:lt-i-limit}. We now turn our attention to \eqref{eq:lt-ijk-limit}. Similar to before, recalling the definition of the function $\mathcal{L}_t^{i,j,k}$ in \eqref{eq:mckean-incomplete-likelihood-2}, differentiating, and then simplifying, we have that
\begin{align}
    -\frac{1}{t}\partial_{\theta}\mathcal{L}_t^{i,j,k}(\theta) 
    &=\frac{1}{t}\int_0^t h^{\mathrm{sym}}(\theta,x_s^{i},x_s^{j},x_s^{k}, \mu_s^{i}) \mathrm{d}s  - \frac{1}{t}\int_0^t g(\theta,x_s^{i},x_s^{j}) \mathrm{d}w_s^{i} 
    \label{eq:l-ijk-decomp}
\end{align}
where we have defined $h^{\mathrm{sym}}(\theta,x,y,z,\mu) = \frac{1}{2}(h(\theta,x,y,z,\mu) + h(\theta,x,z,y,\mu))$. Arguing exactly as above, we can show that
\begin{align}
            \frac{1}{t}\int_0^t h^{\mathrm{sym}}(\theta,x_s^{i},x_s^{j},x_s^{k},\mu_s^{i})\mathrm{d}s &\stackrel{\mathrm{L}^1}{\longrightarrow} \int_{(\mathbb{R}^d)^3}h^{\mathrm{sym}}(\theta,x,y,z,\pi_{\theta_0})\pi_{\theta_0}^{\otimes 3}(\mathrm{d}x,\mathrm{d}y,\mathrm{d}z) \\
            &~~~= \int_{(\mathbb{R}^d)^3}h(\theta,x,y,z,\pi_{\theta_0})\pi_{\theta_0}^{\otimes 3}(\mathrm{d}x,\mathrm{d}y,\mathrm{d}z)
            \label{eq:h-limit-2} 
\end{align}
    where the second equality follows from the fact that, by symmetry, $ \smash{\int_{\mathbb{R}^d} h(\theta,{x},y,z,\pi_{\theta_0})\pi_{\theta_0}^{\otimes 3}(\mathrm{d}x,\mathrm{d}y,\mathrm{d}z)=}$ $\smash{\int_{\mathbb{R}^d} h(\theta,{x},z,y,\pi_{\theta_0})\pi_{\theta_0}^{\otimes 3}(\mathrm{d}x,\mathrm{d}y,\mathrm{d}z)}$. Similarly, by the same arguments as before, we have that
\begin{align}
    \frac{1}{t}M_{i,j,k,t} = \frac{1}{t}\int_0^t g(\theta,x_s^{i},x_s^{j})\,dw_s^{i}&\stackrel{\mathrm{L}^1}{\longrightarrow} 0. \label{eq:m-ijk-limit}
\end{align}
Finally, substituting \eqref{eq:h-limit-2} and \eqref{eq:m-ijk-limit} into \eqref{eq:l-ijk-decomp} yields the claimed result in \eqref{eq:lt-ijk-limit}. 
\end{proof}

\begin{lemma}
\label{lemma:main-theorem-1-2-lemma-1}
    Suppose that Assumption \ref{assumption:moments}, Assumption \ref{assumption:drift}, and Assumption \ref{assumption:drift-grad} hold. Then, for all $\theta\in\Theta$, for all $t>0$, for all $N\in\mathbb{N}$, for all distinct $i,j,k\in\{1,\dots,N\}$, and for all $0<\varepsilon<1$, there exists finite constants $K>0$ and $K'=K'(\varepsilon)>0$ such that 
    \begin{align} 
        \mathbb{E}\Big[\Big\| \frac{1}{t}\partial_{\theta}\mathcal{L}_t^{i,N}(\theta) - \frac{1}{t}\partial_{\theta}\mathcal{L}_t^{[i,N]}(\theta)\Big\|\Big] &\leq \frac{K}{N^{\frac{1}{2(1+\alpha)}}} + \frac{K'}{N^{\frac{1-\varepsilon}{2(1+\alpha)}}}\frac{1}{\sqrt{t}} \label{eq:lt-quant-v1} \\
        \mathbb{E}\Big[\Big\| \frac{1}{t}\partial_{\theta}\mathcal{L}_t^{i,j,k,N}(\theta) - \frac{1}{t}\partial_{\theta}\mathcal{L}_t^{[i,j,k,N]}(\theta)\Big\|\Big] &\leq \frac{K}{N^{\frac{1}{2(1+\alpha)}}} + \frac{K'}{N^{\frac{1-\varepsilon}{2(1+\alpha)}}}\frac{1}{\sqrt{t}}. \label{eq:lt-quant-v2}
    \end{align}
\end{lemma}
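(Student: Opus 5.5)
The plan is to write each difference as a drift (Lebesgue) term plus a stochastic-integral term, and bound the two separately under the synchronous coupling of particles and McKean--Vlasov copies.

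Differentiating the definitions in \eqref{eq:IPS-incomplete-likelihood-2} and \eqref{eq:mckean-copy-incomplete-likelihood-2}, exactly as in the proof of Lemma~\ref{prop:asymptotic-partial-log-lik-grad-l1-convergence}, gives
\begin{align}
-\tfrac{1}{t}\big[\partial_{\theta}\mathcal{L}_t^{i,j,k,N}(\theta) - \partial_{\theta}\mathcal{L}_t^{[i,j,k,N]}(\theta)\big] &= \tfrac{1}{t}\int_0^t \big[h^{\mathrm{sym}}(\theta,\boldsymbol{x}_s^{i,j,k,N},\mu_s^N) - h^{\mathrm{sym}}(\theta,\boldsymbol{x}_s^{(i,j,k)},\mu_s^{[N]})\big]\mathrm{d}s \\
&\quad - \tfrac{1}{t}\int_0^t \big[g(\theta,x_s^{i,N},x_s^{j,N}) - g(\theta,x_s^{i},x_s^{j})\big](\sigma\sigma^{\top})^{-1}\sigma\,\mathrm{d}w_s^{i,N},
\end{align}
where I use that $w^{i}=w^{i,N}$ under the synchronous coupling. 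For $\mathcal{L}^{i,N}$ the same decomposition holds with $H(\theta,x_s^{i,N},\mu_s^N)-H(\theta,x_s^i,\mu_s^{[N]})$ and $G(\theta,x_s^{i,N},\mu_s^N)-G(\theta,x_s^i,\mu_s^{[N]})$.

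For the drift term I would use the local Lipschitz and polynomial growth properties of $h$ and $H$ (Lemma~\ref{lem:Phi-hH-stability}, Corollary~\ref{cor:empirical-H-lip}), Cauchy--Schwarz, and the uniform moment bounds of Theorem~\ref{thm:moment-bounds}. This gives a bound by $K[\sum_{a\in\{i,j,k\}}(\mathbb{E}\|x_s^{a,N}-x_s^a\|^2)^{1/2} + (\mathbb{E}\,\mathsf{W}_2^2(\mu_s^N,\mu_s^{[N]}))^{1/2}]$. The key point is that $\mu_s^{[N]}$, not $\mu_s$, is the comparison measure. Using the trivial coupling $\frac1N\sum_j\delta_{(x^{j,N}_s,x^j_s)}$ gives
\[
\mathsf{W}_2^2(\mu_s^N,\mu_s^{[N]})\le \tfrac1N\sum_j\|x_s^{j,N}-x_s^j\|^2 ,
\]
so both pieces are controlled by Theorem~\ref{thm:poc} uniformly in $s$, with no empirical-measure rate $\rho(N)$ appearing. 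Averaging over $[0,t]$ then yields $K N^{-\frac{1}{2(1+\alpha)}}$.

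For the martingale term I would use Jensen and the It\^o isometry, $\mathbb{E}\|\frac1t M_t\| \le \frac1t(\int_0^t \mathbb{E}\|\Delta g_s\|^2\mathrm{d}s)^{1/2}$. It then remains to bound $\mathbb{E}\|\Delta g_s\|^2$. The Lipschitz property of $g$ (Corollary~\ref{cor:empirical-drift-grad-lip}) only gives $\|\Delta g\|\le K\|\Delta x\|(1+\text{poly})$, so squaring and applying H\"older needs a higher moment of $\|x^{a,N}_s-x^a_s\|$, whereas Theorem~\ref{thm:poc} only controls the second moment. I expect this to be the main obstacle. I would handle it with $L^p$ interpolation, as in the proof of Theorem~\ref{theorem:clt-inf-n}. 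Write $\|\Delta x\|^2 = \|\Delta x\|^{2(1-\varepsilon)}\|\Delta x\|^{2\varepsilon}$ and apply H\"older with exponents $1/(1-\varepsilon)$ and $1/\varepsilon$. The first factor is then controlled by Theorem~\ref{thm:poc}, and the remaining polynomial weights are absorbed by Theorem~\ref{thm:moment-bounds}. This gives $\sup_s\mathbb{E}\|\Delta g_s\|^2\le K'(\varepsilon)N^{-\frac{1-\varepsilon}{1+\alpha}}$, hence a martingale contribution of order $K' N^{-\frac{1-\varepsilon}{2(1+\alpha)}}t^{-1/2}$. The same argument applies verbatim to $G$, which proves \eqref{eq:lt-quant-v1}. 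Adding the two contributions then gives the two stated bounds.
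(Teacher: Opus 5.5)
Your proposal is correct, and it has the same skeleton as the paper's proof. Both use the synchronous coupling, split each difference into a Lebesgue term and a stochastic integral, and bound the Lebesgue term via the local Lipschitz property of $h$ (or $H$), Cauchy--Schwarz, the trivial-coupling estimate $\mathsf{W}_2^2(\mu_s^N,\mu_s^{[N]})\le\frac1N\sum_j\|x_s^{j,N}-x_s^j\|^2$ and propagation of chaos. The stochastic integral is then bounded through the It\^o isometry.

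The real difference is in the one delicate step, bounding $\mathbb{E}\|g(\theta,x_s^{i,N},x_s^{j,N})-g(\theta,x_s^i,x_s^j)\|^2$.

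\textbf{The paper's route.} It localises on the event $A_s^M$ where all four positions have norm at most $M$. On $A_s^M$ it uses the Lipschitz bound with weight $1+M^{2q}$ together with propagation of chaos. On the complement it combines H\"older and Markov to get an arbitrarily high decay $M^{-\gamma}$. It then takes $M=N^{\varepsilon/(2q(1+\alpha))}$.

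\textbf{Your route.} Your H\"older interpolation $\mathbb{E}[\|\Delta x\|^2P^2]\le(\mathbb{E}\|\Delta x\|^2)^{1-\varepsilon}(\mathbb{E}[\|\Delta x\|^2P^{2/\varepsilon}])^{\varepsilon}$ gives the same rate $K'(\varepsilon)N^{-(1-\varepsilon)/(1+\alpha)}$ directly. It avoids tuning the truncation level $M$ and the tail exponent $\gamma$, and it is the device the paper itself uses later in the proof of Theorem~\ref{theorem:clt-inf-n}.

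\textbf{The bound for $\mathcal{L}^{i,N}$.} Here the paper does not rerun the argument. It averages the triple bound over all $(j,k)\in[N]^2$ using $\partial_\theta\mathcal{L}_t^{i,N}=\frac{1}{N^2}\sum_{j,k}\partial_\theta\mathcal{L}_t^{i,j,k,N}$. This tacitly needs the triple bound for non-distinct indices too, which is harmless because the argument never uses distinctness. You instead work directly with $H$ and $G$, which means the term $\mathsf{W}_2(\mu_s^N,\mu_s^{[N]})$ also appears inside $G$. Your trivial-coupling bound combined with the same interpolation handles it.

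You are also slightly more careful than the paper in writing the drift with the symmetrised $h^{\mathrm{sym}}$, though this does not change the estimate.
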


\begin{proof}
    We will first prove \eqref{eq:lt-quant-v2}. We start by recalling the relevant definitions, viz
\begin{align}
\frac{1}{t}\partial_{\theta}{\mathcal{L}}^{i,j,k,N}_t(\theta)&= 
\frac{1}{t}\int_0^t h(\theta,x_s^{i,N},x_s^{j,N},x_s^{k,N},\mu_s^N)\mathrm{d}s
+\frac{1}{t}\int_0^t \left\langle g(\theta,x_s^{i,N},x_s^{j,N}),\mathrm{d}w_s^{i,N}\right\rangle
\label{eq:l-i-j-k-n-recall} \\
\frac{1}{t}\partial_{\theta}{\mathcal{L}}^{[i,j,k,N]}_t(\theta)&= 
\frac{1}{t}\int_0^t h(\theta,x_s^{i},x_s^{j},x_s^{k},\mu_s^{[N]})\mathrm{d}s
+\frac{1}{t}\int_0^t \left\langle  g(\theta,x_s^{i},x_s^{j}),\mathrm{d}w_s^{i}\right\rangle
\label{eq:l-i-j-k-n-indep-recall}
\end{align}
We first bound the difference in the ``deterministic'' integrals in \eqref{eq:l-i-j-k-n-recall} - \eqref{eq:l-i-j-k-n-indep-recall}. Using Lemma \ref{lem:Phi-hH-stability} (i.e., $h$ is locally Lipschitz with polynomial growth),  the Cauchy--Schwarz inequality, Theorem~\ref{thm:moment-bounds} (i.e., uniform-in-time moment bounds),
and then Theorem~\ref{thm:poc} (i.e., uniform-in-time propagation of chaos), we have that 

\begin{align}
    &\sup_{s\geq 0}\mathbb{E}\big[\big\|h(\theta,x_s^{i,N},x_s^{j,N},x_s^{k,N},\mu_s^N) - h(\theta,x_s^{i},x_s^{j},x_s^{k},\mu_s^{[N]})\big\|\big] \\
    &\leq K \sup_{s\geq 0} \big(\textstyle\sum_{a\in\{i,j,k\}}\mathbb{E}\big[\|x_s^{a,N} - x_s^{a}\|^2\big] + \frac{1}{N}\sum_{a=1}^N \mathbb{E}\big[\|x_s^{a,N} - x_s^{a}\|^2\big]\big)^{\frac{1}{2}} 
    \displaystyle \leq \frac{K}{N^{\frac{1}{2(1+\alpha)}}} 
\end{align}
for some constant $K<\infty$, which has been allowed to increase from line to line. It follows, using also the triangle inequality (in integral form), that for all $\theta\in\Theta$ and for all $t\geq 0$, 
\begin{align}
    \mathbb{E}\big[\big\|\frac{1}{t}\int_0^t \big( h(\theta,x_s^{i,N},x_s^{j,N},x_s^{k,N},\mu_s^N) - h(\theta,x_s^{i},x_s^{j},x_s^{k},\mu_s^{[N]})\big)\mathrm{d}s\big\|\big] 
    &
    \leq \frac{K}{N^{\frac{1}{2(1+\alpha)}}}. \label{eq:i1-final-bound}
\end{align}
We now seek an $\mathrm{L}^1$ bound for the difference between the two stochastic integrals.  By Lemma \ref{lem:weighted-lip-g} (i.e., $g$ is   
locally Lipschitz with polynomial growth), there exists a  constant $K_1<\infty$ such that, for all $s\geq 0$, 
\begin{align}
    \|g(\theta,x_s^{i,N},x_s^{j,N}) - g(\theta,x_s^{i},x_s^{j})\|^2 &\leq K_1\Big(\|x_s^{i,N} - x_s^{i}\|^2 + \|x_s^{j,N} - x_s^{j}\|^2\Big) \label{eq:deltag-square-prelocal} \\
    &~~~~~\times\big(1 + \|x_s^{i,N}\|^{q} + \|x_s^{j,N}\|^{q} + \|x_s^{i}\|^{q} + \|x_s^{j}\|^{q} \big)^2. \nonumber
\end{align}
For $M\ge 1$, define the event $A_s^M$ according to $\smash{A_s^M:=\{\|x_s^{i,N}\|\vee \|x_s^{i}\|\vee \|x_s^{j,N}\|\vee \|x_s^{j}\|\le M\}}$. On this event, we have $1 + \|x_s^{i,N}\|^{q} + \|x_s^{j,N}\|^{q} + \|x_s^{i}\|^{q} + \|x_s^{j}\|^{q}\le 1+4M^q\le K_2(1+M^q)$. Substituting this into \eqref{eq:deltag-square-prelocal}, taking expectations, and using Theorem~\ref{thm:poc} (i.e., uniform-in-time propagation-of-chaos), it follows that
\begin{align}
 \mathbb{E}\big[\|g(\theta,x_s^{i,N},x_s^{j,N}) - g(\theta,x_s^{i},x_s^{j})\|^2\boldsymbol{1}_{A_s^M}\big]
\le \frac{K_3(1+M^{2q})}{N^{\frac{1}{1+\alpha}}}, \label{eq:deltag-local-term}
\end{align}
Next, using Lemma~\ref{lem:weighted-lip-g} (i.e., the polynomial growth of $g$), and Theorem~\ref{thm:moment-bounds} (i.e., uniform-in-time moment bounds for the IPS and the MVSDE), there exists $K_4<\infty$ such that
$\smash{\mathbb{E}[\|g(\theta,x_s^{i,N},x_s^{j,N}) - g(\theta,x_s^{i},x_s^{j})\|^{2r}]\le K_4}$ for all $s\geq 0$. Using H\"older's inequality and this bound, it then follows that, for all $s\geq 0$, 
\begin{align}
\mathbb{E}\big[\|g(\theta,x_s^{i,N},x_s^{j,N}) - g(\theta,x_s^{i},x_s^{j})\|^2\boldsymbol{1}_{(A_s^M)^c}\big]
&\le \mathbb{E}\big[\|g(\theta,x_s^{i,N},x_s^{j,N}) - g(\theta,x_s^{i},x_s^{j})\|^{2r}\big]^{\frac1r}\,
\mathbb{P}((A_s^M)^c)^{1-\frac1r}
\\
&\le K_4\,\mathbb{P}((A_s^M)^c)^{1-\frac1r}. \label{eq:g-delta-new-bound}
\end{align}
Meanwhile, by a union bound, Markov's inequality, and Theorem~\ref{thm:moment-bounds} (i.e., uniform-in-time moment bounds), we have that for any $\ell>0$, there exists $K_5=K_5(\ell)<\infty$ such that $\mathbb{P}((A_s^M)^c)
\le 4\,\sup_{u\in\{x_s^{i,N},x_s^{i},x_s^{j,N},x_s^{j}\}}\mathbb{P}(\|u\|>M)
\le \frac{K_5}{M^\ell}$. Combining this with the bound in \eqref{eq:g-delta-new-bound}, it follows that for any $\gamma>0$, there exists $\ell=\ell(\gamma,r)$ and $K_6=K_6(\gamma)$ such that
\begin{equation}
\label{eq:deltag-tail-term}
\mathbb{E}\big[\|g(\theta,x_s^{i,N},x_s^{j,N}) - g(\theta,x_s^{i},x_s^{j})\|^2\boldsymbol{1}_{(A_s^M)^c}\big]\le \frac{K_6}{M^\gamma}.
\end{equation}
Finally, combining \eqref{eq:deltag-local-term} and \eqref{eq:deltag-tail-term}, it follows that for all $M\ge 1$, the following upper bound holds
\begin{equation}
\label{eq:deltag-splitting}
\mathbb{E}\big[\|g(\theta,x_s^{i,N},x_s^{j,N}) - g(\theta,x_s^{i},x_s^{j})\|^2\big]
\le \frac{K_3(1+M^{2q})}{N^{\frac{1}{1+\alpha}}}+\frac{K_6}{M^\gamma}.
\end{equation}
Fix $\varepsilon>0$. Let $M=M(N):=N^{\eta}$, with $\eta:=\frac{\varepsilon}{2q(1+\alpha)}$.\footnote{We note that if the growth exponent $q=0$ (i.e., the relevant function is bounded), then we may take $M=\infty$, and skip the localisation step entirely.} Then the first term on the right-hand side of \eqref{eq:deltag-splitting} is bounded by
\begin{equation}
\frac{K_3(1+M^{2q})}{N^{\frac{1}{1+\alpha}}}
\le \frac{K_3}{N^{\frac{1}{1+\alpha}}}+\frac{K_3}{N^{\frac{1-\varepsilon}{1+\alpha}}}
\le \frac{K_3}{N^{\frac{1-\varepsilon}{1+\alpha}}}. \label{eq:k3-m-bound}
\end{equation}
Meanwhile, by choosing $\gamma>0$ sufficiently large, we can ensure that $M^{-\gamma}=N^{-\eta\gamma}\le N^{-\frac{1-\varepsilon}{1+\alpha}}$, so the tail term in \eqref{eq:deltag-splitting} is of the same (or smaller) order. Thus,  there exists $K_7 = K_{7}(\varepsilon)<\infty$ such that
\begin{equation}
\label{eq:deltag-final}
\mathbb{E}\big[\|g(\theta,x_s^{i,N},x_s^{j,N}) - g(\theta,x_s^{i},x_s^{j})\|^2\big]\le \frac{K_7}{N^{\frac{1-\varepsilon}{1+\alpha}}}.
\end{equation}
It follows, using also the It\^{o} isometry and Fubini's Theorem, that for all $\theta\in\Theta$, and for all $t\geq 0$, it holds that 
\begin{align}
 &\mathbb{E}\Big[\Big\|\frac{1}{t}\int_0^t \left\langle g(\theta,x_s^{i,N},x_s^{j,N}),\mathrm{d}w_s^{i,N}\right\rangle - \frac{1}{t}\int_0^t \left\langle g(\theta,x_s^{i},x_s^{j}),\mathrm{d}w_s^{i}\right\rangle\Big\|^2\Big] \\
 &= \frac{1}{t^2} \int_0^t \mathbb{E}\left[ \left\| g(\theta,x_s^{i,N},x_s^{j,N}) -  g(\theta,x_s^{i},x_s^{j})\right\|^2\right] \mathrm{d}s 
  \leq \frac{K_{7}}{N^{\frac{1-\varepsilon}{(1+\alpha)}}t}. \label{eQ32-recall}
 \end{align}
Thus, applying the Cauchy--Schwarz inequality one final time, and defining a new constant $K' = K_7^{\frac{1}{2}}$, we have that
 \begin{equation}
\mathbb{E}\Big[\Big\|\frac{1}{t}\int_0^t \left\langle g(\theta,x_s^{i,N},x_s^{j,N}),\mathrm{d}w_s^{i,N}\right\rangle - \frac{1}{t}\int_0^t \left\langle g(\theta,x_s^{i},x_s^{j}),\mathrm{d}w_s^{i}\right\rangle\Big\|\Big] \leq \left[\frac{K_7}{N^{\frac{1-\varepsilon}{1+\alpha}}t} \right]^{\frac{1}{2}} \leq \frac{K'}{N^{\frac{1-\varepsilon}{2(1+\alpha)}}}\frac{1}{\sqrt{t}}. \label{eQ346}
\end{equation}
Combining the bounds in \eqref{eq:i1-final-bound} and \eqref{eQ346}, and using the triangle inequality one final time, yields the bound in \eqref{eq:lt-quant-v2}.
The proof of \eqref{eq:lt-quant-v1} is now straightforward. In particular, working from the definitions, and using the result just proved, we have
\begin{align}
\label{eq:lt-i-n-start}
\mathbb{E}\Big[\big\| \frac{1}{t}\partial_{\theta}\mathcal{L}_t^{i,N}(\theta) - \frac{1}{t}\partial_{\theta}\mathcal{L}_t^{[i,N]}(\theta)\big\|\Big] &=\mathbb{E}\Big[\Big\| \frac{1}{N^2}\sum_{j,k=1}^N \big[\frac{1}{t} \partial_{\theta}\mathcal{L}_t^{i,j,k,N}(\theta) - \frac{1}{t}\partial_{\theta}\mathcal{L}_t^{[i,j,k,N]}(\theta) \big]\Big\|\Big] \\
&\leq \frac{1}{N^2}\sum_{j,k=1}^N \mathbb{E}\Big[\Big\| \frac{1}{t} \partial_{\theta}\mathcal{L}_t^{i,j,k,N}(\theta) - \frac{1}{t}\partial_{\theta}\mathcal{L}_t^{[i,j,k,N]}(\theta) \Big\|\Big] \\
&\leq \frac{1}{N^2}\sum_{j,k=1}^N \Big[\frac{K}{N^{\frac{1}{2(1+\alpha)}}} + \frac{K'}{N^{\frac{1-\varepsilon}{2(1+\alpha)}}}\frac{1}{\sqrt{t}}\Big]  \leq \frac{K}{N^{\frac{1}{2(1+\alpha)}}} + \frac{K'}{N^{\frac{1-\varepsilon}{2(1+\alpha)}}}\frac{1}{\sqrt{t}}.
\label{eq:lt-i-n-end}
\end{align}
\end{proof}

\begin{lemma}
\label{lemma:main-theorem-1-2-lemma-2}
    Suppose that Assumption \ref{assumption:moments}, Assumption \ref{assumption:drift}, and Assumption \ref{assumption:drift-grad} hold. Then, for all $t>0$, for all $N\in\mathbb{N}$, and for all distinct $i,j,k\in\{1,\dots,N\}$, there exists a finite constant $K>0$ such that
    \begin{align} 
        \mathbb{E}\left[\left\| \frac{1}{t}\partial_{\theta}\mathcal{L}_t^{[i,N]}(\theta) - \frac{1}{t}\partial_{\theta}{\mathcal{L}}_t^{i}(\theta)\right\|\right] &\leq K\rho(N)\left(1+\frac{1}{\sqrt{t}}\right) \label{eq:lt-quant-v1-v2} \\
        \mathbb{E}\left[\left\| \frac{1}{t}\partial_{\theta}\mathcal{L}_t^{[i,j,k,N]}(\theta) - \frac{1}{t}\partial_{\theta}{\mathcal{L}}_t^{i,j,k}(\theta)\right\|\right] &\leq K\rho(N). \label{eq:lt-quant-v2-v2}
    \end{align}
    where $\rho:\mathbb{N}\rightarrow\mathbb{R}_{+}$ is the function defined in \eqref{eq:rho}.
\end{lemma}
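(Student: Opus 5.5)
We follow the template of Lemma~\ref{lemma:main-theorem-1-2-lemma-1}, comparing the pseudo log-likelihood gradients evaluated along the same independent McKean--Vlasov copies $(x_s^a)_{s\geq 0}$, $a\in[N]$. The only difference between the two processes is the measure argument: $\mu_s^{[N]}=\frac1N\sum_{a}\delta_{x_s^a}$ in one, $\mu_s$ in the other. Hence the only source of error is the empirical measure of i.i.d.\ samples from $\mu_s$. We prove \eqref{eq:lt-quant-v2-v2} first and then \eqref{eq:lt-quant-v1-v2}.

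\textbf{Proof of \eqref{eq:lt-quant-v2-v2}.} From \eqref{eq:mckean-copy-incomplete-likelihood-2} and \eqref{eq:mckean-incomplete-likelihood-2}, the stochastic integrals $\int_0^t g(\theta,x_s^i,x_s^j)\,\mathrm{d}w_s^i$ coincide, since $g$ does not depend on the measure. They therefore cancel exactly, and only the drift terms remain. Because $\partial_\theta$ of the constant term $B(\theta_0,\cdot,\cdot)$ vanishes, the measure enters $h$ only through $B(\theta_0,x^i,\mu)$. By Lemma~\ref{lem:Phi-hH-stability}, $h$ is Lipschitz in $\mu$ with respect to $\mathsf{W}_2$, with a polynomial weight in $x^i,x^j,x^k$ and in the moments of $\mu$. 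Applying Cauchy--Schwarz and Theorem~\ref{thm:moment-bounds} gives
\begin{equation}
\mathbb{E}\big[\|h(\theta,x_s^i,x_s^j,x_s^k,\mu_s^{[N]})-h(\theta,x_s^i,x_s^j,x_s^k,\mu_s)\|\big]\leq K\big(\mathbb{E}[\mathsf{W}_2^2(\mu_s^{[N]},\mu_s)]\big)^{1/2}.
\end{equation}
Theorem~1 of \cite{fournier2015rate} bounds the right-hand side by $K\rho(N)$. This bound holds uniformly in $s$, because the moments of $\mu_s$ of order $q>4$ are bounded uniformly in time by Theorem~\ref{thm:moment-bounds}. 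Averaging over $[0,t]$ yields \eqref{eq:lt-quant-v2-v2} with no $t^{-1/2}$ term.

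\textbf{Proof of \eqref{eq:lt-quant-v1-v2}.} Here the martingale term is $\int_0^t G(\theta,x_s^i,\mu)\,\mathrm{d}w_s^i$, which does depend on the measure, so it no longer cancels. We handle the drift difference $H(\theta,x_s^i,\mu_s^{[N]})-H(\theta,x_s^i,\mu_s)$ exactly as above, using Lemma~\ref{lem:Phi-hH-stability} and Fournier--Guillin, and obtain $K\rho(N)$. For the stochastic integral difference, we combine the It\^o isometry with Lemma~\ref{lem:weighted-lip-g}, which makes $G$ Lipschitz in $\mu$ in $\mathsf{W}_2$ with polynomial weights. This reduces the problem to bounding $\mathbb{E}[\mathsf{W}_2^2(\mu_s^{[N]},\mu_s)\,(1+\|x_s^i\|^q+\cdots)^2]$, which gives
\begin{equation}
\mathbb{E}\Big[\Big\|\frac1t\int_0^t\big(G(\theta,x_s^i,\mu_s^{[N]})-G(\theta,x_s^i,\mu_s)\big)\mathrm{d}w_s^i\Big\|\Big]\leq \Big(\frac{1}{t^2}\int_0^t K\rho^2(N)\,\mathrm{d}s\Big)^{1/2}=\frac{K\rho(N)}{\sqrt t}.
\end{equation}
Adding the drift and martingale bounds gives \eqref{eq:lt-quant-v1-v2}.

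\textbf{Main obstacle.} The delicate step is the weighted expectation $\mathbb{E}[\mathsf{W}_2^2(\mu_s^{[N]},\mu_s)\,P(x_s)]$, where $P$ is a polynomial weight. A direct Cauchy--Schwarz would require a bound on $\mathbb{E}[\mathsf{W}_2^4]$ of order $\rho^2(N)$. We would first try the localisation argument used in Lemma~\ref{lemma:main-theorem-1-2-lemma-1}, truncating at $M=N^{\eta}$, which costs an arbitrarily small power of $N$. Alternatively, since $\mathsf{W}_2^2\leq 2\mu_s^{[N]}(\|\cdot\|^2)+2\mu_s(\|\cdot\|^2)$ and all moments are controlled, H\"older's inequality with exponent close to $1$ should give $\mathbb{E}[\mathsf{W}_2^{2p}]^{1/p}\lesssim\rho^2(N)$ up to constants, via the higher-moment version of Fournier--Guillin. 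The same care applies to the polynomial moment factors of $\mu_s^{[N]}$ in the drift term, and those factors have uniformly bounded moments.
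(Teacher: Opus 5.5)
Your proposal follows the paper's proof essentially step for step: the $g$-martingales cancel exactly in the three-particle case, the drift differences are handled by the $\mathsf{W}_2$-Lipschitz bounds plus Cauchy--Schwarz, uniform moment bounds and Theorem~1 of \cite{fournier2015rate}, and the $G$-martingale difference by the It\^o isometry, which produces the extra $\rho(N)/\sqrt{t}$ term. For the step you flag, the paper uses plain Cauchy--Schwarz and appeals, in a footnote, to the concentration inequality in Theorem~2 of \cite{fournier2015rate} to get $\sup_{s\geq 0}\mathbb{E}[\mathsf{W}_2^4(\mu_s^{[N]},\mu_s)]^{1/2}\le K\rho^2(N)$; this concentration input is what yields the lemma exactly as stated, whereas your localisation or interpolation fallbacks would only give it up to a factor $N^{\varepsilon}$.
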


\begin{proof}
    The proof follows the same template as the previous one, with some minor modifications. In this case, we begin by establishing \eqref{eq:lt-quant-v1-v2}. Recall that 
\begin{align}
\frac{1}{t}\partial_{\theta}{\mathcal{L}}^{[i,N]}_t(\theta)&= 
\frac{1}{t}\int_0^t H(\theta,x_s^{i},\mu_s^{[N]})\mathrm{d}s
+ 
\frac{1}{t}\int_0^t \big\langle G(\theta,x_s^{i},\mu_s^{[N]}),\mathrm{d}w_s^{i}\big\rangle
\label{eq:l-i-n-recall} \\
\frac{1}{t}\partial_{\theta}{\mathcal{L}}^{i}_t(\theta)&= 
\frac{1}{t}\int_0^t  H(\theta,x_s^{i},\mu_s)\mathrm{d}s
+ 
\frac{1}{t}\int_0^t \big\langle  G(\theta,x_s^{i},\mu_s),\mathrm{d}w_s^{i}\big\rangle
\label{eq:l-i-recall}
\end{align}
We first bound the difference in the ``deterministic'' integrals in \eqref{eq:l-i-n-recall} - \eqref{eq:l-i-recall}. By Lemma \ref{lem:Phi-hH-stability} (i.e., the function $H$ is locally Lipschitz with polynomial growth),  the Cauchy--Schwarz inequality, Theorem~\ref{thm:moment-bounds} (i.e., uniform-in-time moment bounds), and Theorem~\ref{thm:poc} (i.e., uniform-in-time propagation of chaos), we have 
$\smash{\sup_{s\geq 0} \mathbb{E}\big[\big\|H(\theta,x_s^{i},\mu_s^{[N]}) - H(\theta,x_s^{i},\mu_s)\big\|\big] \leq K\sup_{s\geq 0} \big(\mathbb{E}\big[\mathsf{W}_2^2(\mu_s^{[N]},\mu_s)\big]\big)^{\frac{1}{2}} \leq K\rho(N)}$ for some constant $K<\infty$ which is allowed to increase between displays, where $\rho:\mathbb{N}\rightarrow\mathbb{R}_{+}$ is the function defined in \eqref{eq:rho}. It follows, using also the triangle inequality (in integral form), that for all $\theta\in\Theta$ and for all $t\geq 0$, 
\begin{align}
    \mathbb{E}\Big[\big\|\frac{1}{t}\int_0^t \big( H(\theta,x_s^{i},\mu_s^{[N]}) - H(\theta,x_s^{i},\mu_s)\big)\mathrm{d}s\big\|\Big] 
    \leq K \rho(N). \label{eq:i1-final-bound-v2}
\end{align}
We now consider the difference in the stochastic integrals. Similar to above, by Lemma~\ref{lem:weighted-lip-g} (i.e., $g$ is locally Lipschitz with polynomial growth), the Cauchy--Schwarz inequality, Theorem~\ref{thm:moment-bounds} (i.e., uniform-in-time moment bounds), and Theorem~1 in \cite{fournier2015rate} (i.e., bound on the $\mathsf{W}_2$ distance to the empirical measure),\footnote{To be precise, Theorem 1 in \cite{fournier2015rate} provides a bound of the form $\smash{\sup_{s\geq 0} \mathbb{E}[\mathsf{W}_2^2(\mu_s^{[N]},\mu_s)] \leq C\rho^2(N)}$. This, combined with the concentration inequality in Theorem 2 in \cite{fournier2015rate}, which provides a bound on $\sup_{s\geq 0} \mathbb{P}(\mathsf{W}_2^2(\mu_s^{[N]},\mu_s)>x)$ for any $x\in(0,\infty)$, yields the stated bound for $\sup_{s\geq 0}\mathbb{E}[\mathsf{W}_2^4(\mu_s^{[N]},\mu_s)]$.} we have that 
$\smash{\sup_{s\geq 0} \mathbb{E}\big[\big\|G(\theta,x_s^{i},\mu_s^{[N]}) - G(\theta,x_s^{i},\mu_s)\big\|^2\big] \leq K \sup_{s\geq 0}\big(\mathbb{E}\big[\mathsf{W}_2^4(\mu_s^{[N]},\mu_s)\big]\big)^{\frac{1}{2}}  \leq K \rho^2(N)}$. We thus have, using also the It\^{o} isometry and Fubini's Theorem, that for all $\theta\in\Theta$, and for all $t\geq 0$, 
\begin{align}
 &\mathbb{E}\Big[\Big\|\frac{1}{t}\int_0^t \big\langle G(\theta,x_s^{i},\mu_s^{[N]}) -  G(\theta,x_s^{i},\mu_s),\mathrm{d}w_s^{i}\big\rangle\Big\|^2\Big] 
  \leq \frac{K}{t}\rho^2(N). \label{eQ32}
 \end{align}
Applying the Cauchy--Schwarz inequality one final time, and once more allowing $K$ to increase between displays, we have that
 \begin{equation}
\mathbb{E}\Big[\Big\|\frac{1}{t}\int_0^t \big\langle G(\theta,x_s^{i},\mu_s^{[N]}) -  G(\theta,x_s^{i},\mu_s),\mathrm{d}w_s^{i}\big\rangle\Big\|\Big]\leq \left[\frac{K}{t} \rho^2(N) \right]^{\frac{1}{2}} \leq \frac{K}{\sqrt{t}} \rho(N). \label{eQ346-alt-alt}
\end{equation}
Combining inequalities \eqref{eq:i1-final-bound-v2} and \eqref{eQ346-alt-alt}, and making use of the triangle inequality, completes the proof of \eqref{eq:lt-quant-v1-v2}. It remains to establish \eqref{eq:lt-quant-v2-v2}. Recall that
\begin{align}
\frac{1}{t}\partial_{\theta}{\mathcal{L}}^{[i,j,k,N]}_t(\theta)&= 
\frac{1}{t}\int_0^t h(\theta,x_s^{i},x_s^{j},x_s^{k},\mu_s^{[N]})\mathrm{d}s
+\frac{1}{t}\int_0^t \left\langle g(\theta,x_s^{i},x_s^{j}),\mathrm{d}w_s^{i}\right\rangle
\\
\frac{1}{t}\partial_{\theta}{\mathcal{L}}^{i,j,k}_t(\theta)&= 
\frac{1}{t}\int_0^t  h(\theta,x_s^{i},x_s^{j},x_s^{k},\mu_s)\mathrm{d}s
+\frac{1}{t}\int_0^t \left\langle  g(\theta,x_s^{i},x_s^{j}),\mathrm{d}w_s^{i}\right\rangle
\end{align}
First consider the difference in the ``deterministic integrals''. Using Lemma \ref{lem:Phi-hH-stability} (i.e., $h$ is locally Lipschitz with polynomial growth),  the Cauchy--Schwarz inequality, Theorem~\ref{thm:moment-bounds} (i.e., uniform-in-time moment bounds), and then Theorem~1 in \cite{fournier2015rate} (i.e., bounds on the $\mathsf{W}_2$ distance to the empirical measure), we have that
\begin{align}
    \sup_{s\geq 0} \mathbb{E}\big[\big\|h(\theta,x_s^{i},x_s^{j},x_s^{k},\mu_s^{[N]}) - h(\theta,x_s^{i},x_s^{j},x_s^{k},\mu_s)\big\|\big] 
    &\leq K \sup_{s\geq 0} \big(\mathbb{E}\big[\mathsf{W}_2^2(\mu_s^{[N]},\mu_s)\big]\big)^{\frac{1}{2}}  \leq K \rho(N),
\end{align}
for some constant $K<\infty$ which has been allowed to increase between displays, where $\rho:\mathbb{N}\rightarrow\mathbb{R}_{+}$ is the function defined in \eqref{eq:rho}. It follows that
\begin{align}
    \mathbb{E}\Big[\big\|\frac{1}{t}\int_0^t \big( h(\theta,x_s^{i},x_s^{j},x_s^{k},\mu_s^{[N]}) - h(\theta,x_s^{i},x_s^{j},x_s^{k},\mu_s)\big)\mathrm{d}s\big\|\Big] 
    \leq K\rho(N). \label{eq:i1-final-bound-recall}
\end{align}
This, combined with the fact that the difference in the stochastic integrals is null, completes the proof of \eqref{eq:lt-quant-v2-v2}. 
\end{proof}

\subsection{Additional Lemmas for Proposition \ref{prop:inf-t-convergence-1}}

\begin{lemma}
\label{lemma:main-theorem-lemma-2-a}
Suppose that Assumption~\ref{assumption:moments}, Assumption~\ref{assumption:drift}, and Assumption~\ref{assumption:drift-grad} hold. Then, for each $m\in\{0,1,2,3\}$, there exists a constant $C_m<\infty$ such that, for all $N\in\mathbb{N}$, for all distinct  $i,j,k\in[N]$, and for all $\theta\in\Theta$, $ \|\partial_{\theta}^m\mathcal{L}^{i,N}(\theta)\| \leq C_m$ and $ \|\partial_{\theta}^m\mathcal{L}^{i,j,k,N}(\theta) \|\leq C_m$.
\end{lemma}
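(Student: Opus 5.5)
The plan is to write each derivative as an integral against $\pi_{\theta_0}^N$ of a $\theta$-derivative of the integrand, bound that integrand by a polynomial in the particle positions uniformly in $\theta$, and then invoke the uniform moment bounds. Concretely, $\mathcal{L}^{i,N}(\theta)=\int L^{i,N}(\theta,\boldsymbol{x}^N)\pi_{\theta_0}^N(\mathrm{d}\boldsymbol{x}^N)$ and $\mathcal{L}^{i,j,k,N}(\theta)=\int \ell^{i,j,k,N}(\theta,\boldsymbol{x}^N)\pi_{\theta_0}^N(\mathrm{d}\boldsymbol{x}^N)$, with $L^{i,N}$ and $\ell^{i,j,k,N}$ quadratic forms in $b(\theta,x^{i,N},x^{a,N})$ and $B(\theta_0,x^{i,N},\mu^N)$.

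\textbf{Step 1: pointwise bounds on the drift and its $\theta$-derivatives.} By Assumption~\ref{assumption:drift-grad}, for $m=0,\dots,3$ we have
\[
\|\partial_\theta^m\nabla V(\theta,x)\|\le C_m+C_m\|x\|(1+\|x\|^{m_m}),
\]
and the analogous bound holds for $\nabla W$. This follows from the local Lipschitz bound with $y=0$ together with the bound at $0$. Hence $\|\partial_\theta^m b(\theta,x,y)\|\le K(1+\|x\|^q+\|y\|^q)$ uniformly in $\theta\in\Theta$, which is essentially the content of Lemma~\ref{lem:weighted-lip-g} and Corollary~\ref{cor:empirical-drift-lip}.

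\textbf{Step 2: Leibniz expansion of the integrands.} Apply the Leibniz rule to $\ell^{i,j,k,N}=\frac12\langle b(\theta,x^i,x^j)-B(\theta_0,x^i,\mu^N),\, b(\theta,x^i,x^k)-B(\theta_0,x^i,\mu^N)\rangle_{\sigma\sigma^\top}$. Then $\partial_\theta^m\ell^{i,j,k,N}$ is a finite sum of bilinear terms of the form $\partial^{r}_\theta b\cdot(\sigma\sigma^\top)^{-1}\partial^{m-r}_\theta(\cdot)$. By Step 1, it is therefore bounded by
\[
K\Big(1+\sum_{a\in\{i,j,k\}}\|x^{a,N}\|^{2q}+\frac1N\sum_{a=1}^N\|x^{a,N}\|^{2q}\Big),
\]
where $K$ is independent of $\theta$ and $N$. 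The same expansion applies to $L^{i,N}$ with $b$ replaced by $B(\theta,x^i,\mu^N)$; the empirical average in $B$ only produces the $\frac1N\sum_a$ terms.

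\textbf{Step 3: integrate against $\pi_{\theta_0}^N$.} First, differentiation under the integral is justified by dominated convergence, as in the proof of Proposition~\ref{prop:asymptotic-partial-log-lik-grad}, since the dominating function from Step 2 does not depend on $\theta$. Second, exchangeability of $\pi_{\theta_0}^N$ gives $\int\|x^{a,N}\|^{2q}\,\mathrm{d}\pi_{\theta_0}^N$ equal to a common value. Third, by Theorem~\ref{thm:moment-bounds} combined with Theorem~\ref{thm:invariant-distribution-2}, this value is bounded by a constant $C$ independent of $N$: the law at time $t$ converges to $\pi_{\theta_0}^N$ in $\mathsf W_2$, and Fatou's lemma (lower semicontinuity of moments under weak convergence) transfers the uniform-in-$t$ moment bound to the invariant measure. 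Combining the three points gives $C_m$ uniform in $N$, $\theta$, and distinct $i,j,k$.

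\textbf{Main obstacle.} The step requiring care is the uniformity in $N$ of the moments of $\pi_{\theta_0}^N$. Under Assumption~\ref{assumption:drift}(b) this must be read for the centered process. I would handle it exactly as above, noting that the constant in Theorem~\ref{thm:moment-bounds} does not depend on $N$. If $\Theta$ is unbounded, uniformity in $\theta$ rests entirely on the $\theta$-free constants in Assumption~\ref{assumption:drift-grad}.
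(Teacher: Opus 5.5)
Your proposal is correct and follows essentially the same route as the paper. Both arguments combine a $\theta$-uniform polynomial bound on $\partial_\theta^m$ of the integrand, dominated convergence, and $N$-uniform moments of $\pi_{\theta_0}^N$; the paper takes the polynomial bound from Corollaries~\ref{cor:empirical-L-lip} and~\ref{cor:empirical-H-lip}, whereas you derive it via Leibniz directly from Assumption~\ref{assumption:drift-grad}, and your $\mathsf{W}_2$-convergence/lower-semicontinuity step spells out the moment transfer to the invariant measure that the paper simply attributes to Theorems~\ref{thm:moment-bounds} and~\ref{thm:invariant-distribution}.
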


\begin{proof}
We prove the result for $\mathcal{L}^{i,j,k,N}$, with the result for $\mathcal{L}^{i,N}$ proved similarly. Fix $m\in\{0,1,2,3\}$. Recall that $\smash{\mathcal{L}^{i,j,k,N}(\theta)=\int_{(\mathbb{R}^d)^N} \ell^{i,j,k,N}(\theta,\boldsymbol{x}^N) \pi_{\theta_0}^N(\mathrm{d}\boldsymbol{x}^N)}$. Due to Theorem~\ref{thm:moment-bounds} (i.e., uniform-in-time moment bounds for the IPS) and Theorem~\ref{thm:invariant-distribution} (i.e., ergodicity of the IPS), it holds that $\int \|x^{a,N}\|^q\pi_{\theta_0}^N(\mathrm{d}\boldsymbol{x}^N)<K_q$ for all $q\geq 1$, for all $N\in\mathbb{N}$, and for all $a\in[N]$. In addition, by Corollary~\ref{cor:empirical-L-lip} and Corollary~\ref{cor:empirical-H-lip}, there exist $K_m>0$ and $q_m\geq 1$, independent of $\theta\in\Theta$, such that $\|\partial_{\theta}^m \ell^{i,j,k,N}(\theta,\boldsymbol{x}^N)\|\leq K_m(1+\|x^{i,N}\|^{q_m} + \|x^{j,N}\|^{q_m} + \|x^{k,N}\|^{q_m} + \frac{1}{N}\sum_{a=1}^N \|x^{a,N}\|^{q_m})$. It follows, using the DCT to differentiate under the integral, the triangle inequality, and these bounds, that
\begin{align}
&\| \partial_{\theta}^m\mathcal{L}^{i,j,k,N}(\theta) \| = \big\|\textstyle\int_{(\mathbb{R}^d)^N} \partial_{\theta}^m  \ell^{i,j,k,N}(\theta,\boldsymbol{x}^N) \pi_{\theta_0}^N(\mathrm{d}\boldsymbol{x}^N)\big\| \leq \int_{(\mathbb{R}^d)^N} \big\|\partial_{\theta}^m  \ell^{i,j,k,N}(\theta,\boldsymbol{x}^N)\big\| \pi_{\theta_0}^N(\mathrm{d}\boldsymbol{x}^N) \\
   & \leq K_m\textstyle\int_{(\mathbb{R}^d)^N} \big(1+\sum_{a\in\{i,j,k\}}\|x^{a,N}\|^{q_m} + \frac{1}{N}\sum_{a=1}^N \|x^{a,N}\|^{q_m}\big)\pi_{\theta_0}^N(\mathrm{d}\boldsymbol{x}^N)  \leq K_m(1+4K_{q_m}):=C_m.
\end{align}
\end{proof}

\begin{lemma}
    \label{lemma:main-theorem-lemma-1}
        Suppose that Assumption~\ref{assumption:moments}, Assumption~\ref{assumption:drift}, Assumption~\ref{assumption:drift-grad} (with $k=0,1,2$), and Assumption~\ref{assumption:learning-rate} hold. Define 
        \begin{align}
            \Gamma_{r,\eta}^{i,N} &= \int_{\tau_r}^{\sigma_{r,\eta}} \gamma_s \left( H^{i,N}(\bar{\theta}_s^{i,N}, {\boldsymbol{x}}_s^N) - \partial_{\theta}\mathcal{L}^{i,N}(\bar{\theta}_s^{i,N})\right) \mathrm{d}s \label{eq:gamma-k-1} \\
            \Gamma_{r,\eta}^{i,j,k,N} &= \int_{\tau_r}^{\sigma_{r,\eta}} \gamma_s \left( h^{i,j,k,N}(\theta_s^{i,j,k,N}, {\boldsymbol{x}}_s^N) - \partial_{\theta}\mathcal{L}^{i,j,k,N}(\theta_s^{i,j,k,N})\right) \mathrm{d}s.  \label{eq:gamma-k-2}
        \end{align}
        where $(\tau_r)_{r\geq 1}$ and $(\sigma_{r,\eta})_{r\geq 0}$ are the stopping times defined in \eqref{eq:stop-times-i-eq-1} - \eqref{eq:stop-times-i-eq-2} and \eqref{eq:stop-times-ijk-eq-1} - \eqref{eq:stop-times-ijk-eq-2}, respectively, and where $\sigma_{r,\eta} = \sigma_r + \eta$ for some $\eta>0$. Then $\|\Gamma_{r,\eta}^{i,N}\|\rightarrow 0$ and $\|\Gamma_{r,\eta}^{i,j,k,N}\|\rightarrow 0$ a.s. as $r\rightarrow\infty$.
    \end{lemma}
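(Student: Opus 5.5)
This is the standard Poisson-equation argument of \cite[Lemma 3.1]{sirignano2017stochastic}, adapted to the finite-particle IPS. We give it for $\Gamma^{i,j,k,N}_{r,\eta}$; the averaged case is identical with $(h^{i,j,k,N},g^{i,j,N},\mathcal{L}^{i,j,k,N})$ replaced by $(H^{i,N},G^{i,N},\mathcal{L}^{i,N})$.

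\textbf{Poisson equation.} Set $S^{i,j,k,N}(\theta,\boldsymbol{x}^N)=h^{i,j,k,N}(\theta,\boldsymbol{x}^N)-\partial_\theta\mathcal{L}^{i,j,k,N}(\theta)$. By Proposition~\ref{prop:asymptotic-partial-log-lik-grad}, this function is centred with respect to $\pi^N_{\theta_0}$. By Corollary~\ref{cor:empirical-H-lip} and Lemma~\ref{lemma:main-theorem-lemma-2-a}, it is locally Lipschitz with polynomial growth in $\boldsymbol{x}^N$, uniformly in $\theta$, together with its first two $\theta$-derivatives. A minor variant of Lemma~17 of \cite{sharrock2023online} (with $r=0$) then gives a solution $v(\theta,\boldsymbol{x}^N)$ of
\[
\mathcal{A}_{\boldsymbol{x}^N}v=S^{i,j,k,N}, \qquad \textstyle\int v\,\mathrm{d}\pi^N_{\theta_0}=0.
\]
The functions $v$, $\partial_\theta v$, $\partial_\theta^2 v$ and $\partial_\theta\partial_{\boldsymbol{x}^N}v$ are all bounded by $K(1+\|\boldsymbol{x}^N\|^q)$.

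\textbf{Decomposition.} Apply It\^o's formula to $u(s)=\gamma_s v(\theta^{i,j,k,N}_s,\boldsymbol{x}^N_s)$ on $[\tau_r,\sigma_{r,\eta}]$, exactly as in the treatment of $\Omega^{(3)}$ in the proof of Theorem~\ref{theorem:main-theorem-1-2-finite-n}. This writes $\Gamma_{r,\eta}$ as the sum of:
\begin{itemize}
\item the boundary terms $\gamma_{\sigma_{r,\eta}}v_{\sigma_{r,\eta}}-\gamma_{\tau_r}v_{\tau_r}$;
\item a $\dot\gamma_s v$ integral;
\item integrals with $\gamma_s^2$ weights, of $\mathcal{A}_\theta v$ and of the cross-variation term $\partial_\theta\partial_{x^{i,N}}v\,g^{i,j,N}$;
\item two stochastic integrals, $\int\gamma_s^2\partial_\theta v\,g\,\sigma^{-\top}\mathrm{d}w^{i,N}$ and $\int\gamma_s\partial_{\boldsymbol{x}^N}v\,(I_N\otimes\sigma)\mathrm{d}\boldsymbol{w}^N$.
\end{itemize}
Each term is then controlled globally over $[0,\infty)$.

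\textbf{Boundary term.} Theorem~\ref{thm:moment-bounds} gives $\sup_t\mathbb{E}\|\boldsymbol{x}^N_t\|^{q'}<\infty$. Combined with polynomial growth and Borel--Cantelli along integer times, this shows $\gamma_t\sup_{s\in[t,t+1]}\|v_s\|\to0$ a.s. This is where the condition $\gamma_tt^p\to0$ in Assumption~\ref{assumption:learning-rate} enters: it lets moments of order $\gg1/p$ absorb the polynomial growth, since $\sup_{[n,n+1]}\|\boldsymbol{x}^N\|$ has moments growing at most polynomially in $n$.

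\textbf{Drift integrals.} The terms with weights $|\dot\gamma_s|$ and $\gamma_s^2$ have finite expectation over $[0,\infty)$, because $\int|\dot\gamma|<\infty$ and $\int\gamma^2<\infty$. Hence they converge absolutely a.s., and their tails over $[\tau_r,\sigma_{r,\eta}]$ vanish as $r\to\infty$, since $\tau_r\to\infty$.

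\textbf{Martingale terms.} Both are $L^2$-bounded martingales on $[0,\infty)$, since their quadratic variations have finite expectation by $\int\gamma^2<\infty$ and the moment bounds. They therefore converge a.s., so increments over intervals starting at $\tau_r\to\infty$ tend to zero.

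\textbf{Main obstacle.} The difficulty is that the estimator $\theta_s$ is unbounded when $\Theta$ is unbounded, so every bound must hold uniformly in $\theta$. This is handled by the fact that the polynomial-growth bounds on $v$ and its derivatives are uniform in $\theta\in\Theta$. For this, Assumption~\ref{assumption:drift-grad} provides $\theta$-uniform constants. Care is also needed in the Borel--Cantelli step for the supremum of $\|\boldsymbol{x}^N_s\|^q$ over unit intervals, for which I would use a BDG-based estimate of $\mathbb{E}\sup_{[n,n+1]}\|\boldsymbol{x}^N_s\|^{q'}$.
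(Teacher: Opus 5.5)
Your proposal is correct and follows essentially the same route as the paper. Both arguments solve the Poisson equation for $h^{i,j,k,N}-\partial_\theta\mathcal{L}^{i,j,k,N}$ via Lemma~17 of \cite{sharrock2023online} and apply It\^o's formula to $\gamma_t v^{i,j,k,N}(\theta_t,\boldsymbol{x}_t^N)$. The boundary terms are then handled by Borel--Cantelli, the $|\dot\gamma_s|+\gamma_s^2$-weighted drift terms by absolute integrability, and the stochastic integrals by $L^2$-bounded martingale convergence. Your explicit control of $\sup_{s\in[n,n+1]}\|\boldsymbol{x}_s^N\|$ usefully fills in a step the paper delegates to \cite{sirignano2020stochastic}: its pointwise bound $\mathbb{E}[|J^{(1)}_t|^2]\le K\gamma_t^2$ does not by itself give almost sure convergence in continuous time.
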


\begin{proof}
We will prove the result for \eqref{eq:gamma-k-2}, with the result for \eqref{eq:gamma-k-1} proved similarly. Consider the function 
\begin{equation}
S^{i,j,k,N}(\theta,{\boldsymbol{x}}^N)
=h^{i,j,k,N}(\theta,{\boldsymbol{x}}^N) - \partial_{\theta}{\mathcal{L}}^{i,j,k,N}(\theta).
\end{equation}
This function is centered with respect to the invariant measure $\pi_{\theta_0}^N$, owing to the definition of $\partial_{\theta}{\mathcal{L}}^{i,j,k,N}(\cdot)$ (see Proposition~\ref{prop:asymptotic-partial-log-lik-grad}). In addition, due to Lemma~\ref{lemma:main-theorem-lemma-2-a} (the boundedness of the asymptotic log-likelihood and its derivatives) and Corollary~\ref{cor:empirical-H-lip} (the local Lipschitz and polynomial growth of ${h}^{i,j,k,N}(\theta,{\boldsymbol{x}}^N)$ and its derivatives), for $l=0,1,2$, $\smash{\|\partial_{\theta}^l  S^{i,j,k,N}(\theta,\boldsymbol{x}^{N})  - \partial_{\theta}^l  S^{i,j,k,N}(\theta,\boldsymbol{y}^{N}) \|}$ satisfies a bound of the type given in Corollary~\ref{cor:empirical-H-lip}. Thus, the function $S^{i,j,k,N}(\theta,{\boldsymbol{x}}^N)$ satisfies the conditions of (a minor modification of) Lemma 17 in \cite{sharrock2023online} with $r=0$. It follows that, for any fixed distinct $i,j,k\in[N]$, the Poisson equation
\begin{equation}
\mathcal{A}_{{\boldsymbol{x}^N}} v^{i,j,k,N}(\theta,{\boldsymbol{x}}^N) = S^{i,j,k,N}(\theta,{\boldsymbol{x}}^N)~~~,~~~\int_{(\mathbb{R}^d)^N} v^{i,j,k,N}(\theta,{\boldsymbol{x}}^N)\pi_{\theta_0}^N(\mathrm{d}\boldsymbol{x}^N)=0 
\end{equation}
has a unique twice differentiable solution which satisfies $\smash{\sum_{l=0}^{2} \|\frac{\partial^l }{\partial \theta^{l}}v^{i,j,k,N}(\theta,{\boldsymbol{x}}^N)\| + \|\frac{\partial^2 }{\partial \theta\partial {\boldsymbol{x}}^N}v^{i,j,k,N}(\theta,{\boldsymbol{x}}^N)\| \leq}$ $\smash{K [1+\sum_{a\in\{i,j,k\}}\|x^{a,N}\|^{q} + \frac{1}{N}\sum_{a=1}^N \|x^{a,N}\|^{q}]}$, where the constant $K>0$ and the integer $q\geq 1$ are also independent of $N$. Suppose now we define $u^{i,j,k,N}(t,\theta,{\boldsymbol{x}}^N) = \gamma_tv^{i,j,k,N}(\theta,{\boldsymbol{x}}^N)$. Then, applying It\^{o}'s formula to each component of this vector-valued function, we obtain, for $m=1,\dots,p$, 
\begin{align}
&u^{i,j,k,N}_m(t_2,\theta^{i,j,k,N}_{t_2},{\boldsymbol{x}}^N_{t_2}) - u^{i,j,k,N}_m(t_1,\theta^{i,j,k,N}_{t_1},{\boldsymbol{x}}^N_{t_1}) \\
&= \int_{t_1}^{t_2}\partial_s u^{i,j,k,N}_m(s,\theta^{i,j,k,N}_{s},{\boldsymbol{x}}_s^N)\mathrm{d}s  + \int_{t_1}^{t_2}\mathcal{A}_{{\boldsymbol{x}^N}}u^{i,j,k,N}_m(s,\theta^{i,j,k,N}_{s},{\boldsymbol{x}}_s^N)\mathrm{d}s \\
&+ \int_{t_1}^{t_2}\mathcal{A}_{\theta}u^{i,j,k,N}_m(s,\theta^{i,j,k,N}_{s},{\boldsymbol{x}}_s^N)\mathrm{d}s + \int_{t_1}^{t_2}\gamma_s \mathrm{Tr}\bigg[G^{i,N}(\theta_s^{i,j,k,N},{\boldsymbol{x}}_s^N)\partial_{\theta}\partial_{{\boldsymbol{x}^N}}u^{i,j,k,N}_m(s,\theta^{i,j,k,N}_{s},{\boldsymbol{x}}_s^N)\bigg]\mathrm{d}s  \\
&+\int_{t_1}^{t_2} \langle \partial_{{\boldsymbol{x}^N}}u^{i,j,k,N}_m(s,\theta^{i,j,k,N}_{s},{\boldsymbol{x}}_s^N), \sigma\otimes I_N \mathrm{d}w_s^N \rangle \\
&+\int_{t_1}^{t_2}\gamma_s \langle \partial_{\theta}u^{i,j,k,N}_m(s,\theta^{i,j,k,N}_{s},{\boldsymbol{x}}_s^N),G^{i,N}(\theta_s^{i,j,k,N},{\boldsymbol{x}}_s^N)\sigma^{-1}\mathrm{d}w_s^{i,N}\rangle 
\end{align}
where $\mathcal{A}_{{\boldsymbol{x}^N}}$ and $\mathcal{A}_{\theta}$ are the infinitesimal generators of ${\boldsymbol{x}}^N$ and $\theta^{i,j,k,N}$. Rearranging this identity, and also recalling that $v^{i,j,k,N}(\theta,{\boldsymbol{x}}^N)$ is the solution of the Poisson equation, we obtain
\begin{align}
\hspace{-5mm} \Gamma_{r,\eta} &= \int_{\tau_r}^{\sigma_{r,\eta}}\gamma_s\mathcal{A}_{{\boldsymbol{x}^N}}v^{i,j,k,N}(\theta^{i,j,k,N}_{s},{\boldsymbol{x}}_s^N)\mathrm{d}s \\
&= \gamma_{\sigma_{r,\eta}}v^{i,j,k,N}(\theta^{i,j,k,N}_{\sigma_{r,\eta}},{\boldsymbol{x}}^N_{\sigma_{r,\eta}}) - \gamma_{\tau_r}v^{i,j,k,N}(\theta^{i,j,k,N}_{\tau_r},{\boldsymbol{x}}^N_{\tau_r})  - \int_{\tau_r}^{\sigma_{r,\eta}}\dot{\gamma}_sv^{i,j,k,N}(\theta^{i,j,k,N}_{s},{\boldsymbol{x}}_s^N)\mathrm{d}s  \hspace{-3mm} \\
& - \int_{\tau_r}^{\sigma_{r,\eta}}\gamma_s\mathcal{A}_{\theta}v^{i,j,k,N}(\theta^{i,j,k,N}_{s},{\boldsymbol{x}}_s^N)\mathrm{d}s - \int_{\tau_r}^{\sigma_{r,\eta}}\gamma_s^2 \mathrm{Tr}\big[G^{i,N}(\theta_s^{i,j,k,N},\boldsymbol{x}_s^{N})\partial_{\theta}\partial_{{\boldsymbol{x}^N}}v^{i,j,k,N}(\theta^{i,j,k,N}_{s},{\boldsymbol{x}}_s^N)\big]\mathrm{d}s \hspace{-3mm}  \\
&-\int_{\tau_r}^{\sigma_{r,\eta}}\gamma_s \langle \partial_{{\boldsymbol{x}^N}}v^{i,j,k,N}(\theta^{i,j,k,N}_{s},{\boldsymbol{x}}_s^N),\sigma \otimes I_N\mathrm{d}w_s^N\rangle \\
&-\int_{\tau_r}^{\sigma_{r,\eta}}\gamma_s^2\langle\partial_{\theta}v^{i,j,k,N}(\theta^{i,j,k,N}_{s},{\boldsymbol{x}}_s^N), G^{i,N}(\theta_{s}^{i,j,k,N},{\boldsymbol{x}}_s^N)\sigma^{-1}\mathrm{d}w^{i,N}_s \rangle 
\end{align} 
First consider $J_{t,i,j,k,N}^{(1)} = \gamma_t \|v^{i,j,k,N}(\theta_t,{\boldsymbol{x}}_t^N)\|$. We have, using the polynomial growth of $v^{i,j,k,N}(\theta,{\boldsymbol{x}}^N)$, and Theorem~\ref{thm:moment-bounds} (i.e., uniform-in-time moment bounds for the IPS), that
\begin{align}
\mathbb{E}[|J_{t,i,j,k,N}^{(1)}|^2]
\leq K \gamma_t^2\Big(1+\sum_{a\in\{i,j,k\}}\mathbb{E}[\|{x}^{a,N}_t\|^{q}]+ \frac{1}{N}\sum_{a=1}^N \mathbb{E}[\|x^{a,N}_t\|^{q}]\Big)\leq K\gamma_t^2.
\end{align}
Applying the Borel--Cantelli argument as in \cite[Appendix B]{sirignano2020stochastic}, it follows that $\smash{J_{t,i,j,k,N}^{(1)} \rightarrow 0}$ as $t\rightarrow\infty$ with probability one. We next consider the term
\begin{align}
J_{0,t,i,j,k,N}^{(2)} &= \int_{0}^t\dot{\gamma}_sv^{i,j,k,N}(\theta^{i,j,k,N}_s,{\boldsymbol{x}}^N_s)\mathrm{d}s+\int_{0}^t\gamma_s\mathcal{A}_{\theta}v^{i,j,k,N}(\theta^{i,j,k,N}_{s},{\boldsymbol{x}}_s^N)\mathrm{d}s \\
&+\int_0^{t}\gamma_s^2 \mathrm{Tr}\big[G^{i,N}(\theta_{s}^{i,N},{\boldsymbol{x}}_s^N)\partial_{\theta}\partial_{x}v^{i,j,k,N}(\theta^{i,j,k,N}_{s},{\boldsymbol{x}}_s^N)\big]\mathrm{d}s. \nonumber
\end{align}
In this case, using the growth properties of the $v^{i,j,k,N}(\theta,{\boldsymbol{x}}^N)$, Theorem~\ref{thm:moment-bounds} (i.e., uniform-in-time moment bounds for the IPS), and Assumption~\ref{assumption:learning-rate} (the properties of the learning rate), we obtain the bound
\begin{align}
\sup_{t>0}\mathbb{E}[|J_{0,t,i,j,k,N}^{(2)}|]&\leq K\int_0^{\infty} (|\dot{\gamma}_s|+\gamma_s^2)(1+\sum_{a\in\{i,j,k\}}\mathbb{E}[\|{x}^{a,N}_s\|^{q}]+\frac{1}{N}\sum_{a=1}^N \mathbb{E}[\|x^{a,N}_s\|^{q}])\mathrm{d}s<\infty. 
\end{align} 
Thus, there exists a finite random variable $\smash{J_{0,\infty,i,j,k,N}^{(2)}}$ such that, with probability one, $\smash{J_{0,t,i,j,k,N}^{(2)}\rightarrow J_{0,\infty,i,j,k,N}^{(2)}}$ as $t\rightarrow\infty$. The last term to consider is the stochastic integral
\begin{align}
J_{0,t,i,j,k,N}^{(3)} &= \int_{0}^{t}\gamma_s\partial_{{\boldsymbol{x}^N}}v^{i,j,k,N}(\theta^{i,j,k,N}_{s},{\boldsymbol{x}}_s^N)\cdot \mathrm{d}w_s^N \\
&+\int_{0}^{t}\gamma_s^2\partial_{\theta}v^{i,j,k,N}(\theta^{i,j,k,N}_{s},{\boldsymbol{x}}_s^N)\cdot G^{i,N}(\theta_{s}^{i,N},{\boldsymbol{x}}_s^N)\mathrm{d}w^{i,N}_s.
\end{align}
In this case, using the BDG inequality, and the same bounds as above, we have
\begin{align}
\mathbb{E}\left[|J_{0,t,i,j,k,N}^{(3)}|^2\right]
&\leq K\int_0^{\infty} (\gamma_s^2+\gamma_s^4) \bigg[1+\sum_{a\in\{i,j,k\}}\mathbb{E}[\|x^{a,N}_s\|^{q}]
+\frac{1}{N}\sum_{a=1}^N \mathbb{E}[\|x^{a,N}_s\|^{q}]\bigg]\mathrm{d}s \leq K\int_0^{\infty}\gamma_s^2\mathrm{d}s<\infty.
\end{align}
Thus, by Doob's martingale convergence theorem, there exists a square integrable random variable $\smash{J_{0,\infty}^{(3)}}$ such that, both almost surely and in $\mathrm{L}^2$, $\smash{J_{0,t,i,j,k,N}^{(3)}\rightarrow J_{0,\infty,i,j,k,N}^{(3)}}$ as $t\rightarrow\infty$. Combining these results, we have 
\begin{equation}
\|\Gamma_{r,\eta}\|\leq J_{\sigma_{r,\eta},i,j,k,N}^{(1)}+ J_{\tau_r,i,j,k,N}^{(1)}+J^{(2)}_{\tau_r,\sigma_{r,\eta},i,j,k,N}+J^{(3)}_{\tau_r,\sigma_{r,\eta},i,j,k,N}\stackrel{r\rightarrow\infty}{\rightarrow}0.
\end{equation}
This completes the proof of \eqref{eq:gamma-k-2}. The proof of \eqref{eq:gamma-k-1} is essentially identical, noting that all of the relevant results (e.g., polynomial growth property, solution of the Poisson equation) continue to hold when $h^{i,j,k,N}$ is replaced by $H^{i,N}$.
\end{proof}

    \begin{lemma}
    \label{lemma:main-theorem-lemma-2}
        Suppose that Assumption~\ref{assumption:moments}, Assumption~\ref{assumption:drift}, Assumption~\ref{assumption:drift-grad} (with $k=0,1,2$), and Assumption~\ref{assumption:learning-rate} hold. Let $L$ denote the Lipschitz constant of $\partial_{\theta}\mathcal{L}^{i,N}$ or $\partial_{\theta}\mathcal{L}^{i,j,k,N}$. Let $\lambda>0$ be such that, for a given $\kappa>0$, it holds that $3\lambda + \frac{\lambda}{4\kappa} = \frac{1}{2L}$. Then, for $r$ sufficiently large and $\eta>0$ sufficiently small (potentially random, and depending on $r$), it holds that
        \begin{align}
            \int_{\tau_r}^{\sigma_{r,\eta}} \gamma_s \mathrm{d}s> \lambda, \qquad \frac{\lambda}{2}\leq \int_{\tau_r}^{\sigma_r}\gamma_s\mathrm{d}s \leq \lambda ~~~\text{a.s.}
        \end{align}
        where $(\tau_r)_{r\geq 1}$ and $(\sigma_{r,\eta})_{r\geq 0}$ are the stopping times defined in either \eqref{eq:stop-times-i-eq-1} - \eqref{eq:stop-times-i-eq-2} or \eqref{eq:stop-times-ijk-eq-1} - \eqref{eq:stop-times-ijk-eq-2}, and where $\sigma_{r,\eta} = \sigma_{r}+\eta$, for some $\eta>0$.
    \end{lemma}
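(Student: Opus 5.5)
The plan follows the corresponding lemma in \cite[Lemma 3.2]{sirignano2017stochastic}, adapted to the present estimators. I treat $(\theta_t^{i,j,k,N})_{t\geq0}$ with $\mathcal{L}^{i,j,k,N}$; the averaged case is identical after replacing $h^{i,j,k,N}$, $g^{i,j,N}$ by $H^{i,N}$, $G^{i,N}$.

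\textbf{Upper bound.} The inequality $\int_{\tau_r}^{\sigma_r}\gamma_s\mathrm{d}s\leq\lambda$ holds by the definition of $\sigma_r$ in \eqref{eq:stop-times-ijk-eq-2}. Since $\gamma$ is positive and continuous, $\int_{\sigma_r}^{\sigma_r+\eta}\gamma_s\mathrm{d}s>0$ for every $\eta>0$. Hence the claim $\int_{\tau_r}^{\sigma_{r,\eta}}\gamma_s\mathrm{d}s>\lambda$ is only nontrivial when $\int_{\tau_r}^{\sigma_r}\gamma_s\mathrm{d}s=\lambda$. I therefore argue by contradiction: I assume that along a subsequence $\int_{\tau_r}^{\sigma_r}\gamma_s\mathrm{d}s<\lambda/2$. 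In that case the supremum defining $\sigma_r$ must have been attained because the gradient-ratio constraint became active at $\sigma_r$. Then, for all small $\eta>0$, the ratio $\|\partial_\theta\mathcal{L}^{i,j,k,N}(\theta_{\sigma_{r,\eta}})\|/\|\partial_\theta\mathcal{L}^{i,j,k,N}(\theta_{\tau_r})\|$ lies outside $[1/2,2]$. Continuity of $\theta_t$ also gives $\int_{\tau_r}^{\sigma_{r,\eta}}\gamma_s\mathrm{d}s\leq\lambda$ for $\eta$ small, which is all that is needed below. Once the lower bound is shown, the first claim follows by the same choice of small $\eta$.

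\textbf{Lower bound.} I bound the parameter increment. Writing the update as \eqref{theta_ideal_2-a-i-ii-finite-n},
\[
\theta_{\sigma_{r,\eta}}-\theta_{\tau_r}=-\int_{\tau_r}^{\sigma_{r,\eta}}\gamma_s\partial_\theta\mathcal{L}^{i,j,k,N}(\theta_s)\mathrm{d}s-\Gamma^{i,j,k,N}_{r,\eta}+\int_{\tau_r}^{\sigma_{r,\eta}}\gamma_s g^{i,j,N}(\theta_s,\boldsymbol{x}_s^N)\sigma^{-\top}\mathrm{d}w_s^{i,N}.
\]
I bound the three terms as follows.
\begin{itemize}
\item[(i)] On $[\tau_r,\sigma_r]$ the gradient norm is at most $2\|\partial_\theta\mathcal{L}^{i,j,k,N}(\theta_{\tau_r})\|$. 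Therefore the first term is at most $2\lambda\|\partial_\theta\mathcal{L}^{i,j,k,N}(\theta_{\tau_r})\|$, plus an $o(1)$ contribution from $[\sigma_r,\sigma_{r,\eta}]$ that vanishes as $\eta\to0$, using Lemma~\ref{lemma:main-theorem-lemma-2-a}.
\item[(ii)] $\|\Gamma^{i,j,k,N}_{r,\eta}\|\to0$ almost surely by Lemma~\ref{lemma:main-theorem-lemma-1}.
\item[(iii)] The martingale $\int_0^t\gamma_s g^{i,j,N}\sigma^{-\top}\mathrm{d}w_s^{i,N}$ has quadratic variation of expectation at most $K\int_0^\infty\gamma_s^2\mathrm{d}s<\infty$. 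This uses Corollary~\ref{cor:empirical-drift-grad-lip}, Theorem~\ref{thm:moment-bounds} and Assumption~\ref{assumption:learning-rate}. It therefore converges almost surely, so its increments over $[\tau_r,\sigma_{r,\eta}]$ tend to $0$.
\end{itemize}
For $r$ large, terms (ii) and (iii) are each below $\lambda\kappa/4\leq\frac{\lambda}{4}\|\partial_\theta\mathcal{L}^{i,j,k,N}(\theta_{\tau_r})\|$, because $\|\partial_\theta\mathcal{L}^{i,j,k,N}(\theta_{\tau_r})\|\geq\kappa$. Combining,
\[
\|\theta_{\sigma_{r,\eta}}-\theta_{\tau_r}\|\leq \|\partial_\theta\mathcal{L}^{i,j,k,N}(\theta_{\tau_r})\|\big(2\lambda+\tfrac{\lambda}{2}\cdot\text{small}+\tfrac{\lambda}{4\kappa}\kappa\big)\leq\|\partial_\theta\mathcal{L}^{i,j,k,N}(\theta_{\tau_r})\|\big(3\lambda+\tfrac{\lambda}{4\kappa}\big).
\]
Lipschitz continuity of $\partial_\theta\mathcal{L}^{i,j,k,N}$ follows from the bounded Hessian in Lemma~\ref{lemma:main-theorem-lemma-2-a}. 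With the choice $3\lambda+\lambda/(4\kappa)=1/(2L)$, it gives
\[
\|\partial_\theta\mathcal{L}^{i,j,k,N}(\theta_{\sigma_{r,\eta}})-\partial_\theta\mathcal{L}^{i,j,k,N}(\theta_{\tau_r})\|\leq\tfrac12\|\partial_\theta\mathcal{L}^{i,j,k,N}(\theta_{\tau_r})\|.
\]
This keeps the ratio inside $[1/2,3/2]\subset[1/2,2]$ and contradicts the exit of the gradient ratio. Hence $\int_{\tau_r}^{\sigma_r}\gamma_s\mathrm{d}s\geq\lambda/2$ for all large $r$.

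\textbf{Main obstacle.} The main difficulty is making the almost-sure smallness of the noise and fluctuation terms hold uniformly over the random intervals $[\tau_r,\sigma_{r,\eta}]$. For the martingale term this follows from almost-sure convergence of the full integral on $[0,\infty)$. For the fluctuation term it relies entirely on Lemma~\ref{lemma:main-theorem-lemma-1}, whose Poisson-equation construction is what replaces ergodic averaging here. A further technical point is the choice of $\eta=\eta(r)$: it must be small enough that the contributions from $[\sigma_r,\sigma_{r,\eta}]$ are negligible, but positive so that the strict inequality $\int_{\tau_r}^{\sigma_{r,\eta}}\gamma_s\mathrm{d}s>\lambda$ holds in the case $\int_{\tau_r}^{\sigma_r}\gamma_s\mathrm{d}s=\lambda$.
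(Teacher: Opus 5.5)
\textbf{Gap: the first claim does not follow from the lower bound.} You use the same estimates as the paper: the increment decomposition of $\theta_{\sigma_{r,\eta}}-\theta_{\tau_r}$, Lemma~\ref{lemma:main-theorem-lemma-1} for the fluctuation term, almost-sure convergence of the $\gamma$-weighted martingale, and the Lipschitz gradient from the bounded Hessian. Your direct argument for $\int_{\tau_r}^{\sigma_r}\gamma_s\mathrm{d}s\geq\lambda/2$ is essentially sound, up to the point in the second paragraph. The problem is how you link the two claims.

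Your remark that $\int_{\tau_r}^{\sigma_{r,\eta}}\gamma_s\mathrm{d}s>\lambda$ is ``only nontrivial when $\int_{\tau_r}^{\sigma_r}\gamma_s\mathrm{d}s=\lambda$'' is backwards. If equality holds, the first claim is immediate for every $\eta>0$, because $\gamma>0$. If $\int_{\tau_r}^{\sigma_r}\gamma_s\mathrm{d}s<\lambda$, the first claim is \emph{false} for every sufficiently small $\eta$. So the first claim is equivalent to $\int_{\tau_r}^{\sigma_r}\gamma_s\mathrm{d}s=\lambda$: the interval must be terminated by the learning-rate budget in \eqref{eq:stop-times-ijk-eq-2}, not by the gradient-ratio constraint. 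Any value in $[\lambda/2,\lambda)$ satisfies your lower bound yet violates the first claim. Hence the sentence ``once the lower bound is shown, the first claim follows by the same choice of small $\eta$'' is an invalid step.

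\textbf{How to repair it.} Your estimates never used the hypothesis $<\lambda/2$. Bound (i) already allows $2\lambda\|\partial_\theta\mathcal{L}^{i,j,k,N}(\theta_{\tau_r})\|$, which needs only $\int_{\tau_r}^{\sigma_{r,\eta}}\gamma_s\mathrm{d}s\leq\lambda$, and the total still stays below $(3\lambda+\tfrac{\lambda}{4\kappa})\|\partial_\theta\mathcal{L}^{i,j,k,N}(\theta_{\tau_r})\|$. So assume instead, along a subsequence, that $\int_{\tau_r}^{\sigma_r}\gamma_s\mathrm{d}s<\lambda$; this is exactly the paper's contradiction hypothesis $\int_{\tau_r}^{\sigma_{r,\eta}}\gamma_s\mathrm{d}s\leq\lambda$ for small $\eta$. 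The same argument then gives a contradiction. Hence $\int_{\tau_r}^{\sigma_r}\gamma_s\mathrm{d}s=\lambda$ for all large $r$, which yields both claims at once. This is the paper's order: prove the first claim, then deduce the second from it.

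One further point to tighten. When the ratio constraint binds at $\sigma_r$, you only know that the ratio leaves $[1/2,2]$ at points arbitrarily close to the right of $\sigma_r$. You do not know that it lies outside $[1/2,2]$ at $\sigma_{r,\eta}$ itself for every small $\eta$. The contradiction with the supremum defining $\sigma_r$ should therefore come from showing the ratio stays in $[1/2,3/2]$ for \emph{every} $s\in[\tau_r,\sigma_{r,\eta}]$. Your bounds (i)--(iii) hold uniformly in the right endpoint, so this is immediate. The martingale converges almost surely and is therefore Cauchy. The Poisson-equation estimates in the proof of Lemma~\ref{lemma:main-theorem-lemma-1} control arbitrary subintervals of $[\tau_r,\infty)$. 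The paper's write-up has the same looseness.
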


    \begin{proof}
    We prove the result in the case where the stopping times are defined by  \eqref{eq:stop-times-ijk-eq-1} - \eqref{eq:stop-times-ijk-eq-2}, with the other case proved in the same way. Our proof follows closely that of \cite[][Lemma 3.2]{sirignano2017stochastic}, with the appropriate modifications. We will argue by contradiction. Let us assume that $\smash{\int_{\tau_r}^{\sigma_{r,\eta}}\gamma_s\mathrm{d}s\leq \lambda}$. Choose  $\varepsilon>0$ such that $\varepsilon\leq \frac{\lambda}{8}$. Then, using the It\^{o} isometry, Corollary~\ref{cor:empirical-drift-grad-lip} (the polynomial growth of $g^{i,j,N}(\theta,\boldsymbol{x}^N)$), Theorem~\ref{thm:moment-bounds} (the bounded moments of the IPS), and Assumption~\ref{assumption:learning-rate} (the properties of the learning rate), we have that
\begin{align}
&\sup_{t\geq 0}\mathbb{E}\big\|\int_0^t \gamma_s \frac{\kappa}{\|\partial_{\theta}\mathcal{L}^{i,j,k,N}(\theta^{i,j,k,N}_{\tau_r})\|}g^{i,j,N}(\theta_s^{i,j,k,N},\boldsymbol{x}_s^N)\sigma^{-1}\mathrm{d}w_s^{i,N}\big\|^2 \\[.5mm]
&\leq \sup_{t\geq 0}\mathbb{E}\big\|\int_0^t \gamma_s g^{i,j,N}(\theta_s^{i,j,k,N},\boldsymbol{x}_s^N)\sigma^{-1}\mathrm{d}w_s^{i,N}\big\|^2 \\
&\leq \int_0^t K\gamma_s^2\Big(1+\mathbb{E}[\|{x}^{i,N}_s\|^{q}] + \mathbb{E}[\|{x}^{j,N}_s\|^{q}]+\frac{1}{N}\sum_{k=1}^N \mathbb{E}[\|x^{k,N}_s\|^{q}]\Big)\mathrm{d}s<\infty.
\end{align}
Thus, appealing to Doob's martingale convergence theorem, there exists a finite random variable $M$ such that, both almost surely and in $\mathrm{L}^2$, $\int_0^t [\cdots]\mathrm{d}w_s^{i,N}\rightarrow M$ and thus, for the chosen $\varepsilon>0$, there exists $r$ such that 
\begin{equation}
\int_{\tau_r}^{\sigma_{r,\eta}}\gamma_s \frac{\kappa}{\|\partial_{\theta}\mathcal{L}^{i,j,k,N}(\theta^{i,j,k,N}_{\tau_r})\|}g^{i,j,N}(\theta_s^{i,j,k,N},{\boldsymbol{x}_s^N})\sigma^{-1}\mathrm{d}w_s^{i,N}<\varepsilon. \label{eq391}
\end{equation}
Let us now also assume that, for the given $r$, $\eta$ is small enough such that for all $s\in[\tau_r,\sigma_{r,\eta}]$, we have $\|\partial_{\theta}\mathcal{L}^{i,j,k,N}(\theta_s^{i,j,k,N})\|\leq 3\|\partial_{\theta}\mathcal{L}^{i,j,k,N}(\theta_{\tau_r}^{i,j,k,N})\|$. We can then compute
\begin{align} 
&\|\theta^{i,j,k,N}_{\sigma_{r,\eta}}-\theta^{i,j,k,N}_{\tau_r}\| = \big\|\int_{\tau_r}^{\sigma_{r,\eta}}\gamma_sh^{i,j,k,N}(\theta_s^{i,j,k,N},{\boldsymbol{x}}_s^N)\mathrm{d}s+\int_{\tau_r}^{\sigma_{r,\eta}}\gamma_s g^{i,j,N}(\theta_s^{i,j,k,N},{\boldsymbol{x}}_s^N) \sigma^{-1}\mathrm{d}w_s^{i,N} \big\| \\
&\leq 3\|\partial_{\theta}\mathcal{L}^{i,j,k,N}(\theta_{\tau_r}^{i,N})\|\int_{\tau_r}^{\sigma_{r,\eta}}\gamma_s\mathrm{d}s + 
\big\|\int_{\tau_r}^{\sigma_{r,\eta}}\gamma_s(h^{i,j,k,N}(\theta_s^{i,j,k,N},{\boldsymbol{x}}_s^N)-\partial_{\theta}\mathcal{L}^{i,j,k,N}(\theta_s^{i,j,k,N}))\mathrm{d}s\big\| \notag  \\
&~~~+\frac{\|\partial_{\theta}\mathcal{L}^{i,j,k,N}(\theta^{i,j,k,N}_{\tau_r})\|}{\kappa}
\big\|\int_{\tau_r}^{\sigma_{r,\eta}}\gamma_s\frac{\kappa}{\|\partial_{\theta}\mathcal{L}^{i,j,k,N}(\theta^{i,j,k,N}_{\tau_r})\|} g^{i,j,N}(\theta_s^{i,j,k,N},{\boldsymbol{x}}_s^N) \sigma^{-1} \mathrm{d}w_s^{i,N}\big\|  \\[2mm]
&\leq 3\|\partial_{\theta}\mathcal{L}^{i,j,k,N}(\theta_{\tau_r}^{i,N})\|\lambda + \varepsilon + \frac{\|\partial_{\theta}\mathcal{L}^{i,j,k,N}(\theta^{i,j,k,N}_{\tau_r})\|}{\kappa}\varepsilon  \leq \|\partial_{\theta}\mathcal{L}^{i,j,k,N}(\theta_{\tau_r}^{i,N})\|\Big[{3\lambda}+\frac{\lambda}{4\kappa}\Big] \label{d105}
\end{align}
where in the penultimate line we have used Lemma \ref{lemma:main-theorem-lemma-1} and our previous bound in \eqref{eq391}, and in the final line we have used the fact that our choice of $\varepsilon$ satisfies $\varepsilon\leq \frac{\lambda}{8}$. We thus obtain
\begin{equation}
\|\theta^{i,j,k,N}_{\sigma_{r,\eta}}-\theta^{i,j,k,N}_{\tau_r}\|\leq \|\partial_{\theta}\mathcal{L}^{i,j,k,N}(\theta_{\tau_r}^{i,j,k,N})\|\Big[{3\lambda}+\frac{\lambda}{4\kappa}\Big]\leq \|\partial_{\theta}\mathcal{L}^{i,j,k,N}(\theta_{\tau_r}^{i,j,k,N})\|\frac{1}{2L}.\label{d106}
\end{equation}
It follows, using also the definition of the Lipschitz constant $L$, that
\begin{equation}
\|\partial_{\theta}\mathcal{L}^{i,j,k,N}(\theta_{\sigma_{r,\eta}}^{i,j,k,N})-\partial_{\theta}\mathcal{L}^{i,j,k,N}(\theta_{\tau_r}^{i,j,k,N})\|\leq L\|\theta^{i,j,k,N}_{\sigma_{r,\eta}}-\theta^{i,j,k,N}_{\tau_r}\|\leq \frac{1}{2}\|\partial_{\theta}\mathcal{L}^{i,j,k,N}(\theta_{\tau_r}^{i,j,k,N})\|
\end{equation}
which, in turn, yields
\begin{equation}
\frac{1}{2}\|\partial_{\theta}\mathcal{L}^{i,j,k,N}(\theta_{\tau_r}^{i,j,k,N})\|\leq \|\partial_{\theta}\mathcal{L}^{i,j,k,N}(\theta_{\sigma_{r,\eta}}^{i,j,k,N})\|\leq 2\|\partial_{\theta}\mathcal{L}^{i,j,k,N}(\theta_{\tau_r}^{i,j,k,N})\|. \label{d108}
\end{equation}
But this implies that $\sigma_{r,\eta}\in[\tau_r,\sigma_r]$, which is a contradiction, since $\sigma_{r,\eta}:=\sigma_{r}+\eta>\sigma_{r}$. Thus, we must have $\int_{\tau_r}^{\sigma_{r,\eta}}\gamma_s\mathrm{d}s>\lambda$. It remains to prove the second part of the Lemma. In fact, this is a straightforward consequence of the result just proven. By definition of the stopping times, we have that $\smash{\int_{\tau_r}^{\sigma_r}\gamma_s\mathrm{d}s\leq \lambda}$. Thus, it remains only to show that $\smash{\frac{\lambda}{2}\leq \int_{\tau_r}^{\sigma_r}\gamma_s\mathrm{d}s}$. From the first part of the Lemma, we have that $
\smash{\int_{\tau_r}^{\sigma_{r,\eta}}\gamma_s\mathrm{d}s>\lambda}$. Moreover, for $r$ sufficiently large and $\eta$ sufficiently small, we must have $\smash{\int_{\sigma_r}^{\sigma_{r,\eta}}\gamma_s\mathrm{d}s\leq \frac{\lambda}{2}}$. We thus obtain $\int_{\tau_r}^{\sigma_r}\gamma_s\mathrm{d}s\geq \lambda - \int_{\sigma_r}^{\sigma_{r,\eta}}\gamma_s\mathrm{d}s\geq \lambda - \frac{\lambda}{2} = \frac{\lambda}{2}$.
\end{proof}

\begin{lemma}
\label{lemma:main-theorem-lemma-3}
    Suppose that Assumption~\ref{assumption:moments}, Assumption~\ref{assumption:drift}, Assumption~\ref{assumption:drift-grad} (with $k=0,1,2$), and Assumption~\ref{assumption:learning-rate} hold. Suppose that $\theta_t\in\Theta$ for all $t\geq 0$ and that there are an infinite number of intervals $[\tau_r,\sigma_r)$. Then there exists a constant $\beta:=\beta(\kappa)>0$ such that, for all $r>r_0$, 
    \begin{align}
        \mathcal{L}^{i,N}(\bar\theta_{\sigma_r}^{i,N}) - \mathcal{L}^{i,N}(\bar\theta_{\tau_r}^{i,N})\leq - \beta ~~~\text{a.s.}
        \label{eq:l1-diff-1} \\
    \mathcal{L}^{i,j,k,N}(\theta_{\sigma_r}^{i,j,k,N}) - \mathcal{L}^{i,j,k,N}(\theta_{\tau_r}^{i,j,k,N})\leq -\beta ~~~\text{a.s.} \label{eq:l1-diff-2}
    \end{align}
\end{lemma}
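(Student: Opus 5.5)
We prove \eqref{eq:l1-diff-2}; \eqref{eq:l1-diff-1} is identical after replacing $h^{i,j,k,N}$, $g^{i,j,N}$ and $\mathcal{L}^{i,j,k,N}$ by $H^{i,N}$, $G^{i,N}$ and $\mathcal{L}^{i,N}$. The plan follows \cite[Lemma 3.3]{sirignano2017stochastic}. Write $\theta_s=\theta_s^{i,j,k,N}$ and $\mathcal{L}^\ast=\mathcal{L}^{i,j,k,N}$. By Lemma~\ref{lemma:main-theorem-lemma-2-a}, $\mathcal{L}^\ast$ has bounded derivatives up to order three. In particular it is $C^2$ with bounded Hessian, so we may apply It\^o's formula to $\mathcal{L}^\ast(\theta_s)$ on $[\tau_r,\sigma_r]$. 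Using the decomposition \eqref{theta_ideal_2-a-i-ii-finite-n}, this gives
\begin{align}
\mathcal{L}^\ast(\theta_{\sigma_r})-\mathcal{L}^\ast(\theta_{\tau_r}) &= -\int_{\tau_r}^{\sigma_r}\gamma_s\|\partial_\theta\mathcal{L}^\ast(\theta_s)\|^2\mathrm{d}s - \int_{\tau_r}^{\sigma_r}\gamma_s\langle \partial_\theta\mathcal{L}^\ast(\theta_s), h^{i,j,k,N}(\theta_s,\boldsymbol{x}_s^N)-\partial_\theta\mathcal{L}^\ast(\theta_s)\rangle\mathrm{d}s \\
&\quad + \int_{\tau_r}^{\sigma_r}\gamma_s\langle\partial_\theta\mathcal{L}^\ast(\theta_s), g^{i,j,N}\sigma^{-\top}\mathrm{d}w_s^{i,N}\rangle + \tfrac12\int_{\tau_r}^{\sigma_r}\gamma_s^2\,\mathrm{Tr}\big[g^{i,j,N}\sigma^{-\top}\sigma^{-1}g^{i,j,N,\top}\partial^2_\theta\mathcal{L}^\ast(\theta_s)\big]\mathrm{d}s \\
&=: \Theta^{(1)}_r+\Theta^{(2)}_r+\Theta^{(3)}_r+\Theta^{(4)}_r .
\end{align}

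The first term supplies the strict decrease. On $[\tau_r,\sigma_r]$ the definition of the stopping times gives $\|\partial_\theta\mathcal{L}^\ast(\theta_s)\|\ge\tfrac12\|\partial_\theta\mathcal{L}^\ast(\theta_{\tau_r})\|\ge\kappa/2$. By Lemma~\ref{lemma:main-theorem-lemma-2}, $\int_{\tau_r}^{\sigma_r}\gamma_s\mathrm{d}s\ge\lambda/2$ for $r$ large. Hence
\[
\Theta^{(1)}_r\le -\tfrac{\lambda}{8}\|\partial_\theta\mathcal{L}^\ast(\theta_{\tau_r})\|^2\le -\tfrac{\lambda\kappa^2}{8}.
\]
We set $\beta:=\lambda\kappa^2/32$ and show that $\Theta^{(2)}_r+\Theta^{(3)}_r+\Theta^{(4)}_r\to0$ a.s. as $r\to\infty$.

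The terms $\Theta^{(3)}_r$ and $\Theta^{(4)}_r$ are routine. Since $\|\partial_\theta\mathcal{L}^\ast\|\le C_1$ and $\|\partial^2_\theta\mathcal{L}^\ast\|\le C_2$, Corollary~\ref{cor:empirical-drift-grad-lip} and Theorem~\ref{thm:moment-bounds} give $\mathbb{E}\int_0^\infty\gamma_s^2\|g^{i,j,N}\|^2\mathrm{d}s<\infty$ by Assumption~\ref{assumption:learning-rate}. So $\int_0^\infty\gamma_s^2\|g^{i,j,N}\|^2\mathrm{d}s<\infty$ a.s., and its tails over $[\tau_r,\sigma_r]$ vanish, which handles $\Theta^{(4)}_r$. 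For $\Theta^{(3)}_r$, the martingale $\int_0^t\gamma_s\langle\partial_\theta\mathcal{L}^\ast,g^{i,j,N}\sigma^{-\top}\mathrm{d}w_s^{i,N}\rangle$ is $\mathrm{L}^2$-bounded, so by Doob's theorem it converges a.s. Its increments over $[\tau_r,\sigma_r]$ therefore tend to zero, since $\tau_r\to\infty$.

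The main obstacle is the fluctuation term $\Theta^{(2)}_r$, because its integrand involves $\partial_\theta\mathcal{L}^\ast(\theta_s)$ and not just the centred function. We handle it exactly as in Lemma~\ref{lemma:main-theorem-lemma-1}. Solve the Poisson equation $\mathcal{A}_{\boldsymbol{x}^N}\tilde v=\langle\partial_\theta\mathcal{L}^\ast(\theta),h^{i,j,k,N}(\theta,\boldsymbol{x}^N)-\partial_\theta\mathcal{L}^\ast(\theta)\rangle$. The right-hand side is centred under $\pi^N_{\theta_0}$, and by Lemma~\ref{lemma:main-theorem-lemma-2-a} and Corollary~\ref{cor:empirical-H-lip} it has polynomial growth together with its $\theta$-derivatives. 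The (modified) Lemma~17 of \cite{sharrock2023online} therefore yields a solution $\tilde v$ with polynomial growth. Applying It\^o's formula to $\gamma_t\tilde v(\theta_t,\boldsymbol{x}_t^N)$ splits $\Theta^{(2)}_r$ into four pieces: boundary terms $\gamma_t\tilde v$, which vanish a.s. by the Borel--Cantelli argument; integrals against $|\dot\gamma_s|+\gamma_s^2$, which are absolutely convergent; and two $\mathrm{L}^2$-bounded martingales. All four have vanishing increments over $[\tau_r,\sigma_r]$.

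Alternatively, one can use Lemma~\ref{lemma:main-theorem-lemma-1} directly. Freeze $\partial_\theta\mathcal{L}^\ast(\theta_{\tau_r})$ and bound the error by the Lipschitz constant $L$ times $\sup_{s\in[\tau_r,\sigma_r]}\|\theta_s-\theta_{\tau_r}\|$, which is $O(\lambda)$ by \eqref{d106}. This only yields smallness proportional to $\lambda$, so the Poisson approach is the one we would carry out. Combining the estimates, for $r>r_0$ we obtain
\[
\mathcal{L}^\ast(\theta_{\sigma_r})-\mathcal{L}^\ast(\theta_{\tau_r})\le -\tfrac{\lambda\kappa^2}{8}+\tfrac{\lambda\kappa^2}{32}\cdot 3\le -\beta \quad \text{a.s.},
\]
which is \eqref{eq:l1-diff-2}.
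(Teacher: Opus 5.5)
Your proposal is correct and follows the paper's proof essentially step for step: the same It\^o decomposition into a descent term (bounded below using the stopping-time definition and Lemma~\ref{lemma:main-theorem-lemma-2}), a martingale term handled by Doob's theorem, a fluctuation term handled as in Lemma~\ref{lemma:main-theorem-lemma-1} through the Poisson equation for $\langle\partial_\theta\mathcal{L}^{i,j,k,N}(\theta),h^{i,j,k,N}(\theta,\boldsymbol{x}^N)-\partial_\theta\mathcal{L}^{i,j,k,N}(\theta)\rangle$, and an It\^o correction controlled by square-summability of $\gamma_t$, ending with the same choice $\beta=\lambda\kappa^2/32$. Your It\^o correction term keeps the factor $\sigma^{-\top}\sigma^{-1}$ coming from the noise $\gamma_t g^{i,j,N}\sigma^{-\top}\mathrm{d}w_t^{i,N}$, so it is slightly more precise than the paper's version.
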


\begin{proof}
We will prove \eqref{eq:l1-diff-2}, with \eqref{eq:l1-diff-1} proved in an identical fashion. By It\^{o}'s formula, we have that
\begin{align}
&\mathcal{L}^{i,j,k,N}(\theta^{i,j,k,N}_{\sigma_r}) - \mathcal{L}^{i,j,k,N}(\theta_{\tau_r}^{i,j,k,N}) \label{eq_d117} \\[2mm]
&= -\int_{\tau_r}^{\sigma_r}\gamma_s \|\partial_{\theta}\mathcal{L}^{i,j,k,N}(\theta_s^{i,j,k,N})\|^2\mathrm{d}s + \int_{\tau_r}^{\sigma_r}\gamma_s\langle \partial_{\theta}\mathcal{L}^{i,j,k,N}(\theta_s^{i,j,k,N}), g^{i,j,N}(\theta_s^{i,j,k,N},{\boldsymbol{x}}_s^N)\sigma^{-1}\mathrm{d}w_s^{i,N}\rangle \\ 
&~~~~~+ \int_{\tau_r}^{\sigma_r}\gamma_s\langle \partial_{\theta}\mathcal{L}^{i,j,k,N}(\theta_s^{i,j,k,N}), \partial_{\theta}\mathcal{L}^{i,j,k,N}(\theta_s^{i,j,k,N}) - h^{i,j,k,N}(\theta_s^{i,j,k,N},{\boldsymbol{x}}_s^N)\rangle \mathrm{d}s \\
&~~~~~+ \int_{\tau_r}^{\sigma_r}\frac{1}{2}\gamma_s^2\mathrm{Tr}\left[g^{i,j,N}(\theta_s^{i,j,k,N},{\boldsymbol{x}}_s^N)g^{i,j,N}(\theta_s^{i,j,k,N},{\boldsymbol{x}}_s^N)^T\partial^2_{\theta}\mathcal{L}^{i,j,k,N}(\theta_s^{i,j,k,N})\right]\mathrm{d}s  \\
&:= -A_{r,i,j,k,N}^{(1)} + A_{r,i,j,k,N}^{(2)} + A_{r,i,j,k,N}^{(3)} + A_{r,i,j,k,N}^{(4)}
\end{align}
We will deal with each of the four terms on the RHS separately. First consider $A_{r,i,j,k,N}^{(1)}$. For this term, we have that
\begin{align}
A_{r,i,j,k,N}^{(1)}
&= \int_{\tau_r}^{\sigma_r}\gamma_s\|\partial_{\theta}\mathcal{L}^{i,j,k,N}(\theta_s^{i,j,k,N})\|^2\mathrm{d}s \\
&\geq \frac{\|\partial_{\theta}\mathcal{L}^{i,j,k,N}(\theta_{\tau_r}^{i,j,k,N})\|^2}{4} \int_{\tau_r}^{\sigma_r}\gamma_s\mathrm{d}s \geq \frac{\|\partial_{\theta}\mathcal{L}^{i,j,k,N}(\theta_{\tau_r}^{i,j,k,N})\|^2}{8} \lambda   
\end{align}
where, in the first inequality, we have used the definition of the $\{\tau_r\}_{r\geq 0}$, which implies that  $\|\partial_{\theta}\mathcal{L}^{i,j,k,N}(\theta_s^{i,j,k,N})\|\geq \frac{1}{2}\|\partial_{\theta}\mathcal{L}^{i,j,k,N}(\theta_{\tau_r}^{i,j,k,N})\|$ for all $s\in[\tau_r,\sigma_r]$, and in the second inequality we have used Lemma \ref{lemma:main-theorem-lemma-2}. We next consider $\smash{A_{r,i,j,k,N}^{(2)}}$. Using It\^{o}'s isometry, Lemma~\ref{lemma:main-theorem-lemma-2-a} (the bound on the asymptotic log-likelihood of the IPS), Corollary~\ref{cor:empirical-drift-grad-lip} (the polynomial growth of $\smash{g^{i,j,N}(\theta,{\boldsymbol{x}^N})}$), 
Theorem~\ref{thm:moment-bounds} (uniform-in-time moment bounds for the IPS), and Assumption~\ref{assumption:learning-rate} (the square summability of the learning rate), we have
\begin{align}
&\sup_{t\geq 0}\mathbb{E}\Big[\Big|\int_{0}^{t}\gamma_s\langle \partial_{\theta}\mathcal{L}^{i,j,k,N}(\theta_s^{i,j,k,N}), g^{i,j,N}(\theta_s^{i,j,k,N},{\boldsymbol{x}}_s^N)\mathrm{d}w_s^{i,N}\rangle\Big|^2\Big] \\
&\leq K\mathbb{E} \int_0^{\infty}\gamma_s^2\|g^{i,j,N}(\theta_s^{i,j,k,N},{\boldsymbol{x}}_s^N)\|^2\mathrm{d}s \leq K\int_0^{\infty} \gamma_s^2 (1+\sum_{a\in\{i,j\}}\mathbb{E}\left[\|{x}_s^{a,N}\|^q\right] + \frac{1}{N}\sum_{a=1}^N \mathbb{E}\left[\|{x}_s^{a,N}\|^q\right]\mathrm{d}s<\infty.
\end{align} 
Thus, by Doob's martingale convergence theorem, there exists a finite random variable $\smash{A_{\infty,i,j,k,N}^{(2)}}$ such that, both a.s. and in $\mathrm{L}^2$, $\smash{\int_{0}^{t}[\cdots]\rightarrow A_{\infty,i,j,k,N}^{(2)}}$ as $t\rightarrow\infty$. It follows that $A_{r,i,j,k,N}^{(2)}\rightarrow 0$ a.s. as $r\rightarrow\infty$. We now consider $\smash{A_{r,i,j,k,N}^{(3)}}$. Define 
\begin{equation}
T^{i,j,k,N}(\theta,{\boldsymbol{x}}^N) = \langle \partial_{\theta}\mathcal{L}^{i,j,k,N}(\theta),h^{i,j,k,N}(\theta,{\boldsymbol{x}}^N) - \partial_{\theta}\mathcal{L}^{i,j,k,N}(\theta)\rangle.
\end{equation}
Due to Lemma~\ref{lemma:main-theorem-lemma-2-a} (the boundedness of the asymptotic log-likelihood and its derivatives) and Corollary~\ref{cor:empirical-H-lip} (the local Lipschitz and polynomial growth of ${h}^{i,j,k,N}(\theta,{\boldsymbol{x}}^N)$ and its derivatives), for $l=0,1,2$, $\smash{\|\partial_{\theta}^l  T^{i,j,k,N}(\theta,\boldsymbol{x}^{N})  - \partial_{\theta}^l  T^{i,j,k,N}(\theta,\boldsymbol{y}^{N}) \|}$ satisfies a bound of the type given in Corollary~\ref{cor:empirical-H-lip}. In addition, this function is centered w.r.t. the invariant distribution $\pi_{\theta_0}^N$. Thus, by (a minor variation on) Lemma 17 in \cite{sharrock2023online} with $r=0$, the Poisson equation
\begin{equation}
\mathcal{A}_{{\boldsymbol{x}^N}} v^{i,j,k,N}(\theta,{\boldsymbol{x}}^N) = T^{i,j,k,N}(\theta,{\boldsymbol{x}}^N)~~~,~~~\int_{(\mathbb{R}^d)^N} v^{i,j,k,N}(\theta,{\boldsymbol{x}}^N)\pi_{\theta_0}^{N}(\mathrm{d}{\boldsymbol{x}}^N)=0
\end{equation}
has a unique twice differentiable solution which satisfies $\sum_{l=0}^{2} \|\frac{\partial^l }{\partial \theta^{l}}v^{i,j,k,N}(\theta,{\boldsymbol{x}}^N)\| + \|\frac{\partial^2 }{\partial \theta\partial {\boldsymbol{x}}^N}v^{i,j,k,N}(\theta,{\boldsymbol{x}}^N)\| \leq K [1+\sum_{a\in\{i,j,k\}}\|x^{a,N}\|^{q} + \frac{1}{N}\sum_{a=1}^N \|x^{a,N}\|^{q}]$, for a constant $K>0$ and an integer $q\geq 1$ which are independent of $N$. Arguing as in Lemma \ref{lemma:main-theorem-lemma-1}, it follows that, a.s., $\smash{\| A_{r,i,j,k,N}^{(3)}\|\rightarrow 0}$ as $r\rightarrow\infty$.

Finally, we turn our attention to $A_{r,i,j,k,N}^{(4)}$. Once more using Lemma~\ref{lemma:main-theorem-lemma-2-a} (the bound on the asymptotic log-likelihood of the IPS), Corollary~\ref{cor:empirical-drift-grad-lip} (the polynomial growth of the function $\smash{g^{i,j,N}(\theta,{\boldsymbol{x}^N})}$), Theorem~\ref{thm:moment-bounds} (the uniform-in-time moment bounds for solutions of the IPS), and Assumption~\ref{assumption:learning-rate} (the square summability of the learning rate), we have that
\begin{align}
&\sup_{t\geq 0}\mathbb{E}\Big\|\int_{0}^{t}\frac{1}{2}\gamma_s^2\mathrm{Tr}\left[g^{i,j,N}(\theta_s^{i,j,k,N},{\boldsymbol{x}}_s^N)g^{i,j,N}(\theta_s^{i,j,k,N},{\boldsymbol{x}}_s^N)^T\partial^2_{\theta}\mathcal{L}^{i,j,k,N}(\theta_s^{i,j,k,N})\right]\mathrm{d}s\Big\| \\
&\leq K\int_0^{\infty} \gamma_s^2(1+\mathbb{E}\left[\|{x}_s^{i,N}\|^q\right] +\mathbb{E}\left[\|{x}_s^{j,N}\|^q\right] + \frac{1}{N}\sum_{k=1}^N \mathbb{E}\left[\|x_s^{k,N}\|^q\right])\mathrm{d}s<\infty, 
\end{align}
It follows that the random variable $\int_{0}^{\infty}[\frac{1}{2}\gamma_s^2\cdots]\mathrm{d}s$ is finite a.s., which in turn implies that there exists a finite random variable $A_{\infty,i,j,k,N}^{(4)}$ such that $\int_{0}^{\infty}[\frac{1}{2}\gamma_s^2\cdots]\mathrm{d}s\rightarrow A_{\infty,i,j,k,N}^{(4)}$ a.s. as $t\rightarrow\infty$. This implies, in particular, that $A_{r,i,j,k,N}^{(4)}=\int_{\tau_r}^{\sigma_r}\frac{1}{2}\gamma_s^2[\cdots]\mathrm{d}s\rightarrow 0$ as $r\rightarrow\infty$. Putting all of these results together, it follows that, for all $\varepsilon>0$, there exists $k$ such that 
\begin{align}
\mathcal{L}^{i,j,k,N}(\theta^{i,j,k,N}_{\sigma_r}) - \mathcal{L}^{i,j,k,N}(\theta_{\tau_r}^{i,j,k,N}) 
&\leq -A_{r,i,j,k,N}^{(1)} +\|A_{r,i,j,k,N}^{(2)}\| + \|A_{r,i,j,k,N}^{(3)}\| + \|A_{r,i,j,k,N}^{(4)}\| 
\\[1mm]
&=-\frac{\|\partial_{\theta}\mathcal{L}^{i,j,k,N}(\theta_{\tau_r}^{i,j,k,N})\|^2}{8} \lambda + 3 \varepsilon
\end{align}
The claim follows by setting $\varepsilon = \frac{\lambda(\kappa)\kappa^2}{32}$ and $\beta=\frac{\lambda(\kappa)\kappa^2}{32}$. This completes the proof of \eqref{eq:l1-diff-2}. The proof of \eqref{eq:l1-diff-1} is essentially unchanged, noting that, up to minor variations, all of the relevant results (e.g., polynomial growth property, solution of the associated Poisson equation) still hold when $g^{i,j,N}$ and $h^{i,j,k,N}$ are replaced by $G^{i,N}$ and $H^{i,N}$ (up to minor differences in the form of the polynomial growth).
\end{proof}

    \begin{lemma}
    \label{lemma:main-theorem-lemma-4}
        Suppose that Assumption~\ref{assumption:moments}, Assumption~\ref{assumption:drift}, Assumption~\ref{assumption:drift-grad} (with $k=0,1,2$), and Assumption~\ref{assumption:learning-rate} hold.  Suppose that $\theta_t\in\Theta$ for all $t\geq 0$ and that there are an infinite number of intervals $[\tau_r,\sigma_r)$. Then there exists a constant $\beta_1:=\beta_1(\kappa)>0$ satisfying $0<\beta_1<\beta$ such that, for all $r>r_0$, 
        \begin{align}
            \mathcal{L}^{i,N}(\bar\theta_{\tau_r}^{i,N}) - \mathcal{L}^{i,N}(\bar\theta_{\sigma_{r-1}}^{i,N})\leq\beta_1 ~~~\text{a.s.} \label{eq:l1-change-v2-1} \\
        \mathcal{L}^{i,j,k,N}(\theta_{\tau_r}^{i,j,k,N}) - \mathcal{L}^{i,j,k,N}(\theta_{\sigma_{r-1}}^{i,j,k,N})\leq \beta_1 ~~~\text{a.s.}\label{eq:l1-change-v2-2}
        \end{align}
    \end{lemma}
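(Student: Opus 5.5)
I will follow the template of \cite[Lemma 3.3]{sirignano2017stochastic}, giving the argument for \eqref{eq:l1-change-v2-2}; \eqref{eq:l1-change-v2-1} is identical after replacing $g^{i,j,N},h^{i,j,k,N}$ by $G^{i,N},H^{i,N}$.

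\emph{Step 1: It\^o decomposition.} Apply It\^o's formula to $\mathcal{L}^{i,j,k,N}(\theta_t^{i,j,k,N})$ on $[\sigma_{r-1},\tau_r]$, exactly as in the proof of Lemma~\ref{lemma:main-theorem-lemma-3}. This gives
\begin{equation}
\mathcal{L}^{i,j,k,N}(\theta_{\tau_r}^{i,j,k,N}) - \mathcal{L}^{i,j,k,N}(\theta_{\sigma_{r-1}}^{i,j,k,N}) = -B_r^{(1)} + B_r^{(2)} + B_r^{(3)} + B_r^{(4)},
\end{equation}
with the terms defined as follows.
\begin{itemize}
\item[] $B_r^{(1)}=\int_{\sigma_{r-1}}^{\tau_r}\gamma_s\|\partial_\theta\mathcal{L}^{i,j,k,N}(\theta_s^{i,j,k,N})\|^2\mathrm{d}s$ is the descent term.
\item[] $B_r^{(2)}$ is the martingale term driven by $g^{i,j,N}\sigma^{-1}\mathrm{d}w^{i,N}$.
\item[] $B_r^{(3)}$ is the fluctuation term $\int\gamma_s\langle\partial_\theta\mathcal{L}^{i,j,k,N},\partial_\theta\mathcal{L}^{i,j,k,N}-h^{i,j,k,N}\rangle\mathrm{d}s$.
\item[] $B_r^{(4)}$ is the It\^o correction $\frac12\int\gamma_s^2\mathrm{Tr}[g g^\top\partial_\theta^2\mathcal{L}^{i,j,k,N}]\mathrm{d}s$.
\end{itemize}
Since $B_r^{(1)}\ge 0$, it may be dropped. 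It therefore suffices to show that $B_r^{(2)}+B_r^{(3)}+B_r^{(4)}\to 0$ a.s. as $r\to\infty$. Once this holds, any fixed $\beta_1\in(0,\beta)$ works for $r$ large, for example $\beta_1=\beta/2$ with $\beta=\lambda\kappa^2/32$ taken from Lemma~\ref{lemma:main-theorem-lemma-3}.

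\emph{Step 2: the martingale and It\^o correction terms.} For $B_r^{(2)}$ and $B_r^{(4)}$ I repeat the arguments used for $A^{(2)}$ and $A^{(4)}$ in Lemma~\ref{lemma:main-theorem-lemma-3}. Lemma~\ref{lemma:main-theorem-lemma-2-a} bounds $\partial_\theta\mathcal{L}^{i,j,k,N}$ and $\partial_\theta^2\mathcal{L}^{i,j,k,N}$. Corollary~\ref{cor:empirical-drift-grad-lip} gives polynomial growth of $g^{i,j,N}$, Theorem~\ref{thm:moment-bounds} gives uniform moments, and Assumption~\ref{assumption:learning-rate} gives $\int_0^\infty\gamma_s^2\mathrm{d}s<\infty$. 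Together these show that $\int_0^t(\cdot)\,\mathrm{d}w^{i,N}$ is an $\mathrm{L}^2$-bounded martingale, hence converges a.s. by Doob's theorem, and that $\int_0^\infty\gamma_s^2|\mathrm{Tr}[\cdots]|\mathrm{d}s<\infty$ a.s. Both integrals over $[\sigma_{r-1},\tau_r]$ are increments of a.s.\ convergent processes over intervals whose left endpoints tend to infinity, so both tend to zero a.s.

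\emph{Step 3: the fluctuation term, which I expect to be the main obstacle.} The difficulty is that $[\sigma_{r-1},\tau_r]$ can be arbitrarily long, so $\int\gamma_s\mathrm{d}s$ need not be small there. A pointwise bound on $B_r^{(3)}$ is therefore useless, and the Poisson equation must absorb the fluctuations. I use the Poisson solution $v^{i,j,k,N}$ for $T^{i,j,k,N}=\langle\partial_\theta\mathcal{L}^{i,j,k,N},h^{i,j,k,N}-\partial_\theta\mathcal{L}^{i,j,k,N}\rangle$ from Lemma~\ref{lemma:main-theorem-lemma-3}. Applying It\^o to $\gamma_t v^{i,j,k,N}(\theta_t,\boldsymbol{x}_t^N)$ as in Lemma~\ref{lemma:main-theorem-lemma-1} writes $B_r^{(3)}$ as four pieces.
\begin{itemize}
\item[] The boundary terms $\gamma_{\tau_r}v(\cdot)$ and $\gamma_{\sigma_{r-1}}v(\cdot)$ tend to zero a.s. by the Borel--Cantelli argument of \cite[Appendix B]{sirignano2020stochastic}.
\item[] The $\dot\gamma$, $\mathcal{A}_\theta$ and $\gamma^2$-trace integrals are increments of $J^{(2)}_{0,t}$, which converges a.s. since $\int_0^\infty(|\dot\gamma_s|+\gamma_s^2)\mathrm{d}s<\infty$.
\item[] The stochastic integrals are increments of the $\mathrm{L}^2$-bounded martingale $J^{(3)}_{0,t}$.
\end{itemize}
The key point is that all of these controls are uniform over the random interval, because they come from Cauchy-type convergence of processes on $[0,\infty)$ rather than from the interval length. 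The step to check carefully is that the $\mathcal{A}_\theta v$ term is bounded by $\gamma_s$ times a polynomial in $\boldsymbol{x}_s^N$. Its $\theta$-drift contains $\gamma_s h^{i,j,k,N}$, so it is genuinely $O(\gamma_s^2)$ after the outer factor $\gamma_s$, and is integrable.

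Combining the three steps gives $\limsup_r[\mathcal{L}^{i,j,k,N}(\theta_{\tau_r}^{i,j,k,N}) - \mathcal{L}^{i,j,k,N}(\theta_{\sigma_{r-1}}^{i,j,k,N})]\le 0$ a.s., which yields the claim with $\beta_1=\beta/2$.
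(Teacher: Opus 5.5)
Your proposal is correct and follows the paper's proof. Both apply It\^o's formula on $[\sigma_{r-1},\tau_r]$, drop the non-positive descent term, and show that the martingale, It\^o-correction and fluctuation terms vanish a.s. by the arguments of Lemma~\ref{lemma:main-theorem-lemma-1} and Lemma~\ref{lemma:main-theorem-lemma-3}. Your observation that the Poisson-equation control comes from increments of a.s.-convergent processes on $[0,\infty)$, and so does not depend on the possibly unbounded length of $[\sigma_{r-1},\tau_r]$, makes explicit a step the paper leaves implicit.
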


    \begin{proof}
Similar to the previous result, we will prove \eqref{eq:l1-change-v2-2}, with \eqref{eq:l1-change-v2-1} proved similarly. Using It\^{o}'s formula, and discarding the non-positive term, we have that
\begin{align}
&{\mathcal{L}}^{i,j,k,N}(\theta^{i,j,k,N}_{\tau_r}) - {\mathcal{L}}^{i,j,k,N}(\theta_{\sigma_{r-1}}^{i,j,k,N})  \\
&\leq \int_{\sigma_{r-1}}^{\tau_r}\gamma_s\langle \partial_{\theta}\mathcal{L}^{i,j,k,N}(\theta_s^{i,j,k,N}), \partial_{\theta}\mathcal{L}^{i,j,k,N}(\theta_s^{i,j,k,N}) - h^{i,j,k,N}(\theta_s^{i,j,k,N},{\boldsymbol{x}}_s^N)\rangle \mathrm{d}s \\
&+ \int_{\sigma_{r-1}}^{\tau_r}\gamma_s\langle \partial_{\theta}\mathcal{L}^{i,j,k,N}(\theta_s^{i,j,k,N}), g^{i,j,N}(\theta_s^{i,j,k,N},{\boldsymbol{x}}_s^N)\sigma^{-1}\mathrm{d}w_s^{i,N}\rangle \\
&+ \int_{\sigma_{r-1}}^{\tau_r}\frac{1}{2}\gamma_s^2\mathrm{Tr}\left[g^{i,j,N}(\theta_s^{i,j,k,N},{\boldsymbol{x}}_s^N)g^{i,j,N}(\theta_s^{i,j,k,N},{\boldsymbol{x}}_s^N)^T\partial^2_{\theta}\mathcal{L}^{i,j,k,N}(\theta_s^{i,j,k,N})\right]\mathrm{d}s
\end{align}
Arguing as in the proof of Lemma \ref{lemma:main-theorem-lemma-3}, the magnitude of each of the terms converges to zero a.s. as $r\rightarrow\infty$. This is sufficient for the conclusion.
\end{proof}

\subsection{Auxiliary Lemmas}

In this appendix, we present some auxiliary growth estimates which follow from Assumption~\ref{assumption:drift-grad}. The proofs of these results follows from basic algebraic inequalities and are thus omitted in the interest of space.

\begin{lemma}
\label{lem:weighted-lip-B}
Suppose that Assumption \ref{assumption:drift-grad} (with $k=0$) holds. Then there exists a constant $K<\infty$ and an integer $q\geq 1$, depending only on $C,m$, such that for all $\theta\in\Theta$, the following hold. For all $x,y\in\mathbb{R}^d$, and for all $\mu\in\mathcal{P}(\mathbb{R}^d)$, 
\begin{align}
    \|b(\theta,x,y)\| &\leq K\big(1+\|x\|^{q} + \|y\|^{q}\big), \qquad 
    \|B(\theta,x,\mu)\| \leq K\big(1 + \|x\|^{q} + \mu(\|\cdot\|^{q})\big). 
\end{align}
In addition, for all $x,y,w,z\in\mathbb{R}^d$, and for all $\mu,\nu\in\mathcal{P}(\mathbb{R}^d)$, 
\begin{align}
\label{eq:B-lip-0}
\|b(\theta,x,w)-b(\theta,y,z)\|
&\le K \big(\|x-y\| + \|w-z\|\big)
\big(1+\textstyle\sum_{a\in\{x,y,w,z\}} \|a\|^q \big) \\
\label{eq:B-lip}
\|B(\theta,x,\mu)-B(\theta,y,\nu)\|
&\le K \big(\|x-y\| + \mathsf{W}_2(\mu,\nu)\big) 
\big(1+\textstyle \sum_{a\in\{x,y\}} \|a\|^q + \textstyle \sum_{\eta\in\{\mu,\nu\}}\eta(\|\cdot\|^{q}) \big).
\end{align}
\end{lemma}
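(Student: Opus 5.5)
The plan is to derive every bound directly from Assumption~\ref{assumption:drift-grad} with $k=0$, using the definition $b(\theta,x,y)=-\nabla V(\theta,x)-\nabla W(\theta,x-y)$ and $B(\theta,x,\mu)=\int b(\theta,x,y)\mu(\mathrm{d}y)$. We proceed in three steps: growth bounds for $b$ and $B$, the local Lipschitz bound for $b$, and then the Lipschitz bound for $B$ via a coupling argument.

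\emph{Growth.} Apply the local Lipschitz condition with $y=0$ together with $\|\nabla V(\theta,0)\|\le C_0$. This gives
\[
\|\nabla V(\theta,x)\|\le C_0+C_0\|x\|(1+\|x\|^{m_0})\le K(1+\|x\|^{m_0+1}).
\]
The same argument gives $\|\nabla W(\theta,x-y)\|\le K(1+\|x-y\|^{m_0+1})$. Using $\|x-y\|^{q}\le 2^{q-1}(\|x\|^q+\|y\|^q)$, the growth bound for $b$ follows with $q=m_0+1$. Integrating in $y$ against $\mu$ yields the bound for $B$, with $\mu(\|\cdot\|^q)$ appearing.

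\emph{Lipschitz bound for $b$.} Write
\[
b(\theta,x,w)-b(\theta,y,z)=-[\nabla V(\theta,x)-\nabla V(\theta,y)]-[\nabla W(\theta,x-w)-\nabla W(\theta,y-z)].
\]
The first bracket is bounded directly by the assumption. For the second, the assumption gives the factor $\|(x-w)-(y-z)\|\le\|x-y\|+\|w-z\|$, multiplied by $1+\|x-w\|^{m_0}+\|y-z\|^{m_0}$. This weight is then bounded by a constant times $1+\sum_{a\in\{x,y,w,z\}}\|a\|^{q}$, again via the elementary power inequality. Taking $q\ge m_0$ and enlarging $K$ covers all terms at once.

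\emph{Lipschitz bound for $B$.} This is the only step that needs care. Fix any coupling $\pi\in\Pi(\mu,\nu)$ and write
\[
B(\theta,x,\mu)-B(\theta,y,\nu)=\int\big[b(\theta,x,w)-b(\theta,y,z)\big]\pi(\mathrm{d}w,\mathrm{d}z).
\]
Apply the bound for $b$ inside the integral. The term $\|x-y\|$ multiplies $\int(1+\sum\|a\|^q)\,\mathrm{d}\pi$, which is controlled by $1+\|x\|^q+\|y\|^q+\mu(\|\cdot\|^q)+\nu(\|\cdot\|^q)$. The term $\|w-z\|$ is handled by Cauchy--Schwarz:
\[
\int\|w-z\|(1+\cdots)\,\mathrm{d}\pi\le\Big(\int\|w-z\|^2\mathrm{d}\pi\Big)^{1/2}\Big(\int(1+\cdots)^2\mathrm{d}\pi\Big)^{1/2}.
\]
The second factor produces moments of order $2q$, and we bound it by $K\big(1+\|x\|^{q}+\|y\|^q+\mu(\|\cdot\|^{2q})^{1/2}+\nu(\|\cdot\|^{2q})^{1/2}\big)$. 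Choosing the optimal coupling makes the first factor equal to $\mathsf{W}_2(\mu,\nu)$.

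The main obstacle is that this step produces $\eta(\|\cdot\|^{2q})^{1/2}$ rather than $\eta(\|\cdot\|^{q})$. We resolve this by relabelling the exponent, i.e.\ taking $q$ in the statement large enough, together with Jensen's inequality: $\eta(\|\cdot\|^{2q})^{1/2}\le 1+\eta(\|\cdot\|^{2q})$. After replacing $q$ by $2q$ throughout, which is harmless because the lemma only asserts the existence of some integer $q$, the bound for $B$ takes the stated form. All constants depend only on $C_0$ and $m_0$.
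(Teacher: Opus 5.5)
Your proposal is correct. The paper omits this proof as following from basic algebraic inequalities, and yours is the natural argument. In particular, the coupling-plus-Cauchy--Schwarz step for $B$, and the resulting need to enlarge $q$ to $2q$ to absorb $\eta(\|\cdot\|^{2q})^{1/2}$, match how the paper handles $\mathsf{W}_2$ terms elsewhere, e.g.\ in the proof of Corollary~\ref{cor:ips-likelihood-n-t-limit}.

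Two cosmetic fixes. First, $m_0>0$ is only assumed real, so take $q=\lceil m_0\rceil+1$, using $\|a\|^r\le 1+\|a\|^q$ for $0\le r\le q$, so that $q$ is an integer. Second, the inequality $a^{1/2}\le 1+a$ you use is elementary, not Jensen.
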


\begin{lemma}
\label{lem:weighted-lip-g}
Suppose that Assumption \ref{assumption:drift-grad} (with $k=1$) holds. Then there exists a constant $K<\infty$, and an integer $q\geq 1$, depending only on $C,m$, such that for all $\theta\in\Theta$, the following hold. For all $x,y\in\mathbb{R}^d$, and for all $\mu\in\mathcal{P}(\mathbb{R}^d)$, 
\begin{align}
    \label{eq:g-pgp}
    \|g(\theta,x,y)\| &\leq K\big(1+\|x\|^{q} + \|y\|^{q}\big), \qquad
    \|G(\theta,x,\mu)\| \leq K\big(1 + \|x\|^{q} + \mu(\|\cdot\|^{q})\big). 
\end{align}
In addition, for all $x,y,w,z\in\mathbb{R}^d$, and for all $\mu,\nu\in\mathcal{P}(\mathbb{R}^d)$, 
\begin{align}
\label{eq:G-lip}
\|g(\theta,x,w)-g(\theta,y,z)\|
&\le K \big(\|x-y\| + \|w-z\|\big)
\big(1+\textstyle\sum_{a\in\{x,y,w,z\}} \|a\|^q \big) \\
\label{eq:g-lip}
\|G(\theta,x,\mu)-G(\theta,y,\nu)\|
&\le K \big(\|x-y\| + \mathsf{W}_2(\mu,\nu)\big)  
\big(1+\textstyle\sum_{a\in\{x,y\}} \|a\|^q + \textstyle\sum_{\eta\in\{\mu,\nu\}}\eta(\|\cdot\|^{q}) \big).
\end{align}
\end{lemma}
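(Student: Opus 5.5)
The statement to be proved is Lemma~\ref{lem:weighted-lip-g}. It is the exact analogue of Lemma~\ref{lem:weighted-lip-B}, with $b$ replaced by $g=\partial_\theta b$. I would therefore prove it directly from Assumption~\ref{assumption:drift-grad} with $k=1$.

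\textbf{Pointwise bounds on $g$.} Write
\begin{equation}
g(\theta,x,y)=-\partial_\theta\nabla V(\theta,x)-\partial_\theta\nabla W(\theta,x-y).
\end{equation}
\emph{Growth.} Apply the local Lipschitz bound of Assumption~\ref{assumption:drift-grad}(i) with the second point equal to $0$, and use $\|\partial_\theta\nabla V(\theta,0)\|\le C_1$. This gives
\begin{equation}
\|\partial_\theta\nabla V(\theta,x)\|\le C_1+C_1\|x\|(1+\|x\|^{m_1})\le K(1+\|x\|^{m_1+1}).
\end{equation}
Part (ii) gives the same bound for $W$ at the point $x-y$. Since $\|x-y\|^{q}\le 2^{q-1}(\|x\|^q+\|y\|^q)$, we obtain $\|g(\theta,x,y)\|\le K(1+\|x\|^q+\|y\|^q)$ with $q=m_1+1$, uniformly in $\theta$.

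\emph{Lipschitz estimate.} For \eqref{eq:G-lip}, split
\begin{equation}
g(\theta,x,w)-g(\theta,y,z)=-\big[\partial_\theta\nabla V(\theta,x)-\partial_\theta\nabla V(\theta,y)\big]-\big[\partial_\theta\nabla W(\theta,x-w)-\partial_\theta\nabla W(\theta,y-z)\big].
\end{equation}
Apply the two Lipschitz bounds of the assumption to the two brackets. Then use $\|(x-w)-(y-z)\|\le\|x-y\|+\|w-z\|$ and bound $\|x-w\|^{m_1}\le 2^{m_1-1}(\|x\|^{m_1}+\|w\|^{m_1})$. This yields the stated form after enlarging $K$ and $q$.

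\textbf{Passing to the measure $G$.} Differentiating under the integral sign (justified by dominated convergence and the growth bound, whenever $\mu$ has a $q$th moment) gives $G(\theta,x,\mu)=\int g(\theta,x,y)\,\mu(\mathrm{d}y)$. The growth bound for $G$ then follows by integrating the growth bound for $g$ in $y$.

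For \eqref{eq:g-lip}, let $\pi$ be an optimal $\mathsf{W}_2$ coupling of $(\mu,\nu)$, and write
\begin{equation}
G(\theta,x,\mu)-G(\theta,y,\nu)=\int\big[g(\theta,x,w)-g(\theta,y,z)\big]\,\pi(\mathrm{d}w,\mathrm{d}z).
\end{equation}
Apply \eqref{eq:G-lip} inside the integral. The $\|x-y\|$ part integrates directly. For the $\|w-z\|$ part, use Cauchy--Schwarz:
\begin{equation}
\int\|w-z\|\big(1+\|w\|^q+\|z\|^q+\cdots\big)\,\mathrm{d}\pi\le \mathsf{W}_2(\mu,\nu)\Big(\int(1+\cdots)^2\,\mathrm{d}\pi\Big)^{1/2}.
\end{equation}
The square-root factor is controlled by $1+\|x\|^q+\|y\|^q+\mu(\|\cdot\|^{2q})^{1/2}+\nu(\|\cdot\|^{2q})^{1/2}$. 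Jensen turns $\mu(\|\cdot\|^{2q})^{1/2}$ into a moment of the stated type after renaming $2q$ as $q$; keeping the $\mu(\|\cdot\|^{2q})^{1/2}$ form is also harmless.

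\textbf{Main obstacle.} This last Cauchy--Schwarz step is the only delicate point. The exponent bookkeeping must be handled so that the right-hand side takes the form $\eta(\|\cdot\|^q)$ rather than a square root of a higher moment. I would absorb this by redefining $q$, and use $\sqrt{a}\le 1+a$ to bound $(\eta(\|\cdot\|^{2q}))^{1/2}\le 1+\eta(\|\cdot\|^{2q})$.
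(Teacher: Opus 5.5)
Your proof is correct and is essentially the routine ``basic algebraic inequalities'' argument that the paper has in mind; the paper omits this proof entirely. The argument is: a growth bound via the Lipschitz condition at $0$, a splitting of the $V$ and $W$ parts, and an optimal-coupling/Cauchy--Schwarz step for $G$. One small fix: Jensen gives $\mu(\|\cdot\|^{q})\le\mu(\|\cdot\|^{2q})^{1/2}$, which is the wrong direction for removing the square root, so the step that actually works is your $\sqrt{a}\le 1+a$ bound, used with all exponents raised to a common integer such as $2(\lceil m_1\rceil+1)$.
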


\begin{lemma}
\label{lem:Phi-L1-stability}
Suppose that Assumption \ref{assumption:drift-grad} (with $k=0$) holds. Then there exists a constant $K<\infty$, and an integer $q\geq 1$, depending only on $C,m,\sigma$, such that for all $\theta\in\Theta$, the following hold. For all $x,y,z\in\mathbb{R}^d$, and for all $\mu\in\mathcal{P}(\mathbb{R}^d)$, 
\begin{align}
    | \ell(\theta,x,y,z,\mu) | &\leq K\big(1+\|x\|^{q} + \|y\|^q + \|z \|^q + \mu(\|\cdot\|^q)\big), \qquad 
    | L(\theta,x,\mu) |  \leq K\big(1+\|x\|^{q} + \mu(\|\cdot\|^q)\big). \label{eq:L-pgp}
\end{align}
In addition, for all $x,y,w,z,v,s\in\mathbb R^d$, and for all $\mu,\nu\in\mathcal P(\mathbb R^d)$,
\begin{align}
\label{eq:l-stab-pointwise}
|\ell(\theta,x,w,v,\mu)-\ell(\theta,y,z,s,\nu)| 
&\le K\big(\textstyle\sum_{(a,b)\in\{(x,y),(w,z),(v,s)\}}\|a-b\| + \mathsf{W}_2(\mu,\nu)\big)\\
&~~~\times\big(1+\textstyle\sum_{a\in\{x,y,w,z,v,s\}} \|a\|^q+ \sum_{\eta\in\{\mu,\nu\}} \eta(\|\cdot\|^q)  \big)  \nonumber \\[2mm]
\label{eq:L-stab-pointwise}
|L(\theta,x,\mu)-L(\theta,y,\nu)| 
&\le K\big(\|x-y\|+\mathsf{W}_2(\mu,\nu)\big)  \\
&~~~\times\big(1+\textstyle\sum_{a\in\{x,y\}} \|a\|^q + \textstyle\sum_{\eta\in\{\mu,\nu\}}\eta(\|\cdot\|^{q}) \big). \nonumber
\end{align}
\end{lemma}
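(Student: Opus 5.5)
This is a growth estimate, so I would prove it by direct algebra from Lemma~\ref{lem:weighted-lip-B}, writing everything in terms of $b$ and $B$. The inner product $\langle\cdot,\cdot\rangle_{\sigma\sigma^{\top}}$ is bounded by $\|(\sigma\sigma^{\top})^{-1}\|_{\mathrm{op}}$ times the Euclidean product, so $\sigma$ only enters through this constant.

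For the growth bounds \eqref{eq:L-pgp}, recall that
\[
L(\theta,x,\mu)=\tfrac12\|B(\theta,x,\mu)-B(\theta_0,x,\mu)\|^2_{\sigma\sigma^{\top}}.
\]
Applying the growth bound for $B$ in Lemma~\ref{lem:weighted-lip-B} to both $\theta$ and $\theta_0$ (the constant is uniform over $\Theta$) gives
\[
|L(\theta,x,\mu)|\le K\big(1+\|x\|^{q}+\mu(\|\cdot\|^{q})\big)^2 .
\]
To reach the stated form I use $(1+a+c)^2\le 3(1+a^2+c^2)$ together with Jensen's inequality, $\mu(\|\cdot\|^q)^2\le \mu(\|\cdot\|^{2q})$, and then rename $2q$ as $q$. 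For $\ell$, Cauchy--Schwarz bounds it by the product of $\|b(\theta,x,y)-B(\theta_0,x,\mu)\|$ and $\|b(\theta,x,z)-B(\theta_0,x,\mu)\|$. Each factor is controlled by the growth bounds for $b$ and $B$, and I handle the product with $ab\le\frac12(a^2+b^2)$ and the same Jensen step.

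For the stability bounds \eqref{eq:l-stab-pointwise}--\eqref{eq:L-stab-pointwise}, I use the polarisation identity
\[
\langle u,v\rangle-\langle u',v'\rangle=\langle u-u',v\rangle+\langle u',v-v'\rangle .
\]
For $\ell$, take $u=b(\theta,x,w)-B(\theta_0,x,\mu)$ and $v=b(\theta,x,v)-B(\theta_0,x,\mu)$, with $u',v'$ the primed counterparts built from $(y,z,s,\nu)$. Each difference $u-u'$ splits into a $b$-difference and a $B$-difference. The Lipschitz bounds \eqref{eq:B-lip-0} and \eqref{eq:B-lip} then give a factor $(\|x-y\|+\|w-z\|+\mathsf W_2(\mu,\nu))$ times a polynomial weight, while the other factor ($v$ or $u'$) is bounded by the growth estimate. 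Multiplying the two polynomial weights produces a weight of degree $2q$, which I simplify using $(1+\sum a_i)(1+\sum b_j)\le C(1+\sum a_i^2+\sum b_j^2)$ and again $\eta(\|\cdot\|^q)^2\le\eta(\|\cdot\|^{2q})$. The bound for $L$ is the special case $u=v$, where $L$ is obtained from the analogous function $\frac12\|\Delta B\|^2$ in exactly the same way.

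The only point needing care is that the weights combine measure moments and point norms with exponents that differ from term to term. I resolve this by taking $q$ to be the maximum exponent that appears and enlarging $K$, using $\|a\|^{r}\le 1+\|a\|^{q}$ for $r\le q$. Beyond this bookkeeping there is no substantive obstacle; Lemmas~\ref{lem:weighted-lip-g} and the analogous bounds for $h,H$ follow in the same way from Assumption~\ref{assumption:drift-grad} with $k=1$.
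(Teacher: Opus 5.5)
Your proposal is correct and is the direct algebraic derivation from Lemma~\ref{lem:weighted-lip-B} that the paper has in mind: it omits this proof as following from basic algebraic inequalities, and your steps (Cauchy--Schwarz with $\|(\sigma\sigma^{\top})^{-1}\|_{\mathrm{op}}$, the polarisation identity, Jensen for the moment terms, and enlarging $q$) are the natural way to fill it in. Two cosmetic points, neither a gap: you use $v$ both as a point in $\mathbb{R}^d$ and as the second vector in the polarisation identity, and the second polarisation term produces $\|x-y\|+\|v-s\|+\mathsf{W}_2(\mu,\nu)$ rather than the $\|w-z\|$ factor you wrote.
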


\begin{lemma}
\label{lem:Phi-hH-stability}
Suppose that Assumption \ref{assumption:drift-grad} (with $k=0,1,2,3$) holds. Then there exists a constant $K<\infty$, and an integer $q\geq 1$, depending only on $C,m,\sigma$, such that for all $\theta\in\Theta$, and for $l=0,1,2$, the following hold. For all $x,y,z\in\mathbb{R}^d$, and for all $\mu\in\mathcal{P}(\mathbb{R}^d)$, 
\begin{align}
    \big\| \partial_{\theta}^lh(\theta,x,y,z,\mu) \big\| &\leq K\big(1+\|x\|^{q} + \|y\|^q + \|z \|^q + \mu(\|\cdot\|^q)\big), \quad 
    \big\| \partial_{\theta}^lH(\theta,x,\mu) \big\|  \leq K\big(1+\|x\|^{q} + \mu(\|\cdot\|^q)\big) \label{eq:H-pgp}
\end{align}
In addition, for all $x,y,w,z,v,s\in\mathbb R^d$, and for all $\mu,\nu\in\mathcal P(\mathbb R^d)$,
\begin{align}
\label{eq:h-stab-pointwise}
\|\partial_{\theta}^l h(\theta,x,w,v,\mu)-\partial_{\theta}^l h(\theta,y,z,s,\nu)\|
&\le K\big(\textstyle\sum_{(a,b)\in\{(x,y),(w,z),(v,s)\}}\|a-b\| + \mathsf{W}_2(\mu,\nu)\big)\\
&~~~~\times\big(1+\textstyle\sum_{a\in\{x,y,w,z,v,s\}} \|a\|^q+ \sum_{\eta\in\{\mu,\nu\}} \eta(\|\cdot\|^q)  \big) \nonumber \\[2mm]
\label{eq:H-stab-pointwise}
\|\partial_{\theta}^l H(\theta,x,\mu)-\partial_{\theta}^lH(\theta,y,\nu)\|
&\le K\big(\|x-y\|+\mathsf{W}_2(\mu,\nu)\big)  \\
&~~~~\times\big(1+\textstyle\sum_{a\in\{x,y\}} \|a\|^q + \textstyle\sum_{\eta\in\{\mu,\nu\}}\eta(\|\cdot\|^{q}) \big). \nonumber
\end{align}
\end{lemma}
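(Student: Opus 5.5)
The plan is to derive every estimate in Lemma~\ref{lem:Phi-hH-stability} from the pointwise bounds in Assumption~\ref{assumption:drift-grad} and from Lemmas~\ref{lem:weighted-lip-B}--\ref{lem:weighted-lip-g}. The same argument, applied to $\partial_\theta^{l}b$ for $l\le 3$ (i.e.\ Assumption~\ref{assumption:drift-grad} with $k$ up to $3$), gives analogous growth and weighted-Lipschitz bounds for $\partial_\theta^{l+1} b$ and $\partial_\theta^l b$.

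First, differentiate
\[
h(\theta,x,y,z,\mu)=g(\theta,x,y)(\sigma\sigma^\top)^{-1}\big[b(\theta,x,z)-B(\theta_0,x,\mu)\big]
\]
$l$ times in $\theta$ using the Leibniz rule. This gives a finite sum of products
\[
\partial_\theta^{a+1}b(\theta,x,y)\,(\sigma\sigma^\top)^{-1}\,\partial_\theta^{c}\big[b(\theta,x,z)-B(\theta_0,x,\mu)\big],\qquad a+c=l .
\]
The term $B(\theta_0,\cdot)$ appears only when $c=0$. For $H$, differentiating under $\int\cdot\,\mu(\mathrm dy)$ is legitimate by dominated convergence, using polynomial growth and assuming $\mu$ has the relevant moment (otherwise the right-hand sides are infinite and the bounds are trivial). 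Hence the same expansion holds with $\partial_\theta^{a+1}B$ and $\partial_\theta^c B$ in place of the $b$-terms.

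For the growth bounds, multiply the polynomial bounds on each factor and use $(1+\|x\|^{q}+\dots)(1+\|x\|^{q}+\dots)\le K(1+\|x\|^{2q}+\dots)$. Then relabel $2q$ as $q$. Moment terms appear through $\mu(\|\cdot\|^q)$. The $B(\theta_0,x,\mu)$ factor is handled by Lemma~\ref{lem:weighted-lip-B} with $\theta=\theta_0$.

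For the stability bounds, use the telescoping identity $f_1g_1-f_2g_2=(f_1-f_2)g_1+f_2(g_1-g_2)$. Bound each difference by the weighted-Lipschitz estimates, which are linear in $\|x-y\|+\|w-z\|+\|v-s\|+\mathsf W_2(\mu,\nu)$ with a polynomial weight. Bound the remaining factors by polynomial growth, and again merge the exponents. The measure dependence enters only through $\int \partial_\theta^c b(\theta,x,\cdot)\,\mathrm d\mu-\int\partial_\theta^c b(\theta,x,\cdot)\,\mathrm d\nu$.

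The main obstacle is the measure Lipschitz step. Take an optimal coupling $\pi$ of $(\mu,\nu)$ and write this difference as $\int [f(y')-f(y'')]\,\pi(\mathrm dy',\mathrm dy'')$. Using the weighted-Lipschitz property in the second argument and Cauchy--Schwarz, bound it by
\[
\mathsf W_2(\mu,\nu)\Big(\int(1+\|y'\|^{q}+\|y''\|^q)^2\,\mathrm d\pi\Big)^{1/2}\le K\,\mathsf W_2(\mu,\nu)\big(1+\mu(\|\cdot\|^{2q})+\nu(\|\cdot\|^{2q})\big)^{1/2}.
\]
The weight in the statement is written as $\eta(\|\cdot\|^q)$ rather than its square root, so at this point I enlarge $q$ and use $\sqrt{1+a}\le 1+a$. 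Finally, all constants depend only on $C_k,m_k$ and $\|(\sigma\sigma^\top)^{-1}\|$, uniformly in $\theta$, because Assumption~\ref{assumption:drift-grad} is uniform over $\Theta$.
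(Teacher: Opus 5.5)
Your proposal is correct. It is the routine argument the paper has in mind when it omits the proof as following ``from basic algebraic inequalities'': a Leibniz expansion in $\theta$, telescoping of products, weighted-Lipschitz and growth bounds on $\partial_\theta^k b$ for $k\le 3$, and an optimal coupling plus Cauchy--Schwarz for the $\mathsf{W}_2$ term. One step deserves to be written out. When you merge weights, products of moment terms such as $\mu(\|\cdot\|^q)^2$, $\|x\|^q\mu(\|\cdot\|^q)$ or $\mu(\|\cdot\|^q)\nu(\|\cdot\|^q)$ are not controlled by exponent-doubling alone. They are absorbed via Young's and Jensen's inequalities into $\|x\|^{2q}+\mu(\|\cdot\|^{2q})+\nu(\|\cdot\|^{2q})$.
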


\begin{corollary}
\label{cor:empirical-drift-lip}
Suppose that Assumption~\ref{assumption:drift-grad} (with $k=0$) holds. Then there exists a constant $K<\infty$, and an integer $q\geq 1$, depending only on $C,m$, such that for all
$\theta\in\Theta$, all $N\in\mathbb N$, all $i,j\in[N]$, the following hold. For all $\mathbf x^N\in(\mathbb{R}^d)^N$, 
\small
\begin{align}
    \|b^{i,j,N}(\theta,\boldsymbol{x}^N)\| &\leq K\big(1+\|x^{i,N}\|^{m+1}+\|x^{j,N}\|^{q}\big) \\ 
    \|B^{i,N}(\theta,\boldsymbol{x}^N)\| &\leq K\big(1 + \|x^{i,N}\|^{q} + \textstyle\frac{1}{N}\textstyle\sum_{j=1}^N \|x^{j,N}\|^{q}\big) \label{eq:b-pgp-2}
\end{align}
\normalsize
In addition, for all $\mathbf x^N,\boldsymbol{y}^N\in(\mathbb{R}^d)^N$,
\small
\begin{align}
\big\|b^{i,j,N}(\theta,\boldsymbol{x}^{N})  - b^{i,j,N}(\theta,\boldsymbol{y}^{N}) \big\| &\leq K\big(\|x^{i,N} - y^{i,N}\| + \|x^{j,N} - y^{j,N}\|\big) \label{eq:B-bound-1} \\ 
&~~~~\times \big(1+\|x^{i,N}\|^q + \|y^{i,N}\|^q + \|x^{j,N}\|^q + \|y^{j,N}\|^q\big) \nonumber \\[1mm]
\big\|B^{i,N}(\theta,\mathbf x^N)-B^{i,N}(\theta,\mathbf y^N)\big\|
&\le K \big(\|x^{i,N}-y^{i,N}\|+\big(\textstyle\frac1N\textstyle\sum_{j=1}^N\|x^{j,N}-y^{j,N}\|^2\big)^{\frac{1}{2}}\big) \label{eq:B-bound-2} \\
&~~~~\times\big(1+\|x^{i,N}\|^q+\|y^{i,N}\|^q+\textstyle\frac{1}{N}\sum_{j=1}^N \|x^{j,N}\|^{q} + \textstyle\frac{1}{N}\sum_{j=1}^N \|y^{j,N}\|^{q}\big). \nonumber
\end{align}
\normalsize
\end{corollary}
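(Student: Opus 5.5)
The statement is Corollary~\ref{cor:empirical-drift-lip}: the polynomial-growth and weighted local-Lipschitz bounds for $b^{i,j,N}(\theta,\boldsymbol{x}^N)=b(\theta,x^{i,N},x^{j,N})$ and $B^{i,N}(\theta,\boldsymbol{x}^N)=B(\theta,x^{i,N},\mu^N)$, where $\mu^N=\frac1N\sum_j\delta_{x^{j,N}}$. I would obtain these as direct specialisations of Lemma~\ref{lem:weighted-lip-B}.

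\textbf{Growth bounds.} Since $b^{i,j,N}$ is $b$ evaluated at $(x^{i,N},x^{j,N})$, its growth bound is the first bound of Lemma~\ref{lem:weighted-lip-B}. The exponent on $\|x^{i,N}\|$ can be taken equal to $q$ after enlarging $q\ge m+1$ if needed. For $B^{i,N}$, note that $\mu^N(\|\cdot\|^q)=\frac1N\sum_j\|x^{j,N}\|^q$, and substitute this into the second growth bound of Lemma~\ref{lem:weighted-lip-B}.

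\textbf{Lipschitz bound for $b^{i,j,N}$.} Apply \eqref{eq:B-lip-0} with $(x,w)=(x^{i,N},x^{j,N})$ and $(y,z)=(y^{i,N},y^{j,N})$. This gives \eqref{eq:B-bound-1} immediately.

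\textbf{Lipschitz bound for $B^{i,N}$.} Apply \eqref{eq:B-lip} with $\mu=\mu^N_{\boldsymbol{x}}$ and $\nu=\mu^N_{\boldsymbol{y}}$, the empirical measures of $\boldsymbol{x}^N$ and $\boldsymbol{y}^N$. The only extra ingredient is a bound on the Wasserstein term. Use the index-matching coupling $\frac1N\sum_j\delta_{(x^{j,N},y^{j,N})}\in\Pi(\mu^N_{\boldsymbol{x}},\mu^N_{\boldsymbol{y}})$, which gives
\[
\mathsf W_2(\mu^N_{\boldsymbol{x}},\mu^N_{\boldsymbol{y}})\le\Big(\frac1N\sum_j\|x^{j,N}-y^{j,N}\|^2\Big)^{1/2}.
\]
The moment terms in \eqref{eq:B-lip} become the empirical averages $\frac1N\sum_j\|x^{j,N}\|^q$ and $\frac1N\sum_j\|y^{j,N}\|^q$, which yields \eqref{eq:B-bound-2}.

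\textbf{Where the real work lies.} Essentially all of it is in Lemma~\ref{lem:weighted-lip-B} itself, which we may assume. If it had to be checked, I would write $b=-\nabla V-\nabla W(\cdot-\cdot)$ and use the $k=0$ case of Assumption~\ref{assumption:drift-grad}, together with $\|x-w\|^m\le 2^{m}(\|x\|^m+\|w\|^m)$.

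The most delicate step is the $B$-Lipschitz bound with a $\mathsf W_2$ term. Writing $B(\theta,x,\mu)-B(\theta,x,\nu)=\int\!\!\int [b(\theta,x,w)-b(\theta,x,z)]\,\pi(\mathrm dw,\mathrm dz)$ for an optimal coupling $\pi$, one applies \eqref{eq:B-lip-0} and then Cauchy--Schwarz. This produces moments of order $2q$ rather than $q$; the exponent $q$ is relabelled accordingly, and since it is uniform in $\theta$ and $N$ the statement is unaffected.
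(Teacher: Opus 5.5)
Your route is what the paper intends: specialise Lemma~\ref{lem:weighted-lip-B} to empirical measures, using $\mu^N(\|\cdot\|^q)=\frac1N\sum_j\|x^{j,N}\|^q$, and bound $\mathsf W_2(\mu^N_{\boldsymbol x},\mu^N_{\boldsymbol y})$ by the index-matching coupling. The paper omits the proof as following from elementary inequalities. The growth bound for $B^{i,N}$ and both Lipschitz bounds go through exactly as you describe.

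The one step that does not work as written is the first growth bound. The statement puts the \emph{fixed} exponent $m+1$ on $\|x^{i,N}\|$, and ``enlarging $q\ge m+1$'' goes the wrong way. From $K(1+\|x^{i,N}\|^q+\|x^{j,N}\|^q)$ with $q>m+1$ you cannot recover $\|x^{i,N}\|^{m+1}$, so you would be proving a weaker inequality. You also cannot simply take the lemma's $q\le m+1$. The same $q$ must serve the $B^{i,N}$ Lipschitz bound, where the $\mathsf W_2$/Cauchy--Schwarz step pushes it up (in general to order $2m$).

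The fix is to get this one bound directly from Assumption~\ref{assumption:drift-grad} with $k=0$, writing $m=m_0$ and $C=C_0$. First, $\|\nabla V(\theta,x)\|\le\|\nabla V(\theta,0)\|+C_0\|x\|(1+\|x\|^{m})\le 2C_0(1+\|x\|^{m+1})$. Similarly, $\|\nabla W(\theta,x-y)\|\le 2C_0\big(1+2^{m}(\|x\|^{m+1}+\|y\|^{m+1})\big)$. Together these give $\|b(\theta,x,y)\|\le K(1+\|x\|^{m+1}+\|y\|^{m+1})$. Then bound $\|x^{j,N}\|^{m+1}\le 1+\|x^{j,N}\|^q$, using that the common $q$ can be chosen with $q\ge m+1$.

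If the $m+1$ is a typo for $q$, as the parallel corollaries suggest, your argument is already complete.
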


\begin{corollary}
\label{cor:empirical-drift-grad-lip}
Suppose that Assumption~\ref{assumption:drift-grad} (with $k=1$) holds. Then there exists a constant $K<\infty$ and an integer $q\geq 1$, depending only on $C,m$, such that for all
$\theta\in\Theta$, all $N\in\mathbb N$, all $i,j\in[N]$, the following hold. For all $\boldsymbol{x}^N\in(\mathbb{R}^d)^N$,
\small
\begin{align}
    \|g^{i,j,N}(\theta,\boldsymbol{x}^N)\| &\leq K\big(1+\|x^{i,N}\|^{q}+\|x^{j,N}\|^{q}\big) \\
    \|G^{i,N}(\theta,\boldsymbol{x}^N)\| &\leq K\big(1 + \|x^{i,N}\|^{q} + \textstyle\frac{1}{N}\sum_{j=1}^N \|x^{j,N}\|^{q}\big). 
\end{align}
\normalsize
In addition, for all $\boldsymbol{x}^N, \boldsymbol{y}^N \in(\mathbb{R}^d)^N$,
\small
\begin{align}
\big\|g^{i,j,N}(\theta,\boldsymbol{x}^{N})  - g^{i,j,N}(\theta,\boldsymbol{y}^{N}) \big\| &\leq K\big(\|x^{i,N} - y^{i,N}\| + \|x^{j,N} - y^{j,N}\|\big) \label{eq:g-bound-1}  \\ 
&~~~~\times \big(1+\|x^{i,N}\|^q + \|y^{i,N}\|^q + \|x^{j,N}\|^q + \|y^{j,N}\|^q\big) \nonumber \\[1mm]
\big\|G^{i,N}(\theta,\mathbf x^N)-G^{i,N}(\theta,\mathbf y^N)\big\|
&\le K \big(\|x^{i,N}-y^{i,N}\|+\big(\textstyle\frac1N\sum_{j=1}^N\|x^{j,N}-y^{j,N}\|^2\big)^{\frac{1}{2}}\big) \label{eq:G-bound-2} \\
&~~~~\times \big(1+\|x^{i,N}\|^q+\|y^{i,N}\|^q+\textstyle\frac{1}{N}\sum_{j=1}^N \|x^{j,N}\|^{q} + \textstyle\frac{1}{N}\sum_{j=1}^N \|y^{j,N}\|^{q}\big). \nonumber
\end{align}
\normalsize
\end{corollary}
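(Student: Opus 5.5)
The statement to be proved is Corollary~\ref{cor:empirical-drift-grad-lip}: polynomial growth and weighted local Lipschitz bounds for $g^{i,j,N}(\theta,\boldsymbol{x}^N)=g(\theta,x^{i,N},x^{j,N})$ and $G^{i,N}(\theta,\boldsymbol{x}^N)=G(\theta,x^{i,N},\mu^N)$, with $\mu^N=\frac1N\sum_j\delta_{x^{j,N}}$. I would deduce it directly from Lemma~\ref{lem:weighted-lip-g} by specialising the measure arguments to empirical measures.

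\emph{Step 1: the $g^{i,j,N}$ bounds.} These are identical to \eqref{eq:g-pgp} and \eqref{eq:G-lip}.
\begin{itemize}
\item For growth, set $x=x^{i,N}$ and $y=x^{j,N}$ in the first bound of \eqref{eq:g-pgp}.
\item For the difference, set $(x,w)=(x^{i,N},x^{j,N})$ and $(y,z)=(y^{i,N},y^{j,N})$ in \eqref{eq:G-lip}.
\end{itemize}
This gives exactly \eqref{eq:g-bound-1}.

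\emph{Step 2: growth of $G^{i,N}$.} Apply the second bound of \eqref{eq:g-pgp} with $\mu=\mu^N$. Then $\mu^N(\|\cdot\|^q)=\frac1N\sum_j\|x^{j,N}\|^q$, which is the stated bound.

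\emph{Step 3: the difference bound \eqref{eq:G-bound-2}.}
\begin{itemize}
\item Apply \eqref{eq:g-lip} with $\mu=\mu^N_{\boldsymbol{x}}$ and $\nu=\mu^N_{\boldsymbol{y}}$, the empirical measures of $\boldsymbol{x}^N$ and $\boldsymbol{y}^N$.
\item The moment terms become empirical averages, as in Step~2.
\item Bound $\mathsf{W}_2(\mu^N_{\boldsymbol{x}},\mu^N_{\boldsymbol{y}})$ by the index-matching coupling $\frac1N\sum_j\delta_{(x^{j,N},y^{j,N})}\in\Pi(\mu^N_{\boldsymbol{x}},\mu^N_{\boldsymbol{y}})$. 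This gives $\mathsf{W}_2\le(\frac1N\sum_j\|x^{j,N}-y^{j,N}\|^2)^{1/2}$, which is the required form.
\end{itemize}

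The main point needing care is that the constants $K,q$ must not depend on $N$ or $\theta$. This holds because Lemma~\ref{lem:weighted-lip-g} is uniform in $\theta\in\Theta$ and in the measures, and the coupling bound contributes no constant.

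If one instead wanted to prove Lemma~\ref{lem:weighted-lip-g} from scratch, the argument would go as follows.
\begin{itemize}
\item Write $g=-\partial_\theta\nabla V(\theta,x)-\partial_\theta\nabla W(\theta,x-y)$.
\item Use Assumption~\ref{assumption:drift-grad} with $k=1$, together with $\|x-y\|^{m}\le 2^{m}(\|x\|^m+\|y\|^m)$, to obtain growth and Lipschitz bounds for $g$.
\item For $G$, integrate in $y$ against an optimal $\mathsf{W}_2$ coupling of $\mu$ and $\nu$, then apply Cauchy--Schwarz. This produces the $\mathsf{W}_2$ factor multiplied by square roots of moments, and those square roots are dominated by $1+\eta(\|\cdot\|^{q'})$ for an enlarged exponent $q'$.
\end{itemize}
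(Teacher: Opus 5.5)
Your proposal is correct and is essentially the argument the paper intends; the paper omits the proof, saying only that it follows from elementary inequalities. The argument is to specialise Lemma~\ref{lem:weighted-lip-g} to the empirical measures $\mu=\mu^N_{\boldsymbol{x}}$, $\nu=\mu^N_{\boldsymbol{y}}$, and to bound $\mathsf{W}_2(\mu^N_{\boldsymbol{x}},\mu^N_{\boldsymbol{y}})$ by the index-matching coupling. Your from-scratch sketch of the lemma, which uses Cauchy--Schwarz against an optimal coupling and then absorbs square roots of moments into $1+\eta(\|\cdot\|^{q'})$ with an enlarged exponent, is also sound.
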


\begin{corollary}
\label{cor:empirical-L-lip}
Suppose that Assumption~\ref{assumption:drift-grad} (with $k=0$) holds. Then there exists a constant $K<\infty$ and an integer $q\geq 1$, depending only on $C,m$, such that for all
$\theta\in\Theta$, all $N\in\mathbb N$, all $i,j,k\in[N]$, the following hold. For all $\boldsymbol{x}^N\in(\mathbb{R}^d)^N$,   
\small
\begin{align}
    |\ell^{i,j,k,N}(\theta,\boldsymbol{x}^N)| &\leq K\big(1+\textstyle\sum_{a\in\{i,j,k\}}\|x^{a,N}\|^{q}+ \textstyle\frac{1}{N}\sum_{a=1}^N\|x^{a,N}\|^{q}\big) \label{eq:l-pgp-1} \\
    |L^{i,N}(\theta,\boldsymbol{x}^N)| &\leq K\big(1 + \|x^{i,N}\|^{q} + \textstyle\frac{1}{N}\sum_{j=1}^N \|x^{j,N}\|^{q}\big) \label{eq:L-pgp-2}
\end{align}
\normalsize
In addition, for all $\boldsymbol{x}^N, \boldsymbol{y}^N \in(\mathbb{R}^d)^N$,
\small
\begin{align}
\big|\ell^{i,j,k,N}(\theta,\boldsymbol{x}^{N})  - \ell^{i,j,k,N}(\theta,\boldsymbol{y}^{N}) \big| &\leq K\big(\textstyle\sum_{a\in\{i,j,k\}}\|x^{a,N} - y^{a,N}\| + \big(\textstyle\frac1N\sum_{a=1}^N\|x^{a,N}-y^{a,N}\|^2\big)^{\frac{1}{2}}\big) \label{eq:l-bound} \hspace{-10mm} \\ 
&~~~~\times\big(1+\textstyle\sum_{a\in\{i,j,k\}}\left(\|x^{a,N}\|^q +\|y^{a,N}\|^q\right) +   \textstyle\frac{1}{N}\sum_{a=1}^N\left( \|x^{a,N}\|^{q} + \|y^{a,N}\|^{q}\right)\big) \notag \\[1mm]
\big|L^{i,N}(\theta,\mathbf x^N)-L^{i,N}(\theta,\mathbf y^N)\big| &\le K \big(\|x^{i,N}-y^{i,N}\|+\big(\textstyle\frac1N\sum_{j=1}^N\|x^{j,N}-y^{j,N}\|^2\big)^{\frac{1}{2}}\big) \label{eq:L-bound} \\
&~~~~\times\big(1+\|x^{i,N}\|^q+\|y^{i,N}\|^q+\textstyle\frac{1}{N}\sum_{j=1}^N\left( \|x^{j,N}\|^{q} +  \|y^{j,N}\|^{q}\right)\big).  \notag 
\end{align}
\normalsize
\end{corollary}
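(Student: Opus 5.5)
Both bounds follow by composing the drift estimates of Corollary~\ref{cor:empirical-drift-lip} (equivalently Lemma~\ref{lem:weighted-lip-B} and Lemma~\ref{lem:Phi-L1-stability}) with two elementary identifications. Let $\mu^N$ and $\nu^N$ denote the empirical measures of $\boldsymbol{x}^N$ and $\boldsymbol{y}^N$. The first identification is $\mu^N(\|\cdot\|^q)=\frac1N\sum_{a=1}^N\|x^{a,N}\|^q$. The second is
\[
\mathsf{W}_2(\mu^N,\nu^N)\le\big(\tfrac1N\textstyle\sum_{a=1}^N\|x^{a,N}-y^{a,N}\|^2\big)^{1/2},
\]
which is obtained from the diagonal coupling $\frac1N\sum_a\delta_{(x^{a,N},y^{a,N})}$. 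Since $L^{i,N}(\theta,\boldsymbol{x}^N)=L(\theta,x^{i,N},\mu^N)$ and $\ell^{i,j,k,N}(\theta,\boldsymbol{x}^N)=\ell(\theta,x^{i,N},x^{j,N},x^{k,N},\mu^N)$, the plan is to substitute these identifications into Lemma~\ref{lem:Phi-L1-stability}. To keep the argument self-contained, I would also verify that lemma directly from the structure of $L$ and $\ell$ as quadratic forms in drift differences.

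\textbf{Growth bounds.} Write $\Delta^{j}:=b(\theta,x^{i,N},x^{j,N})-B(\theta_0,x^{i,N},\mu^N)$. Then $2\ell^{i,j,k,N}=\langle\Delta^j,\Delta^k\rangle_{\sigma\sigma^\top}$, and Cauchy--Schwarz gives
\[
|\ell^{i,j,k,N}|\le c_\sigma\|\Delta^j\|\,\|\Delta^k\|\le c_\sigma\big(\|\Delta^j\|^2+\|\Delta^k\|^2\big),
\]
where $c_\sigma$ depends only on $\sigma$. Each $\|\Delta^j\|$ is bounded by the polynomial-growth estimates for $b$ and $B$ in Lemma~\ref{lem:weighted-lip-B}, which hold uniformly in $\theta$ and hence also at $\theta_0$. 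Squaring produces cross products and the term $(\frac1N\sum_a\|x^{a,N}\|^q)^2$. Cross products are handled by Young's inequality. The squared average is handled by Jensen's inequality, $(\frac1N\sum_a\|x^{a,N}\|^q)^2\le\frac1N\sum_a\|x^{a,N}\|^{2q}$. After these steps we rename $2q\to q$, which gives \eqref{eq:l-pgp-1}. The same computation with $\Delta:=B^{i,N}(\theta,\cdot)-B^{i,N}(\theta_0,\cdot)$ gives \eqref{eq:L-pgp-2}.

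\textbf{Local Lipschitz bounds.} Use bilinearity:
\[
\langle a,b\rangle-\langle a',b'\rangle=\langle a-a',b\rangle+\langle a',b-b'\rangle .
\]
In $\ell^{i,j,k,N}(\theta,\boldsymbol{x}^N)-\ell^{i,j,k,N}(\theta,\boldsymbol{y}^N)$, each difference of the form $b(\theta,x^{i,N},x^{j,N})-b(\theta,y^{i,N},y^{j,N})$ is controlled by \eqref{eq:B-bound-1}. Each difference $B(\theta_0,x^{i,N},\mu^N)-B(\theta_0,y^{i,N},\nu^N)$ is controlled by \eqref{eq:B-lip}, combined with the $\mathsf{W}_2$ coupling bound above. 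The undifferenced factor is bounded by the growth estimate just established. Multiplying gives a product of two polynomial weights. This product is merged into a single weight of the form appearing in \eqref{eq:l-bound} using Young's and Jensen's inequalities, with $q$ enlarged as needed. The bound \eqref{eq:L-bound} follows in the same way from \eqref{eq:B-bound-2}.

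\textbf{Main obstacle.} The only real work is bookkeeping. One must check that the merged weights contain only empirical averages $\frac1N\sum_a\|x^{a,N}\|^q$, and no products of such averages. This is what makes $K$ and $q$ independent of $N$, which later proofs need, for instance to use uniform moment bounds under $\pi^N_{\theta_0}$. Products such as $\|x^{i,N}\|^q\cdot\frac1N\sum_a\|y^{a,N}\|^q$ are split by Young's inequality. Squares of averages are reduced by Jensen's inequality, exactly as in the growth step. Since both the Young step and the Jensen step carry constants that do not depend on $N$, this yields the stated bounds.
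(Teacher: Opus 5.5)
Your proposal is correct and follows the route the paper intends, since the paper omits this proof as following from ``basic algebraic inequalities''. That route is: specialise Lemma~\ref{lem:weighted-lip-B} and Lemma~\ref{lem:Phi-L1-stability} to empirical measures, bound $\mathsf{W}_2(\mu^N,\nu^N)$ by the diagonal coupling, and merge the polynomial weights with Young's and Jensen's inequalities. Two minor remarks: your $K$ also depends on $\sigma$ (through $c_\sigma$), as it must, so the corollary's ``depending only on $C,m$'' should read $C,m,\sigma$ as in Lemma~\ref{lem:Phi-L1-stability}; and the Jensen step is needed to put the weight in the stated single-average form, not for $N$-independence of $K$ and $q$, which holds regardless.
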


\begin{corollary}
\label{cor:empirical-H-lip}
Suppose that Assumption~\ref{assumption:drift-grad} (with $k=0,1,2,3$) holds. Then there exists a constant $K<\infty$ and an integer $q\geq 1$, depending only on $C,m$, such that for all
$\theta\in\Theta$, all $N\in\mathbb N$, all $i,j,k\in[N]$, and $l=0,1,2$, the following hold. For all $\boldsymbol{x}^N\in(\mathbb{R}^d)^N$,  
\small
\begin{align}
    \|\partial_{\theta}^l h^{i,j,k,N}(\theta,\boldsymbol{x}^N)\| &\leq K\big(1+\textstyle \sum_{a\in\{i,j,k\}}\|x^{a,N}\|^{q}+ \textstyle \frac{1}{N}\sum_{a=1}^N\|x^{a,N}\|^{q}\big) \label{eq:h-pgp-1} \\
    \| \partial_{\theta}^l  H^{i,N}(\theta,\boldsymbol{x}^N)\| &\leq K\big(1 + \|x^{i,N}\|^{q} + \textstyle \frac{1}{N}\sum_{j=1}^N \|x^{j,N}\|^{q}\big) \label{eq:H-pgp-2}
\end{align}
\normalsize
In addition, for all $\boldsymbol{x}^N, \boldsymbol{y}^N \in(\mathbb{R}^d)^N$,
\small
\begin{align}
\big\|\partial_{\theta}^l  h^{i,j,k,N}(\theta,\boldsymbol{x}^{N})  - \partial_{\theta}^l  h^{i,j,k,N}(\theta,\boldsymbol{y}^{N}) \big\| &\leq K\big(\textstyle \sum_{a\in\{i,j,k\}}\|x^{a,N} - y^{a,N}\| + \big(\textstyle \frac1N\sum_{a=1}^N\|x^{a,N}-y^{a,N}\|^2\big)^{\frac{1}{2}}\big)  \hspace{-10mm} \\  
&~~~~\times\big(1+\textstyle \sum_{a\in\{i,j,k\}}\left(\|x^{a,N}\|^q +\|y^{a,N}\|^q\right) +   \textstyle \frac{1}{N}\sum_{a=1}^N\left( \|x^{a,N}\|^{q} + \|y^{a,N}\|^{q}\right)\big) \notag \\[1mm]
\big\|\partial_{\theta}^l  H^{i,N}(\theta,\mathbf x^N)-\partial_{\theta}^l  H^{i,N}(\theta,\mathbf y^N)\big\| &\le K \big(\|x^{i,N}-y^{i,N}\|+\big(\textstyle \frac1N\sum_{j=1}^N\|x^{j,N}-y^{j,N}\|^2\big)^{\frac{1}{2}}\big) \label{eq:H-bound} \\
&~~~~\times\big(1+\|x^{i,N}\|^q+\|y^{i,N}\|^q+\textstyle \frac{1}{N}\sum_{j=1}^N\left( \|x^{j,N}\|^{q} +  \|y^{j,N}\|^{q}\right)\big).\notag 
\end{align} 
\normalsize
\end{corollary}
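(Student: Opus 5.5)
The plan is to reduce the statement to Lemma~\ref{lem:Phi-hH-stability} by substituting empirical measures, and then to check the two ingredients this needs: how moments and $\mathsf{W}_2$ behave for empirical measures, and that the lemma itself holds. By definition, $h^{i,j,k,N}(\theta,\boldsymbol{x}^N)=h(\theta,x^{i,N},x^{j,N},x^{k,N},\mu^N)$ and $H^{i,N}(\theta,\boldsymbol{x}^N)=H(\theta,x^{i,N},\mu^N)$, where $\mu^N=\frac1N\sum_a\delta_{x^{a,N}}$. Similarly, for $\boldsymbol{y}^N$ we write $\nu^N=\frac1N\sum_a\delta_{y^{a,N}}$.

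Two elementary facts about empirical measures are all that is needed.
\begin{itemize}
\item The moments are averages: $\mu^N(\|\cdot\|^q)=\frac1N\sum_{a=1}^N\|x^{a,N}\|^q$.
\item Using the index coupling $\frac1N\sum_a\delta_{(x^{a,N},y^{a,N})}\in\Pi(\mu^N,\nu^N)$ gives
\[
\mathsf{W}_2(\mu^N,\nu^N)\le\Big(\tfrac1N\textstyle\sum_{a=1}^N\|x^{a,N}-y^{a,N}\|^2\Big)^{1/2}.
\]
\end{itemize}
Substituting these into \eqref{eq:H-pgp}, \eqref{eq:h-stab-pointwise} and \eqref{eq:H-stab-pointwise} gives the growth bounds \eqref{eq:h-pgp-1}--\eqref{eq:H-pgp-2} and the two Lipschitz-type bounds of the corollary directly. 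The constants $K,q$ are inherited from the lemma, so they are independent of $N$, $\theta$ and the indices.

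The substance therefore lies in Lemma~\ref{lem:Phi-hH-stability}, which I would verify as follows. Write
\[
h(\theta,x,y,z,\mu)=g(\theta,x,y)(\sigma\sigma^\top)^{-1}\big(b(\theta,x,z)-B(\theta_0,x,\mu)\big).
\]
Since $B(\theta_0,x,\mu)$ does not depend on $\theta$, the Leibniz rule expresses $\partial_\theta^l h$ for $l\le2$ as a finite sum of products $\partial_\theta^{l_1}g\cdot(\sigma\sigma^\top)^{-1}\cdot\partial_\theta^{l_2}(b-B(\theta_0,\cdot))$ with $l_1+l_2=l$. Each factor involves at most $\partial_\theta^3\nabla V$ and $\partial_\theta^3\nabla W$, so Assumption~\ref{assumption:drift-grad} with $k\le3$ applies. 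Running the argument of Lemmas~\ref{lem:weighted-lip-B}--\ref{lem:weighted-lip-g} on $\partial_\theta^{k}b$ in place of $b$ shows that every factor has polynomial growth and satisfies a weighted Lipschitz bound in its point arguments. For a product we use
\[
\|fg-f'g'\|\le\|f-f'\|\,\|g\|+\|f'\|\,\|g-g'\|,
\]
so the growth exponents add and $q$ increases accordingly. The argument for $H=G(\sigma\sigma^\top)^{-1}(B(\theta,\cdot)-B(\theta_0,\cdot))$ is identical, with $\mu$ in place of the individual points $y,z$.

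The step I expect to need the most care is the measure-Lipschitz part, that is, bounding $\|B(\theta_0,x,\mu)-B(\theta_0,x,\nu)\|$ (and the analogues for $G$ and $B(\theta,\cdot)$). I would take an optimal coupling $\pi\in\Pi(\mu,\nu)$ and write the difference as $\int[b(\theta_0,x,y)-b(\theta_0,x,y')]\,\pi(\mathrm dy,\mathrm dy')$. Applying \eqref{eq:B-lip-0} and then Cauchy--Schwarz bounds this by
\[
K\,\mathsf{W}_2(\mu,\nu)\Big(\int\big(1+\|x\|^q+\|y\|^q+\|y'\|^q\big)^2\,\pi\Big)^{1/2}.
\]
The coupling has marginals $\mu$ and $\nu$, so the last integral is controlled by $\mu(\|\cdot\|^{2q})+\nu(\|\cdot\|^{2q})$. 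This is why the exponent $q$ must be enlarged (for instance $q\mapsto2q$). Once this is in place, the rest is bookkeeping of exponents.
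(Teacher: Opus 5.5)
Your proposal is correct and follows essentially the route the paper intends (the paper omits the proof as routine). You specialise Lemma~\ref{lem:Phi-hH-stability} to $\mu=\mu^N$, $\nu=\nu^N$, using that empirical moments are averages and that the index coupling gives $\mathsf W_2(\mu^N,\nu^N)\le(\frac1N\sum_a\|x^{a,N}-y^{a,N}\|^2)^{1/2}$, and you justify the lemma via the Leibniz rule, the product inequality, and the coupling/Cauchy--Schwarz step with $q\mapsto 2q$. One small point: because of the factor $(\sigma\sigma^\top)^{-1}$, your constant $K$ (like the lemma's) also depends on $\sigma$, so the corollary's ``depending only on $C,m$'' should really read ``$C,m,\sigma$''.
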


\end{document}